\documentclass[11pt,a4paper,reqno]{amsart}
\usepackage[english]{babel}
\usepackage[applemac]{inputenc}
\usepackage[T1]{fontenc}
\usepackage{palatino}
\usepackage{amsmath}
\usepackage{amssymb}
\usepackage{amsthm}
\usepackage{amsfonts}
\usepackage{graphicx}
\usepackage{color}
\usepackage{mathtools}

\usepackage[colorlinks = true, citecolor = black]{hyperref}
\pagestyle{headings}
\title[Hardy spaces and QC maps in the Heisenberg group]{Hardy spaces and quasiconformal maps\\ in the Heisenberg group}
\author[T.\ Adamowicz]{Tomasz Adamowicz}
\address{The Institute of Mathematics, Polish Academy of Sciences \\ ul. Sniadeckich 8, 00-656 Warsaw, Poland}
\email{tadamowi@impan.pl}
\author[K. F\"assler]{Katrin F\"assler}
\address{Department of Mathematics and Statistics\\ University of Jyv\"askyl\"a \\ P.O. Box 35 (MaD),
FI-40014 University of Jyv\"askyl\"a, Finland}
\email{katrin.s.fassler@jyu.fi}

\thanks{T.A.\ was supported by the National Science
Center, Poland (NCN), UMO-2017/25/B/ST1/01955. K.F.\ was supported
by the Swiss National Science Foundation grant 161299, and by the
Academy of Finland grants 321696, 328846. The work was partially
supported by the grant 346300 for IMPAN from the Simons Foundation
and the matching 2015-2019 Polish MNiSW fund. The paper was
completed while K.F.\ visited the Hausdorff Research Institute for
Mathematics in Bonn during the trimester \emph{Interactions
between Geometric measure theory, Singular integrals, and PDE}}
\keywords{}
\date{\today}
\subjclass[2010]{(Primary) 30L10  (Secondary) 30C65, 30H10}

\newcommand{\R}{\mathbb{R}}

\newcommand{\1}{\mathbf{1}}

\newcommand{\diam}{\operatorname{diam}}

\newcommand{\Om}{\Omega}
\newcommand{\Hei}{{\mathbb{H}}^{1}}

\newcommand{\ep}{\varepsilon}
\newcommand{\om}{\omega}

\def\Barint_#1{\mathchoice
          {\mathop{\vrule width 6pt height 3 pt depth -2.5pt
                  \kern -8pt \intop}\nolimits_{#1}}%
          {\mathop{\vrule width 5pt height 3 pt depth -2.6pt
                  \kern -6pt \intop}\nolimits_{#1}}%
          {\mathop{\vrule width 5pt height 3 pt depth -2.6pt
                  \kern -6pt \intop}\nolimits_{#1}}%
          {\mathop{\vrule width 5pt height 3 pt depth -2.6pt
                  \kern -6pt \intop}\nolimits_{#1}}}

\numberwithin{equation}{section}

\theoremstyle{plain}
\newtheorem{thm}[equation]{Theorem}

\newtheorem{lemma}[equation]{Lemma}
\newtheorem{lemdef}[equation]{Lemma and Definition}

\newtheorem{cor}[equation]{Corollary}
\newtheorem{proposition}[equation]{Proposition}

\theoremstyle{definition}

\newtheorem{definition}[equation]{Definition}

\theoremstyle{remark}
\newtheorem{remark}[equation]{Remark}

\addtolength{\hoffset}{-1.15cm} \addtolength{\textwidth}{2.3cm}
\addtolength{\voffset}{0.45cm} \addtolength{\textheight}{-0.2cm}

\definecolor{blau}{rgb}{0.1,0.0,0.9}
\newcommand{\blue}{\color{blau}}
\definecolor{lila}{rgb}{0.5,0.0,0.5}

\definecolor{Bcolor}{rgb}{0.5,0.0,0.0}

\newcounter{komcounter}
\numberwithin{komcounter}{section}

\newcommand{\kom}[1]{}
\renewcommand{\kom}[1]{{\bf \blue[#1]}}

\usepackage{hyperref}
\hypersetup{%
  colorlinks = true,
  linkcolor  = black
}

\begin{document}

\begin{abstract}
We define Hardy spaces $H^p$, $0<p<\infty$, for quasiconformal
mappings on the Kor\'{a}nyi unit ball $B$ in the first Heisenberg
group $\mathbb{H}^1$. Our definition is stated in terms of the
Heisenberg polar coordinates introduced by Kor\'{a}nyi and
Reimann, and Balogh and Tyson. First, we prove the existence of
$p_0(K)>0$ such that every $K$-quasiconformal map $f:B \to f(B)
\subset \mathbb{H}^1$ belongs to $H^p$ for all $0<p<p_0(K)$.
Second, we give two equivalent conditions for the $H^p$ membership
of a quasiconformal map $f$, one in terms of the radial limits of
$f$, and one using a nontangential maximal function of $f$.  As an application, we characterize Carleson measures on
 $B$ via integral inequalities for quasiconformal mappings on $B$ and their radial limits. Our paper thus
extends results by Astala and Koskela, Jerison and Weitsman,
Nolder,
 and Zinsmeister, from $\mathbb{R}^n$ to $\mathbb{H}^1$.
A crucial difference between the proofs in $\mathbb{R}^n$ and
$\mathbb{H}^1$ is caused by the nonisotropic nature of the
Kor\'{a}nyi unit sphere with its two characteristic points.
\end{abstract}

\maketitle

\tableofcontents

\section{Introduction}
A holomorphic function $f$ on the unit disk $\mathbb{D}$ in
$\mathbb{C}$ belongs to the \emph{Hardy class $H^p$}, for some
$0<p<\infty$, if
\begin{equation}\label{eq:HardyPlane}
\sup_{0<s<1} \left(\int_0^{2\pi}
|f(se^{\mathrm{i}\varphi})|^p\,d\varphi\right)^{1/p}< \infty.
\end{equation}
According to a result by Hardy and Littlewood \cite{MR1555303},
condition \eqref{eq:HardyPlane} holds for a holomorphic function
$f$ on $\mathbb{D}$ if and only if the nontangential maximal
function of $f$ belongs to $L^p(S^1)$. Here and in the rest of
this paper, we consider Lebesgue spaces $L^p$ of $p$-integrable
functions for all exponents $0<p<\infty$, not only in the normed
case $p\geq 1$.

While Hardy spaces play an important role in complex analysis,
they have also spurred the development of real-variable methods
used to study their analogs in $\mathbb{R}^n$, or even on
homogeneous groups
\cite{MR0268655,MR447953,MR657581,MR628971,MR1232192}. Another
line of research, close in spirit to the condition
\eqref{eq:HardyPlane}, is the $H^p$ theory for quasiconformal or
quasiregular mappings on the unit ball in $\mathbb{R}^n$, where
"quasiconformal" and "quasiregular" serve as substitutes for
"conformal" and "holomorphic", respectively, see
\cite{MR2900163,MR624919,MR1013976,MR1307591,MR1943765,MR860655, MR4310177}.

The purpose of the present paper is to develop an $H^p$ theory for
quasiconformal mappings on the unit ball in the first Heisenberg
group $\mathbb{H}^1$, thus extending some of the previously
mentioned results from $\mathbb{R}^n$ to $\mathbb{H}^1$.
Quasiconformal maps in $\mathbb{H}^1$ have been studied
extensively, both as prototypes for quasiconformal maps in
non-Euclidean metric measures spaces \cite{zbMATH00746069} and due
to their connection with complex hyperbolic geometry
\cite{Pansu1,MR1317384}.

We consider mappings defined on the unit ball $B:=B(0,1)\subset
\mathbb{H}^1$ with respect to the \emph{Kor\'{a}nyi norm}
$\|\cdot\|$. This gauge function plays a distinguished role on
$\mathbb{H}^1$ as it is connected to the fundamental solution of
the sub-Laplacian, and it gives rise to a system of \emph{polar
coordinates} $(s,\omega)\in (0,\infty) \times \left[\partial B
\setminus \{z=0\}\right]$, see \cite{zbMATH04023589,MR1942237}.
For the precise definitions, we refer the reader to Section
\ref{s:prelim}; for now we simply mention that
\eqref{eq:HardyPlane} motivates the following definition.

\begin{definition}\label{d:Hp_Heis}
Let $0<p<\infty$. A quasiconformal map $f:B \subset \mathbb{H}^1
\to f(B) \subset \mathbb{H}^1$ belongs to the \emph{Hardy class}
$H^p$ if
\begin{displaymath}
\|f\|_{H^p} := \sup_{0<s<1} \left(\int_{\partial B}
\|f(\gamma(s,\omega))\|^p\,
d\mathcal{S}^3(\omega)\right)^{1/p}<\infty.
\end{displaymath}
\end{definition}
Here, $\mathcal{S}^3$ denotes the $3$-dimensional spherical
Hausdorff measure with respect to a metric that is bi-Lipschitz
equivalent to the Kor\'{a}nyi distance $d$. This is a natural
measure to work with since, with respect to the Kor\'{a}nyi
metric, $\partial B$ has Hausdorff dimension $3$, while the entire
space $\mathbb{H}^1$ is $4$-dimensional. However, unlike in
$\mathbb{R}^n$, we will employ several different canonical
measures on $\partial B$, depending on the context. In addition to
the Hausdorff measure $\mathcal{S}^3|_{\partial B}$, these are the
measure $\sigma_0$ appearing in the polar coordinates formula, and
the measure $\sigma$ from the formula for the modulus of a ring
domain, see Section \ref{s:prelim}. These measures qualitatively
differ from each other in a neighborhood of the north and south
pole of the Kor\'{a}nyi sphere, which are the two
\emph{characteristic points} of $\partial B$. The nonisotropic
nature of $\partial B$ is the main reason for the challenges one
faces when extending the $H^p$ theory to $\Hei$.

\medskip

To motivate our definition of Hardy spaces, we first prove that
every quasiconformal map on the Kor\'{a}nyi unit ball belongs to
some $H^p$ space:
\begin{thm}\label{t:QC_hardy_intro}
For every $K\geq 1$, there exists a constant $p_0=p_0(K)>0$ such
that every $K$-quasiconformal map $f:B \to f(B) \subset
\mathbb{H}^1$ belongs to $H^p$ for all $0<p<p_0$.
\end{thm}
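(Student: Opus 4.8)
The plan is to bound the defining integrals of $\|f\|_{H^p}$ uniformly in $s\in(0,1)$ by a distribution-function argument. After post-composing $f$ with a left translation I may assume $f(0)=0$, so that $\|f(\gamma(s,\omega))\|$ is the Kor\'{a}nyi distance from $f(\gamma(s,\omega))$ to $f(0)$; since a fixed left translation changes $\|f(\gamma(s,\omega))\|$ by at most the constant $\|f(0)\|$, this normalization is harmless for the finiteness of $\|f\|_{H^p}$. Writing $E_s(t)=\{\omega\in\partial B:\|f(\gamma(s,\omega))\|>t\}$ and using the layer-cake formula,
\[
\int_{\partial B}\|f(\gamma(s,\omega))\|^p\,d\mathcal{S}^3(\omega)=p\int_0^\infty t^{p-1}\,\mathcal{S}^3(E_s(t))\,dt.
\]
Because $\mathcal{S}^3(\partial B)<\infty$, the contribution of bounded $t$ is controlled, so the whole theorem reduces to a tail bound: it suffices to produce an exponent $q=q(K)>0$ and a constant $C$, allowed to depend on $f$ and $K$ but \emph{not} on $s$, with $\mathcal{S}^3(E_s(t))\le C\,t^{-q}$ for all large $t$. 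Granting this, the displayed integral is bounded uniformly in $s$ for every $p<q$, and one takes $p_0(K)=q(K)$.

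The core is therefore the uniform power bound on $\mathcal{S}^3(E_s(t))$, and here I would use the quasi-invariance of the $4$-modulus $\modf$ under $K$-quasiconformal maps together with the explicit, \emph{logarithmic} form of the modulus of a Kor\'{a}nyi ring, $\modf(\{r_1<\|x\|<r_2\})=\sigma(\partial B)/(\log(r_2/r_1))^3$. This logarithmic dependence is exactly what converts the multiplicative distortion of moduli by the factor $K$ into a \emph{power-law} distortion of scales, and it is the ultimate source of the exponent $q(K)$. Concretely, I would first establish an interior growth estimate $\max_{\|x\|=r}\|f(x)\|\le C_f\,r^{c(K)}$ for $r\le 1/2$, by comparing via quasi-invariance the $\modf$ of the spherical ring $\{r<\|x\|<1/2\}$ with a Teichm\"{u}ller/Gr\"{o}tzsch-type extremal estimate for its image ring in $\Hei$ (this needs only $m(1/2):=\min_{\|x\|=1/2}\|f(x)\|>0$, so it does not require $f(B)$ to be bounded). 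I would then run the analogous comparison for the subfamily of radial curves $r\mapsto\gamma(r,\omega)$ indexed by $\omega\in E_s(t)$, whose domain modulus factorizes through the polar coordinates as $\sigma(E_s(t))/(\log(\cdot))^3$ and whose image curves are forced to cross the region between a fixed interior scale and the sphere $\{\|y\|=t\}$.

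The delicate point, and the step I expect to be the main obstacle, is that a naive single-ring comparison yields only a logarithmic decay $\mathcal{S}^3(E_s(t))\lesssim(\log t)^{-3}$, which is \emph{not} summable against $t^{p-1}$ for any $p>0$; to reach a genuine power $t^{-q}$ one must exploit the full strength of the extremal configuration, either by iterating the distortion estimate across dyadic scales in $t$ or by invoking a sharp Gr\"{o}tzsch-type capacity inequality in $\Hei$ in which $\mathcal{S}^3(E_s(t))$ enters through the exponent rather than linearly. Carrying this out demands uniform control of the relevant moduli and measures up to the two characteristic points of $\partial B$, where the Hausdorff measure $\mathcal{S}^3|_{\partial B}$, the polar-coordinate measure $\sigma_0$, and the ring-modulus measure $\sigma$ genuinely differ; the anisotropy of the Kor\'{a}nyi sphere near its north and south poles is precisely what makes this transfer, and hence the uniformity of $C$ and $q$ in $s$, the technical heart of the proof. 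With the power bound in hand, the layer-cake integral is finished by splitting at a fixed threshold and integrating the tail, which converges for every $p<q(K)=:p_0(K)$.
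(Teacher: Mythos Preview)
Your proposal identifies the right difficulty but does not resolve it, and this is a genuine gap rather than a cosmetic omission. You correctly observe that the single-ring modulus comparison yields only
\[
\sigma(E_s(t))\;\lesssim_K\;(\log t)^{-3},
\]
which is not integrable against $t^{p-1}$ for any $p>0$. You then say that a power bound $\mathcal{S}^3(E_s(t))\lesssim t^{-q}$ should follow ``either by iterating the distortion estimate across dyadic scales in $t$ or by invoking a sharp Gr\"{o}tzsch-type capacity inequality in $\Hei$ in which $\mathcal{S}^3(E_s(t))$ enters through the exponent.'' Neither mechanism is spelled out, and I do not see how the first one can work: the $(\log t)^{-3}$ bound already incorporates \emph{all} the dyadic annuli between the fixed interior scale and $\{\|y\|=t\}$; there is nothing further to iterate on the target side, and on the source side the sets $E_s(t)$ are not nested in a way that would bootstrap the estimate. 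A Gr\"{o}tzsch-type inequality of the form you describe, with the boundary measure in the exponent, is not available in the paper's toolbox and would have to be developed from scratch. Also, the ``interior growth estimate'' you quote, $\max_{\|x\|=r}\|f(x)\|\le C_f\,r^{c(K)}$ for $r\le 1/2$, controls $f$ near the \emph{center}, whereas what is needed is control as $q\to\partial B$.

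The paper avoids the distribution-function route entirely. It first proves, via the Loewner/QED property of $B$, the boundary growth bound $\|f(q)\|\le C_1(f)+C_2(f)\,d(q,\partial B)^{-\alpha(K)}$ (Proposition~\ref{c:dist_alpha}); this is the correct analytic replacement for your missing power-law on superlevel sets. It then exploits the $HW^{1,4}_{loc}$ regularity of $f$: differentiating $s\mapsto\|f(\gamma(s,\omega))\|^p$ along radial curves, bounding the derivative by $|D_Hf|$ (Lemma~\ref{l:deriv_r}), and applying H\"older with exponents $4$ and $4/3$. The $L^4$ piece is controlled, independently of $s$, by the distortion inequality $|D_Hf|^4\lesssim_K J_f$ and the area formula; the $L^{4/3}$ piece carries a factor $\|f\|^{8p/3}$ which the growth bound converts to $(1-s)^{-8p\alpha(K)/3}$, integrable in $s$ precisely when $p<p_0(K)=3/(8\alpha(K))$. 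Thus the paper's argument trades your hoped-for sharp modulus inequality for the Sobolev/Jacobian structure of quasiconformal maps, which supplies the missing summability directly.
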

This extends a result by Jerison and Weitsman \cite{MR624919} from
$\mathbb{R}^n$ to $\mathbb{H}^1$. In $\mathbb{R}^n$, more precise
information on the admissible exponents is due to Nolder
\cite{MR1943765}, Astala and Koskela \cite{MR2900163}. In the
second part of our paper, we give necessary and sufficient
conditions for the $H^p$ membership of a quasiconformal map $f:B
\to f(B) \subset \mathbb{H}^1$ for a fixed exponent $p$.

\begin{thm}\label{t:main2} Let $0<p<\infty$.
The following conditions are equivalent for a quasiconformal  map
$f:B\to f(B) \subset \mathbb{H}^1$:
\begin{enumerate}
\item $f\in H^p$, \item the nontangential maximal function $Mf$ of
$f$ belongs to $L^p(\mathcal{S}^3|_{\partial B})$, \item the
Kor\'anyi norm of the radial limit $f^{\ast}$ of $f$ belongs to
$L^p(\mathcal{S}^3|_{\partial B})$.
\end{enumerate}
\end{thm}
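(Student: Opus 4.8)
The plan is to prove the cycle of implications $(2)\Rightarrow(1)\Rightarrow(3)\Rightarrow(2)$, the first two being soft and the last carrying the analytic weight. The implication $(2)\Rightarrow(1)$ is immediate: for every $0<s<1$ and every $\omega\in\partial B$ the radial point $\gamma(s,\omega)$ lies in the nontangential approach region with vertex $\omega$, so $\|f(\gamma(s,\omega))\|\le Mf(\omega)$; raising to the power $p$, integrating against $\mathcal{S}^3|_{\partial B}$, and taking the supremum over $s$ gives $\|f\|_{H^p}\le\|Mf\|_{L^p(\mathcal{S}^3|_{\partial B})}$.

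For $(1)\Rightarrow(3)$ I would first record that the radial limit $f^{\ast}(\omega)=\lim_{s\to1}f(\gamma(s,\omega))$ exists for $\mathcal{S}^3$-a.e.\ $\omega$. This is a Fatou-type statement for quasiconformal maps, which I would establish separately using that $f$ is a quasisymmetric homeomorphism onto the bounded domain $f(B)$ (and is, by Theorem \ref{t:QC_hardy_intro}, a fortiori in some $H^{p_0}$). Granting it, $\|f^{\ast}(\omega)\|=\lim_{s\to1}\|f(\gamma(s,\omega))\|$ a.e., and Fatou's lemma yields
\[
\int_{\partial B}\|f^{\ast}\|^p\,d\mathcal{S}^3\le\liminf_{s\to1}\int_{\partial B}\|f(\gamma(s,\omega))\|^p\,d\mathcal{S}^3(\omega)\le\|f\|_{H^p}^p<\infty.
\]

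The heart of the matter is $(3)\Rightarrow(2)$, a Hardy--Littlewood maximal estimate controlling the $L^p$ size of the full nontangential maximal function by that of the radial boundary values. My main tool is the subharmonicity of $\log\|f\|$: a quasiconformal (hence quasiregular) map satisfies that $\log\|f\|$ is a subsolution of the relevant sub-elliptic equation away from the at most one preimage of the origin, so that $\|f\|^{q}$ is a nonnegative subsolution for every $q>0$ and is therefore dominated in $B$ by the Poisson integral of its boundary values $\|f^{\ast}\|^{q}$. For $x$ in the nontangential cone at $\omega$, the Poisson-kernel (equivalently harmonic-measure) estimates for the Kor\'anyi ball, which is a nontangentially accessible domain, give
\[
\|f(x)\|^{q}\;\lesssim\;\int_{\partial B}P(x,\zeta)\,\|f^{\ast}(\zeta)\|^{q}\,d\mathcal{S}^3(\zeta)\;\lesssim_K\;\mathcal{M}\big(\|f^{\ast}\|^{q}\big)(\omega),
\]
where $\mathcal{M}$ is the uncentered Hardy--Littlewood maximal operator on $(\partial B,\mathcal{S}^3|_{\partial B})$. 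Taking the supremum over $x$ and then a root, $Mf(\omega)\lesssim_K(\mathcal{M}(\|f^{\ast}\|^{q})(\omega))^{1/q}$. Now choose $q<p$: since $(\partial B,\mathcal{S}^3|_{\partial B})$ is Ahlfors $3$-regular, hence doubling, $\mathcal{M}$ is bounded on $L^{p/q}$ because $p/q>1$, and therefore
\[
\|Mf\|_{L^p}^p\lesssim_{K,p}\int_{\partial B}\|f^{\ast}\|^{p}\,d\mathcal{S}^3<\infty,
\]
which is $(2)$ and closes the cycle.

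I expect the main obstacle to be making these estimates uniform near the two characteristic points of $\partial B$, where the Kor\'anyi sphere is nonisotropic. Away from the poles the radial segments $s\mapsto\gamma(s,\omega)$ meet $\partial B$ transversally and the cones, the surface balls, and the measure $\mathcal{S}^3$ all match up comfortably; but near $\{z=0\}$ the three canonical measures $\mathcal{S}^3|_{\partial B}$, $\sigma_0$ and $\sigma$ diverge in behaviour, the tangency of the horizontal distribution distorts the shape of surface balls and of nontangential approach regions, and one must verify by hand that the Poisson-kernel/harmonic-measure comparison and the doubling constant of $\mathcal{S}^3$ survive up to the poles. Establishing the subsolution property of $\log\|f\|$ in the sub-Riemannian setting, and checking the nontangentially accessible structure of $B$ near its characteristic points, is where the argument departs most from the Euclidean treatments of Jerison--Weitsman and Astala--Koskela.
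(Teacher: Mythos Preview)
Your treatment of $(2)\Rightarrow(1)$ and $(1)\Rightarrow(3)$ matches the paper's (the paper establishes the existence of $f^{\ast}$ $\mathcal{S}^3$-a.e.\ directly via modulus, Lemma~\ref{l:QC_RadialLimit}, not via quasisymmetry; note that $f(B)$ need not be bounded and $f$ need not be globally quasisymmetric). The trouble is entirely in your $(3)\Rightarrow(2)$.

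The claim that $\log\|f\|$ is a subsolution and that consequently $\|f\|^{q}$ is dominated by the Poisson integral of $\|f^{\ast}\|^{q}$ is not justified, and this is the core of the argument, not a technicality. Already in $\mathbb{R}^n$, for a quasiconformal $f$ the function $\log|f|$ is a subsolution of a quasilinear $\mathcal{A}$-harmonic equation whose structure depends on $f$, not of the Laplacian; there is no Poisson-integral representation for such subsolutions, and the step ``subsolution $\Rightarrow$ dominated by Poisson integral of boundary values'' uses linearity in an essential way. In $\mathbb{H}^1$ the situation is no better: a sub-Laplacian Poisson kernel on the Kor\'anyi ball exists, but $\|f\|^{q}$ is not known to be sub-$\mathcal{L}$-harmonic, so the pointwise bound $\|f(x)\|^{q}\lesssim \mathcal{M}(\|f^{\ast}\|^{q})(\omega)$ cannot be obtained this way. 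You correctly identify the characteristic points as a difficulty, but the gap is earlier and more fundamental.

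The paper reaches exactly the inequality you want, namely $\sup_{\Gamma(\omega)}\|f\|^{q}\lesssim \mathcal{M}_{\partial B}(\|f^{\ast}\|^{q})(\omega)$ (Lemma~\ref{l:ZLemma6}), but by an entirely different route that \emph{replaces} the missing potential theory. The key step is a weak-type bound (Lemma~\ref{l:ZLemma5}): for each $x\in B$, the set of $\omega\in S(x)$ with $\|f^{\ast}(\omega)\|\leq \|f(x)\|/N$ has $\mathcal{S}^3$-measure at most $\Psi_K(N)\,\mathcal{S}^3(S(x))$ with $\Psi_K(N)\to 0$. For $x$ deep in $B$ this is a standard modulus/Loewner estimate (Lemma~\ref{l:CentredLemma4}). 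For $x$ near $\partial B$ the paper cannot use M\"obius self-maps of $B$ (they are too rigid in $\mathbb{H}^1$); instead it constructs explicit $1$-quasiconformal maps $T_x$ built from the Kor\'anyi inversion (Section~\ref{sect-Mobius}) that normalize the geometry, and combines them with the Carleson-measure property of $|D_Hf|/\|f\|\,dq$ (Proposition~\ref{l:5.6}) and the growth estimate of Proposition~\ref{p:dist_alpha}. Once Lemma~\ref{l:ZLemma5} is in hand, the averaging bound of Lemma~\ref{l:AKCOr4.3} and then your Hardy--Littlewood step follow exactly as you wrote.
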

This extends earlier work by Zinsmeister \cite{MR860655} and by
Astala and Koskela \cite{MR2900163}. The implication "(3) $\Rightarrow$ (2)" in Theorem~\ref{t:main2} is the most challenging,
 and analogously as in $\R^n$, we also obtain quantitative information, see Proposition~\ref{p:FromRadialLimitToNTMax}.
 Our proof combines elements from
 \cite{MR2900163} (the use Hardy-Littlewood maximal functions on $\partial B$) and \cite{MR860655} (the use of specific Carleson measures on $B$).
However, an important tool in \cite{MR860655,MR2900163}, the
subgroup of M\"obius transformations that keep $B$ invariant, is
not flexible enough in our setting. We use as a substitute a
specific class of $1$-quasiconformal maps that do not necessarily
preserve the unit ball, but nonetheless have useful metric
properties.

In order to put Proposition~\ref{p:FromRadialLimitToNTMax} in a
slightly wider perspective, let us mention that similar
inequalities, for a more restrictive range of integrability
exponents, have been studied for harmonic functions e.g., on
Lipschitz domains in $\R^n$ by Dahlberg, see
\cite[(2.1)]{MR592391}. Integral inequalities for nontangential
maximal functions appear also in connection with the Dirichlet
problem for the sub-Laplacian on certain domains in $\mathbb{H}^1$
or more general $H$-type groups, with rough boundary values, see
\cite[Theorem 1.8]{orponen2020subelliptic}, \cite[Theorem
1.1]{MR1890994}.

Our main application of Theorem~\ref{t:main2} is a characterization of Carleson measures on $B$ in terms of radial limits of quasiconformal
 maps on $B$.
\begin{thm}\label{t:NecCarleson_intro}
Assume that $\mu$ is a Carleson measure on $B$. If $f:B \to f(B)
\subset \mathbb{H}^1$ is $K$-quasiconformal, then
\begin{equation}\label{eq:IntIneqFromCarlesonIntro}
\int_B \|f(q)\|^{ p}\,d\mu(q) \leq C \int_{\partial B}
\|f^{\ast}(\omega)\|^p\,d\mathcal{S}^3(\omega),\quad\text{for all
}0<p<\infty
\end{equation}
where $C$ depends only on $p$, $K$, and the Carleson measure
constant of $\mu$. Conversely, for every $K\geq 1$, there exists
$p(K) < 3$ such that if $p>p(K)$ is fixed and $\mu$ is a Borel
measure for which \eqref{eq:IntIneqFromCarlesonIntro} holds for
all $K$-quasiconformal mappings, then $\mu$ is a Carleson measure.
\end{thm}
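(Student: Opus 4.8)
\section*{Proof proposal}

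The plan is to treat the two implications separately, using the equivalence of $H^p$ membership with the nontangential maximal function from Theorem~\ref{t:main2} as the bridge between the bulk integral over $B$ and the boundary integral over $\partial B$. For the forward direction (a Carleson measure yields \eqref{eq:IntIneqFromCarlesonIntro}) I would argue by a Carleson embedding. Writing the left-hand side through the layer-cake formula,
\[
\int_B \|f(q)\|^{p}\,d\mu(q) = p\int_0^{\infty}\lambda^{p-1}\,\mu\bigl(\{q\in B:\|f(q)\|>\lambda\}\bigr)\,d\lambda,
\]
the key geometric step is to show that each superlevel set $\{\|f\|>\lambda\}$ is contained in the union of nontangential tents erected over the open boundary set $O_\lambda:=\{\omega\in\partial B: Mf(\omega)>\lambda\}$, where $Mf$ is the nontangential maximal function. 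This containment follows from the definition of the approach regions: any $q$ with $\|f(q)\|>\lambda$ lies in the cones $\Gamma(\omega)$ over a whole boundary ball (its shadow), and every such $\omega$ satisfies $Mf(\omega)\geq\|f(q)\|>\lambda$, hence lies in $O_\lambda$. A Whitney-type covering of $O_\lambda$ adapted to the Kor\'anyi metric, together with the defining property of a Carleson measure applied to the covering balls, then gives $\mu(\{\|f\|>\lambda\})\leq C_\mu\,\mathcal{S}^3(O_\lambda)$.

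Reinserting this bound into the layer-cake identity yields
\[
\int_B \|f\|^{p}\,d\mu \leq C_\mu\int_{\partial B}(Mf)^p\,d\mathcal{S}^3,
\]
and it remains to invoke the quantitative implication "(3)$\Rightarrow$(2)" of Theorem~\ref{t:main2}, namely Proposition~\ref{p:FromRadialLimitToNTMax}, to bound $\int_{\partial B}(Mf)^p\,d\mathcal{S}^3 \leq C(p,K)\int_{\partial B}\|f^{\ast}\|^p\,d\mathcal{S}^3$. Composing the two estimates produces \eqref{eq:IntIneqFromCarlesonIntro} with $C$ depending only on $p$, $K$, and the Carleson constant $C_\mu$, which is the asserted dependence.

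For the converse I would use test functions. It suffices to bound $\mu(T(\omega_0,r))$ by a multiple of $\mathcal{S}^3$ of the corresponding boundary ball, uniformly over boundary points $\omega_0$ and scales $0<r<1$. For each such pair I would construct, from the specific class of $1$-quasiconformal maps introduced after Theorem~\ref{t:main2} (suitable compositions of Kor\'anyi dilations, left translations, rotations, and the Kor\'anyi inversion), a $K$-quasiconformal map $f=f_{\omega_0,r}$ that is large on the Carleson box, in the sense that $\|f\|\geq c>0$ on $T(\omega_0,r)$, while its radial limit satisfies $\int_{\partial B}\|f^{\ast}\|^p\,d\mathcal{S}^3\leq C\,r^3$. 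Feeding $f_{\omega_0,r}$ into the assumed inequality \eqref{eq:IntIneqFromCarlesonIntro} and discarding the part of $B$ outside the box gives
\[
c^p\,\mu(T(\omega_0,r)) \leq \int_{T(\omega_0,r)}\|f\|^p\,d\mu \leq \int_B \|f\|^p\,d\mu \leq C\,r^3,
\]
which is exactly the Carleson bound.

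The main obstacle is the construction of the maps $f_{\omega_0,r}$ and, above all, the estimate $\int_{\partial B}\|f^{\ast}\|^p\,d\mathcal{S}^3\lesssim r^3$; this is where both the constraint $p>p(K)$ and the restriction $p(K)<3$ originate. Since the ball-preserving M\"obius subgroup is too rigid here, the substitute maps do not fix $\partial B$, so $f^{\ast}$ must be analyzed directly, and its $p$-th power is integrable against $\mathcal{S}^3$ only once $p$ exceeds a threshold determined by the quasiconformal distortion of $f$. The nonisotropy of $\partial B$ enters precisely in this integrability estimate: near the two characteristic points the measure $\mathcal{S}^3$ and the boundary behavior of the test maps degenerate, so the dominant contribution to $\int_{\partial B}\|f^{\ast}\|^p$ comes from the poles and forces $p$ to be taken large, though still below the Hausdorff dimension $3$ of $\partial B$, to keep the bound of order $r^3$. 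Verifying simultaneously the lower bound $\|f\|\geq c$ on the box and this pole-sensitive upper bound, with all constants independent of $\omega_0$ and $r$, is the technical heart of the argument.
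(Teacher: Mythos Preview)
Your forward direction is correct and essentially identical to the paper's: the paper proves the superlevel-set inequality $\mu(\{\|f\|>\lambda\})\lesssim \mathcal{S}^3(\{Mf>\lambda\})$ via a Whitney decomposition of the open set $\{Mf>\lambda\}\subset\partial B$ (stated as Proposition~\ref{p:NontangentialInequality} in metric generality), integrates it, and then applies Proposition~\ref{p:FromRadialLimitToNTMax}.

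In the converse direction your test-function strategy is also the paper's, and your normalization ($\|f\|\geq c$ on the box, $\int_{\partial B}\|f^\ast\|^p\lesssim r^3$) is equivalent to the paper's after a dilation. But there are two genuine gaps. First, the building blocks you list (translations, dilations, rotations, Kor\'anyi inversion) are all $1$-quasiconformal; composing them alone produces a map with $\|f(y)\|=d(y,a)^{-1}$, and the resulting boundary integral $\int_{\partial B} d(\omega,a)^{-p}\,d\mathcal{S}^3$ is only $\lesssim r^{3-p}$ for $p>3$. That gives $p(K)=3$, not $p(K)<3$. The missing ingredient is the Heisenberg \emph{radial stretch map} $f_K$ from \cite{MR3076803}, a genuinely $K$-quasiconformal map with $\|f_K(q)\|=\|q\|^{\beta(K)}$ for some $\beta(K)>1$ when $K>1$. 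Precomposing with it yields $\|f(y)\|=d(y,a)^{-\beta(K)}$ and pushes the integrability threshold down to $p(K)=3/\beta(K)<3$.

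Second, your diagnosis of where the threshold comes from is off. The test map $f(y)=f_K(I(a^{-1}\cdot y))$ is globally defined and smooth away from $a$, so $f^\ast(\omega)$ is simply $f(\omega)=d(\omega,a)^{-\beta(K)}$; there is no subtlety with radial limits or with the M\"obius substitutes not preserving $B$. The singularity of $\|f^\ast\|$ sits near $\omega_0$ (the boundary point closest to $a$), not near the characteristic points, and the bound $\int_{\partial B}\|f^\ast\|^p\,d\mathcal{S}^3\lesssim r^{3-\beta(K)p}$ follows from a standard dyadic-annulus decomposition around $\omega_0$ together with the $3$-regularity of $\mathcal{S}^3|_{\partial B}$. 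The poles play no special role in this step; the computation is the direct analog of the Euclidean one.
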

In fact we prove a more general result for the $\alpha$-Carleson measures, Theorem \ref{t:NecCarleson}, which
can be applied to relate $H^p$ and
 Bergman-type spaces $A^p$, analogously as in \cite[Theorem
 9.1]{MR2900163}, see Theorem~\ref{t: Ap-spaces}.
Theorems \ref{t:NecCarleson_intro} and \ref{t:NecCarleson} are
Heisenberg versions of results by Nolder~\cite{MR1013976,
MR1307591},
  and Astala and Koskela \cite{MR2900163} which in turn were motivated by Carleson's embedding theorem~\cite{MR141789}.
  We prove the first part of Theorem~\ref{t:NecCarleson} in two steps:
(i) a result for Carleson measures and nontangential maximal
functions in rather general metric spaces (Proposition
\ref{p:NontangentialInequality}) and (ii) the relation between
nontangential maximal functions and radial limits of
quasiconformal maps given by Theorem~\ref{t:main2}, or more
specifically, Proposition~\ref{p:FromRadialLimitToNTMax}. The
second part of Theorem \ref{t:NecCarleson} follows by applying
\eqref{eq:IntIneqFromCarlesonIntro} to specific maps $f$ that are
constructed using the $\mathbb{H}^1$ radial stretch map from
\cite{MR3076803}.

\medskip

In the process of proving the stated theorems, we establish
results of independent interest. Extending a theorem by Jones
\cite{MR554817}, we show that for every quasiconformal map $f$ on
$B$ omitting the origin, $|\nabla_H \log \|f(q)\||\,dq$ defines a
Carleson measure on $B$. This is a consequence of Proposition
\ref{l:5.6}. Related to the Heisenberg polar coordinates, we prove
that there exists a parameter $\kappa$ such that every radial
curve segment connecting a point $\omega \in
\partial B$  to the origin stays inside the nontangential approach region  $\Gamma_{\kappa}(\omega)$ in $B$
 (Proposition \ref{p:curve_cone}).
 This is a nontrivial statement
in $\mathbb{H}^1$, due to the
 non-geodesic feature of the radial curves, and the
non-isotropic nature of $\partial B$. The Heisenberg geometry
enters the picture in other ways, too. For instance, rotations do
not act transitively on $\partial B$. In Section \ref{sect-Mobius}
we introduce a family of canonical maps on $B$ that serve as the
mentioned substitutes for the M\"obius self-maps of the unit ball.
Since these maps do not necessarily keep $B$ invariant, we have to
formulate several of the auxiliary results for a more general
class of domains in $\mathbb{H}^1$. This can be done by replacing
explicit computations for the Euclidean unit ball with abstract
arguments using concepts from metric geometry such as
\emph{corkscrew} and \emph{QED} domains.

\medskip

\textbf{Structure of the paper.} In Section \ref{s:prelim} we
introduce preliminaries about the Heisenberg group and
quasiconformal maps, and state the definitions of radial limits
and nontangential maximal functions. In Section \ref{s:QC_in_Hp}
we prove our first main result: quasiconformal maps on the
Kor\'{a}nyi ball belong to Hardy spaces. Section \ref{s:Carleson}
and Appendix \ref{s:radialsDist} contain  auxiliary results on
Carleson measures and radial curves, respectively. In Section
\ref{s:ZinsmeisterChar} we prove our second main result, a
characterization of the $H^p$-membership of a quasiconformal map,
for a fixed exponent $p$. We apply this result in
Section~\ref{s:CarlesonChar} to characterize Carleson measures on
$B$ using radial limits of quasiconformal maps on $B$.

\textbf{Notation.} If $f,g\geq 0$, the notation $f\lesssim g$
signifies the existence of a positive absolute constant $C$ such
that $f\leq C g$. The notation $f\lesssim_p g$ means that $C$ is
allowed to depend on a parameter ``$p$''. Finally, $f\sim g$ is an
abbreviation of $f\lesssim g \lesssim f$.

\medskip
\textbf{Acknowledgments.} We thank Tuomas Orponen for help with
Proposition \ref{p:TopSph}.

\section{Preliminaries}\label{s:prelim}

\subsection{The Heisenberg group}
The \emph{Heisenberg group} $\mathbb{H}^1$ is the set $\mathbb{R}^3$ endowed with the group product
\begin{displaymath}
(x,y,t) \cdot (x',y',t')= (x+x',y+y',t+t'-2 xy'+ 2yx')
\end{displaymath}
for $(x,y,t),(x',y',t')\in \mathbb{R}^3$. Sometimes we will
identify $(x,y)\in \mathbb{R}^2$ with $z=x+\mathrm{i}y
\in\mathbb{C}$. Similarly, if $g=(g_1,g_2,g_3)$ is a
$\mathbb{H}^1$-valued map, we will occasionally denote $g_I= g_1 +
\mathrm{i} g_2$, or $g_I=(g_1,g_2)$. We collect here the
properties of $\mathbb{H}^1$ that are most relevant for this paper
and we refer the reader to \cite{capogna2007introduction} for more
details.

\subsubsection{Metric structure} We denote the \emph{Kor\'{a}nyi
norm} by
\begin{displaymath}
\|(z,t)\| = (|z|^4 + t^2)^{1/4},\quad (z,t)\in \mathbb{H}^1
\end{displaymath}
and the Kor\'{a}nyi unit ball by $B:=\{p\in \mathbb{H}^1:\,
\|p\|<1\}$. The \emph{Kor\'{a}nyi distance} is the left-invariant
metric given by
\begin{displaymath}
d(p,q):= \|q^{-1}\cdot p\|,\quad p,q\in\mathbb{H}^1.
\end{displaymath}
For $q\in \mathbb{H}^1$ and $A\subset \mathbb{H}^1$, we write
$d(q,A):=\inf_{a\in A} d(q,a)$. A curve $\gamma:[a,b]\to
\mathbb{H}^1$ is rectifiable with respect to $d$ if and only if it
is absolutely continuous as a curve in $\mathbb{R}^3$
 and for almost every $s\in [a,b]$ the tangent vector $\dot{\gamma}(s)$ is contained in the \emph{horizontal plane} $H_{\gamma(s)}\mathbb{H}^1$ given by
\begin{displaymath}
H_{(x,y,t)}\mathbb{H}^1 =
\mathrm{span}\left\{\begin{pmatrix}1\\0\\2y\end{pmatrix},\begin{pmatrix}0\\1\\-2x\end{pmatrix} \right\}.
\end{displaymath}
We denote the left-invariant vector fields
\begin{displaymath}
(x,y,t) \mapsto (1,0,2y)\quad\text{and}\quad (x,y,t)\mapsto
(0,1,-2x)
\end{displaymath}
by $X$ and $Y$, respectively, and identify them with the
differential operators $\partial_x + 2y\, \partial_t$ and
$\partial_y - 2x\, \partial_t$, respectively. The \emph{horizontal
gradient} of a function $u:\Omega \to \mathbb{R}$ on an open set
$\Omega \subset \mathbb{H}^1$ is
\begin{displaymath}
\nabla_H u = (Xu) X + (Yu)Y.
\end{displaymath}
We also equip $H_q\mathbb{H}^1$ with the norm $|\cdot|$ defined by
$|a X_q + b Y_q|:= \sqrt{a^2+b^2}$.

\subsubsection{Polar coordinates and radial curves}
The following polar coordinates
 formula was first proved by Kor\'{a}nyi and Reimann \cite{zbMATH04023589},
 and later in greater generality by Balogh and Tyson \cite[Example
 3.11]{MR1942237}. Here and in the following, integration on
 $\mathbb{H}^1$ is performed with respect to the Lebesgue measure
 on $\mathbb{R}^3$, which is a Haar measure of the group
 $\mathbb{H}^1$. We will use the symbol
 $|A|$ to denote the Lebesgue measure of a set $A$.
\begin{thm}\label{t:polar}
There exists a unique Radon measure $\sigma_0$ on $\partial
B\setminus \{z=0\}$ such that for all $u\in L^1(\mathbb{H}^1)$,
\begin{equation}\label{eq:polarCoordsForm}
\int_{\mathbb{H}^1} u(q)\, dq = \int_{\partial B \setminus
\{z=0\}} \int_0^{\infty} u(\gamma(s,\omega)) s^3\,ds\,
d\sigma_0(\omega),
\end{equation}
where the \emph{radial curves} are given by the horizontal curves
\begin{displaymath}
\gamma(s,(z,t)) = \left(sz e^{-\mathrm{i}\frac{t}{|z|^2}\log
s},s^2 t\right),\quad (z,t)\in \partial B\setminus \{z=0\}.
\end{displaymath}
Moreover, using the parametrization
\begin{equation}\label{eq:spherical_coords}
\partial B \setminus \{z=0\} = \left\{(z,t)=(\sqrt{\cos \alpha}e^{\mathrm{i}\varphi},\sin \alpha):\,
-\tfrac{\pi}{2}<\alpha<\tfrac{\pi}{2},\,0\leq
\varphi<2\pi\right\},
\end{equation}
the measure $\sigma_0$ takes the form $ d\sigma_0 = d\alpha\,
d\varphi. $
\end{thm}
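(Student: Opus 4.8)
The plan is to prove \eqref{eq:polarCoordsForm} as a change-of-variables formula for the map $\Phi(s,\omega) := \gamma(s,\omega)$, which I claim is a diffeomorphism from $(0,\infty)\times(\partial B\setminus\{z=0\})$ onto $\mathbb{H}^1\setminus\{z=0\}$. Since the $t$-axis $\{z=0\}$ has Lebesgue measure zero, integrating $u$ over $\mathbb{H}^1$ equals integrating over $\mathbb{H}^1\setminus\{z=0\}$, so it suffices to establish the formula there. Once $\Phi$ is known to be a diffeomorphism and its Jacobian is computed in the coordinates \eqref{eq:spherical_coords}, the identity \eqref{eq:polarCoordsForm} with $d\sigma_0 = d\alpha\,d\varphi$ will follow for nonnegative measurable $u$ by Tonelli, and then for all $u\in L^1(\mathbb{H}^1)$ by linearity.

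First I would check that $\Phi$ is a bijection. The crucial observation is that $\|\gamma(s,\omega)\| = s$ for every $\omega\in\partial B$: if $\omega=(z,t)$, then the image has first component of modulus $s|z|$ and $t$-component $s^2 t$, so its Kor\'anyi norm is $(s^4|z|^4 + s^4 t^2)^{1/4} = s\|\omega\| = s$. Thus, given a target point $q=(Z,T)$ with $Z\neq 0$, the radius is forced to be $s = \|q\|$, and the remaining equations $s^2 t = T$ and $sz\,e^{-\mathrm{i}\frac{t}{|z|^2}\log s} = Z$ then determine $t$, then $|z|$, and finally the argument of $z$ uniquely, yielding a unique preimage $\omega\in\partial B\setminus\{z=0\}$. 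Smoothness of $\Phi$ and $\Phi^{-1}$ away from $z=0$ is immediate from the explicit formulas. As a sanity check one also has $\gamma(1,\omega)=\omega$, and a short computation confirms that $s\mapsto\gamma(s,\omega)$ is indeed horizontal, i.e.\ $\dot T = 2Y\dot X - 2X\dot Y$.

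The heart of the argument is the Jacobian computation. Writing $\omega = (\sqrt{\cos\alpha}\,e^{\mathrm{i}\varphi},\sin\alpha)$ as in \eqref{eq:spherical_coords} and setting $c=\sqrt{\cos\alpha}$ and $\psi = \varphi - \tan\alpha\,\log s$, the image coordinates are $X = sc\cos\psi$, $Y = sc\sin\psi$, $T = s^2\sin\alpha$. I would expand the Jacobian determinant of $(s,\alpha,\varphi)\mapsto(X,Y,T)$ along the $\varphi$-column, namely $(X_\varphi,Y_\varphi,T_\varphi) = (-Y,X,0)$; using $T_\alpha = s^2\cos\alpha$ and $T_s = 2s\sin\alpha$ this reduces the determinant to the two combinations $\cos\psi\,X_s + \sin\psi\,Y_s = c$ and $\cos\psi\,X_\alpha + \sin\psi\,Y_\alpha = -\tfrac{s\sin\alpha}{2c}$, whence the determinant equals $-s^3(\cos^2\alpha+\sin^2\alpha) = -s^3$. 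The one step where care is required is the $\alpha$-differentiation: both $c$ and $\psi$ depend on $\alpha$, so $X_\alpha$ and $Y_\alpha$ each carry a term proportional to $\sec^2\alpha\,\log s$, and the point is that these $\log s$ terms cancel in the combination $\cos\psi\,X_\alpha+\sin\psi\,Y_\alpha$, leaving the clean value above. This cancellation is exactly what singles out $\gamma$ as the correct horizontal family of curves for a polar decomposition, and it is the main obstacle in the calculation.

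Finally, $|\det D\Phi| = s^3$ together with the change-of-variables theorem yields \eqref{eq:polarCoordsForm} with $d\sigma_0 = d\alpha\,d\varphi$. For uniqueness, suppose a second Radon measure $\sigma_0'$ satisfied the same identity. Testing against functions of the separated form $u\circ\Phi(s,\omega)=f(s)g(\omega)$ with $g\in C_c(\partial B\setminus\{z=0\})$ and $f$ chosen so that $\int_0^\infty f(s)s^3\,ds\neq 0$ forces $\int g\,d\sigma_0 = \int g\,d\sigma_0'$ for all such $g$, hence $\sigma_0=\sigma_0'$ by the Riesz representation theorem.
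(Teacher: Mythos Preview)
Your proof is correct. The paper does not actually prove Theorem~\ref{t:polar}; it states it as a known result and cites Kor\'anyi--Reimann \cite{zbMATH04023589} and Balogh--Tyson \cite[Example 3.11]{MR1942237} for the proof. Your direct change-of-variables argument, showing that $\Phi(s,\alpha,\varphi)=\gamma(s,\omega)$ is a diffeomorphism onto $\mathbb{H}^1\setminus\{z=0\}$ with $|\det D\Phi|=s^3$, is essentially the computation carried out in those references (in particular, the cancellation of the $\sec^2\alpha\,\log s$ terms you flag is exactly what appears in the Kor\'anyi--Reimann calculation), so there is nothing to compare beyond noting that you have supplied the details the paper outsources.
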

Abusing notation, we will often identify a set $E\subset \partial
B\setminus \{z=0\}$ with the corresponding set in the parameter
space $(-\pi/2,\pi/2)\times [0,2\pi)$.

\subsubsection{Modulus of curve families}\label{ss:mod}
The modulus of curve families is a crucial tool in this paper. We
refer to \cite{MR2466579} for a detailed discussion, and only
recall the relevant definitions and properties. Given a family
$\Gamma$ of curves in $\mathbb{H}^1$, a Borel function
$\rho:\mathbb{H}^1 \to [0,+\infty]$ is said to be
\emph{admissible} for $\Gamma$, denoted $\rho\in
\mathrm{adm}(\Gamma)$, if $\int_{\gamma} \rho \,\mathrm{d}s\geq 1$
for all locally rectifiable $\gamma\in \Gamma$. The
\emph{$4$-modulus of $\Gamma$} is then defined as
\begin{displaymath}
\mathrm{mod}_4(\Gamma) := \inf_{\rho \in \mathrm{adm}(\Gamma)}
\int_{\mathbb{H}^1} \rho^4(q)\, dq.
\end{displaymath}
Given sets $U\subset \Hei$ and $E,F\subset U$, we denote by $\Gamma(E,F,U)$ the family of all curves contained in $U$ that connect $E$ and $F$. Kor\'{a}nyi and Reimann proved in \cite{zbMATH04023589} that the family
\[
\Gamma_{a,b}:=\Gamma(\partial B(0,a), \partial B(0, b), \Hei)
\]
of all curves joining the Kor\'{a}nyi
spheres $\partial B(0,a)$ and $\partial B(0,b)$ for $0<a<b<\infty$
has modulus
\begin{equation}\label{eq:ring}
\mathrm{mod}_4(\Gamma_{a,b}) = \pi^2 \left(\log
\frac{b}{a}\right)^{-3}.
\end{equation}
A slight modification of the argument in \cite{zbMATH04023589}
yields the following useful formula concerning the radial curves
introduced in Theorem \ref{t:polar}.
\begin{proposition}\label{p:Mod_Radial_subet}
Fix $0<r<1$ and a Borel set $E\subset \partial B\setminus
\{z=0\}$. If $\Gamma$ denotes the family of radial curves joining
$\partial B(0,r)$ to $E$, then
\begin{equation}\label{eq:mod_formula}
\mathrm{mod}_4(\Gamma) = \sigma(E) \left(\ln
\frac{1}{r}\right)^{-3},
\end{equation}
where $ d \sigma = \cos^2 \alpha\, d\alpha\,d\varphi$ in the
coordinates given by \eqref{eq:spherical_coords}.
\end{proposition}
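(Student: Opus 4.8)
The plan is to mirror the Kor\'anyi--Reimann computation of \eqref{eq:ring}, exploiting that the radial curves are in fact the extremal curves for the ring modulus. First I would record the two geometric facts that drive the argument. Since a direct computation from the definition of $\gamma$ and the Kor\'anyi norm gives $\|\gamma(s,\omega)\| = s$ for every $\omega\in\partial B$, each radial curve $s\mapsto \gamma(s,\omega)$ meets the sphere $\partial B(0,s)$ exactly once, so $\Gamma$ is parametrized by $(s,\omega)\in [r,1]\times E$. Second, and this is the heart of the ``slight modification'', I would compute the sub-Riemannian speed. Writing $\omega=(z,t)=(\sqrt{\cos\alpha}\,e^{\mathrm{i}\varphi},\sin\alpha)$ and differentiating the horizontal component $w(s)=sz\,e^{-\mathrm{i}(t/|z|^2)\log s}$, one finds $|\dot w(s)| = |z|\sqrt{1+(t/|z|^2)^2} = |z|\sqrt{1/|z|^4} = 1/|z| = 1/\sqrt{\cos\alpha}$, using $|z|^4+t^2=1$. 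As the radial curves are horizontal, their $d$-length element equals $|\dot w(s)|$, which is independent of $s$; hence $\int_{\gamma(\cdot,\omega)}\rho\,d\ell = \int_r^1 \rho(\gamma(s,\omega))\,\frac{ds}{\sqrt{\cos\alpha}}$, and the admissibility condition reads $\int_r^1 \rho(\gamma(s,\omega))\,ds \geq \sqrt{\cos\alpha}$ for every $\omega\in E$.

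Next I would transfer the modulus integral to polar coordinates. By the polar coordinates formula \eqref{eq:polarCoordsForm} with $d\sigma_0 = d\alpha\,d\varphi$, restricting the integration over $\mathbb{H}^1$ to the shell $\{\gamma(s,\omega):s\in[r,1],\ \omega\in E\}$ yields, for every admissible $\rho\geq 0$,
\[
\int_{\mathbb{H}^1}\rho^4\,dq \;\geq\; \int_E \int_r^1 \rho(\gamma(s,\omega))^4\, s^3\,ds\,d\alpha\,d\varphi.
\]
For the lower bound I would fix $\omega$ and apply H\"older's inequality with exponents $4$ and $4/3$ to the admissibility constraint:
\[
\sqrt{\cos\alpha} \;\leq\; \Big(\int_r^1 \rho(\gamma(s,\omega))^4 s^3\,ds\Big)^{1/4}\Big(\int_r^1 \tfrac{ds}{s}\Big)^{3/4},
\]
which rearranges to $\int_r^1 \rho(\gamma(s,\omega))^4 s^3\,ds \geq \cos^2\alpha\,(\ln\tfrac1r)^{-3}$. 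Integrating over $E$ and recalling $d\sigma=\cos^2\alpha\,d\alpha\,d\varphi$ gives $\mathrm{mod}_4(\Gamma)\geq \sigma(E)(\ln\tfrac1r)^{-3}$.

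For the matching upper bound I would exhibit the extremal weight dictated by the equality case of H\"older, namely $\rho\propto 1/\|q\|$ along each radius. I set $\rho^{\ast}(q) = \tfrac{\sqrt{\cos\alpha(q)}}{\|q\|\ln(1/r)}$ on the shell over $E$ and $\rho^{\ast}=0$ elsewhere, where $\alpha(q)$ and $\|q\|=s$ are the polar data of $q$; injectivity of the polar parametrization (Theorem~\ref{t:polar}) guarantees $\rho^{\ast}$ is a well-defined Borel function. Admissibility is immediate, since $\int_r^1 \rho^{\ast}(\gamma(s,\omega))\,\tfrac{ds}{\sqrt{\cos\alpha}} = \int_r^1 \tfrac{ds}{s\ln(1/r)} = 1$, and substituting $\rho^{\ast}$ into the polar integral produces exactly $\sigma(E)(\ln\tfrac1r)^{-3}$, closing the estimate.

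I expect the one genuinely delicate point to be the speed computation and its bookkeeping: verifying that the radial curves are horizontal and that their $d$-length element is $\tfrac{ds}{\sqrt{\cos\alpha}}$, independent of $s$, is precisely what makes the variational problem separate over $\omega$ and collapse to a one-dimensional H\"older estimate. As a consistency check I would note that taking $E=\partial B$ gives $\sigma(\partial B)=\int_{-\pi/2}^{\pi/2}\int_0^{2\pi}\cos^2\alpha\,d\varphi\,d\alpha = \pi^2$, so the formula recovers $\mathrm{mod}_4(\Gamma_{r,1}) = \pi^2(\ln\tfrac1r)^{-3}$ in agreement with \eqref{eq:ring}; this confirms that the radial curves are extremal and that no non-radial competitors can lower the modulus.
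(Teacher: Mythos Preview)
Your argument is correct and follows exactly the Kor\'anyi--Reimann scheme that the paper invokes: the lower bound via H\"older along each radial curve together with the polar-coordinate estimate for $\int\rho^4$, and the upper bound via the extremal density $\rho^\ast(q)=|z_q|/(\|q\|^2\ln(1/r))$ supported on the radial shell over $E$. You have simply written out in full the details that the paper leaves as a reference to \cite{zbMATH04023589}.
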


\begin{proof} The proof is almost verbatim the same as for
\eqref{eq:ring} with $a=r$, $b=1$, observing that $\sigma(\partial
B \setminus \{z=0\})=\pi^2$. The inequality ``$\geq$'' in
\eqref{eq:mod_formula} is proven like
\cite[(4.5)]{zbMATH04023589}, with the only difference that
instead of integrating over $(\alpha,\varphi)\in
(-\pi/2,\pi,2)\times [0,2\pi]$, the domain of integration now
corresponds to the set $E$. To prove the converse inequality, we
follow the proof of \cite[(4.6)]{zbMATH04023589}, but observe that
it suffices to consider radial curves, instead of arbitrary
rectifiable ones. Moreover, the domain of integration is now
restricted to the part of the annulus $\{q\in \mathbb{H}^1:\,r\leq
\|q\|\leq 1\}$ foliated by segments of the radial curves passing
through the set $E$.
\end{proof}

\subsubsection{Measures on the Kor\'{a}nyi unit sphere} The two points $q_{\pm}:=(0,0,\pm 1)\in \partial B$ are
\emph{characteristic points} of the Kor\'{a}nyi sphere: the
tangent planes to the surface $\partial B$ agree with the
horizontal planes $H_{\pm q}\mathbb{H}^1$ at the respective
points. The distinguished role of these points is reflected in the
behavior of the measures that we consider  on the Kor\'{a}nyi unit
sphere outside $q_{\pm}$: the measure $\sigma_0$ from the polar
coordinates formula \eqref{eq:polarCoordsForm}, the measure
$\sigma$ from the modulus formula \eqref{eq:mod_formula}, and the Hausdorff measure
$\mathcal{S}^3|_{\partial B\setminus \{z=0\}}$. The latter is a
restriction of the $3$-dimensional spherical Hausdorff measure
computed with respect to the metric $d_{\infty}$ induced by the
gauge function $\|(z,t)\|_{\infty}= c \max\{|z|,\sqrt{|t|}\}$ for
a suitable constant $c>0$. Since $d_{\infty}$ is bi-Lipschitz
equivalent to the Kor\'{a}nyi distance $d$, we might as well use
the standard $3$-dimensional Hausdorff measure with respect to $d$
for our purposes, but $\mathcal{S}^3|_{\partial B}$ is related to
the horizontal perimeter measure of $B$ and we can thus use some
results from \cite{MR1871966}.

\begin{lemma}\label{l:Haus}
If we parametrize $\partial B \setminus \{z=0\}$ as in
\eqref{eq:spherical_coords}, then
\begin{displaymath}
d\mathcal{S}^3|_{\partial B\setminus \{z=0\}} = \sqrt{\cos
\alpha}\, d\alpha\, d\varphi.
\end{displaymath}
\end{lemma}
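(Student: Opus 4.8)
The plan is to avoid the definition of spherical Hausdorff measure directly---covering $\partial B$ by $d_{\infty}$-balls and optimizing is hopeless---and instead to identify $\mathcal{S}^3|_{\partial B}$ with a quantity one can compute by hand, namely the horizontal perimeter measure of $B$. This is precisely where the choice of the box gauge $\|\cdot\|_{\infty}$ pays off: for $d_{\infty}$ the metric factor relating spherical Hausdorff measure to perimeter is constant (independent of the horizontal normal direction), so with the normalizing constant $c$ chosen appropriately, the results of \cite{MR1871966} give $\mathcal{S}^3|_{\partial B}=|\partial B|_{\mathbb{H}}$ on the non-characteristic part $\partial B\setminus\{z=0\}$. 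For the Kor\'anyi gauge itself this identification would carry a direction-dependent factor, which is the real reason for passing to $d_{\infty}$.

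Granting this, I would take the defining function $\rho(z,t)=|z|^4+t^2-1$, so that $B=\{\rho<0\}$ and $\partial B=\{\rho=0\}$, the two characteristic points $q_{\pm}=(0,0,\pm1)$ being exactly the locus $\{z=0\}\cap\partial B$ where $\nabla_H\rho$ vanishes; since these form a two-point, hence $\mathcal{S}^3$-null, set, discarding them costs nothing. Away from them the perimeter measure has the Euclidean density
\begin{displaymath}
d|\partial B|_{\mathbb{H}} = \frac{|\nabla_H\rho|}{|\nabla_{\R^3}\rho|}\, d\mathcal{H}^2_{\mathrm{eucl}},
\end{displaymath}
so everything reduces to three explicit computations in the parametrization $\Phi(\alpha,\varphi)=(\sqrt{\cos\alpha}\cos\varphi,\sqrt{\cos\alpha}\sin\varphi,\sin\alpha)$ of \eqref{eq:spherical_coords}.

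First, using $X=\partial_x+2y\,\partial_t$ and $Y=\partial_y-2x\,\partial_t$ one finds $(X\rho)^2+(Y\rho)^2=16|z|^2(|z|^4+t^2)$, which on $\partial B$ equals $16|z|^2$, so $|\nabla_H\rho|=4|z|=4\sqrt{\cos\alpha}$. Second, $|\nabla_{\R^3}\rho|^2=16|z|^6+4t^2=4(4\cos^3\alpha+\sin^2\alpha)$ on $\partial B$. Third, computing $\partial_\alpha\Phi\times\partial_\varphi\Phi$ yields the Euclidean area element $d\mathcal{H}^2_{\mathrm{eucl}}=\tfrac12\sqrt{4\cos^3\alpha+\sin^2\alpha}\,d\alpha\,d\varphi$. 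Substituting these three expressions, the factor $\sqrt{4\cos^3\alpha+\sin^2\alpha}$ cancels and leaves exactly $\sqrt{\cos\alpha}\,d\alpha\,d\varphi$, as claimed.

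The only genuine difficulty is the first step: the clean identity $\mathcal{S}^3|_{\partial B}=|\partial B|_{\mathbb{H}}$ with constant $1$ hinges both on the constancy of the metric factor for $d_{\infty}$ and on the correct normalization of $c$. Once this is quoted from \cite{MR1871966}, the remainder is a routine differentiation and cross-product calculation, and the exact cancellation of $\sqrt{4\cos^3\alpha+\sin^2\alpha}$ at the end serves as a consistency check confirming that the normalization has been pinned down correctly.
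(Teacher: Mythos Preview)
Your proposal is correct and follows essentially the same route as the paper: both invoke \cite{MR1871966} to identify $\mathcal{S}^3|_{\partial B}$ with the horizontal perimeter measure, express the latter as $\frac{|\nabla_H\rho|}{|\nabla_{\R^3}\rho|}\,d\mathcal{H}^2_{\mathrm{eucl}}$ (the paper writes this as $|Cn_B|\,d\mathcal{H}^2$, which is the same quantity), and then carry out the three explicit computations in the coordinates $(\alpha,\varphi)$ to watch the factor $\sqrt{4\cos^3\alpha+\sin^2\alpha}$ cancel. Your additional remark on why the box gauge $d_{\infty}$ is needed for the constant metric factor is a nice clarification that the paper leaves implicit.
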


\begin{proof} Since  the boundary $\partial B$ of the Kor\'{a}nyi
unit  ball is of class  $C^1$, we know by \cite[Corollary
7.7]{MR1871966} that, if the constant $c$ in the definition of
$\|\cdot\|_{\infty}$ is chosen suitably, we have
\begin{equation}\label{eq:formSpherical}
d\mathcal{S}^3|_{\partial B} =  |Cn_B|\,d\mathcal{H}^2|_{\partial
B},
\end{equation}
where $n_B$ is the Euclidean outward unit normal to the
Kor\'{a}nyi unit sphere, $\mathcal{H}^2$ denotes the
$2$-dimensional Euclidean Hausdorff measure and
\begin{displaymath}
C(x,y,t):= \begin{pmatrix}1&0&2y\\0&1&-2x\end{pmatrix}.
\end{displaymath}
To prove the lemma, it suffices to express the right-hand side of
\eqref{eq:formSpherical} using the coordinates $(\alpha,\varphi)$
from \eqref{eq:spherical_coords}. First, since $n_B = \nabla
v/|\nabla v|$ with $v(x,y,t):=\|(x,y,t)\|^4=(x^2+y^2)^2+t^2$, we
compute for $(x,y,t)=(\sqrt{\cos \alpha}\cos \varphi,\sqrt{\cos
\alpha}\sin\varphi,\sin \alpha)\in \partial B\setminus \{z=0\}$
that
\begin{displaymath}
\nabla v(x,y,t) = \begin{pmatrix}4(x^2+y^2)x\\ 4(x^2+y^2)y\\
2t\end{pmatrix}=  \begin{pmatrix}4\cos^{3/2}\alpha
\cos \varphi\\
4\cos^{3/2}\alpha \sin \varphi\\2\sin \alpha\end{pmatrix}
\end{displaymath}
and
\begin{displaymath}
C(x,y,t) \nabla v(x,y,t) = \begin{pmatrix}4(x^2+y^2)x+4yt\\
4(x^2+y^2)y-4xt\end{pmatrix}= 4 \begin{pmatrix}\cos^{3/2}\alpha
\cos \varphi+ \cos^{1/2}\alpha \sin \alpha \sin \varphi\\
\cos^{3/2}\alpha \sin \varphi-\cos^{1/2}\alpha \sin \alpha \cos
\varphi\end{pmatrix}.
\end{displaymath}
Hence
\begin{equation}\label{eq:AreaForm1}
|Cn_B| = \frac{|C \nabla v|}{|\nabla
v|}=\frac{|\nabla_{\mathbb{H}}v|}{|\nabla v|}=\frac{\sqrt{\cos
\alpha}}{\sqrt{\cos^3 \alpha + \frac{\sin^2 \alpha}{4}}}.
\end{equation}
On the other hand, using the parametrization $
\Phi(\alpha,\varphi) = (\sqrt{\cos \alpha}
\cos \varphi,\sqrt{\cos \alpha}\sin\varphi,\sin \alpha), $ we
find that the surface measure is given by
\begin{equation}\label{eq:AreaForm2}
d\mathcal{H}^2|_{\partial B\setminus \{z=0\}}= |\Phi_{\alpha}\times
\Phi_{\varphi}|\,d\alpha d\varphi = \sqrt{\cos^3 \alpha +
\frac{\sin^2 \alpha}{4}}\,d\alpha d\varphi.
\end{equation}
Inserting \eqref{eq:AreaForm1} and \eqref{eq:AreaForm2} in formula
\eqref{eq:formSpherical} then  yields the claim.
\end{proof}

In addition to its simple expression in the coordinates
$(\alpha,\varphi)$, the measure $\mathcal{S}^3|_{\partial B}$ has
another useful feature:

\begin{lemma}\label{l:3Reg}
The measure  $\mathcal{S}^3|_{\partial B}$ is $3$-regular, that
is, there exists a constant $C\geq 1$ such that
\begin{displaymath}
C^{-1} r^3 \leq \mathcal{S}^3(B(p,r)\cap \partial B) \leq C
r^3,\quad \text{for all }p\in \partial B,\,
0<r<\mathrm{diam}(\partial B).
\end{displaymath}
\end{lemma}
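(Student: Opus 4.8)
My plan is to establish the two bounds by combining a compactness argument for large radii with a local analysis for small radii that treats the non-characteristic part of $\partial B$ and small neighbourhoods of the characteristic points $q_{\pm}$ separately. Fix a small constant $r_0>0$. For radii $r_0\le r<\diam(\partial B)$ both inequalities are essentially free: the upper bound is immediate from $\mathcal{S}^3(B(p,r)\cap\partial B)\le\mathcal{S}^3(\partial B)<\infty$, the finiteness following from Lemma~\ref{l:Haus}, while for the lower bound I would use that $p\mapsto\mathcal{S}^3(B(p,r_0)\cap\partial B)$ is lower semicontinuous and strictly positive on the compact set $\partial B$, hence bounded below by a positive constant, which is comparable to $r^3$ since $r$ ranges over the compact interval $[r_0,\diam(\partial B)]$. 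It thus remains to treat $0<r<r_0$, and here, using the left-invariance of $d$ and of $\mathcal{S}^3$ together with the rotational symmetry $(z,t)\mapsto(e^{\mathrm{i}\theta}z,t)$, I may assume $\varphi=0$ and reduce the problem to understanding how the estimate depends on the single parameter $\dist(p,\{q_+,q_-\})$.

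Suppose first that $\dist(p,\{q_+,q_-\})\ge\delta_0$ for a fixed $\delta_0>0$. Then $B(p,r)\cap\partial B$ lies in the non-characteristic region, where in the coordinates \eqref{eq:spherical_coords} one has $\cos\alpha\ge c(\delta_0)>0$; by Lemma~\ref{l:Haus} the density $\sqrt{\cos\alpha}$ is then bounded above and below, so $\mathcal{S}^3|_{\partial B}$ is comparable to Euclidean surface measure there. I would finish this case with the ball--box principle for non-characteristic $C^1$ surfaces: the restricted Korányi ball $B(p,r)\cap\partial B$ is comparable to a coordinate rectangle of Euclidean side-lengths $\sim r$ along the horizontal tangent direction and $\sim r^2$ transversally, hence of Euclidean area $\sim r^3$. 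Putting the two comparisons together gives $\mathcal{S}^3(B(p,r)\cap\partial B)\sim r^3$.

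For the characteristic points I would argue at $q_+$, the case of $q_-$ being symmetric. Translating $q_+$ to the origin (a Korányi isometry preserving $\mathcal{S}^3$) turns $\partial B$ into $\Sigma=\{(z,s):\,|z|^4=-s(2+s)\}$, with $s=t-1$, so that $s=-\tfrac12|z|^4+O(|z|^8)$ and therefore $d(q_+,(z,t))=\|(z,s)\|=(|z|^4+s^2)^{1/4}\sim|z|=\sqrt{\cos\alpha}$ near $q_+$. Consequently $B(q_+,r)\cap\partial B$ corresponds to $\{\cos\alpha\lesssim r^2\}$, and integrating the density from Lemma~\ref{l:Haus} with the substitution $\beta=\tfrac{\pi}{2}-\alpha$, $\cos\alpha\sim\beta$, yields
\[
\mathcal{S}^3(B(q_+,r)\cap\partial B)\ \sim\ \int_0^{2\pi}\!\!\int_{\{\cos\alpha\lesssim r^2\}}\sqrt{\cos\alpha}\,d\alpha\,d\varphi\ \sim\ (r^2)^{3/2}=r^3.
\]
A general point $p$ with $\dist(p,q_+)<\delta_0$ is then handled by comparison: if $d(p,q_+)\le r/2$, then $B(q_+,r/2)\cap\partial B\subset B(p,r)\cap\partial B\subset B(q_+,2r)\cap\partial B$ and the displayed estimate applies; if $d(p,q_+)>r/2$, the ball $B(p,r)$ avoids $q_+$ and lives in the non-characteristic regime treated above.

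The main obstacle is the uniformity of the non-characteristic estimate throughout the transition region $d(p,q_+)\sim r\to0$, where the ball--box constants a priori degenerate because, by \eqref{eq:AreaForm1}, the horizontal normal satisfies $|Cn_B|\sim\sqrt{\cos\alpha}\to0$ as $p\to q_+$. I would remove this degeneration by a scaling argument. After left-translating $p$ to the origin, apply the anisotropic dilation $(z,t)\mapsto(\lambda z,\lambda^2t)$ with $\lambda=d(p,q_+)^{-1}$: it multiplies the Korányi distance by $\lambda$ and the measure $\mathcal{S}^3$ by $\lambda^3$, and rescales the ball $B(p,r)$ to radius $\lambda r<2$. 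A direct computation shows that, as $p\to q_+$, the rescaled surfaces converge in $C^1$ to a fixed affine plane for which the origin is a non-characteristic point; such a plane is Ahlfors $3$-regular with an absolute constant, its at most one characteristic point being handled exactly as $q_+$ above. A standard stability argument then gives the ball--box comparison and the density bounds on the rescaled surfaces with constants independent of $p$, and undoing the dilation restores the factor $r^3$. This yields $\mathcal{S}^3(B(p,r)\cap\partial B)\sim r^3$ with a single constant valid uniformly up to the poles, completing the proof.
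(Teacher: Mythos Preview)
Your argument is correct but takes a quite different route from the paper, which does not prove the lemma from scratch: it simply invokes existing general results on the Ahlfors regularity of the horizontal perimeter measure on the boundary of a smooth bounded domain in a Carnot group (citing \cite[Theorem 6.2]{MR2229731}, or alternatively results from \cite{MR1664688,MR1658616,MR4127898}). Your approach, by contrast, is self-contained and specific to the Kor\'anyi sphere: you combine the explicit density from Lemma~\ref{l:Haus}, the ball--box estimate at non-characteristic points, a direct integration at the poles $q_{\pm}$, and a blow-up argument for the transition regime. The blow-up is the most delicate step; carrying out the ``direct computation'' (with $\varphi=0$) indeed gives the limiting plane $\{t=2y\}$, which is non-characteristic at the origin and has its single characteristic point at $(-1,0,0)$, and since the rescaled radii $\lambda r$ stay below $2$ the $C^1$-stability reasoning is legitimate. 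What your route buys is transparency: one sees exactly why the exponent is $3$ at the poles via the density $\sqrt{\cos\alpha}$, and no external machinery is needed. The paper's route is much shorter, but the geometric content is hidden inside the cited theorems.
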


The lemma follows e.g., from \cite[Theorem 6.2]{MR2229731}.
Alternatively, one can also obtain it by combining \cite[Theorem
5.1]{MR1664688},  \cite[Propositions 9 and 10]{MR1658616} and
\cite[Proposition 4.1]{MR4127898}.

The formulae for $\sigma_0$, $\sigma$, and
$\mathcal{S}^3|_{\partial B\setminus \{z=0\}}$ given in Theorem
\ref{t:polar}, Proposition \ref{p:Mod_Radial_subet},
 and Lemma \ref{l:Haus}, respectively,
show that
\begin{equation}\label{eq:measure_monotone}
\sigma(E) \leq \mathcal{S}^3(E)\leq \sigma_0(E),\quad\text{for all
 Borel sets }E\subseteq \partial B\setminus \{z=0\}.
\end{equation}
The difference between the measures becomes more pronounced for
sets $E$ concentrated near the characteristic points of $\partial
B$ as such sets are harder to reach by short radial curves from
inside the ball. Indeed, the curves $\gamma(\cdot,(z,t))$ defined
in Theorem \ref{t:polar} begin to spiral more as $z$ tends to $0$,
approaching in the limit the $t$-axis, a curve that fails to be
locally rectifiable with respect to the Kor\'{a}nyi metric.
However,since ${\cos \alpha}>0$ for $\alpha \in (-\pi/2,\pi/2)$,
the three measures are still mutually absolutely continuous:
\begin{equation}\label{eq:ac}
\sigma(E)=0\quad\Leftrightarrow\quad \sigma_0(E)=0
\quad\Leftrightarrow\quad \mathcal{S}^3(E)=0,\quad\text{for all
}E\subset \partial B\setminus\{z=0\}\text{ Borel}.
\end{equation}

\subsubsection{Nontangential regions and nontangential
maximal functions} To study how a mapping on the Kor\'{a}nyi unit
ball $B$ behaves close to the boundary $\partial B$, we use two
different tools: \emph{nontangential maximal functions} and
\emph{radial limits}. While the former are purely metric concepts,
our definition of radial limit is tailored specifically to the
Kor\'{a}nyi unit ball, as it makes use of the \emph{radial curves}
in Theorem \ref{t:polar}. The geometry of $B$ is also reflected in
the choice of the parameter $\kappa$ for which we apply the
definition of nontangential region and associated maximal
function, see Proposition~\ref{p:curve_cone}. However, the
nontangential maximal functions will not appear in our final
result, Theorem~\ref{t:NecCarleson}, where they are only used as
tools along the way. The first step in the proof of
Theorem~\ref{t:NecCarleson}, namely Proposition
\ref{p:NontangentialInequality}, works in abstract proper metric
spaces, which is why we state the following definitions in this
generality.

\begin{definition}\label{def:NTregion}\label{d:NTregion_kappa0}
Let $(X,d)$ be a  metric space, and $\Omega$ a fixed nonempty
domain in $X$. For a point $\omega \in
\partial \Omega$ and a parameter $\kappa>0$,
 we define the \emph{nontangential region in $\Omega$ with parameter $\kappa$ centered at $\omega$} as follows:
\begin{displaymath}
\Gamma_{\Omega,\kappa}(\omega):=\{x \in \Omega:\; d(x,\omega) <
(1+\kappa)d(x,\partial \Omega)\}.
\end{displaymath}
If $(X,d)=(\mathbb{H}^1,d)$, $\Omega=B$, and $\kappa$ is as in
Proposition~\ref{p:curve_cone}, we often abbreviate
\begin{displaymath}
\Gamma(\omega):=
\Gamma_{\kappa}(\omega)=\Gamma_{B,\kappa}(\omega).
\end{displaymath}
\end{definition}

If $(X,d)$ is the Euclidean plane, and $\Omega$ the open unit
disk, then $\Gamma_{\kappa,\Omega}(\omega)$ is a \emph{Stolz
region} (or \emph{nontangential approach region}) in the usual
sense. In general, we do not require the domain $\Omega$ in
Definition \ref{def:NTregion} to have the interior corkscrew
property or satisfy other geometric conditions, but in our main
application, the nontangential regions carry relevant information
thanks to the following result.

\begin{proposition}\label{p:curve_cone} Let $B$ be the Kor\'{a}nyi
unit ball in $\mathbb{H}^1$. There exists $\kappa>0$ such that for
every $\omega \in
\partial B \setminus \{z=0\}$,
\begin{displaymath}
\gamma(s,\omega) \in \Gamma_{B,\kappa}(\omega),\quad\text{for all
}s\in (0,1).
\end{displaymath}
\end{proposition}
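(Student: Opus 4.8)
The plan is to produce a single $\kappa$ by bounding the ratio $d(\gamma(s,\omega),\omega)/d(\gamma(s,\omega),\partial B)$ uniformly in $\omega\in\partial B\setminus\{z=0\}$ and $s\in(0,1)$: once I show $d(\gamma(s,\omega),\omega)\le C\,d(\gamma(s,\omega),\partial B)$ for a universal $C$, any $\kappa\ge C$ works, since $d(\gamma(s,\omega),\partial B)>0$ for $s\in(0,1)$. Writing $\omega=(z,t)$ in the coordinates $|z|^2=\cos\alpha$, $t=\sin\alpha$ of \eqref{eq:spherical_coords}, a computation with the group law gives $\|\gamma(s,\omega)\|=s$ and the factorization
\begin{equation*}
\gamma(s,\omega)=R_{\phi}\big(\delta_s\omega\big),\qquad \delta_s(z,t)=(sz,s^2t),\quad R_\phi(z,t)=(e^{\mathrm{i}\phi}z,t),\quad \phi=-\tan\alpha\,\log s,
\end{equation*}
with $\delta_s$ the Heisenberg dilation and $R_\phi$ a rotation. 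Since $R_\phi$ is a Kor\'anyi isometry fixing $0$ and preserving $\partial B$, the denominator does not see the rotation,
\begin{equation*}
d(\gamma(s,\omega),\partial B)=d(\delta_s\omega,\partial B),
\end{equation*}
and rotating in $\varphi$ I may assume $z=\sqrt{\cos\alpha}>0$. For $s\in(0,\tfrac12]$ the claim is immediate from the triangle inequality ($d(\gamma(s,\omega),\omega)\le s+1\le\tfrac32$, $d(\gamma(s,\omega),\partial B)\ge 1-s\ge\tfrac12$), so I assume $s\in[\tfrac12,1)$, where $|\log s|\sim 1-s$. Everything then reduces to comparing, with $\theta=\tan\alpha\,\log s$,
\begin{equation*}
d(\gamma(s,\omega),\omega)^4=\|\omega^{-1}\gamma(s,\omega)\|^4=\cos^2\alpha\,(1+s^2-2s\cos\theta)^2+\big(\sin\alpha\,(s^2-1)-2s\cos\alpha\,\sin\theta\big)^2
\end{equation*}
with $d(\delta_s\omega,\partial B)$; both will be shown comparable to $L:=\tfrac{1-s}{\sqrt{\cos\alpha+(1-s)}}$, splitting into the regimes $\cos\alpha\ge 1-s$ and $\cos\alpha<1-s$ (in the latter $\sin\alpha\sim 1$).

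For the numerator the essential point is a near-cancellation in the vertical component, reflecting the spiralling of the radial curves. From $\omega^{-1}\gamma(s,\omega)=\big(z(se^{-\mathrm{i}\theta}-1),\,\sin\alpha\,(s^2-1)-2s\cos\alpha\,\sin\theta\big)$ the horizontal part equals $\sqrt{\cos\alpha}\sqrt{(1-s)^2+2s(1-\cos\theta)}$, which is $\lesssim(1-s)/\sqrt{\cos\alpha}\lesssim L$ when $\cos\alpha\ge 1-s$ (so $|\theta|\lesssim1$ and $1-\cos\theta\lesssim\theta^2$), and $\lesssim\sqrt{\cos\alpha}\lesssim\sqrt{1-s}\sim L$ when $\cos\alpha<1-s$. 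For the vertical part a crude triangle-inequality bound is useless, since it destroys the cancellation; I would instead Taylor-expand, using $\tfrac{\sin(\tan\alpha\,\log s)}{\tan\alpha}=\log s-\tfrac{\tan^2\alpha(\log s)^3}{6}+\cdots$ together with $(s^2-1)-2s\log s=-\tfrac{(1-s)^3}{3}+O((1-s)^4)$, to obtain for $\cos\alpha\ge 1-s$ the bound $|\sin\alpha\,(s^2-1)-2s\cos\alpha\,\sin\theta|\lesssim\tfrac{\sin\alpha\,(1-s)^3}{\cos^2\alpha}$, whose square is dominated by the fourth power of the horizontal part; for $\cos\alpha<1-s$ I bound $\cos\theta,\sin\theta$ by $1$. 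Collecting both regimes yields $d(\gamma(s,\omega),\omega)\lesssim L$.

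The denominator is the crux, because the Kor\'anyi distance to $\partial B$ is strongly anisotropic and the nearest boundary point to $\delta_s\omega$ is \emph{not} the radial projection $\omega$ except near the characteristic points. Writing a boundary point as $(\xi,b)$ with $(\xi_1^2+\xi_2^2)^2+b^2=1$ and $w_0=s\sqrt{\cos\alpha}$, $\tau_0=s^2\sin\alpha$, I have
\begin{equation*}
\|(\xi,b)^{-1}\delta_s\omega\|^4=|w_0-\xi|^4+\big(\tau_0-b-2w_0\xi_2\big)^2,
\end{equation*}
and must bound this below by $c\,L^4$ for all admissible $(\xi,b)$. When $\cos\alpha<1-s$, $\delta_s\omega$ lies within $\sim\sqrt{1-s}$ of the vertical-axis point $(0,0,s^2\sin\alpha)$, whose distance to $\partial B$ I compute exactly — minimizing $|\xi|^4+(s^2\sin\alpha-b)^2$ under $|\xi|^4+b^2=1$ gives $\sqrt{1-s^2\sin\alpha}\sim\sqrt{1-s}$, attained at the pole — and a perturbation argument yields $d(\delta_s\omega,\partial B)\gtrsim\sqrt{1-s}\sim L$. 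When $\cos\alpha\ge 1-s$ the minimizer is instead governed by the ``same-height'' competitor $(\,(1-\tau_0^2)^{1/4},0,\tau_0)$, at distance $(1-\tau_0^2)^{1/4}-w_0\sim(1-s)/\cos^{3/2}\alpha$; here I would prove the matching lower bound $d(\delta_s\omega,\partial B)\gtrsim(1-s)/\sqrt{\cos\alpha}\sim L$ by the constrained minimization over $(\xi,b)$, using $|w_0-\xi|\ge|\,|\xi|-w_0|$ and the identity $b^2-\tau_0^2=1-s^4$ forced by the constraint to limit how much the vertical term can be reduced by moving $\xi$ off the real axis.

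Combining the two estimates gives $d(\gamma(s,\omega),\omega)\lesssim L\lesssim d(\gamma(s,\omega),\partial B)$ uniformly, which furnishes the desired $\kappa$. The main obstacle is exactly this lower bound on $d(\delta_s\omega,\partial B)$: unlike in $\mathbb{R}^n$, where the distance to the sphere equals $1-|x|$ and is realized radially, here the minimizing boundary point migrates from the radial direction (near the poles) to an almost-horizontal direction (near the equator), so one cannot avoid analyzing the full three-parameter minimization over $\partial B$ and tracking the anisotropy through the transition $\cos\alpha\sim 1-s$. I expect the rotation-invariance reduction and the vertical cancellation to be the clean parts, and this anisotropic lower bound to demand the most care.
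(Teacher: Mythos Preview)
Your overall strategy is correct and closely parallels the paper's: bound $d(\gamma(s,\omega),\omega)$ above and $d(\gamma(s,\omega),\partial B)$ below by comparable quantities, splitting according to the relative sizes of $1-s$ and $\cos\alpha$. The rotation reduction $d(\gamma(s,\omega),\partial B)=d(\delta_s\omega,\partial B)$ is a clean observation not made explicit in the paper. Your numerator bound via Taylor expansion of the vertical component is essentially the paper's Lemma~\ref{l:radial_curve_dist}, obtained there by the mean value theorem.

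The denominator lower bound is where your sketch is loose, and the paper's organization helps here. First, a couple of inaccuracies: the ``identity $b^2-\tau_0^2=1-s^4$'' is not ``forced by the constraint'' --- with $|\xi|^4+b^2=1$ one has $b^2-\tau_0^2=1-|\xi|^4-s^4\sin^2\alpha$, which equals $1-s^4$ only at the particular radius $|\xi|=w_0$; and your computation of the same-height competitor distance $\sim(1-s)/\cos^{3/2}\alpha$ is not correct throughout the regime $\cos\alpha\ge 1-s$ (it fails when $1-s\le\cos\alpha\le\sqrt{1-s}$). More importantly, your perturbation argument in the regime $\cos\alpha<1-s$ gives $d(\delta_s\omega,\partial B)\ge\sqrt{1-s^2|\sin\alpha|}-s\sqrt{\cos\alpha}$, and at the regime boundary $\cos\alpha=1-s$ both terms are $\sim\sqrt{1-s}$ with no gap, so you do not directly get a positive lower bound. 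The paper sidesteps these issues in two ways: it fixes a threshold $c\in(0,\pi/2)$ and for $|\alpha|\le c$ uses only the trivial bound $d(\gamma(s,\omega),\partial B)\ge 1-s$ (which already matches the numerator since $\cos\alpha\sim_c 1$), and for $|\alpha|\ge c$ it proves the sharp lower bound in Lemma~\ref{l:radial_curve_dist_bdry} via an algebraic factorization (see \eqref{ineq:l:radial}--\eqref{eq:denom}) that bounds the full minimization over $\partial B$ without guessing the minimizer, with constants allowed to depend on $c$. Adopting this threshold trick would remove the need for your uniform (in $\alpha$) lower bound and make the argument go through.
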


We postpone the proof to the Appendix.

\begin{remark}
By \cite[Corollary 1]{MR1658616} and by the discussion in the
beginning of Section 6 in~\cite{MR1404326}, we have that $B$ is a
\emph{John domain}. This already shows that there is a constant
$\kappa$, and for every $\omega \in
\partial B$, a rectifiable curve in $B$ emanating from $\omega$ and
contained in $\Gamma_{B,\kappa}(\omega)$ until it hits the John
center of $B$. The purpose of Proposition \ref{p:curve_cone} is to
provide specific information about the \emph{radial} curves
$\gamma(\cdot,\omega)$.
\end{remark}

\begin{definition}\label{def:sph-cap} Let $(X,d)$ be a  metric space, and $\Omega$ a fixed
domain with nonempty boundary in $X$. For a point $x \in \Omega$
and a parameter $\kappa>0$,
 we define the \emph{shadow} associated to $ x$ and $\kappa>0$ as
\begin{displaymath}
S_{\Omega,\kappa}(x) := \partial \Omega \cap B(x,
(1+\kappa)d(x,\partial \Omega)).
\end{displaymath}
If $(X,d)=(\mathbb{H}^1,d)$, $\Omega=B$, and $\kappa$ is as in
Proposition~\ref{p:curve_cone}, we often abbreviate
\begin{displaymath}
S(q):= S_{\kappa}(q):= S_{B,\kappa}(q), \end{displaymath} and we
also call the shadow a \emph{spherical cap} in this case.
\end{definition}

The definition is tailored so that nontangential regions and
shadows are related in the following way:
\begin{displaymath}
x \in \Gamma_{\Omega,\kappa}(\omega) \quad \Leftrightarrow \quad
\omega \in S_{\Omega,\kappa}(x).
\end{displaymath}

\begin{definition}\label{d:NT} Let $(X,d)$ be a  metric space, and $\Omega$ a fixed nonempty
domain in $X$ with nonempty boundary $\partial \Omega$. If
$\kappa>0$ is such that $\Gamma_{\Omega,\kappa}(\omega)\neq
\emptyset$ for all $\omega \in \partial \Omega$, we define the
\emph{$\kappa$-nontangential maximal function}
\begin{displaymath}
N_{\Omega,\kappa}h(\omega):= \sup_{x\in
\Gamma_{\Omega,\kappa}(\omega)} h(x),\quad \omega \in
\partial \Omega
\end{displaymath}
of a function $h: \Omega \to [0,+\infty)$.

If $(X,d)=(\mathbb{H}^1,d)$, $\Omega=B$, and $\kappa$ is as in
Proposition~\ref{p:curve_cone}, we define the \emph{nontangential
maximal function of $f:B \to \mathbb{H}^1$} as
\begin{displaymath}
M(f)(\omega):= M_{\kappa}(f)(\omega):= N_{B,\kappa} \|f\|(\omega)
= \sup_{q\in \Gamma_{B,\kappa}(\omega)}\|f(q)\|.
\end{displaymath}
\end{definition}

\begin{remark}\label{r:NTMaxMeas} For $(X,d)$, $\Omega\subset X$,
$\kappa>0$ and $h:\Omega \to [0,+\infty)$ as in Definition
\ref{d:NT}, the nontangential maximal function
$N_{\Omega,\kappa}h:
\partial \Omega \to [0,+\infty]$ is lower semicontinuous.
Indeed, if $\lambda \in \mathbb{R}$ and $\omega \in \partial
\Omega$ are such that $N_{\Omega,\kappa} h(\omega)>\lambda$, then
there exists $x\in \Gamma_{\Omega,\kappa}(\omega)$ such that
$h(x)>\lambda$. Since
\begin{displaymath}
x\in \Gamma_{\Omega,\kappa}(\omega')\quad\text{for all }\omega'\in
\partial \Omega\text{ with }d(\omega,\omega')<\left[(1+\kappa)d(x,\partial
\Omega)-d(x,\omega)\right],
\end{displaymath}
we see that $N_{\Omega,\kappa} h(\omega')>\lambda$ for all
$\omega'$ in a relatively open neighborhood of $\omega$ in
$\partial \Omega$.
\end{remark}

\subsection{Quasiconformal mappings}
A homeomorphism $f:\Omega \to \Omega'$ between domains in
$\mathbb{H}^1$ is \emph{quasiconformal} if
\begin{displaymath}
H_f(q):= \limsup_{r\to 0}
\frac{\max_{d(q,q')=r}d(f(q),f(q'))}{\min_{d(q,q')=r}d(f(q),f(q'))}
\end{displaymath}
is uniformly bounded on $\Omega$. We say that $f$ is
\emph{$K$-quasiconformal} for a constant $K\geq 1$, if
\begin{displaymath}
\|H_f\|_{\infty}:= \mathrm{ess sup}_{q\in \Omega} H_f(q)\leq K,
\end{displaymath}
where the essential supremum is computed with respect to the
Lebesgue measure on $\mathbb{R}^3$.

We refer to the literature for equivalent characterizations of
quasiconformal mappings and simply recall that a
$K$-quasiconformal map $f:\Omega \to \Omega'$ between domains in
$\mathbb{H}^1$ has the following properties:
\begin{enumerate}
\item The map $f$ is absolutely continuous along $\mathrm{mod}_4$
a.e.\ curve in $\Omega$, and there exists a constant $K'$,
depending only on $K$, such that for each curve family $\Gamma$ in
$\Omega$,
\begin{equation}\label{eq:mod}
\frac{1}{K'} \mathrm{mod}_4(\Gamma)\leq
\mathrm{mod}_4(f(\Gamma))\leq K' \mathrm{mod}_4(\Gamma).
\end{equation}
 \item The components $f_1,f_2,f_3$ all
belong the \emph{horizontal Sobolev space}
$HW_{loc}^{1,4}(\Omega)$ of $L_{loc}^{4}(\Omega)$-functions with
weak $X$ and $Y$ derivatives in $L_{loc}^{4}(\Omega)$, they
satisfy the \emph{contact conditions} $Xf(q) , Yf (q)\in
H_q\mathbb{H}^1$ for almost every $q\in \Omega$, and there exists
a constant $K''$, depending only on $K$, such that
\begin{equation}\label{eq:Dist_Ineq}
|D_Hf(q)|^4 \leq K'' J_f(q),\quad\text{for almost every }q\in
\Omega,
\end{equation}
where the \emph{operator norm} is $|D_H f(q)|:= \sup_{\xi \in
H_q\mathbb{H}^1, |\xi|=1}|D_H f(q)\xi|$, and the \emph{formal
horizontal derivative} is given with respect to the frame
$\{X,Y\}$ by
\begin{displaymath}
D_H f(q):=
\begin{pmatrix}Xf_1(q)&Yf_1(q)\\Xf_2(q)&Yf_2(q)\end{pmatrix},
\end{displaymath}
and $J_f(q)= \left(\det D_H f(q)\right)^2$.
\end{enumerate}
These properties follow from
\cite{Pansu1,MR1317384,zbMATH00746069}, see also \cite{MR1778673},
and \cite[Section 9]{MR1869604} for a discussion in abstract
metric measure spaces of locally $Q$-bounded geometry. If the
quasiconformal map $f$ is also a diffeomorphism, then $J_f$ agrees
with its standard Jacobian, see \cite[Section 2.3]{MR1317384}.

\subsubsection{Radial limits}
\begin{definition}
The \emph{radial limit} of a map $f:B\subset \mathbb{H}^1 \to
\mathbb{H}^1$ is defined as
\begin{displaymath}
f^{\ast}(\omega):=\lim_{s  \to 1}f(\gamma(s,\omega))
\end{displaymath}
for all $\omega \in \partial B$, where the limit exists.
\end{definition}

\begin{lemma}\label{l:QC_RadialLimit}
If $f:B \to f(B)\subseteq \mathbb{H}^1$ is quasiconformal, then
the radial limit $f^{\ast}$ exists almost everywhere on $\partial
B\setminus \{z=0\}$ with respect to any of the measures
$\sigma_0$, $\sigma$, and $\mathcal{S}^3|_{\partial B \setminus
\{z=0\}}$. Moreover, $f^{\ast}$ is Borel measurable.
\end{lemma}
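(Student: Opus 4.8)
The plan is to reduce the existence of $f^{\ast}(\omega)$ to a \emph{finite-length} statement for the image curves $s\mapsto f(\gamma(s,\omega))$ near $s=1$: since $(\mathbb{H}^1,d)$ is complete, a curve of finite length converges at its endpoint, so finiteness of that length forces the radial limit to exist. All exceptional sets will be shown to be $\sigma$-null, and by the mutual absolute continuity \eqref{eq:ac} this is equivalent to being null for $\sigma_0$ and for $\mathcal{S}^3|_{\partial B\setminus\{z=0\}}$. The basic dictionary I would use is Proposition~\ref{p:Mod_Radial_subet} with $r=\tfrac12$: if $A\subseteq \partial B\setminus\{z=0\}$ is Borel and $\Gamma_A$ is the family of radial segments $\gamma(\cdot,\omega)|_{[1/2,1)}$, $\omega\in A$, then $\mathrm{mod}_4(\Gamma_A)=\sigma(A)(\log 2)^{-3}$, so that $\mathrm{mod}_4(\Gamma_A)=0$ if and only if $\sigma(A)=0$. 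Applying this to the $\mathrm{mod}_4$-exceptional family from property~(1) of quasiconformal maps shows that $f$ is absolutely continuous along $\gamma(\cdot,\omega)|_{[1/2,1)}$ for $\sigma$-a.e.\ $\omega$, and along such curves the image length is at most $L(\omega):=\int_{1/2}^1 \rho(\gamma(s,\omega))\,|\dot\gamma(s,\omega)|\,ds$, where $\rho:=|D_Hf|\in L^4_{\mathrm{loc}}(B)$ by property~(2).

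To estimate $L(\omega)$ I would first record that in the coordinates \eqref{eq:spherical_coords} the horizontal speed of a radial curve is \emph{independent of $s$}: a direct computation of the complex component $w(s)=sz e^{-\mathrm{i}\tan\alpha\,\log s}$ gives $|\dot\gamma(s,\omega)|=|w'(s)|=(\cos\alpha)^{-1/2}$, which is finite for every $\omega\in \partial B\setminus\{z=0\}$. Splitting $\rho\,|\dot\gamma|=(\rho s^{3/4})(|\dot\gamma|s^{-3/4})$ and applying Hölder's inequality with exponents $4$ and $4/3$ gives
$$L(\omega)\le\Big(\int_{1/2}^1 \rho^4(\gamma(s,\omega))\,s^3\,ds\Big)^{1/4}\Big(\int_{1/2}^1 |\dot\gamma(s,\omega)|^{4/3}s^{-1}\,ds\Big)^{3/4},$$
and the second factor equals $\big((\cos\alpha)^{-2/3}\log 2\big)^{3/4}<\infty$. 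Hence $L(\omega)<\infty$ whenever $\int_{1/2}^1 \rho^4(\gamma(s,\omega))\,s^3\,ds<\infty$, and by the polar coordinates formula \eqref{eq:polarCoordsForm} together with the distortion inequality \eqref{eq:Dist_Ineq} the latter integral, integrated in $\omega$ over a Borel set $A$, equals $\int_{R_A}\rho^4\le K''\int_{R_A}J_f$, where $R_A$ is the part of the annulus $\{1/2\le\|q\|<1\}$ foliated by the segments over $A$.

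The main obstacle is that $f(B)$ may be unbounded, so $\int_{\{1/2\le\|q\|<1\}}J_f=|f(\{1/2\le\|q\|<1\})|$ can be infinite and no global bound is available; I would resolve this by stratifying. Let $E'$ be the set of $\omega$ with $\limsup_{s\to1}\|f(\gamma(s,\omega))\|=\infty$. Since $f(\partial B(0,1/2))$ is compact, say contained in $B(0,R_0)$, each image curve over $E'$ contains, for every $R>R_0$, a subcurve joining $\partial B(0,R_0)$ to $\partial B(0,R)$, whence $\mathrm{mod}_4(f(\Gamma_{E'}))\le \mathrm{mod}_4(\Gamma_{R_0,R})=\pi^2(\log(R/R_0))^{-3}\to 0$ by \eqref{eq:ring}; the quasi-invariance \eqref{eq:mod} then gives $\mathrm{mod}_4(\Gamma_{E'})=0$, i.e.\ $\sigma(E')=0$. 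On the complement, set $E''_m:=\{\omega:\sup_{s\in[1/2,1)}\|f(\gamma(s,\omega))\|\le m\}$; for $\omega\in E''_m$ the whole segment lies in $f^{-1}(\overline{B(0,m)})$, so $R_{E''_m}\subseteq f^{-1}(\overline{B(0,m)})$ and $\int_{R_{E''_m}}J_f\le |B(0,m)|<\infty$. Thus $\int_{1/2}^1 \rho^4(\gamma(s,\omega))\,s^3\,ds<\infty$, hence $L(\omega)<\infty$, for $\sigma$-a.e.\ $\omega\in E''_m$; taking the union over $m\in\mathbb{N}$ and recalling $\sigma(E')=0$ shows that the radial limit exists for $\sigma$-a.e.\ $\omega$, which by \eqref{eq:ac} is the claimed a.e.\ statement for all three measures.

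For Borel measurability I would argue that $\omega\mapsto f(\gamma(1-1/n,\omega))$ is continuous for each $n$, that the set where the sequential limit $\lim_{n\to\infty}f(\gamma(1-1/n,\omega))$ exists is Borel, and that this limit function is Borel there. Since the sequential limit coincides with the radial limit wherever the latter exists, $f^{\ast}$ agrees on a co-null Borel set with a Borel function, and is therefore Borel measurable.
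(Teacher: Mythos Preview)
Your argument is correct, but it is considerably more elaborate than the paper's. The paper observes directly that if $A_0$ is the (Borel) set of $\omega$ for which $f\circ\gamma(\cdot,\omega)|_{[1/2,1)}$ is \emph{not} rectifiable, then $f(\Gamma_{A_0})$ consists of nonrectifiable curves and hence $\mathrm{mod}_4(f(\Gamma_{A_0}))=0$; quasi-invariance~\eqref{eq:mod} and Proposition~\ref{p:Mod_Radial_subet} then give $\sigma(A_0)=0$ in one stroke. In particular no stratification by the size of the image, no H\"older estimate, and no appeal to the area formula are needed. What your approach buys is an explicit quantitative bound on the image length in terms of $|D_Hf|$ and the geometry of the radial curves, which is pleasant and in the spirit of Section~\ref{s:3_2}; the paper's approach buys brevity and avoids the measurability check for the ``foliated'' sets $R_{E''_m}$ and the change-of-variables bound $\int_{R_{E''_m}}J_f\leq |B(0,m)|$.

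One small remark on your measurability paragraph: agreeing with a Borel function on a co-null set only yields measurability for the \emph{completed} $\sigma$-algebra. What you actually want (and essentially have) is that the set $\{\omega:\lim_{s\to 1}f(\gamma(s,\omega))\text{ exists}\}$ is itself Borel, via a Cauchy-criterion description using countably many rational $s,s'$; on that Borel set $f^\ast$ equals the sequential limit of the continuous maps $\omega\mapsto f(\gamma(1-1/n,\omega))$ and is therefore Borel. This is the content of the paper's ``standard arguments''.
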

\noindent By \eqref{eq:mod_formula}, the proof is a
straightforward adaptation of \cite[p.21]{MR2900163}. Polar
coordinates were also used  in connection with radial limits at
$\infty$ for homogeneous Sobolev functions
\cite{https://doi.org/10.48550/arxiv.2203.01020}.

\begin{proof}
If $\omega \in \partial B \setminus \{z=0\}$ is such that $f\circ
\gamma(\cdot,\omega):[1/2,1) \to \mathbb{H}^1$ is rectifiable,
then it extends to a rectifiable curve on the closed interval
$[1/2,1]$ (see e.g.\ \cite[Theorem 2.1]{2015arXiv151209165A}), and
consequently, $f^{\ast}(\omega)$ is defined. In other words, the
set $A_0$ of points  $\omega \in
\partial B\setminus \{z=0\}$ for which the radial limit
$f^{\ast}(\omega)$ does \emph{not} exist is a subset of the set
$E$ of points $\omega$ in $\partial B\setminus \{z=0\}$ for which
$f\circ \gamma(\cdot,\omega):[1/2,1) \to \mathbb{H}^1$ fails to be
rectifiable.

Now let $A\subset E$ be an arbitrary Borel set and denote by
$\Gamma_A$ the family of radial curves connecting $\partial
B(0,1/2)$ to $A$. Then, since the modulus of non-rectifiable
curves is zero, see e.g.\ \cite[Proposition 5.3.3.]{MR3363168}, we
have $\mathrm{mod}_4(f(\Gamma_A))=0$ and hence
$\mathrm{mod}_4(\Gamma_A)=0$ by the quasiconformality of $f$ and
\eqref{eq:mod}. Then formula \eqref{eq:mod_formula} implies that
$\sigma(A)=0$. This holds for arbitrary Borel sets $A\subset E$,
in particular for the set $A_0$ defined above, which is indeed a
Borel set by standard arguments.
Thus we conclude that $f^{\ast}$ is defined on the
Borel set $\partial B \setminus [\{z=0\}\cup A_0]$ with
$\sigma(A_0)=0$, and hence also
$\sigma_0(A_0)=\mathcal{S}^3(A_0)=0$ by \eqref{eq:ac}. Moreover,
since each component of $f^{\ast}$ is the limit of a convergent
sequence of Borel functions, $f^{\ast}$ itself is Borel
measurable.
\end{proof}


\section{Quasiconformal maps belong to Hardy spaces}\label{s:QC_in_Hp}
In this section we prove Theorem \ref{t:QC_hardy_intro}, which
states that every $K$-quasiconformal map on the Kor\'{a}nyi unit
ball is of class $H^p$, for all $p$ less than a threshold that
depends only on the distortion $K$. We show this by establishing
the following counterpart of a result by Jerison and Weitsman
\cite[Theorem 1]{MR624919}. Obviously Theorem \ref{t:QC_hardy}
implies  Theorem \ref{t:QC_hardy_intro}.

\begin{thm}\label{t:QC_hardy}
For every $K\geq 1$, there exists a constant $p_0=p_0(K)>0$ such
that every $K$-quasiconformal map $f:B \to f(B) \subset
\mathbb{H}^1$ satisfies
\begin{displaymath}
\limsup_{r\to 1} \int_{\partial B} \|f(\gamma(r,\omega))\|^p\,
d\mathcal{S}^3(\omega) <\infty\quad\text{for all }p<p_0,
\end{displaymath}
where $\gamma(\cdot,\omega)$, $\omega \in \partial B \setminus
\{z=0\}$, are the radial curves given by Theorem \ref{t:polar}.
\end{thm}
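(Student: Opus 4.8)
The plan is to reduce the assertion to a distribution-function estimate for $\|f(\gamma(r,\cdot))\|$ and to extract \emph{power}-type decay (not merely logarithmic) from the modulus inequality \eqref{eq:mod} by comparing the radial curve family to a target ring whose inner radius is chosen \emph{adaptively}. First I would normalize: replacing $f$ by $f(0)^{-1}\cdot f$ keeps $f$ $K$-quasiconformal and changes $\|f(q)\|$ by at most the fixed amount $\|f(0)\|$, so by $\mathcal{S}^3(\partial B)<\infty$ the finiteness of the $\limsup$ is unaffected; thus assume $f(0)=0$. Write $R(\rho):=\max_{\|q\|\le \rho}\|f(q)\|$ (continuous and nondecreasing), and for fixed $r\in(1/2,1)$ and $\lambda>0$ set $E_\lambda:=\{\omega\in\partial B\setminus\{z=0\}:\|f(\gamma(r,\omega))\|>\lambda\}$. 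By the layer-cake formula,
\[
\int_{\partial B}\|f(\gamma(r,\omega))\|^p\,d\mathcal{S}^3(\omega)=p\int_0^\infty \lambda^{p-1}\,\mathcal{S}^3(E_\lambda)\,d\lambda,
\]
so it suffices to prove a bound $\mathcal{S}^3(E_\lambda)\lesssim_{K,f}\lambda^{-q}$ for some $q=q(K)>0$ with constants \emph{uniform in $r$ near $1$}; then every $p<q=:p_0$ works.

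For the key estimate fix $\lambda$ with $R(1/2)^2<\lambda<R(r)$ (the ranges $\lambda\le R(1/2)^2$ give the trivial bound $\mathcal{S}^3(E_\lambda)\le\mathcal{S}^3(\partial B)$, and $\lambda\ge R(r)$ give $E_\lambda=\emptyset$). By the intermediate value theorem choose $r_0\in(1/2,r)$ with $R(r_0)=\sqrt{\lambda}$. For $\omega\in E_\lambda$ the continuous image $f\circ\gamma([r_0,r],\omega)$ runs from a point of Kor\'anyi norm $\le R(r_0)=\sqrt\lambda$ to one of norm $>\lambda$, hence contains a subcurve joining $\partial B(0,\sqrt\lambda)$ to $\partial B(0,\lambda)$. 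Writing $\Gamma:=\{\gamma([r_0,r],\omega):\omega\in E_\lambda\}$, every curve of $f(\Gamma)$ therefore contains a subcurve in $\Gamma_{\sqrt\lambda,\lambda}$, so $\mathrm{adm}(\Gamma_{\sqrt\lambda,\lambda})\subseteq\mathrm{adm}(f(\Gamma))$ and by \eqref{eq:ring},
\[
\modf(f(\Gamma))\le \modf(\Gamma_{\sqrt\lambda,\lambda})=8\pi^2(\log\lambda)^{-3}.
\]
Combining this with \eqref{eq:mod} and the rescaled radial modulus formula (Proposition \ref{p:Mod_Radial_subet} applied between the spheres $\partial B(0,r_0)$ and $\partial B(0,r)$, which by dilation and rotation invariance gives $\modf(\Gamma)=\sigma(E_\lambda)(\log(r/r_0))^{-3}$) yields $\sigma(E_\lambda)\le 8K'\pi^2\big(\log(r/r_0)/\log\lambda\big)^3$. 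The decisive point is the size of $\log(r/r_0)\le\log(1/r_0)\lesssim 1-r_0$: here I would invoke the polynomial growth estimate $R(\rho)\lesssim_{K,f}(1-\rho)^{-\beta}$ with $\beta=\beta(K)$ (standard for quasiconformal maps and obtainable from \eqref{eq:ring} by chaining rings along $\rho_k=1-2^{-k}$). Together with $R(r_0)=\sqrt\lambda$ this forces $1-r_0\lesssim \lambda^{-1/(2\beta)}$, turning the otherwise merely logarithmic bound into $\sigma(E_\lambda)\lesssim_{K,f}\lambda^{-q_0}$ with $q_0=3/(2\beta)>0$, uniformly in $r$.

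It remains to transfer from $\sigma$ to $\mathcal{S}^3$. Since $H^p$ is measured with $\mathcal{S}^3$ while the modulus naturally produces $\sigma$, and $d\mathcal{S}^3=(\cos\alpha)^{-3/2}\,d\sigma$ (compare Lemma \ref{l:Haus} with Proposition \ref{p:Mod_Radial_subet}), I would pass between them by H\"older: for any $u\in(1,2)$ the weight $(\cos\alpha)^{-3u/2}$ is $\sigma$-integrable, whence $\mathcal{S}^3(E)=\int_E(\cos\alpha)^{-3/2}\,d\sigma\le C_u\,\sigma(E)^{1-1/u}$. Taking $u=4/3$ gives
\[
\mathcal{S}^3(E_\lambda)\lesssim \sigma(E_\lambda)^{1/4}\lesssim_{K,f}\lambda^{-q_0/4},
\]
so $q:=q_0/4=3/(8\beta)>0$ completes the reduction and yields $p_0(K)=q$.

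The main obstacle is the interplay of two difficulties. First, the naive comparison of the radial family with a \emph{single} ring centered at $f(0)$ produces only $\sigma(E_\lambda)\lesssim(\log\lambda)^{-3}$, which is useless for any $L^p$; the power decay hinges entirely on the adaptive choice $R(r_0)=\sqrt\lambda$ and hence on a genuinely quantitative polynomial growth bound for $K$-quasiconformal maps on $B$ in $\Hei$ (a Heisenberg analogue of the Euclidean growth / quasihyperbolic quasi-isometry estimates), whose proof I regard as the technical heart. Second, the discrepancy between $\sigma$ and $\mathcal{S}^3$ near the two characteristic points $q_\pm$ — where the radial curves spiral and, on polar caps, $\mathcal{S}^3\sim\sigma^{1/2}$ as is visible from \eqref{eq:measure_monotone} and the explicit densities — must be controlled; this is the concrete manifestation of the nonisotropy emphasized in the introduction, and the H\"older exponent used in the transfer is precisely what fixes the final admissible range $p<p_0(K)$.
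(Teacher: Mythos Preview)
Your argument is correct and arrives at the same threshold $p_0(K)=3/(8\beta(K))$, but it takes a genuinely different route from the paper. The paper follows Jerison--Weitsman: it differentiates $\|f(\gamma(s,\omega))\|^p$ along radial curves (Lemma~\ref{l:deriv_r}), applies the fundamental theorem of calculus, splits via H\"older into a Jacobian integral controlled by the area formula and a term handled by the polynomial growth estimate (Proposition~\ref{c:dist_alpha}). You instead use a distribution-function argument: for each level $\lambda$ you choose $r_0$ adaptively with $R(r_0)=\sqrt{\lambda}$, compare the radial family $\Gamma$ to the ring $\Gamma_{\sqrt{\lambda},\lambda}$ via \eqref{eq:mod}, and convert the resulting $\sigma$-bound to $\mathcal{S}^3$ through the H\"older step $\mathcal{S}^3(E)\le\sigma(E)^{1/4}\sigma_0(E)^{3/4}$ (which is exactly the transfer used later in the paper in Lemma~\ref{l:CentredLemma4}). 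Both proofs rest on the same technical core---the polynomial growth bound $R(\rho)\lesssim_{K,f}(1-\rho)^{-\beta}$ of Proposition~\ref{c:dist_alpha}---but your approach is more ``modulus-only'': it bypasses the pointwise differentiation lemma and the area formula entirely, at the cost of needing the annular modulus formula for radial segments between two intermediate spheres (a mild extension of Proposition~\ref{p:Mod_Radial_subet} that follows either from the same H\"older-on-radials computation or, as you indicate, by dilation). The paper's route generalizes the Euclidean Jerison--Weitsman proof more transparently; yours is closer in spirit to the level-set and modulus techniques developed later in Section~\ref{s:ZinsmeisterChar}.
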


\subsection{Consequences of modulus estimates} Readers interested
in the main argument used to prove Theorem \ref{t:QC_hardy} may
want to proceed directly to Section \ref{s:3_2}. The purpose of
the present section is to establish growth estimates for
quasiconformal maps on $B$ that will be used as a tool in the
proof of Theorem \ref{t:QC_hardy}. Analogous estimates for balls
in $\mathbb{R}^n$ have been proven using Euclidean techniques,
such as spherical symmetrization and Gr\"otzsch rings
\cite{MR1271759,MR2900163,MR538089}. Our proof relies on standard
estimates for abstract Ahlfors regular Loewner spaces. This allows
us to formulate the statement not only for the unit ball, but also
for bounded $1$-quasiconformal images thereof that contain the
origin. This generalization is possible since the Kor\'{a}nyi ball
and its conformal images are \emph{quasiextremal distance domains
(QED)} in the terminology of \cite{MR833411},
\cite[(13.33)]{MR2466579}, with a universal constant. Our use for
this property, and a challenge in proving it, is that the
Kor\'{a}nyi inversion with center at $0$ does not keep the sphere
$\partial B$ pointwise fixed. Hence simple reflection arguments as
in $\mathbb{R}^n$ are not available. On the other hand, it is then
natural to discuss the proof for more general domains than $B$,
following the reasoning used to prove \cite[Lemma 3.6]{MR2466579}
in $\mathbb{R}^n$.

Adapting the Euclidean terminology \cite{MR631089}, we say that a
domain $D \subset \mathbb{H}^1$ is an \emph{extension domain for
the Dirichlet energy space (EDE)} if there exists a bounded
extension operator
\begin{equation}\label{eq:extOp}
\mathrm{ext}: \mathcal{L}^1_4(D) \to
\mathcal{L}^1_4(\mathbb{H}^1)\quad \text{with}\quad
\|\mathrm{ext}(u)\|_{\mathcal{L}^1_4(\mathbb{H}^1)} \leq C
\|u\|_{\mathcal{L}^1_4(D)},\quad \mathrm{ext}(u)|_D = u,
\end{equation}
for the \emph{homogeneous horizontal Sobolev space}
$\mathcal{L}^1_4$, i.e.\,, for the semi-normed space of locally
integrable functions with weak $X$ and
$Y$ derivatives in $L^4$.

\begin{lemma}\label{l:QED} If $D\subset \mathbb{H}^1$ is an extension domain for
the Dirichlet energy space, then it is a quasiextremal distance
domain, quantitatively, that is, for all disjoint nonempty
continua $E$ and $F$ in $D$, it holds
\begin{equation}\label{eq:QED condition_thm}
\mathrm{mod}_4(\Gamma(E,F,\mathbb{H}^1)) \leq C \mathrm{mod}_4
(\Gamma(E,F,D)),
\end{equation}
where $C$ is the constant in \eqref{eq:extOp}.

In particular, all domains that arise as images of the Kor\'{a}nyi
ball $B$ under $1$-quasiconformal maps $T:B \to T(B) \subset
\mathbb{H}^1$ are QED with a constant $C$ that does not depend on
$T$.
\end{lemma}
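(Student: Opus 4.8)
The plan is to prove the two assertions in turn, deducing the second from the first. For the implication ``EDE $\Rightarrow$ QED'' I would translate the statement from moduli into variational $4$-capacities. Recall that $\mathbb{H}^1$ is an Ahlfors $4$-regular Loewner space, so that for disjoint continua $E,F$ in a domain $\Omega$ the modulus of the connecting family equals a $4$-capacity,
\[
\mathrm{mod}_4(\Gamma(E,F,\Omega)) = \mathrm{cap}_4(E,F;\Omega) := \inf_{u} \int_{\Omega} |\nabla_H u|^4\, dq,
\]
where the infimum runs over all $u\in \mathcal{L}^1_4(\Omega)$ with $u\geq 1$ on $E$ and $u\leq 0$ on $F$; I would quote this identity from the metric-space literature underlying \cite{MR2466579}. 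The point of passing to capacities is that a competitor is now a \emph{function}, to which the extension operator \eqref{eq:extOp} can be applied directly, whereas an admissible density for the modulus cannot be extended in this way.

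With this reformulation the proof of \eqref{eq:QED condition_thm} is short. Fix disjoint continua $E,F\subset D$ and $\varepsilon>0$, and choose an admissible $u\in \mathcal{L}^1_4(D)$ with $\int_D |\nabla_H u|^4 \leq \mathrm{cap}_4(E,F;D)+\varepsilon$. Applying the extension operator gives $\mathrm{ext}(u)\in \mathcal{L}^1_4(\mathbb{H}^1)$. Since $\mathrm{ext}(u)|_D=u$ and $E\cup F\subset D$, the extended function still satisfies $\mathrm{ext}(u)\geq 1$ on $E$ and $\mathrm{ext}(u)\leq 0$ on $F$, hence is admissible for the global capacity. Therefore
\[
\mathrm{cap}_4(E,F;\mathbb{H}^1)\leq \int_{\mathbb{H}^1} |\nabla_H \mathrm{ext}(u)|^4 \leq C\int_D |\nabla_H u|^4 \leq C\left(\mathrm{cap}_4(E,F;D)+\varepsilon\right),
\]
where $C$ is the constant from \eqref{eq:extOp} (up to the fixed power fixed by the chosen normalization of the $\mathcal{L}^1_4$ semi-norm). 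Letting $\varepsilon\to 0$ and translating back to moduli yields \eqref{eq:QED condition_thm}.

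For the ``in particular'' I would first record that the Korányi ball $B$ is itself an extension domain for the Dirichlet energy space: $B$ is a bounded $C^1$ domain, in particular uniform (John), and for such domains the horizontal Sobolev extension theorem provides \eqref{eq:extOp} with some constant $C_B$ (the Euclidean prototype being \cite{MR631089}). The task is then to transfer this to $T(B)$ with a constant independent of $T$. Here I would use the rigidity of $1$-quasiconformal maps in $\mathbb{H}^1$: by the regularity and Liouville-type theory (cf.\ \cite{Pansu1,MR1317384}), every such $T$ is conformal and extends to a global conformal map $\tilde T$ of $\mathbb{H}^1\cup\{\infty\}$. Since $4$ is the homogeneous dimension, the energy $\int |\nabla_H u|^4$ is invariant under $\tilde T$ and $\tilde T^{-1}$. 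Thus, given $u\in \mathcal{L}^1_4(T(B))$, I would set $v:=u\circ T\in \mathcal{L}^1_4(B)$ (equal energy), extend it to $w:=\mathrm{ext}_B(v)\in \mathcal{L}^1_4(\mathbb{H}^1)$, and define $\mathrm{ext}(u):=w\circ \tilde T^{-1}$. This function lies in $\mathcal{L}^1_4(\mathbb{H}^1)$ (the single point $\tilde T(\infty)$ is removable, having zero $4$-capacity), restricts to $u$ on $T(B)$, and has energy at most $C_B$ times that of $u$. Hence each $T(B)$ is an extension domain with the \emph{same} constant $C_B$, and the first part delivers the uniform QED bound.

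The routine ingredients here---the modulus--capacity identity, the existence of near-optimal potentials, and the extension theorem for $B$---are all standard and quotable. The genuine content, and the step I expect to require the most care, is the second part: one must invoke the \emph{global} conformal extension of the $1$-quasiconformal map (so that $\tilde T^{-1}$ is defined on all of $\mathbb{H}^1$, not merely on $T(B)$ where $T^{-1}$ lives) and then check that conformal invariance of the $4$-energy, together with removability of the point at infinity, indeed transports the extension operator of $B$ to $T(B)$ with no loss in the constant. The restriction of the statement to bounded images containing the origin is precisely what makes the homogeneous Sobolev space the correct setting and keeps these manipulations legitimate.
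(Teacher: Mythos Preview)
Your approach is essentially the same as the paper's: reduce \eqref{eq:QED condition_thm} to a capacity inequality, push a near-optimal potential through the extension operator, and then handle $T(B)$ via the Liouville theorem and conformal invariance. Two points deserve comment.

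First, for the implication EDE $\Rightarrow$ QED, the paper is more careful about a subtlety you glide over. The modulus--capacity identity the paper invokes (Kallunki--Shanmugalingam) is stated for \emph{continuous} competitors, whereas the extension operator \eqref{eq:extOp} is only asserted to land in $\mathcal{L}^1_4(\mathbb{H}^1)$ and need not preserve continuity. The paper therefore replaces a near-optimal $u$ by $v=\tfrac{1+r}{1-r}(u-r)$ so that $v\le -r$ on $E$ and $v\ge 1+r$ on $F$; after extending $v$ they approximate $\mathrm{ext}(v)$ by smooth functions in $\mathcal{L}^1_4$ and use the buffer $r$ to ensure the approximant is still admissible. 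Your line ``$\mathrm{ext}(u)$ \dots is admissible for the global capacity'' hides exactly this step: you need either a modulus--capacity equality that allows discontinuous (Newtonian) competitors and boundary constraints in the quasi-everywhere sense, or the buffer-and-mollify argument. Either fix works, but you should say which one you are using.

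Second, for the ``in particular'' part, the paper takes the slightly shorter route of applying conformal invariance of $\mathrm{mod}_4$ directly to the curve families $\Gamma(T^{-1}E,T^{-1}F,B)$ and $\Gamma(T^{-1}E,T^{-1}F,\mathbb{H}^1)$ (discarding the single point $T^{-1}(\infty)$, which carries zero $4$-modulus), rather than showing that $T(B)$ is itself EDE by conjugating the extension operator. Your construction of $\mathrm{ext}_{T(B)}:=\mathrm{ext}_B(\,\cdot\circ T)\circ \tilde T^{-1}$ is correct and gives the same constant, but it is a detour: once $B$ is QED, conformal invariance of modulus transfers the QED bound to $T(B)$ in one line. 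Finally, note that the lemma does not assume $T(B)$ is bounded or contains the origin; that hypothesis appears only in the subsequent Proposition~\ref{p:dist_alpha}.
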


The first part is based on results about $4$-capacities. The QED
property of $B$ follows from work by Lu~\cite{MR1787096}, see also
Greshnov \cite{MR1603290}, which extends a result by Jones
\cite[Theorem 2]{MR631089} to Carnot groups. The QED property of
$T(B)$ is then a consequence of a Liouville-type theorem in
$\mathbb{H}^1$ and conformal invariance of the $4$-modulus.
\begin{proof}
Let $D$ be an EDE as in the first part of the Lemma. To establish
\eqref{eq:QED condition_thm} for $D$, we follow the same reasoning
as used to prove \cite[Lemma 3.6]{MR2466579} in $\mathbb{R}^n$.
The existence of the extension operator \eqref{eq:extOp} has
immediate consequences for capacities. For a domain $U\subset
\mathbb{H}^1$, and nonempty disjoint compact sets $C_0,C_1\subset
U$, we define the \emph{$4$-capacity}
\begin{equation}\label{eq:def_cap}
\mathrm{cap}_4(C_0,C_1;U):= \inf_{u \in W} \int_{U} |\nabla_H
u(q)|^4 \, dq,
\end{equation}
where $W:=W(C_0,C_1;U)=\{u\in  C(U)\cap \mathcal{L}^1_4(U):\,
u|_{C_0}\leq 0\text{ and }u|_{C_1}\geq 1\}$. We apply this
definition first for $U=D$, $C_0=E$ and $C_1=F$ as in the
statement of the lemma. Thus, for every $\varepsilon>0$, there
exists $u\in W(E,F;D)$ such that
\begin{displaymath}
\int_D |\nabla_H u(q)| \, dq \leq \mathrm{cap}_4(E,F;D) +
\frac{\varepsilon}{2}.
\end{displaymath}
Our goal is to control the left-hand side of the inequality from
below by $\mathrm{cap}_4(E,F;\mathbb{H}^1)$. To achieve this, we
will apply the extension operator \eqref{eq:extOp} to a
modification $v$ of $u$, so as to obtain a competitor in
$W(E,F;\mathbb{H}^1)$. Namely, we choose $r>0$ small enough such
that $ v:= \frac{1+r}{1-r} (u-r) $ satisfies
\begin{equation}\label{eq:competitior_v}
\int_{D} |\nabla_H v(q)|^4 \, dq \leq \mathrm{cap}_4(E,F;D) +
\varepsilon,\quad v|_E\leq -r,\quad v|_F\geq 1+r.
\end{equation}
Clearly, $v \in \mathcal{L}^1_4(D)$ with $\nabla_H v =
\frac{1+r}{1-r}\,\nabla_H u$. By \eqref{eq:extOp}, there exists
$\mathrm{ext}(v) \in \mathcal{L}^1_4(\mathbb{H}^1)$ with
\begin{equation}\label{eq:intermediate}
\mathrm{ext}(v)|_D =v\quad \text{and}\quad \int_{\mathbb{H}^1}
|\nabla_H \mathrm{ext}(v)(q)|^4 \,dq\leq C \int_D |\nabla_H
v(q)|^4\, dq,
\end{equation}
for the constant $C$ given by \eqref{eq:extOp}. Even if $v$ is
continuous on $D$, a priori, $\mathrm{ext}(v)$ is only an element
in $\mathcal{L}^1_4(\mathbb{H}^1)$ and does not necessarily have a
continuous representative on $\mathbb{H}^1$. However, there exists
a sequence $(v_n)_{n\in \mathbb{N}} \subset
C^{\infty}(\mathbb{R}^3)$ such that
\begin{displaymath}
v_n \to \mathrm{ext}(v)\;\text{ locally in }
L^1(\mathbb{R}^3)\quad\text{and}\quad \nabla_H v_n \to \nabla_H\,
\mathrm{ext}(v)\;\text{ in }L^4(\mathbb{R}^3),
\end{displaymath}
see \cite[Section 3.1]{MR1317384}, and also \cite[Chapter
1.B]{MR657581}. Recalling that $\mathrm{ext}(v)|_{E}\leq -r$ and $\mathrm{ext}(v)|_{F}\geq 1+r$ almost everywhere, this allows us
to choose a (smooth) function $\varphi \in W(E,F;\mathbb{H}^1)$
with
\begin{displaymath}
\int_{\mathbb{H}^1}|\nabla_H \varphi(q)|^4 \, dq \leq
\int_{\mathbb{H}^1} |\nabla_H \mathrm{ext}(v)(q)|^4 \,dq+
\varepsilon.
\end{displaymath}
Combining this with \eqref{eq:competitior_v} and
\eqref{eq:intermediate}, and then letting $\varepsilon$ tend to
$0$, we deduce that
\begin{displaymath}
\mathrm{cap}_4(E,F;\mathbb{H}^1) \leq C\, \mathrm{cap}_4(E,F;D).
\end{displaymath}
To conclude the proof of the QED property of $D$, it suffices to
replace the $4$-capacities in the above estimate by the
$4$-modulus of the curve families $\Gamma(E,F;\mathbb{H}^1)$ and
$\Gamma(E,F;D)$, respectively. In $\mathbb{R}^n$, the
corresponding modulus-capacity equality is due to Hesse
\cite{MR379871}. There are several related statements for
$\mathbb{H}^1$, but the assumptions for instance in
\cite{zbMATH00746069} and Eichman's result \cite{Eichmann} cited
in \cite[Section 3.2]{MR1317384} are a bit different from ours.
Instead, we apply \cite[Theorem 1.1]{MR1833251}, which holds for
$p$-modulus and all disjoint compact non-empty sets $C_0$ and
$C_1$ in arbitrary domains $U$ of proper $\varphi$-convex metric
measure spaces with a doubling measure supporting a
$(1,p)$-Poincar\'{e} inequality with $1<p<\infty$. This class of
metric measure spaces includes $(\mathbb{H}^1,d)$ with the
Lebesgue measure for $p=4$. The caveat is that the capacities
appearing in \cite{MR1833251} are defined using upper gradients,
specifically,
\begin{equation}\label{eq:def_cap2}
\mathrm{Cont}-\mathrm{Cap}_4(C_0,C_1;U)=\inf_{g} \int_U g(q)^4\,
dq,
\end{equation}
where the infimum is taken over all non-negative Borel functions
$g$ that are upper gradients -- or weak upper gradients -- of
functions $u\in C(U)$ with the property $u|_{C_0} \leq 0$ and
$u|_{C_0}\geq 1$. By the proof of \cite[Proposition
C.12]{kleiner2021pansu},
 the definitions in \eqref{eq:def_cap} and \eqref{eq:def_cap2}
yield the same result.  Thus we have shown that every EDE
$D\subset \mathbb{H}^1$ is indeed a QED domain.

The second part of the lemma can be deduced by applying the
previous statement to the Kor\'{a}nyi unit ball $D=B$. The EDE
property of $B$ follows from \cite[Theorem C]{MR1787096} since $B$
is uniform and hence an $(\epsilon, \infty)$-domain in the sense
of \cite[Definition 1.1]{MR1787096}; see also \cite[Theorem
5]{MR1603290}, \cite[Theorem 3.2.]{MR2135732}, and the comment
below \cite[Theorem 1.1]{MR1848049}. Then every domain $T(B)$ as
in the lemma is also a QED domain, with the same constant, by
conformal invariance of $\mathrm{mod}_4$. Indeed, by a version of
Liouville's theorem \cite[Corollary 1.4]{MR1459590}, $T$ is the
restriction of a conformal self-map of the one-point
compactification $\widehat{\mathbb{H}}^1$. The claim \eqref{eq:QED
condition_thm} for $D=T(B)$ follows from the QED property of $B$
by applying $T$ to the curve families
$\Gamma(T^{-1}(E),T^{-1}(F),B)$ and
$\Gamma(T^{-1}(E),T^{-1}(F),\mathbb{H}^1)$, recalling that the
family of all nonconstant curves passing through one point in
$\mathbb{H}^1$ has vanishing $4$-modulus by \cite[Corollary
7.20]{heinonen2012lectures}, so we may ignore the points
$T(\infty)$ and $T^{-1}(\infty)$.
\end{proof}

Lemma \ref{l:QED} allows us to prove the following proposition
with constants $C$ and $\alpha$ that do not depend on the
particular domain $\Omega$ in the statement. The class of sets $\Omega$ covered by Proposition \ref{p:dist_alpha} is strictly larger than the class of Kor\'{a}nyi balls, since the $1$-quasiconformal maps on $B$  include suitable compositions with \emph{the Kor\'{a}nyi inversion}, see Section \ref{sect-Mobius}.

\begin{proposition}\label{p:dist_alpha}
For every $K\geq 1$ and $0<m<M<\infty$, there exist constants
$C,\alpha>1$ such that whenever $\Omega \subset \mathbb{H}^1$ is a
$1$-quasiconformal image of $B$ with $B(0,m) \subset \Omega
\subset B(0,M)$ and
 $g:\Omega \to g(\Omega)\subset \mathbb{H}^1\setminus \{0\} $ is a $K$-quasiconformal map, then
\begin{equation}\label{eq:dist_est_Finer}
C^{-1}\, d(q,\partial \Omega)^{\alpha} \leq
\frac{\|g(q)\|}{\|g(0)\|} \leq C d(q,\partial
\Omega)^{-\alpha},\quad \text{for all }q\in \Omega.
\end{equation}
\end{proposition}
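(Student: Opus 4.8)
The plan is to derive the two-sided power bound from a single comparison of curve-family moduli in the source domain $\Omega$ and in the target $g(\Omega)$, exploiting that $g$ omits the origin so that $\|g(q)\|$ and $\|g(0)\|$ are the distances from $g(q)$ and $g(0)$ to the omitted value $0$. Throughout write $\delta := d(q,\partial\Omega)$; since $\Omega\subset B(0,M)$ we have $\delta<M$, and the content of the estimate lies in the regime $\delta\to 0$, where $q$ is close to $\partial\Omega$. The remaining cases are absorbed into the constants.

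First I fix the source continua. Let $F:=\overline{B(0,m/2)}$, which is compactly contained in $\Omega$ because $B(0,m)\subset\Omega$, and let $E$ be a short segment starting at $q$ and running toward a nearest boundary point of $\Omega$, stopped so that $E\subset\Omega$ with $\diam(E)\sim\delta$. For $\delta$ small the point $q$ lies far from $0$ (as $d(0,\partial\Omega)\geq m$), so $E$ and $F$ are disjoint continua with $\dist(E,F)\gtrsim m$ and relative distance $\Delta(E,F)=\dist(E,F)/\min\{\diam E,\diam F\}\sim 1/\delta$. Since $(\mathbb{H}^1,d)$ is Ahlfors $4$-regular and Loewner, the Loewner lower bound gives $\mathrm{mod}_4(\Gamma(E,F,\mathbb{H}^1))\geq\phi(\Delta(E,F))\gtrsim(\log(1/\delta)+1)^{-3}$ for the decreasing Loewner function $\phi$; applying the uniform QED inequality \eqref{eq:QED condition_thm} of Lemma \ref{l:QED} I transfer this inside $\Omega$, obtaining $\mathrm{mod}_4(\Gamma(E,F,\Omega))\geq C^{-1}\mathrm{mod}_4(\Gamma(E,F,\mathbb{H}^1))\gtrsim(\log(1/\delta)+1)^{-3}$, with all constants independent of the particular domain $\Omega$. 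Quasi-invariance \eqref{eq:mod} then yields the same lower bound, up to the factor $K'$, for $\mathrm{mod}_4(\Gamma(g(E),g(F),g(\Omega)))$.

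Next I bound this target modulus from above using the explicit ring formula \eqref{eq:ring}. Here I use a Harnack-type estimate: because $F$ is compactly contained in $\Omega$ with $\dist(F,\partial\Omega)\geq m/2$ and $g$ omits $0$, a fixed spherical collar $B(0,3m/4)\setminus\overline{B(0,m/2)}\subset\Omega$ of bounded modulus maps under $g$ to a ring separating $g(F)$ from $0$; this forces $\|g(p)\|\sim\|g(0)\|$ for all $p\in F$, with uniform constants, so $g(F)$ is confined to a spherical shell of controlled radial spread about radius $\|g(0)\|$. Consequently every curve joining $g(F)$ to $g(q)$ must cross the round annulus between the shell of $g(F)$ and the Kor\'anyi sphere through $g(q)$, so by monotonicity of the modulus and \eqref{eq:ring},
\begin{displaymath}
\mathrm{mod}_4(\Gamma(g(E),g(F),g(\Omega)))\leq \mathrm{mod}_4(\Gamma(g(E),g(F),\mathbb{H}^1))\leq \pi^2\Bigl(\Bigl|\log\tfrac{\|g(q)\|}{\|g(0)\|}\Bigr|-c_0\Bigr)^{-3},
\end{displaymath}
valid once $|\log(\|g(q)\|/\|g(0)\|)|$ exceeds the fixed constant $c_0$ coming from the radial spread of $g(F)$. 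Comparing the lower and upper bounds gives $|\log(\|g(q)\|/\|g(0)\|)|\lesssim \log(1/\delta)+1$, which is exactly $C^{-1}\delta^{\alpha}\leq \|g(q)\|/\|g(0)\|\leq C\delta^{-\alpha}$ for suitable $C,\alpha>1$; the bounded regime $|\log(\|g(q)\|/\|g(0)\|)|\leq c_0$ and the case where $\delta$ is bounded below are absorbed by enlarging $C$, using $\delta<M$.

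The main obstacle is the passage from moduli back to the pointwise power law in the target, i.e.\ the Harnack/radial-spread control of $g(F)$ and the verification that every admissible curve genuinely crosses the full round annulus between the two shells. Both points are sensitive to the anisotropy of the Kor\'anyi sphere near its characteristic points, and both rely, for the claimed uniformity of $C$ and $\alpha$ over all admissible $\Omega$, on the fact that the QED constant in Lemma \ref{l:QED} and the Ahlfors-regularity and Loewner data of $(\mathbb{H}^1,d)$ do not depend on $\Omega$. Establishing the radial-spread estimate cleanly, again via a modulus computation that uses crucially that $g$ omits $0$, is the step requiring the most care.
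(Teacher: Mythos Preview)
Your overall architecture---Loewner lower bound plus QED transfer on the source side, ring-modulus upper bound on the target side, then compare---matches the paper. The crucial difference is \emph{where you choose the continua}, and this is where your argument breaks.

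You fix $E$ and $F$ in the \emph{source} (a short segment near $q$ and the ball $\overline{B(0,m/2)}$), push forward, and then claim the target modulus $\mathrm{mod}_4(\Gamma(g(E),g(F),g(\Omega)))$ is bounded above by the modulus of the round annulus between the shell containing $g(F)$ and the sphere through $g(q)$. But the curve family connects $g(E)$ to $g(F)$, not $\{g(q)\}$ to $g(F)$. For the annulus bound you would need all of $g(E)$ to lie outside (a fixed dilate of) $\partial B(0,\|g(q)\|)$, and you have no such control: $E$ sits arbitrarily close to $\partial\Omega$, so nothing prevents $g(E)$ from extending back toward, or even into, the shell containing $g(F)$. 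If that happens, there are short curves from $g(E)$ to $g(F)$ and the claimed upper bound fails. The Harnack estimate for $g(F)$---which you correctly flag as delicate---would not help here even if granted.

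The paper sidesteps this by building the continua in the \emph{target}: $E'$ is the straight segment from $g(0)$ to the omitted value $0$, and $F'$ is the half-ray from $g(q)$ away from $0$. By construction $E'\subset\overline{B(0,\|g(0)\|)}$ and $F'\subset\mathbb{H}^1\setminus B(0,\|g(q)\|)$, so the annulus separation is automatic and the upper modulus bound is immediate from \eqref{eq:ring}. Because $0$ and $\infty$ are both omitted by $g$, the preimages $g^{-1}(E')$ and $g^{-1}(F')$ (suitably truncated via an exhaustion $\Omega_n$) are continua in $\Omega$ reaching $\partial\Omega_n$ from $0$ and from $q$ respectively; this gives $\mathrm{diam}\gtrsim m$ and $\gtrsim d(q,\partial\Omega)$, which is exactly the input for the Loewner lower bound. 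No separate Harnack step is needed. If you want to repair your argument, this reversal---choose continua in the target through the omitted values, then pull back---is the missing idea.
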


\begin{proof}[Proof of Proposition~\ref{p:dist_alpha}] If $\Omega$ is as assumed in the proposition, then
\begin{equation}\label{eq:dist_bound_sphere}
d(q,\partial \Omega) \leq M, \quad \text{for all }q\in \Omega.
\end{equation}
Indeed, since $q\in \Omega$ belongs to $B(0,M)$, there exists a
horizontal line segment $\ell$ of $d$-length at most $M$
connecting $q$ to $\mathbb{H}^1 \setminus B(0,M)\subseteq
\mathbb{H}^1 \setminus \Omega$. As $\ell$ is a connected set, it
must intersect $\partial \Omega$ in at least one point, which
proves that $d(q,\partial \Omega) \leq M$ as claimed.

Fix now an arbitrary point $q\in \Omega$. We first show the upper
bound in \eqref{eq:dist_est_Finer}. There are two cases to
consider. If $\|g(0)\|>\frac{1}{2} \|g(q)\|$, then we find by
\eqref{eq:dist_bound_sphere} that
\begin{displaymath}
\frac{\|g(q)\| }{\|g(0)\|}< 2   \leq  2   M^{\alpha} d(q,\partial
\Omega)^{-\alpha}\leq C d(q,\partial \Omega)^{-\alpha}
\end{displaymath}
for every $\alpha >0$ and $C\geq 2M^{\alpha}$, establishing
 the upper bound in \eqref{eq:dist_est_Finer} in that case.

\medskip If instead $\|g(0)\|\leq \frac{1}{2} \|g(q)\|$, we will use
the modulus of suitable curve families $\Gamma_n$ to prove the
corresponding estimate. To define $\Gamma_n$, let first $E'$ be
the line segment connecting $g(0)$ to $0$, and let $F'$ be the
half ray on the line through $0$ and $g(q)$ that emanates from
$g(q)$ and does not contain $0$. By convexity of Kor\'{a}nyi
balls,
\begin{equation}\label{eq:ContinuaPosition}
E'\subset \overline{B}(0,\|g(0)\|)\quad\text{and}\quad F' \subset
\mathbb{H}^1 \setminus B(0,\|g(q)\|).
\end{equation}
Moreover,  $E'$ and $F'$ are disjoint since $\|g(0)\|<\|g(q)\|$ by
assumption. Essentially, we would like to work with the family of
curves connecting $E'\cap g(\Omega)$ and $F'\cap g(\Omega)$ inside
$g(\Omega)$, but this would lead to technical problems since the
two sets are neither compact nor necessarily connected. To address
this, we consider suitable sequences of continua $E_n'\subset E'$,
$F_n'\subset F'$. By assumption, $\Omega$ is the image of $B$
under a $1$-quasiconformal map $T$, and we define
$\Omega_n:=T(B(0,1-\frac{1}{n}))$. Then $g(0)\in g(\Omega_n)$, and
for $n$ large enough, also $g(q)\in g(\Omega_n)$. On the other
hand $0\notin \overline{g(\Omega_n)}$ by our assumption on $g$,
and also the unbounded ray $F'$ must contain points outside the
compact set $\overline{g(\Omega_n)}$. It follows that $E'$ and
$F'$ must intersect $\partial g(\Omega_n)$. These considerations
imply that, for $n$ large enough depending on $q$, the sets
\begin{displaymath}
E_n'=\text{connected component of }E'\cap
\overline{g(\Omega_n)}\text{ containing }g(0),
\end{displaymath}
\begin{displaymath}
F_n'=\text{connected component of }F'\cap
\overline{g(\Omega_n)}\text{ containing }g(q)
\end{displaymath}
are disjoint continua as desired. If
\begin{displaymath}
\Gamma'_n:= \Gamma(E'_n,F'_n,g(\Omega))
\end{displaymath}
denotes the family of curves in $g(\Omega)$ that connect $E_n'$
and $F_n'$, then every element in $\Gamma_n'$ must have a subcurve
connecting $\partial B(0,\|g(0)\|)$ and $\partial B(0,\|g(q)\|)$
by \eqref{eq:ContinuaPosition}. This yields
\begin{displaymath}
\mathrm{mod}_4(\Gamma'_n)\leq \mathrm{mod}_4(\Gamma(\partial
B(0,\|g(0)\|),\partial B(0,\|g(q)\|),\mathbb{H}^1)),
\end{displaymath}
and formula \eqref{eq:ring} for the modulus of  Kor\'{a}nyi annuli
implies
\begin{equation}\label{eq:mod12}
\mathrm{mod}_4(\Gamma'_n)\leq \pi^2 \left(\ln
\frac{\|g(q)\|}{\|g(0)\|}\right)^{-3}.
\end{equation}
On the other hand, if $\Gamma_n$ denotes the family
$g^{-1}(\Gamma'_n)$, we know by \eqref{eq:mod} that
\begin{equation}\label{eq:mod22}
\mathrm{mod}_4(\Gamma_n) \leq K' \mathrm{mod}_4(\Gamma'_n),
\end{equation}
with $K'$ depending only on $K$. To find a lower bound for
$\mathrm{mod}_4(\Gamma_n)$, we observe that $\Gamma_n$ consists of
all curves in $\Omega$ that connect the two continua $E_n:=
g^{-1}(E'_n)$ and $F_n:=g^{-1}(F'_n)$. By \eqref{eq:mod22} and the
QED property stated in Lemma \ref{l:QED}, there exists a universal
constant $0<c\leq 1$, independent of the choice of $\Om$ in
Proposition~\ref{p:dist_alpha}, and independent of $E_n,F_n$, such
that
\begin{equation}\label{eq:mod_interm_est}
\mathrm{mod}_4(\Gamma'_n) \geq \frac{c}{K'}
\mathrm{mod}_4(\Gamma(E_n,F_n,\mathbb{H}^1)).
\end{equation}
The right-hand side can be bounded from below using the fact that
$(\mathbb{H},d)$ equipped with the Haar measure is a $4$-regular
$4$-Loewner space in the sense of \cite{MR1654771}, see for
instance \cite{heinonen2012lectures,MR1683160} and references
therein. This means that
\begin{equation}\label{eq:mod_last}
\mathrm{mod}_4(\Gamma(E_n,F_n,\mathbb{H}^1))\geq
\psi(\Delta(E_n,F_n)),
\end{equation}
where $\psi$ denotes a decreasing homeomorphism as in
\cite[(3.9)]{MR1654771} and
\begin{displaymath}
\Delta(E_n,F_n):=
\frac{\mathrm{dist}(E_n,F_n)}{\min\{\mathrm{diam}E_n,\mathrm{diam}F_n\}}
\end{displaymath}
is the relative distance of $E_n$ and $F_n$. Hence, by
\eqref{eq:mod12}, \eqref{eq:mod_interm_est}, and
\eqref{eq:mod_last}, we obtain for large enough $n$,
\begin{equation}\label{eq:ModPutTogether}
\pi^2 \left(\ln \frac{\|g(q)\|}{\|g(0)\|}\right)^{-3}
 \geq \frac{c}{K'} \psi(\Delta(E_n,F_n)).
\end{equation}
Since $0\in E_n\subset \Omega_n$ and $q\in F_n\subset \Omega_n$,
and the endpoints of $E_n$ and $F_n$ lie in $\partial \Omega_n$,
we find
\begin{equation}\label{eq:Loewner_prep}
\mathrm{diam} E_n \geq d(0,\partial \Omega_n),\quad \mathrm{diam}
F_n \geq d(q,\partial \Omega_n)\quad \text{and }\quad
\mathrm{dist}(E_n,F_n)\leq \|q\|\leq M.
\end{equation}
Recall that $\Omega$ is the image of $B=B(0,1)$ under a
$1$-quasiconformal map $T$, and $\Omega_n =T(B(0,1-\frac{1}{n}))$,
where $T$ is the restriction of a conformal self-map of
$\widehat{\mathbb{H}}^1$, which we continue to denote by $T$.
Since the image $T(B) \subset B(0,M)$ is bounded, $T$ is uniformly
continuous on $\overline{B}$. This can be used to prove for every
$q\in \Omega$ that
\begin{equation}\label{eq:dist_est_bdry}
 2 d(q,\partial \Omega_n) \geq
d(q,\partial \Omega)
 \end{equation}
for all $n$ large enough, depending on $q$ and $T$. Since
$B(0,m)\subset \Omega$ by assumption, we have $d(0,\partial
\Omega)\geq m$, and hence we may assume that
\begin{displaymath}
d(0,\partial \Omega_n)\overset{\eqref{eq:dist_est_bdry}}{\geq}
\frac{m}{2}\overset{\eqref{eq:dist_bound_sphere}}{\geq} \frac{m \,
d(q,\partial \Omega)}{2 M}
\end{displaymath}
for $n$ large enough. If we apply \eqref{eq:dist_est_bdry} also to
the lower bound for $\mathrm{diam}F_n$ in \eqref{eq:Loewner_prep},
then the monotone decrease of the Loewner function yields
\begin{equation}
\psi(\Delta(E_n,F_n))\geq \psi\left(\frac{2 M^2}{m\, d(q,\partial
\Omega)}\right).
\end{equation}
By \cite[Theorem 3.6]{MR1654771}, we may assume that that $\psi(t)
\sim (\ln t)^{-3}$ for large enough $t$, i.e.,
\begin{equation}\label{eq:mod3a}
\psi(\Delta(E_n,F_n))\gtrsim \left(\ln\left(\frac{2
M^2}{m\,d(q,\partial \Omega))}\right)\right)^{-3}
\end{equation}
if $d(q,\partial \Omega)$ is small enough, where the notation
``$\gtrsim$'' means that the inequality holds up to a positive
multiplicative constant that does not depend on $q$, $E_n$ or
$F_n$. Hence, if $d(q,\partial \Omega)$ is, say, such that
\begin{equation}\label{eq:DIfficult_q}
\frac{m d(q,\partial \Omega)}{2 M^2}
 \leq \frac{1}{t_0},
\end{equation}
for some $t_0$ that depends only on the geometry of
$\mathbb{H}^1$, then \eqref{eq:mod3a} holds for all large enough
$n$. On the other hand, if \eqref{eq:DIfficult_q} fails, then we
will
 simply use the estimate
\begin{equation}\label{eq:mod3b}
\psi(\Delta(E_n,F_n))\geq \psi( t_0)>0.
\end{equation}
This suffices to treat that case since we always have that
$d(q,\partial \Omega)\leq M$ by \eqref{eq:dist_bound_sphere}, so
if $d(q,\partial \Omega)$ is also bounded from below by a positive
constant in terms of $m$ and $M$ (and the absolute constant $t_0$), then actually $d(q,\partial
\Omega)$ is comparable to a constant depending on $m$ and $M$.
Hence in that case
\begin{displaymath}
\pi^2 \left(\ln \frac{\|g(q)\|}{\|g(0)\|}\right)^{-3} \geq
\frac{c}{K'} \psi(t_0)
\end{displaymath}
implies that $q$ satisfies the second inequality in
\eqref{eq:dist_est_Finer} for any $\alpha$ with large enough
constant $C$, depending only on $K$, $m$, and $M$. Hence it
remains to discuss the case of $q$ as in \eqref{eq:DIfficult_q}.
By \eqref{eq:ModPutTogether} and \eqref{eq:mod3a} applied to large
enough $n$, we obtain
\begin{equation}\label{eq:mod3}
\pi^2 \left(\ln \frac{\|g(q)\|}{\|g(0)\|}\right)^{-3} \gtrsim
\left(\ln\left(\frac{2M^2}{m\,d(q,\partial
\Omega)}\right)\right)^{-3}.
\end{equation}
It follows that
\begin{displaymath}
\frac{\|g(q)\|}{\|g(0)\|}\lesssim \left(\frac{2 M^2}{md(q,\partial
\Omega)}\right)^{C(K)^3},
\end{displaymath}
which concludes the proof of the upper bound in
\eqref{eq:dist_est_Finer}.

\medskip

In order to show the lower estimate in the
assertion~\eqref{eq:dist_est_Finer}, we follow a similar approach
as above, and hence we only sketch the proof. First assume that
$\|g(q)\|>\frac{1}{2} \|g(0)\|$. By \eqref{eq:dist_bound_sphere}
we immediately obtain that
\begin{displaymath}
\|g(0)\| < 2 \|g(q)\|  \leq 2 M \|g(q)\| d(q,\partial
\Omega)^{-1}.
\end{displaymath}
So in that case the lower estimate in \eqref{eq:dist_est_Finer}
holds for every $\alpha>0$ and with sufficiently large $C$,
depending on $M$. If $\|g(q)\|\leq \frac{1}{2} \|g(0)\|$, then we
define sets $E'$ and $F'$ as above with $g(q)$ instead of $g(0)$
in the definition of $E'$ and with the opposite change in the
definition of $F'$. Then the counterpart for \eqref{eq:mod12}
reads:
\begin{equation*}
\mathrm{mod}_4(\Gamma')\leq \mathrm{mod}_4(\Gamma(\partial
B(0,\|g(q)\|),\partial B(0,\|g(0)\|),\mathbb{H}^1))= \pi^2
\left(\ln \frac{\|g(0)\|}{\|g(q)\|}\right)^{-3},
\end{equation*}
and the rest of the proof follows as before.
\end{proof}

\begin{remark}\label{rem:growth-non-zero}
By applying Proposition~\ref{p:dist_alpha} to the case $\Om=B$
(with $m=M=1$), we get
 the growth estimates~\eqref{eq:dist_est_Finer} for quasiconformal
 maps on $B$
 omitting the origin, hence generalizing Lemma 2.2 in~\cite{MR2900163}.
\end{remark}

Having proven Proposition~\ref{p:dist_alpha} for quasiconformal
mappings that omit the origin, we next deduce information for maps
with $f(0)=0$.

\begin{proposition}\label{c:dist_alpha}
For every $K\geq 1$, there is $\alpha>0$ such that whenever
$f:B\subset \mathbb{H}^1 \to f(B)\subset \mathbb{H}^1 $ is a
$K$-quasiconformal map with $f(0)=0$, then
\begin{displaymath}
\|f(q)\| \leq C_1(f)+ C_2(f)\,d(q,\partial B)^{-\alpha},\quad
\text{for all }q\in B,
\end{displaymath}
for positive and finite constants $C_1(f)$ and $C_2(f)$, which do
not depend on $q$.
\end{proposition}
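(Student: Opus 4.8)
The plan is to reduce the statement to the omitting-origin case already settled in Proposition~\ref{p:dist_alpha}, by localizing $f$ to Kor\'anyi balls on which the value $0$ is not attained. First, for $q$ in the fixed inner ball $\overline{B(0,1/2)}$ the assertion is immediate: by continuity $\|f\|$ is bounded on the compact set $\overline{B(0,1/2)}$, and $d(q,\partial B)\geq 1/2$ there, so such $q$ only contribute to the constant $C_1(f)$. It remains to bound $\|f(q)\|$ for $q$ close to $\partial B$, and the key is the following localized growth estimate. If $B(p,r)\subset \mathbb{H}^1$ is any Kor\'anyi ball with $0\notin B(p,r)$, then the map $\Phi(x):=p\cdot\delta_r(x)$, built from left translation by $p$ and the anisotropic dilation $\delta_r(z,t)=(rz,r^2t)$, is a conformal ($1$-quasiconformal) bijection of $B$ onto $B(p,r)$ with $\Phi(0)=p$ and $d(\Phi(x),\partial B(p,r))=r\,d(x,\partial B)$. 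Since $f(0)=0$ and $f$ is injective, $0\notin B(p,r)$ forces $0\notin f(B(p,r))$, so $g:=f\circ\Phi\colon B\to g(B)\subset\mathbb{H}^1\setminus\{0\}$ is $K$-quasiconformal and omits the origin. Applying Remark~\ref{rem:growth-non-zero} to $g$ (that is, Proposition~\ref{p:dist_alpha} with $\Omega=B$ and $m=M=1$, whose constants depend only on $K$) and undoing the substitution $y=\Phi(x)$ yields
\begin{equation}\label{eq:ball_growth_plan}
\|f(y)\|\leq C(K)\,r^{\alpha}\,\|f(p)\|\,d(y,\partial B(p,r))^{-\alpha},\qquad y\in B(p,r),
\end{equation}
with $\alpha=\alpha(K)$ as in Proposition~\ref{p:dist_alpha}.

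It then suffices to produce, for every $q$ with $\rho:=d(q,\partial B)$ small, a Kor\'anyi ball $B(p,r)$ with $r\sim 1$ satisfying $q\in B(p,r)$, $0\notin B(p,r)$, $d(q,\partial B(p,r))\gtrsim\rho$, and whose center $p$ stays in a fixed compact subset $\mathcal{K}\subset B\setminus\{0\}$ as $q$ varies. Granting such a ball, \eqref{eq:ball_growth_plan} applied at $y=q$ gives
\begin{displaymath}
\|f(q)\|\lesssim_{K} \|f(p)\|\,\rho^{-\alpha}\leq \Big(\max_{\mathcal{K}}\|f\|\Big)\,\rho^{-\alpha},
\end{displaymath}
which is exactly the claimed bound with $C_2(f)$ a multiple of $\max_{\mathcal{K}}\|f\|$ and $\alpha=\alpha(K)$; combined with the inner region this proves Proposition~\ref{c:dist_alpha}. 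To construct the ball I would invoke a uniform interior Kor\'anyi-ball condition for $B$: letting $w\in\partial B$ realize $d(q,w)=\rho$, one selects a ball of radius $r_0\sim 1$ that is internally tangent to $\partial B$ near $w$ and contained in $B$; for $\rho$ small such a ball contains $q$ with $d(q,\partial B(p,r_0))$ comparable to $\rho$, while its center lies at distance $\sim r_0$ from $\partial B$, hence in a fixed compact subset of $B\setminus\{0\}$ (so in particular $0\notin B(p,r_0)$).

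The main obstacle is precisely this geometric construction, since it must be uniform over all boundary points, including the two characteristic points $q_{\pm}=(0,0,\pm1)$, where the Kor\'anyi sphere is least accessible from inside. Because left translations act by shearing rather than by Euclidean rigid motions, the balls $B(p,r)$ are mutually non-isometric as Euclidean bodies, so a naive rolling-ball argument based on Euclidean convexity of $\overline B$ is delicate near $q_{\pm}$. I would therefore avoid explicit computations and rely on the metric regularity of $B$ recorded earlier: $B$ is a uniform (John) domain, hence satisfies an interior corkscrew condition with a constant that remains uniform up to the characteristic points, and this supplies interior balls with centers in a compact subset of $B\setminus\{0\}$ and with the required depth $d(q,\partial B(p,r))\gtrsim\rho$. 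An equivalent route, which avoids tangency altogether, is to connect $q$ to a fixed point $p_0\in\partial B(0,1/2)$ by a Harnack chain of $\lesssim\log(1/\rho)$ Kor\'anyi balls contained in $B\setminus\{0\}$, on each of which \eqref{eq:ball_growth_plan} bounds the oscillation of $\log\|f\|$; multiplying these bounds reproduces the power $\rho^{-\alpha}$ with an exponent still controlled solely by $K$. Either way, the quantitative input that renders the characteristic points harmless is the uniformity of $B$, not the explicit shape of $\partial B$.
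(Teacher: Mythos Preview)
Your strategy is sound and ultimately works (at least via the Harnack-chain variant), but it is considerably more elaborate than what the paper does, and the primary ``interior Kor\'anyi ball'' route has gaps that you correctly flag but do not close. The paper avoids all domain-side geometry by translating in the \emph{target}: since $f(B)\subsetneq\mathbb{H}^1$ (a quasisymmetric image of the complete space $\mathbb{H}^1$ cannot be the open ball), one can pick $\tau\in\mathbb{H}^1\setminus f(B)$, set $g:=\tau^{-1}\cdot f$, and apply Proposition~\ref{p:dist_alpha} directly to $g$ on $\Omega=B$, obtaining $\|f(q)\|\le\|\tau\|+C\|\tau\|\,d(q,\partial B)^{-\alpha}$ in one line. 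No interior balls, no chains, no analysis near characteristic points.

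By contrast, your argument transports the problem to sub-balls $B(p,r)\subset B$ so that $0\notin B(p,r)$ forces $0\notin f(B(p,r))$. The Harnack-chain version is correct in outline: the uniformity of $B$ supplies chains of length $\lesssim\log(1/\rho)$, and telescoping \eqref{eq:ball_growth_plan} along the chain produces a power $\rho^{-\alpha'}$ with $\alpha'$ depending only on $K$ (and the fixed geometry of $B$); one must, however, argue that the chain can be kept in $B\setminus\{0\}$, which is an extra step. The interior-ball version is more delicate: you would need a \emph{uniform} interior Kor\'anyi-ball condition for $\partial B$ (balls of fixed radius $r_0$ tangent from inside at every boundary point), together with the quantitative comparison $d(q,\partial B(p,r_0))\gtrsim d(q,\partial B)$, neither of which is established in the paper, and the second is not automatic even granted the first. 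So your approach buys nothing here and costs a fair amount; the paper's target-side translation is both shorter and sidesteps the characteristic-point issue entirely.
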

The proof below will in fact yield
\begin{displaymath}
\frac{d(f(0),f(q))}{d(f(0),\partial f(B))}\leq 1 + C d(q,\partial
B)^{-\alpha},\quad \text{for all }q\in B.
\end{displaymath}
The crucial feature is that this holds for \emph{all} points $q\in
B$, arbitrarily close to the boundary, with a constant $C$
depending only on $K$ thus controlling the rate at which
$\|f(q)\|$ can grow as $q$ approaches $\partial B$.

\begin{proof}
If $f$ satisfies the assumptions of the proposition, then $f(B)$
is a \emph{strict} subset of $\mathbb{H}^1$. Indeed, suppose
towards a contradiction that $f(B)=\mathbb{H}^1$. This yields a
quasiconformal map $f^{-1}:\mathbb{H}^1 \to B$, which is in fact
\emph{quasisymmetric}, see for instance \cite[Lemma
5.2]{MR3029176}. Since the quasisymmetric image of a complete
space is complete, this would imply that the open ball $B$ is
complete, a contradiction. Hence we can pick a point $\tau\in
\mathbb{H}^1\setminus \{0\}$ that is omitted by $f$. The constants
$C_1(f)$ and $C_2(f)$ will depend on the choice of $\tau$. Now the
map $g: B \to g(B)$, defined by $g(q):= \tau^{-1}\cdot f(q)$, is
quasiconformal with the same constant $K$ as $f$, and it fulfills
the assumptions of Proposition \ref{p:dist_alpha} for $\Omega =B$.
Then,
\begin{align*}
\|f(q)\|= d(\tau^{-1},g(q))\leq \|\tau^{-1}\| + \|g(q)\|&\leq
\|\tau^{-1}\| + C\, \|g(0)\|\, d(q,\partial B)^{-\alpha} \\&=
\|\tau\| + C\, \|\tau\|\, d(q,\partial B)^{-\alpha}
\end{align*}
for all $q\in B$, and constants $C$ and $\alpha$ depending only on
the distortion $K$ of $f$.
\end{proof}

\subsection{Proof that quasiconformal maps belong to Hardy
classes}\label{s:3_2} To prove Theorem \ref{t:QC_hardy}, we have
to consider the radial curves $\gamma(\cdot,\omega)$ from Theorem
\ref{t:polar} more closely. We will use in particular that they
are horizontal curves, that is
\begin{displaymath}
\dot{\gamma}(s,\omega)\in H_{\gamma(s,\omega)}\mathbb{H}^1=
\mathrm{span}\{X_{\gamma(s,\omega)},Y_{\gamma(s,\omega)}\},
\end{displaymath}
and that
\begin{equation}\label{eq:length_el}
\left|\frac{\partial \gamma}{\partial s}(s,(z,t))\right|=
\frac{1}{|z|}.
\end{equation}
where $|\cdot|$ on the left-hand side of \eqref{eq:length_el}
denotes the norm on $H_{\gamma(s,(z,t))}\mathbb{H}^1$ that makes
$X_{\gamma(s,(z,t))},Y_{\gamma(s,(z,t))}$ orthonormal, see the
formula below (4.4) in \cite{zbMATH04023589}, or \cite[Lemma 3.3
and Example 3.6]{MR1942237}. This allows us to prove the
following:

\begin{lemma}\label{l:deriv_r}
If  $f:B \to f(B) \subset \mathbb{H}^1$ is quasiconformal, then
for almost every $\omega =(z,t)\in \partial B\setminus \{z=0\}$
(with respect to any of the measures $\sigma_0$, $\sigma$, or
$\mathcal{S}^3$) and for almost every $s\in (0,1)$, we have
\begin{displaymath}
\left|\frac{\partial}{\partial s}
\|f(\gamma(s,\omega))\|\right|\leq
\frac{|f_I(\gamma(s,\omega))|}{\|f(\gamma(s,\omega))\|} |D_H
f(\gamma(s,\omega))| \frac{1}{|z|},
\end{displaymath}
where $|D_H f|$ is defined as below \eqref{eq:Dist_Ineq}.
\end{lemma}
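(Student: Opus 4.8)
The plan is to differentiate the fourth power $\|f\|^4 = |f_I|^4 + f_3^2$ along the radial curve, eliminate the derivative of the vertical component $f_3$ by means of the contact condition, and then collapse the resulting expression with a short algebraic identity.

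\emph{Selecting good curves and the chain rule.} Since $f$ is quasiconformal, it is absolutely continuous along $\mathrm{mod}_4$-almost every curve in $B$. Combining Proposition~\ref{p:Mod_Radial_subet} with the quasi-invariance \eqref{eq:mod} of the $4$-modulus and the mutual absolute continuity \eqref{eq:ac}, one checks that the radial curves $s\mapsto\gamma(s,\omega)$ enjoy this property for $\sigma$-, $\sigma_0$- and $\mathcal{S}^3$-almost every $\omega\in\partial B\setminus\{z=0\}$. Fix such an $\omega=(z,t)$ and write $\dot\gamma(s,\omega)=a(s)X_{\gamma}+b(s)Y_{\gamma}$; by \eqref{eq:length_el} we have $a^2+b^2=1/|z|^2$. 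The chain rule for Sobolev functions that are absolutely continuous along a curve then gives $\dot f_j:=\tfrac{d}{ds}(f_j\circ\gamma)=a\,Xf_j(\gamma)+b\,Yf_j(\gamma)$ for $j=1,2,3$ and almost every $s$; in particular $(\dot f_1,\dot f_2)=D_H f(\gamma)(a,b)$.

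\emph{Contact relation and the computation.} Interpreting the polar coordinates formula \eqref{eq:polarCoordsForm} as a Fubini identity shows that the contact condition $Xf(q),Yf(q)\in H_q\mathbb{H}^1$, valid at Lebesgue-a.e.\ $q\in B$, also holds at $\gamma(s,\omega)$ for almost every $s$ and almost every $\omega$. This yields $Xf_3=2f_2Xf_1-2f_1Xf_2$ and the analogous identity for $Y$, hence $\dot f_3=2f_2\dot f_1-2f_1\dot f_2$. Differentiating and substituting, I expect
\begin{displaymath}
\tfrac{d}{ds}\|f(\gamma)\|^4 = 4\big[(|f_I|^2 f_1 + f_3 f_2)\,\dot f_1 + (|f_I|^2 f_2 - f_3 f_1)\,\dot f_2\big].
\end{displaymath}
Cauchy--Schwarz bounds the bracket by $|(\dot f_1,\dot f_2)|$ times the length of the coefficient vector, and the crucial point is that the cross terms cancel, leaving squared length $(f_1^2+f_2^2)(|f_I|^4+f_3^2)=|f_I|^2\|f\|^4$, i.e.\ length $|f_I|\,\|f\|^2$. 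Since $\tfrac{d}{ds}\|f\|=\tfrac14\|f\|^{-3}\tfrac{d}{ds}\|f\|^4$ wherever $\|f(\gamma)\|>0$, and $|(\dot f_1,\dot f_2)|\le|D_Hf(\gamma)|\,|(a,b)|=|D_Hf(\gamma)|/|z|$ by the definition of the operator norm below \eqref{eq:Dist_Ineq} together with \eqref{eq:length_el}, combining these gives exactly the asserted inequality.

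\emph{Main obstacle.} The algebra is routine; the care goes into the measure-theoretic bookkeeping of the first two paragraphs, where one must arrange, simultaneously for a.e.\ $\omega$ and a.e.\ $s$, both the validity of the chain rule along $\gamma(\cdot,\omega)$ (from absolute continuity on curves) and the pointwise contact condition (via the Fubini argument in polar coordinates). A minor additional point is the set where $\|f(\gamma(s,\omega))\|=0$: because $\|\gamma(s,\omega)\|=s$ the radial curve is injective, so it meets $f^{-1}(0)$ at most once, and this single value of $s$ does not affect the almost-everywhere statement.
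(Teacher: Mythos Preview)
Your proof is correct and follows essentially the same route as the paper: both arguments use the horizontality of $f\circ\gamma(\cdot,\omega)$ to rewrite $\dot f_3$ in terms of $\dot f_1,\dot f_2$, compute the derivative of $\|f(\gamma)\|$ (you via $\|f\|^4$, the paper via the fourth root directly), apply the same Cauchy--Schwarz/length-of-coefficient-vector identity yielding $|f_I|\|f\|^2$, and finish with the operator-norm bound and \eqref{eq:length_el}. The only cosmetic difference is that the paper phrases the chain rule through Pansu differentiability rather than absolute continuity on curves, and your remark about the zero set of $\|f\circ\gamma\|$ is a small addition the paper leaves implicit.
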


\begin{proof}
Taking into account Proposition \ref{p:Mod_Radial_subet}, it
follows from well-known properties of quasiconformal mappings
\cite{MR1317384}, and of rectifiable curves in the Heisenberg
group \cite{MR3417082}, that $f\circ \gamma(\cdot,\omega)$ is a
horizontal curve for $\sigma$ almost every $\omega \in
\partial B\setminus \{z=0\}$.  On the other hand, being quasiconformal, the map $f$
is differentiable in the sense of Pansu \cite{Pansu1} at Lebesgue
almost every point in $B$. By the polar coordinates formula stated
in Theorem \ref{t:polar}, this means that for $\sigma$ almost
every $\omega$ in the Kor\'{a}nyi sphere, for almost every $s\in
(0,1)$, the point $\gamma(s,\omega)$ is a Pansu differentiability
point of $f$. We fix now $\omega\in \partial B \setminus \{z=0\}$
such that $\lambda:=f\circ \gamma(\cdot,\omega)$ is horizontal,
and we further fix $s\in (0,1)$ such that the tangent vectors
$\partial_s \gamma(\omega,s)$ and $\dot{\lambda}(s)$ exist and are
 horizontal, and such that $f$ is Pansu differentiable at
$\gamma(s,\omega)$ with Pansu differential given by its
\emph{formal Pansu differential} as in \cite[Theorem
5.1]{MR1778673}. The horizontality of $\lambda$ means that
\begin{equation}\label{eq:lambda_horizontal}
\dot \lambda_3 = 2 (\dot \lambda_1 \lambda_2 - \dot \lambda_2
\lambda_1),\quad \text{almost everywhere on }(0,1).
\end{equation}
Then, almost everywhere,
\begin{align*}
\frac{\partial}{\partial s} \sqrt[4]{(\lambda_1^2
+\lambda_2^2)^2+\lambda_3^2}&= \frac{1}{4}
\frac{4(\lambda_1^2+\lambda_2^2)(\lambda_1 \dot
\lambda_1+\lambda_2 \dot \lambda_2)+2\dot\lambda_3 \lambda_3}
{\|\lambda\|^3}\\
&\overset{\eqref{eq:lambda_horizontal}}{=
}\frac{(\lambda_1^2+\lambda_2^2)(\lambda_1 \dot\lambda_1+\lambda_2
\dot \lambda_2)+ \dot \lambda_1 \lambda_2 \lambda_3 -
\dot\lambda_2 \lambda_1\lambda_3}
{\|\lambda\|^3}\\
&= \frac{\dot\lambda_1((\lambda_1^2+\lambda_2^2)\lambda_1 +
\lambda_2\lambda_3)+\dot\lambda_2((\lambda_1^2+\lambda_2^2)\lambda_2-\lambda_1\lambda_3)}
{\|\lambda\|^3}.
\end{align*}
Hence
\begin{equation}\label{eq:tangent_bound}
\left|\frac{\partial}{\partial s} \|\lambda\| \right| \leq
\frac{|\dot \lambda_I|}{\|\lambda\|^3} |\lambda_I| \|\lambda\|^2 =
|\dot \lambda_I| \frac{|\lambda_I|}{\|\lambda\|},
\end{equation}
where we have denoted $\lambda_I = \lambda_1 + \mathrm{i}\lambda_2$. The lemma follows upon observing that
\begin{displaymath}
\dot \lambda(s) = D_H f(\gamma(s,\omega))
\partial_s\gamma(s,\omega) \in H_{\lambda(s)} \mathbb{H}^1,
\end{displaymath}
which is based on the fact that the restriction of the Pansu
derivative of $f$ to $H_{\gamma(s,\omega)}\mathbb{H}^1$ coincides
with the formal horizontal derivative $D_H f(\gamma(s,\omega))$
defined below \eqref{eq:Dist_Ineq}, and a chain rule holds for
Pansu derivatives, see \cite{MR2115223}. Hence we obtain
\begin{displaymath}
|\dot \lambda_I (s)| \leq |D_H f(\gamma(s,\omega))|
\left|\frac{\partial}{\partial s}\gamma(s,\omega)\right|,
\end{displaymath}
which yields the claim in Lemma \ref{l:deriv_r} by
\eqref{eq:tangent_bound} and \eqref{eq:length_el}.
\end{proof}

 We now prove the main result of this
section. With Proposition \ref{c:dist_alpha} and Lemma
\ref{l:deriv_r} in place, our argument follows the proof by
Jerison and Weitsman of \cite[Theorem 1]{MR624919}.
\begin{proof}[Proof of Theorem \ref{t:QC_hardy}]
Using left translations and Heisenberg dilations, we may assume
without loss of generality that $f(0)=0$ and that there exists
$\varepsilon>0$ so that
\begin{equation}\label{eq:epsilon_ass}
 \| f|_{B\setminus B(0,1/2)}\|>
\varepsilon. \end{equation} Points $\omega \in
\partial B$ will be denoted by $\omega=(z,t)$. Since $f$ is quasiconformal, it is absolutely continuous along
$\mathrm{mod}_4$ almost every curve. By the modulus formula
 \eqref{eq:mod_formula}, this implies that $f$ is absolutely continuous along the
radial curve segment $\gamma(\cdot,\omega):[1/2,1] \to
\mathbb{H}^1$ for $\mathcal{S}^3$ almost every $\omega$, recalling
that $\mathcal{S}^3|_{\partial B \setminus \{z=0\}}$ is absolutely
continuous with respect to $\sigma$.

Let us now fix an exponent $p>0$, to be determined later, and
$\frac{1}{2}<r<1$. We apply Lemma \ref{l:deriv_r} to obtain
\begin{align*}
&\int_{\partial B} \|f(\gamma(r,\omega))\|^p\,d\mathcal{S}^3(\omega)-
\int_{\partial B} \|f(\gamma(\tfrac{1}{2},\omega))\|^p\,d\mathcal{S}^3(\omega)\\
&= \int_{\partial B} \int_{\frac{1}{2}}^r \frac{\partial}{\partial s} \|f(\gamma(s,\omega))\|^p \, ds\,d\mathcal{S}^3(\omega)\\
&\leq \int_{\partial B} \int_{\frac{1}{2}}^r \left|\frac{\partial}{\partial s} \|f(\gamma(s,\omega))\|^p\right| \, ds\,d\mathcal{S}^3(\omega) \\
&\leq p \int_{1/2}^r \int_{\partial B}
\|f(\gamma(s,\omega))\|^{p-2}|f_I(\gamma(s,\omega)| \,|D_H
f(\gamma(s,\omega))| \frac{1}{|z|}d\mathcal{S}^3(\omega)\,ds.
\end{align*}
The second integral on the left-hand side is a finite positive
number $C=C(f,p)$ that does not depend on $r$. Thus, by the
distortion inequality \eqref{eq:Dist_Ineq} for quasiconformal
maps,
\begin{align*}
&\int_{\partial B} \|f(\gamma(r,\omega)\|^p d\mathcal{S}^3(\omega)\\
& \leq (K'')^{1/4}p \int_{1/2}^r\int_{\partial
B}\|f(\gamma(s,\omega))\|^{p-2}|f_I(\gamma(s,\omega)|\,
J_f(\gamma(s,\omega))^{1/4} \frac{1}{|z|}d\mathcal{S}^3(\omega)\,ds +C\\
& = (K'')^{1/4}p \int_{1/2}^r\int_{\partial B} g(s,\omega)\cdot
h(s,\omega)d\mathcal{S}^3(\omega)\,ds +C=: I(r) + C,
\end{align*}
where, for $\omega=(z,t)$, we have
\begin{align*}
g(s,\omega)&:=\|f(\gamma(s,\omega))\|^{-(p+1)}
J_f(\gamma(s,\omega))^{1/4} \frac{1}{|z|^{1/4}}s^{3/4}\\
h(s,\omega)&:=
\|f(\gamma(s,\omega))\|^{2p-1}|f_I(\gamma(s,\omega)|\frac{1}{|z|^{3/4}}
s^{-3/4}.
\end{align*}
We estimate $I$ by applying H\"older's inequality with exponents
$4$ and $4/3$. This yields $ I(r) \leq (K'')^{1/4} p I_1(r)^{1/4}
I_2(r)^{3/4}, $ where by the area formula for quasiconformal maps
\cite[Theorem 5.4]{MR1778673}, and  \eqref{eq:epsilon_ass},
\begin{align*}
I_1(r)&:= \int_{1/2}^r \int_{\partial B} \|f(\gamma(s,\omega))\|^{-4(p+1)} J_f(\gamma(s,\omega)) s^3 \frac{1}{|z|}\,d\mathcal{S}^3(\omega)\, ds\\
&= \int_{1/2}^r \int_{\partial B} \|f(\gamma(s,\omega))\|^{-4(p+1)} J_f(\gamma(s,\omega)) s^3\, d\sigma_0(\omega) \,ds\\
&= \int_{B(0,r)\setminus B(0,1/2)} \|f(q)\|^{-4(p+1)} J_f(q)\, dq\\
&\leq \int_{\mathbb{H}^1\setminus B(0,\varepsilon)}
\|q\|^{-4(p+1)}\, dq<\infty,
\end{align*}
 and
\begin{align*}
I_2(r)&:= \int_{1/2}^r \int_{\partial B} \|f(\gamma(s,\omega))\|^{(2p-1)4/3}|f_I(\gamma(s,\omega))|^{4/3} s^{-1}\, d\sigma_0(\omega) \,ds\\
&\leq \int_{1/2}^r \int_{\partial B} \|f(\gamma(s,\omega))\|^{2p 4/3} s^{-1} \,d\sigma_0(\omega)\, ds\\
&\leq C \int_{1/2}^r (1-s)^{-2p\alpha4/3}\,ds+C.
\end{align*}
Here we used that $d\mathcal{S}^3 = |z| d\sigma_0$ by Lemma
\ref{l:Haus} for $\sigma_0$ as in the polar coordinates formula of
Theorem \ref{t:polar}. The estimate in the last line is a
consequence of Proposition \ref{c:dist_alpha}, which yields that
\begin{displaymath}
\|f(q)\| \leq C_1(f) + C_2(f) d(q,\partial B)^{-\alpha}\leq C_1(f)
+ C_2(f) \left(1-\|q\|\right)^{-\alpha} ,\quad\text{for all }q\in
B,
\end{displaymath}
for some positive and finite constants $C_1(f)$ and $C_2(f)$, and
$\alpha=\alpha(K)>0$ that only depends on the distortion $K$ of $f$.  If $2p\alpha4/3<1$, then $\sup_{1/2<r<1}
I_2(r)$
 is clearly finite. This proves the theorem with
$
p_0(K):= \tfrac{3}{8 \alpha(K)}.
$
\end{proof}

\section{Carleson measures}\label{s:Carleson}

The aim of this section is to prove Proposition \ref{l:5.6}, which
states that every quasiconformal map $f:B \to f(B) \subset
\mathbb{H}^1\setminus \{0\}$ gives rise to certain Carleson
measures on $B$. This will be an important tool in the proof of
Theorem \ref{t:main2}. A straightforward corollary of Proposition
\ref{l:5.6} says that $|\nabla_H \log \|f(q)\||\,dq$ defines a
Carleson measure for every quasiconformal map $f$ as above. This
extends to $\mathbb{H}^1$ a result that was proven in
$\mathbb{R}^n$ first by Jones \cite[Lemma 4.2 and p.65]{MR554817}
in order to show that $\log |f|$ belongs to $BMO(S^{n-1})$,
quantitatively, for each $K$-quasiconformal map $f$ on the unit
ball in $\mathbb{R}^n$ that omits $0$, where $f$ on $S^{n-1}$ is
understood as the radial limit. Jones' result about Carleson
measures was also obtained, with a different method, by Astala and
Koskela \cite[Lemma 5.6]{MR2900163}, and we follow roughly their
approach. However, the class of M\"obius self-maps of $B$ is not
rich enough to perform the standard normalization arguments done
in \cite{MR554817,MR2900163}. The generality in which we stated
Proposition \ref{p:dist_alpha} allows us to work with
$1$-quasiconformal maps that do not necessarily preserve $B$.

\subsection{The canonical M\"obius transformation}\label{sect-Mobius}
M\"obius transformations are a commonly used tool to simplify
proofs concerning Hardy spaces and Carleson measures on the unit
ball $B^n$ in  $\mathbb{R}^n$. M\"obius self-maps of $B^n$ are
discussed in great detail in Ahlfors' monograph \cite[p.24
ff.]{MR725161}. The computations there use specific properties of
the Euclidean metric and M\"obius transformations in
$\mathbb{R}^n$. The relevant maps in $\mathbb{H}^1$ arise as
restrictions of conformal maps of the compactified Heisenberg
group, they are compositions of \emph{left translations},
\emph{Heisenberg dilations}, \emph{rotations about the vertical
$t$-axis}, and the \emph{Kor\'{a}nyi inversion}. While these share
many properties with M\"obius transformations in $\mathbb{R}^n$,
see e.g. \cite{CTU, MR3276133}, there are important differences.
To give some examples, the conformal group in our case is not
transitive on the set of triples of distinct points
\cite{MR925991}, rotations are not transitive on $\partial B$, and
unlike $x\mapsto \frac{x}{|x|^2}$, the Kor\'anyi inversion does
not keep $\partial B$ pointwise fixed. Nonetheless,
$1$-quasiconformal
 maps of, but not necessarily onto, $B$  play an important role in our proof of Theorem
 \ref{t:main2}. Here we discuss the preliminaries.

The \emph{Kor\'{a}nyi inversion} in the Kor\'{a}nyi unit sphere
centered at the origin is defined as follows:
$I(y)=-\frac{1}{\|y\|^4}\left(y_z(|y_z|^2+iy_t), y_t\right)$,
where $y=(y_z, y_t) \in \mathbb{H}^1 \setminus \{0\}$. It is the
restriction of a conformal self-map of the compactification
$\widehat{\mathbb{H}}^1$, with $I(0)=\infty$ and $I(\infty)=0$,
see \cite{MR1317384}. This inversion
 was introduced by Kor\'{a}nyi \cite[(1.8)]{MR680658} to define
a Kelvin transform for functions on the Heisenberg group. The
inversion has the crucial property that
\begin{equation}\label{eq:InversionFormula}
d(I(y),I(y')) = \frac{d(y,y')}{\|y\| \, \|y'\|},\quad y,y'\in
\mathbb{H}^1 \setminus \{0\},
\end{equation}
see, e.g., \cite[p.19]{capogna2007introduction}. The Jacobian of
$I$ at a point $y\in B$, $y\not=0$, can be expressed as follows:
$J_I(y)=(Xf_1 Yf_2-Xf_2 Yf_1)^2(y)= \frac{1}{\|y\|^8}$, see
\cite[(3.5)]{MR1102963}. We will use the inversion $I$ to define
certain \emph{canonical} $1$-quasiconformal mappings. We start
with the most general definition, and later add more requirements
on the parameters, as we prove finer properties.

\begin{proposition}\label{p:GeneralMob}
For $x\in \mathbb{H}^1$, $a\in \mathbb{H}^1\setminus \{x\}$, and
$\rho>0$, the map
\begin{displaymath}
T:=T_{x,a,\rho}:\hat{\mathbb{H}}^1 \to \hat{\mathbb{H}}^1,\quad
T(y):= \delta_{\rho}\left(\left[I(a^{-1}\cdot x)\right]^{-1} \cdot
\left[I(a^{-1}\cdot y)\right] \right)
\end{displaymath}
has the following properties:
\begin{equation}\label{eq1}
T(x)=0,\quad T(a)=\infty,\quad T(\infty) =
\delta_{\rho}\left([I(a^{-1}\cdot x)]^{-1}\right),
\end{equation}
\begin{equation}\label{eq2}
T|_{\mathbb{H}^1 \setminus \{a\}}: \mathbb{H}^1 \setminus \{a\}
\to \mathbb{H}^1 \setminus \{\delta_{\rho}\left([I(a^{-1}\cdot
x)]^{-1}\right)\}\quad \text{is $1$-quasiconformal},
\end{equation}
for all $y,y'\in \mathbb{H}^1 \setminus \{a\}$, it holds that
\begin{equation}\label{eq3}
d(T(y),T(y')) = \rho \frac{d(y,y')}{d(a,y) d(a,y')},
\end{equation}
\begin{equation}\label{eq4}
\|T(y)\| = \rho \frac{d(x,y)}{d(a,y) d(a,x)},
\end{equation}
\begin{equation}\label{eq5}
J_T(y) = \frac{\rho^4}{d(a,y)^8},
\end{equation}
and for all $r>0$, one has
\begin{equation}\label{eq6}
T(\partial B(a,r)) = \partial
B\left(\delta_{\rho}\left([I(a^{-1}\cdot
x)]^{-1}\right),\frac{\rho}{r}\right).
\end{equation}
\end{proposition}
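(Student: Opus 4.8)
The plan is to decompose $T=T_{x,a,\rho}$ into its four elementary generators and to track the effect of each one on distances, Jacobians, and the two distinguished points $0$ and $\infty$. Writing $c:=I(a^{-1}\cdot x)$ and denoting left translation by $\tau_b(y):=b\cdot y$, we have
\[
T = \delta_{\rho}\circ \tau_{c^{-1}}\circ I\circ \tau_{a^{-1}}.
\]
The three structural facts I would invoke are: (a) left translations are isometries of the Kor\'anyi metric with unit Jacobian and fix $\infty$; (b) the dilation $\delta_\rho$ scales the metric and the norm by $\rho$, has Jacobian $\rho^4$, and fixes $0$ and $\infty$; and (c) the inversion $I$ obeys \eqref{eq:InversionFormula}, has Jacobian $\|y\|^{-8}$, and exchanges $0\leftrightarrow\infty$. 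All four pieces restrict conformal self-maps of $\widehat{\mathbb{H}}^1$, hence are $1$-quasiconformal.

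First I would settle \eqref{eq1} by direct substitution: $T(x)=\delta_\rho(c^{-1}\cdot I(a^{-1}x))=\delta_\rho(c^{-1}c)=\delta_\rho(0)=0$, while $T(a)=\infty$ and $T(\infty)=\delta_\rho(c^{-1})$ follow from $I(0)=\infty$, $I(\infty)=0$ and the fact that $\tau_{a^{-1}}$ fixes $\infty$. Then \eqref{eq2} is immediate: $T$ is a composition of $1$-quasiconformal maps, hence $1$-quasiconformal, and being a bijection of $\widehat{\mathbb{H}}^1$ with $T(a)=\infty$, $T(\infty)=\delta_\rho(c^{-1})$, it carries $\mathbb{H}^1\setminus\{a\}=\widehat{\mathbb{H}}^1\setminus\{a,\infty\}$ bijectively onto $\widehat{\mathbb{H}}^1\setminus\{\infty,\delta_\rho(c^{-1})\}=\mathbb{H}^1\setminus\{\delta_\rho(c^{-1})\}$.

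The computational core is \eqref{eq3}, which I would obtain by composing the metric effects of the four pieces, using the isometry of $\tau_{a^{-1}}$, then \eqref{eq:InversionFormula}, then the isometry $\tau_{c^{-1}}$, then the $\rho$-scaling of $\delta_\rho$, together with $\|a^{-1}\cdot y\|=d(a,y)$ (the definition of $d$ plus symmetry of the Kor\'anyi norm):
\[
d(T(y),T(y')) = \rho\, d\bigl(I(a^{-1}y),I(a^{-1}y')\bigr) = \rho\,\frac{d(y,y')}{\|a^{-1}y\|\,\|a^{-1}y'\|} = \rho\,\frac{d(y,y')}{d(a,y)\,d(a,y')}.
\]
Then \eqref{eq4} drops out of \eqref{eq3} by taking $y'=x$ and recalling $T(x)=0$, so that $\|T(y)\|=d(T(y),T(x))$. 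For \eqref{eq5} I would use multiplicativity of $J_f=(\det D_Hf)^2$ under composition (the chain rule for Pansu differentials), so that $J_T(y)=\rho^4\cdot 1\cdot\|a^{-1}y\|^{-8}\cdot 1=\rho^4 d(a,y)^{-8}$.

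For \eqref{eq6} I would show $T$ maps each sphere $\partial B(a,r)$ into $\partial B(T(\infty),\rho/r)$. The key auxiliary fact is the elementary identity $\|I(w)\|=\|w\|^{-1}$, obtained by inserting the explicit formula for $I$ into $\|\cdot\|^4=|z|^4+t^2$. Peeling off $\delta_\rho$ and $\tau_{c^{-1}}$ exactly as in \eqref{eq3} and using $I(\infty)=0$ then yields
\[
d(T(y),T(\infty)) = \rho\,\|I(a^{-1}y)\| = \frac{\rho}{\|a^{-1}y\|} = \frac{\rho}{d(a,y)},
\]
so $y\in\partial B(a,r)$ forces $T(y)\in\partial B(T(\infty),\rho/r)$; surjectivity onto the target sphere follows since $T$ is bijective on $\widehat{\mathbb{H}}^1$, as any finite $w$ with $d(w,T(\infty))=\rho/r$ equals $T(y)$ for a unique $y\in\mathbb{H}^1\setminus\{a\}$, whence $d(a,y)=r$. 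The only genuinely delicate point I anticipate is the consistent bookkeeping at $\infty$ in \eqref{eq1}, \eqref{eq2} and especially \eqref{eq6}, where \eqref{eq3} cannot be applied directly to $T(\infty)$ and must be replaced by the direct computation via $\|I(w)\|=\|w\|^{-1}$; everything else reduces to combining the metric and Jacobian behaviour of the four generators.
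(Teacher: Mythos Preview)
Your proof is correct and follows essentially the same approach as the paper: decompose $T$ into its elementary pieces (left translations, inversion, dilation), then track the effect of each on the distinguished points, the metric, and the Jacobian, using \eqref{eq:InversionFormula} for \eqref{eq3} and specializing to $y'=x$ for \eqref{eq4}. The only minor difference is in \eqref{eq6}: the paper invokes the fact that $I(\partial B(0,r))=\partial B(0,1/r)$ directly, whereas you compute $d(T(y),T(\infty))=\rho/d(a,y)$ via $\|I(w)\|=\|w\|^{-1}$; these are equivalent, and your treatment of the point at infinity is carefully done.
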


\begin{proof}
Property \eqref{eq1} is immediate from the definition of $T$,
recalling that $I(0)=\infty$ and $I(\infty)=0$. Then $T$ maps
$\mathbb{H}^1 \setminus \{a\}$ homeomorphically onto $\mathbb{H}^1
\setminus \{\delta_{\rho}\left([I(a^{-1}\cdot x)]^{-1}\right)\}$
and it is $1$-quasiconformal on $\mathbb{H}^1 \setminus \{a\}$ as
a composition of left-translations, dilations, and the
$1$-quasiconformal inversion $I$. This proves \eqref{eq2}. Since
the metric $d$ is invariant under left translations, scales by
factor $\rho$ under the dilation $\delta_{\rho}$, and interacts
with the inversion $I$ as stated in \eqref{eq:InversionFormula},
we can deduce \eqref{eq3} immediately from the definition of $T$.
Next, \eqref{eq4} follows from \eqref{eq3} applied to $y'=x$
(recalling that $T(x)=0$ as stated in \eqref{eq1}). The Jacobian
of $T$ can be computed by the chain rule. Denoting left
translations by $L_{q_0}(q):= q_0 \cdot q$, we find
\begin{align*}
J_T(y) &= J_{\delta_{\rho}}\left([I(a^{-1}\cdot x)]^{-1}\cdot
[I(a^{-1}\cdot y)]\right) \, JL_{[I(a^{-1}\cdot
x)]^{-1}}(I(a^{-1}\cdot
y))\, JI(a^{-1}\cdot y)\, JL_{a^{-1}}(y)\\
&= \rho^4 \cdot 1 \cdot \frac{1}{d(a,y)^8}\cdot 1.
\end{align*}
This yields \eqref{eq5}. Finally, \eqref{eq6} follows since
$I(\partial B(0,r)) = \partial B(0,1/r)$ for all $r>0$.
\end{proof}

\begin{cor}\label{c:TB_in_ball} There exists a constant $C\geq 1$ such that for all
$x\in B$, $a\in \mathbb{H}^1 \setminus \overline{B}$ and $\rho>0$,
the map $T=T_{x,a,\rho}$ from Proposition \ref{p:GeneralMob}
satisfies
\begin{displaymath}
T(B) \subset B\left(0, \frac{C \rho}{d(a,\partial B)}+
\frac{\rho}{d(a,x)}\right).
\end{displaymath}
In particular, if
\begin{equation}\label{eq:ImplicitConstants}
\rho \lesssim d(a,\partial B) \quad\text{and} \quad \rho \lesssim
d(a,x),
\end{equation}
then $T(B)\subset B(0,M)$ for a radius  $M$ that depends on $x$,
$a$, and $\rho$ only through the implicit multiplicative constants
in the inequalities in \eqref{eq:ImplicitConstants}.
\end{cor}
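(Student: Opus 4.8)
The plan is to read off an upper bound for $\|T(y)\|$, $y\in B$, directly from the distance formula \eqref{eq4} together with a single application of the triangle inequality, and then to trade $d(a,y)$ for $d(a,\partial B)$ via a geometric comparison that is the only nontrivial point. First I would start from \eqref{eq4}, which gives $\|T(y)\| = \rho\, d(x,y)/(d(a,y)\,d(a,x))$ for every $y\in B$ (note that $y\neq a$, since $a\in\mathbb{H}^1\setminus\overline{B}$, so $T(y)$ is a finite point). Estimating $d(x,y)\leq d(x,a)+d(a,y)$ and using $d(x,a)=d(a,x)$, the bound splits into
\begin{displaymath}
\|T(y)\| \leq \rho\,\frac{d(a,x)+d(a,y)}{d(a,y)\,d(a,x)} = \frac{\rho}{d(a,y)} + \frac{\rho}{d(a,x)}.
\end{displaymath}
The second summand is already in the desired form, so it remains to bound $1/d(a,y)$ by a constant multiple of $1/d(a,\partial B)$, uniformly in $y\in B$ and $a\in\mathbb{H}^1\setminus\overline{B}$.

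The key step, and the main (indeed essentially the only) obstacle, is the geometric claim that there is a universal constant $c>0$ such that
\begin{displaymath}
d(a,y)\geq c\, d(a,\partial B),\qquad y\in B,\ a\in\mathbb{H}^1\setminus\overline{B}.
\end{displaymath}
In a geodesic space this would be immediate: a geodesic from $a$ to $y$ must cross $\partial B$ at some point $w$ (by continuity of $\|\cdot\|$, since $\|a\|>1>\|y\|$), whence $d(a,y)=d(a,w)+d(w,y)\geq d(a,\partial B)$. The Kor\'anyi distance $d$ is not geodesic, so the plan is to pass to the Carnot--Carath\'eodory distance $d_{cc}$, which is geodesic and bi-Lipschitz equivalent to $d$, say $\beta^{-1} d \leq d_{cc}\leq \beta d$ with $\beta\geq 1$ (see \cite{capogna2007introduction}). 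Running the crossing argument for a $d_{cc}$-geodesic yields $d_{cc}(a,y)\geq d_{cc}(a,\partial B)$, and combining this with the two-sided comparison gives
\begin{displaymath}
d(a,y)\geq \beta^{-1} d_{cc}(a,y)\geq \beta^{-1} d_{cc}(a,\partial B)\geq \beta^{-2} d(a,\partial B),
\end{displaymath}
so that $c=\beta^{-2}$ works. Setting $C:=\beta^{2}$, the estimate of the first paragraph becomes $\|T(y)\|\leq C\rho/d(a,\partial B)+\rho/d(a,x)$ for all $y\in B$, which is precisely the asserted inclusion, with a constant $C$ independent of $x$, $a$, and $\rho$.

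Finally, the \emph{In particular} statement will follow by direct substitution. If $\rho\leq c_1\, d(a,\partial B)$ and $\rho\leq c_2\, d(a,x)$ as encoded in \eqref{eq:ImplicitConstants}, then the two summands are bounded by $Cc_1$ and $c_2$, respectively, so $T(B)\subset B(0,M)$ with $M=Cc_1+c_2$; since $C$ is universal, $M$ depends on $x$, $a$, $\rho$ only through the implicit multiplicative constants $c_1,c_2$. I expect the verification of the two displayed estimates to be routine once the bi-Lipschitz comparison is in place; the conceptual content is entirely contained in the crossing lemma, whose subtlety is exactly the non-geodesic nature of $d$.
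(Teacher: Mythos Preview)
Your proposal is correct and follows essentially the same approach as the paper: start from \eqref{eq4}, split via the triangle inequality into $\rho/d(a,y)+\rho/d(a,x)$, and then bound $d(a,y)\gtrsim d(a,\partial B)$ by passing to the geodesic sub-Riemannian distance and using bi-Lipschitz equivalence with $d$. The paper's proof is nearly identical, only stating the last step more tersely (``the inequality holds true with constant $1$ if $d$ is replaced by the sub-Riemannian distance, and then the constant $C$ can be found by comparing the two distances'').
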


\begin{proof}
Formula \eqref{eq4} in Proposition \ref{p:GeneralMob} yields by
the triangle inequality that
\begin{displaymath}
\|T(y)\| = \rho \frac{d(x,y)}{d(a,y) d(a,x)}\leq
\frac{\rho}{d(a,y)} + \frac{\rho}{d(a,x)},\quad y\in \mathbb{H}^1
\setminus \{0\}.
\end{displaymath}
Now if $y\in B$, then $C d(a,y) \geq d(a,\partial B)$ for a
universal constant $C$. Indeed, the inequality holds true with
constant $1$ if $d$ is replaced by the sub-Riemannian distance,
and then the constant $C$ can be found by comparing the two
distances.
\end{proof}

Corollary \ref{c:TB_in_ball} concerned the size of balls $B(0,M)$
that \emph{contain} $T_{x,a,\rho}(B)$. Similarly, we next study
the size of a ball $B(0,m)$ that can be \emph{included} in
$T_{x,a,\rho}(B)$.

\begin{cor}\label{c:ball_in_TB}
Assume that $x\in B$, $a\in \mathbb{H}^1 \setminus \overline{B}$
and $\rho>0$ are such that
\begin{equation}\label{eq:Cond_x_a_rho}
d(x,\partial B) \gtrsim \rho ,\quad d(a,x)\lesssim \rho.
\end{equation}
Then $ B(0,m) \subset T_{x,a,\rho}(B)$ for a constant $m>0$ that
depends on $x$, $a$, and $\rho$ only through the implicit
multiplicative constants in the inequalities in
\eqref{eq:Cond_x_a_rho}.
\end{cor}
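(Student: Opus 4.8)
The plan is to identify the largest Korányi ball inscribed in $T(B)$ and centered at $0=T(x)$, and then to bound its radius from below. Since $T=T_{x,a,\rho}$ is a homeomorphism of $\widehat{\mathbb{H}}^1$ by Proposition~\ref{p:GeneralMob}, it carries $B$ onto an open set with $\partial T(B)=T(\partial B)$, and the hypothesis $a\in\mathbb{H}^1\setminus\overline{B}$ guarantees $\infty=T(a)\notin T(\overline{B})$, so that $T(\overline B)$ is a compact subset of $\mathbb{H}^1$ and $T(B)$ is bounded. As $x\in B$ we have $0=T(x)\in T(B)$, and $0\notin T(\partial B)$ by injectivity of $T$; since $T(\partial B)$ is compact and avoids $0$, the quantity
\[
m:=\inf_{y\in\partial B}\|T(y)\|=d(0,T(\partial B))
\]
is strictly positive. (Here I use that $\|p^{-1}\|=\|p\|$, so $d(0,T(y))=\|T(y)\|$.)

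First I would verify that $B(0,m)\subset T(B)$. Any $w$ with $\|w\|<m$ lies in $\mathbb{H}^1$ and avoids $T(\partial B)=\partial T(B)$, hence it belongs to the disjoint union of the two open sets $T(B)$ and $T(\widehat{\mathbb{H}}^1\setminus\overline B)$. The Korányi ball $B(0,m)$ is connected — it is star-shaped about $0$ via the dilations $\delta_r$, $r\in[0,1]$, since $\|\delta_r p\|=r\|p\|<m$ — and it contains $0\in T(B)$, so it must lie entirely in $T(B)$. This reduces the corollary to bounding $m$ from below by a constant depending only on the implicit constants in \eqref{eq:Cond_x_a_rho}.

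For that estimate, write $d(x,\partial B)\geq c_1\rho$ and $d(a,x)\leq c_2\rho$. Using formula \eqref{eq4}, for every $y\in\partial B$,
\[
\|T(y)\|=\rho\,\frac{d(x,y)}{d(a,y)\,d(a,x)}=\frac{\rho}{d(a,x)}\cdot\frac{d(x,y)}{d(a,y)}\geq\frac{1}{c_2}\cdot\frac{d(x,y)}{d(a,y)}.
\]
The first factor is already $\rho$-independent. For the second, the triangle inequality gives $d(a,y)\leq d(a,x)+d(x,y)$, and the decisive point is that the two hypotheses combine into $d(a,x)\leq c_2\rho\leq (c_2/c_1)\,d(x,\partial B)\leq(c_2/c_1)\,d(x,y)$ for all $y\in\partial B$. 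Hence $d(x,y)/d(a,y)\geq(1+c_2/c_1)^{-1}=c_1/(c_1+c_2)$, and therefore $m\geq c_1/\big(c_2(c_1+c_2)\big)$, a positive constant depending only on $c_1$ and $c_2$.

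The main obstacle is precisely this $\rho$-independence. A naive estimate bounding $d(x,y)$ from above by the diameter of $\overline B$ in the denominator would leave a spurious factor of $\rho$ (indeed $m$ scales linearly under $\rho\mapsto\lambda\rho$, since $T_{x,a,\lambda\rho}=\delta_\lambda\circ T_{x,a,\rho}$). What rescues the argument is that \eqref{eq:Cond_x_a_rho} forces $a$ to be much closer to $x$ than $x$ is to $\partial B$, so that $d(a,y)\sim d(x,y)$ uniformly in $y\in\partial B$ and the canonical map distorts Korányi norms near $0$ only by bounded factors; the cancellation of $\rho$ between the numerator and $d(a,x)$ then yields a genuinely scale-invariant lower bound.
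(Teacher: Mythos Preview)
Your proof is correct and follows essentially the same route as the paper: both use formula \eqref{eq4} to write $\|T(y)\|=\rho\,d(x,y)/(d(a,x)\,d(a,y))$, cancel $\rho$ against $d(a,x)$ via the second hypothesis, and then use the triangle inequality together with $d(a,x)\lesssim d(x,\partial B)\leq d(x,y)$ to bound $d(x,y)/d(a,y)$ from below uniformly. Your additional connectivity argument justifying $B(0,m)\subset T(B)$ is a nice touch that the paper leaves implicit.
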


\begin{proof}
By Proposition \ref{p:GeneralMob}, the map  $T=T_{x,a,\rho}$
satisfies for  all $w'\in \partial B$ that
\begin{align*}
\|T(w')\|=d(T(x),T(w'))= \frac{\rho d(x,w')}{d(a,x)\,
d(a,w')}\overset{\eqref{eq:Cond_x_a_rho}}{\gtrsim}
\frac{d(x,w')}{d(a,w')}&\gtrsim \frac{d(x,w')}{d(a,x)+d(x,w')}\\&
\overset{\eqref{eq:Cond_x_a_rho}}{\gtrsim}
\frac{d(x,w')}{\rho+d(x,w')}\\&
\overset{\eqref{eq:Cond_x_a_rho}}{\gtrsim}
\frac{d(x,w')}{d(x,w')+d(x,w')} \gtrsim 1.
\end{align*}
\end{proof}

We now discuss the behavior of $T_{x,a,\rho}$ on $B(\omega,r)\cap
B$ for $\omega \in \partial B$ and  $r>0$, under certain
conditions on these parameters.

\begin{proposition}\label{p:T_w_r}
 Let $\omega \in \partial B$, $x\in B$, and $\rho>0$.
Assume that $a\in \mathbb{H}^1 \setminus \overline{B}$
and $r>0$  are such that
\begin{equation}\label{eq:Cond_x_a_rho_C}
d(a,\omega) \lesssim r, \quad d(a,B)\geq C r
\end{equation}
for a constant $C>1$. Then, the map $T= T_{x,a,\rho}$ from
Proposition \ref{p:GeneralMob} satisfies
\begin{equation}\label{eq:ComparabilityJ}
\frac{d(T(y),\partial T (B))}{d(y,\partial B)} \sim_C
\frac{\rho}{d(y,a)^2},\quad \text{for all }y\in B(\omega,r)\cap B.
\end{equation}
\end{proposition}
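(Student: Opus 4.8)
The plan is to reduce the statement to an elementary comparison of Kor\'anyi distances, the only genuinely nontrivial input being the exact distortion identity \eqref{eq3} for $T$. Since $a\in \mathbb{H}^1\setminus \overline{B}$, the map $T$ is a homeomorphism on a neighborhood of $\overline{B}$ by \eqref{eq2}, so $\partial T(B)=T(\partial B)$ and therefore
\begin{displaymath}
d(T(y),\partial T(B))=\inf_{w\in \partial B} d(T(y),T(w))
=\frac{\rho}{d(a,y)}\inf_{w\in \partial B}\frac{d(y,w)}{d(a,w)},
\end{displaymath}
where the last equality uses \eqref{eq3} together with the fact that $\rho$ and $d(a,y)$ are independent of $w$. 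Dividing by $d(y,\partial B)$, the proposition follows once I show
\begin{displaymath}
\inf_{w\in \partial B}\frac{d(y,w)}{d(a,w)}\sim_C \frac{d(y,\partial B)}{d(a,y)},\qquad y\in B(\omega,r)\cap B.
\end{displaymath}

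First I would record two elementary estimates valid for every $y\in B(\omega,r)\cap B$. Since $\omega\in \partial B$ and $d(y,\omega)<r$, one has $d(y,\partial B)\leq d(y,\omega)<r$; and since $y\in B$, the hypothesis $d(a,B)\geq Cr$ gives $d(a,y)\geq Cr$. Together these yield the key smallness $d(y,\partial B)<d(a,y)/C$ with $C>1$, which drives both directions of the comparison. The idea is that $d(a,\cdot)$ is essentially constant, comparable to $d(a,y)$, along the portion of $\partial B$ nearest to $y$, precisely because $y$ is much closer to the boundary than to $a$.

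For the lower bound I would estimate the quotient from below uniformly in $w$. For any $w\in \partial B$ the triangle inequality gives $d(a,w)\leq d(a,y)+d(y,w)$, and since $t\mapsto t/(d(a,y)+t)$ is increasing and $d(y,w)\geq d(y,\partial B)$,
\begin{displaymath}
\frac{d(y,w)}{d(a,w)}\geq \frac{d(y,w)}{d(a,y)+d(y,w)}\geq \frac{d(y,\partial B)}{d(a,y)+d(y,\partial B)}\geq \frac{1}{1+1/C}\,\frac{d(y,\partial B)}{d(a,y)},
\end{displaymath}
using $d(y,\partial B)<d(a,y)/C$ in the last step; taking the infimum over $w$ gives ``$\gtrsim_C$''. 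For the matching upper bound I would simply test the infimum at a nearest point $w_0\in \partial B$ with $d(y,w_0)=d(y,\partial B)$, which exists by compactness of $\partial B$. The direct and reverse triangle inequalities give $d(a,y)-d(y,\partial B)\leq d(a,w_0)\leq d(a,y)+d(y,\partial B)$, and again $d(y,\partial B)<d(a,y)/C$ forces $(1-1/C)d(a,y)\leq d(a,w_0)\leq (1+1/C)d(a,y)$, i.e. $d(a,w_0)\sim_C d(a,y)$. Hence
\begin{displaymath}
\inf_{w\in \partial B}\frac{d(y,w)}{d(a,w)}\leq \frac{d(y,w_0)}{d(a,w_0)}=\frac{d(y,\partial B)}{d(a,w_0)}\lesssim_C \frac{d(y,\partial B)}{d(a,y)},
\end{displaymath}
completing the comparison and hence the proof.

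I do not expect a serious obstacle here: once \eqref{eq3} is used to rewrite $d(T(y),\partial T(B))$, everything reduces to the observation about comparability of $d(a,\cdot)$ near the nearest boundary point. The only point requiring care is tracking that all constants depend on $C$ alone, which they do through the factors $(1\pm 1/C)^{-1}$. Note that the hypothesis $d(a,\omega)\lesssim r$ is not strictly needed for the present estimate; it serves in the applications to guarantee in addition that $d(a,y)\sim_C r$ uniformly over $B(\omega,r)\cap B$.
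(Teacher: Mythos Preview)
Your argument is correct and follows essentially the same route as the paper: both proofs invoke the distortion identity \eqref{eq3}, then handle the two inequalities by testing at a boundary point nearest to $T(y)$ on the image side and at a boundary point nearest to $y$ on the domain side. Your reduction to the single comparison $\inf_{w\in\partial B} d(y,w)/d(a,w)\sim_C d(y,\partial B)/d(a,y)$ is a clean way to package this, and your observation that the hypothesis $d(a,\omega)\lesssim r$ is not actually needed for \eqref{eq:ComparabilityJ} is correct (the paper uses it only to bound $d(a,y)\lesssim r$, whereas you get $d(a,w_0)\geq(1-1/C)d(a,y)$ directly).
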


\begin{proof}
By \eqref{eq3} in Proposition \ref{p:GeneralMob}, we know that
\begin{equation}\label{eq:GenFormq_q'}
\frac{\rho}{d(y,a)^2} = \frac{d(T(y),T(y')) \, d(a,y')}{d(a,y)\,
d(y,y')},\quad y,y'\in \mathbb{H}^1 \setminus \{a\}.
\end{equation}
First, we apply \eqref{eq:GenFormq_q'} for $y\in B$ and $y'\in
\partial B$ with the property that
\begin{displaymath}
d(T(y),T(y')) = d\left(T(y),T(\partial B)\right).
\end{displaymath}
For this pair of points, \eqref{eq:GenFormq_q'} yields
\begin{align*}
\frac{\rho}{d(y,a)^2} = \frac{d(T(y),T(\partial B)) \,
d(a,y')}{d(a,y)\, d(y,y')} &\leq d(T(y),T(\partial B))
\left[\frac{1}{d(y,y')} + \frac{1}{d(a,y)} \right]\\
&\lesssim \frac{d(T(y),\partial T B)}{d(y,\partial B)}.
\end{align*}
Here the last inequality holds since $y\in B$, $a\in \mathbb{H}^1
\setminus \overline{B}$, and $y'\in \partial B$, hence
$d(y,\partial B)\leq d(y,y')$ and, as in the proof of Corollary
\ref{c:TB_in_ball},  $d(y,\partial B)\lesssim d(y,a)$. Thus,
\begin{displaymath}
\frac{\rho}{d(y,a)^2} \lesssim \frac{d(T(y),\partial T
B)}{d(y,\partial B)},\quad y\in B.
\end{displaymath}
\medskip Second, to prove the converse inequality in
\eqref{eq:ComparabilityJ}, we apply \eqref{eq:GenFormq_q'} to
$y\in B(\omega,r)\cap B$ and $y'\in \partial B$  with the property
that
\begin{displaymath}
d(y,y') = d\left(y,\partial B\right).
\end{displaymath}
For this pair of points, \eqref{eq:GenFormq_q'} yields
\begin{align*}
\frac{\rho}{d(y,a)^2} = \frac{d(T(y),T(y')) \, d(a,y')}{d(a,y)\,
d(y,\partial B)} &\geq  \frac{d(T(y),T(\partial T B)) \,
d(a,y')}{d(a,y)\, d(y,\partial B)} .
\end{align*}
Thus it suffices to show that $d(a,y') \gtrsim_C d(a,y)$. To this
end, we use the assumptions stated in \eqref{eq:Cond_x_a_rho_C},
which yield, since $C>1$ and $d(y,\partial B)\leq d(y,\omega)<r$,
that
\begin{displaymath}
d(a,y') \geq d(a,y)-d(y,y') = d(a,y)- d(y,\partial
B)\overset{\eqref{eq:Cond_x_a_rho_C}}{\geq} (C-1) r,
\end{displaymath}
and
\begin{displaymath}
d(a,y) \leq d(a,\omega) +
d(\omega,y)\overset{\eqref{eq:Cond_x_a_rho_C}}{\lesssim} r.
\end{displaymath}
Combining these estimates, we deduce that  $d(a,y') \gtrsim_C
d(a,y)$ and conclude the proof.
\end{proof}

\begin{lemma}\label{l:f=gT}  Assume that
$\omega \in \partial B$, $x\in B$, $a\in \mathbb{H}^1 \setminus
\overline{B}$, $\rho>0$ and $r>0$ satisfy the conditions in
Proposition \ref{p:T_w_r},  and additionally, $\rho \sim r$. Let
$f:B \to f(B) \subset \mathbb{H}^1\setminus \{0\}$ be a
$K$-quasiconformal map, and let $T=T_{x,a,\rho}$ be the
$1$-quasiconformal map defined in Proposition~\ref{p:GeneralMob}.
 Then
\begin{displaymath}
g|_{T(B(\omega,r) \cap B)}:= f \circ T^{-1}|_{T(B(\omega,r) \cap
B)}
\end{displaymath}
satisfies
\begin{align}\label{eq:Df_Dg}
\frac{|D_H f (y)|^p }{\|f(y)\|^{p}}\,d(y,\partial B)^{p-1}
\sim_{p,K} r^3 \frac{|D_H g
(T(y))|^p}{\|g(T(y))\|^{p}}\,d(T(y),\partial T( B))^{p-1} \,
J_T(y)
\end{align}
for $0<p<4$ and almost every $y\in B(\omega,r) \cap B(0,1)$.
\end{lemma}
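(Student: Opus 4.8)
The plan is to reduce \eqref{eq:Df_Dg} to a purely scalar (metric) comparison by exploiting the relation $f = g\circ T$ together with the conformality of $T$. First I would note that, since $a\notin\overline{B}$, the Jacobian $J_T(y)=\rho^4/d(a,y)^8$ is bounded away from $0$ and $\infty$ on $\overline{B}$, so $T$ restricts to a diffeomorphism on a neighbourhood of $\overline{B}$ and maps null sets to null sets in both directions; consequently, for almost every $y\in B(\omega,r)\cap B$ the map $T$ is Pansu differentiable at $y$ and $g=f\circ T^{-1}$ is Pansu differentiable at $T(y)$. The chain rule for Pansu derivatives (as in the proof of Lemma~\ref{l:deriv_r}, cf.\ \cite{MR2115223}) then gives
\begin{displaymath}
D_H f(y)=D_H g(T(y))\circ D_H T(y).
\end{displaymath}
Because $T$ is $1$-quasiconformal, $D_H T(y)$ is a similarity of $H_y\mathbb{H}^1$ with factor $|D_H T(y)|=J_T(y)^{1/4}$, whence $|D_H f(y)|=|D_H g(T(y))|\,J_T(y)^{1/4}$; moreover $\|f(y)\|=\|g(T(y))\|$ because $f(y)=g(T(y))$. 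Substituting these two \emph{exact} identities, the factor $|D_H g(T(y))|^p/\|g(T(y))\|^p$ cancels from both sides of \eqref{eq:Df_Dg}, reducing the lemma to the scalar comparison
\begin{displaymath}
J_T(y)^{p/4}\,d(y,\partial B)^{p-1}\sim_{p} r^3\,d(T(y),\partial T(B))^{p-1}\,J_T(y).
\end{displaymath}

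Next I would insert the explicit formula $J_T(y)=\rho^4/d(a,y)^8$ from \eqref{eq5} together with the boundary-distance comparison $d(T(y),\partial T(B))/d(y,\partial B)\sim_C \rho/d(y,a)^2$ supplied by Proposition~\ref{p:T_w_r} via \eqref{eq:ComparabilityJ}. Raising the latter to the power $p-1$ only alters the constant by a factor depending on $p$ and $C$ (the sign of $p-1$ being immaterial for comparability). A short exponent count then shows that the quotient of the two sides of the displayed scalar comparison is, up to a constant depending only on $p$ and $C$,
\begin{displaymath}
\frac{1}{r^3}\,J_T(y)^{p/4-1}\left(\frac{d(y,\partial B)}{d(T(y),\partial T(B))}\right)^{p-1}\sim_{p,C}\frac{d(y,a)^{6}}{r^{3}\rho^{3}}.
\end{displaymath}
The pleasant feature here is that all occurrences of $p$ in the exponents of $\rho$ and $d(y,a)$ cancel, leaving a homogeneous expression that is independent of $p$.

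Finally --- and this is the step I expect to carry the real content --- I would close the argument by showing that the localization $y\in B(\omega,r)\cap B$ forces $d(a,y)\sim r$. Indeed, the hypotheses \eqref{eq:Cond_x_a_rho_C} give $d(a,y)\le d(a,\omega)+d(\omega,y)\lesssim r$ (using $d(\omega,y)<r$), while $d(a,y)\ge d(a,B)\ge Cr\gtrsim r$ since $y\in B$. Together with the assumption $\rho\sim r$, this yields $d(y,a)^{6}/(r^{3}\rho^{3})\sim 1$, which is exactly the equivalence required. The subtlety is that the bookkeeping of the second step does \emph{not} produce a constant outright: it leaves a residual factor $(d(y,a)/r)^{6}$ that varies with $y$ and would invalidate a careless estimate. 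It is precisely the geometric constraints of Proposition~\ref{p:T_w_r} --- that $y$ lies in $B(\omega,r)\cap B$ while $a$ sits at distance comparable to $r$ from this set --- that pin $d(a,y)$ to the scale $r$ and absorb this residual factor into the implicit constants, giving the stated $\sim_{p,K}$ equivalence.
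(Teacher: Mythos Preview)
Your proof is correct and follows essentially the same route as the paper: both arguments combine the chain rule $D_Hf=D_Hg(T)\circ D_HT$ with the conformality of $T$ to get $|D_Hf|\sim |D_Hg(T)|\,J_T^{1/4}$, then reduce everything to the metric facts $J_T(y)^{1/4}\sim \rho/d(a,y)^2\sim d(T(y),\partial T(B))/d(y,\partial B)\sim 1/r$ on $B(\omega,r)\cap B$, which encode precisely your observation that $d(a,y)\sim r\sim\rho$. The only cosmetic difference is that the paper splits $J_T^{p/4}=J_T^{(p-1)/4}J_T^{-3/4}J_T$ and substitutes $J_T^{1/4}\sim 1/r$ directly, whereas you carry the exponents through to the residual factor $d(a,y)^6/(r^3\rho^3)$ before invoking $d(a,y)\sim r$; your version makes the cancellation of $p$ in the exponents more visible, and your remark that $D_HT$ is an exact similarity (so $|D_Hf|=|D_Hg(T)|\,J_T^{1/4}$ rather than merely $\sim_K$) is in fact a slight sharpening of what the paper states.
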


\begin{proof}
The maps $f$ and $g$ are  $K$-quasiconformal on $B(\omega,r)\cap
B$ and $T(B(\omega,r)\cap B)$, respectively. Hence their Pansu
derivatives exist and agree with the formal Pansu derivatives
almost everywhere. Since $T$ is conformal and $T$-preimages of
null sets are null sets, the chain rule \cite[Proposition
3.2.5]{MR2115223} and elementary linear algebra imply that the
operator norms satisfy
\begin{equation}\label{eq:f=gT}
|D_H f|\sim_K |D_Hg(T(\cdot))|\, J_T^{1/4},\quad \text{almost
everywhere}.\end{equation} By Proposition \ref{p:GeneralMob},  the
assumptions $\rho \sim r \sim d(a,y)$ for $y\in B(\omega,r)\cap
B$, and by Proposition \ref{p:T_w_r}, we have
\begin{equation}
\label{eq:J_T_equiv} J_T(y)^{1/4} \overset{\eqref{eq5}}{\sim}
\frac{\rho}{d(y,a)^2} \overset{\eqref{eq:ComparabilityJ}} {\sim}
\frac{d(T(y),\partial T(B))}{d(y,\partial B)} \sim
\frac{1}{r},\quad\text{for all } y\in B(\omega,r) \cap B.
\end{equation}
Therefore we can proceed as follows:
\begin{align*}
|D_H f|^p \overset{\eqref{eq:f=gT}}{\sim}_{p,K}
|D_Hg(T(\cdot))|^p\, J_T^{p/4}& \sim  |D_Hg(T(\cdot))|^p\,
J_T^{(p-1)/4} J_T^{-3/4} J_T
\\&\overset{\eqref{eq:J_T_equiv}}{\sim} |D_Hg(T(\cdot))|^p\, \left(\frac{d(T(\cdot),\partial
T(B))}{d(\cdot,\partial B)}\right)^{p-1}\, r^3 \,J_T
\end{align*}
almost everywhere on $B(\omega,r) \cap B$. This yields
\eqref{eq:Df_Dg}.
\end{proof}

\begin{remark}\label{r:Choice_x_a_r} Given $\omega \in \partial B$
and $0<r\lesssim 1$, it is possible to choose $x\in B$, $a\in
\mathbb{H}^1 \setminus \overline{B}$, and $\rho>0$ such that all
the assumptions of the results stated in this section are
satisfied. To see this, we use the fact that $B$ satisfies
interior and exterior corkscrew conditions \cite{MR1658616} (or
apply Proposition \ref{p:curve_cone} to  find interior corkscrew
points). This means that there exist constants $M_0 \geq 1$ and
$r_0>0$ such that the following holds:
\begin{itemize}
\item For every $\omega \in \partial B$ and $0<r<r_0$, there
exists $A_{i,r}(\omega) \in B$ such that
\begin{displaymath}
\frac{r}{M_0} \leq d(A_{i,r}(\omega),\partial B) \leq
d(A_{i,r}(\omega),\omega)\leq r.
\end{displaymath}
\item For every $\omega \in \partial B$ and $0<r<r_0$, there
exists $A_{o,r}(\omega) \in \mathbb{H}^1 \setminus \overline{B}$
such that
\begin{displaymath}
\frac{r}{M_0} \leq d(A_{o,r}(\omega),\partial B) \leq
d(A_{o,r}(\omega),\omega)\leq r.
\end{displaymath}
\end{itemize}
We consider $r<r_0/(M_0 N)=:r_{\ast}$ for a fixed constant $N>1$
to be determined. We claim that $N$ can be chosen such that
\begin{displaymath}
\rho:= r,\quad x:=A_{i,r}(\omega)\in B,\quad a:= A_{o,M_0 N
r}(\omega)\in \mathbb{H}^1 \setminus \overline{B}
\end{displaymath}
have the desired properties. Indeed, we have
\begin{enumerate}
\item $d(x,\partial B)\geq r/M_0$, \item $r/M_0\leq
d(x,\omega)\leq r$, \item $Nr \leq d(a,\partial B) \leq
d(a,\omega)\leq M_0 N r$, \item $(N-1)r\leq
d(a,\omega)-d(\omega,x)\leq d(x,a) \leq
d(x,\omega)+d(\omega,a)\leq (1 +M_0N) r$.
\end{enumerate}
Finally, arguing as in the proof of Corollary \ref{c:TB_in_ball},
we find a universal constant $\eta<1$ such that
\begin{displaymath}
d(a,q) \geq  \eta d(\partial B,a) \geq \eta N r =: Cr,\quad q\in
B.
\end{displaymath}
Thus we choose $N>1$ large enough, depending on $\eta$, such that
$C:= \eta N>1$.
\end{remark}

\begin{remark}
 The maps $T_{x,a,\rho}$ are in general
 \emph{not} self-maps of $B$.
 For a simple example, consider $x=(x_1,0,0)$ and $a=(a_1,0,0)$ with
 $0<x_1<1<a_1$. Then $T=T_{x,a,\rho}$ does not map $B$ onto itself
 for any choice of $\rho>0$. This can be seen by computing
 $\|T(y)\|$ for $y\in \{(1,0,0),(-1,0,0),(0,2^{-\frac14},2^{-\frac12})\}$.
 This is in
contrast to the situation in $\R^n$, where the relevant maps
preserve the unit ball thanks to the
identity~\cite[(33)]{MR725161}.
 \end{remark}

\subsection{Carleson measures related to quasiconformal maps}
Carleson characterized in  \cite[Theorem 1]{MR141789} the measures
$\mu$ on the unit disk $\mathbb{D}$ in $\mathbb{C}$ for which
$\|f\|_{L^p(\mu)}\leq C(\mu) \|f\|_{H^p}$ holds for all
holomorphic functions on $\mathbb{D}$ that belong to the Hardy
space $H^p$, $p\geq 1$. These are measures $\mu$ with the property
that $\mu(\mathbb{D}\cap B(\omega,r)) \lesssim r$ for $\omega \in
\partial \mathbb{D}$ and $r>0$. Such measures, and various
generalizations thereof, are now known as \emph{Carleson
measures}. Carleson's result was reproved by H\"ormander~\cite{MR234002} and, in extended form, by
Duren \cite{MR241650}. Later Nolder \cite{MR1307591}, and Astala and Koskela \cite[4.5.
Corollary]{MR2900163}, generalized Carleson's and Duren's result
to Hardy spaces of quasiconformal mappings on the unit ball in
$\mathbb{R}^n$. We will prove an analogous result for $\Hei$ in Section~\ref{s:CarlesonChar}.
This motivates the following definition (cf. Definition~\ref{def: Carleson-msp} for metric spaces).

\begin{definition}\label{def: Carleson-m}
Let $1\leq \alpha<\infty$.  We say that a (positive) Borel measure $\mu$ on $B$ is an
\emph{$\alpha$-Carleson measure on  the Kor\'anyi unit ball $B$}, if there exists a
constant $C>0$ such that
\begin{equation}\label{eq:CarlesonCOnst}
\mu(B \cap B(\omega,r))\leq C r^{3\alpha},\quad \text{for all
}\omega\in \partial B\text{ and }r>0.
\end{equation}
The $\alpha$-\emph{Carleson measure constant} of $\mu$ is defined
by
\begin{displaymath}
\gamma_{\alpha}(\mu):= \inf\{C>0\text{ such that
\eqref{eq:CarlesonCOnst} holds for all $\omega \in\partial B$ and
$r>0$}\}
\end{displaymath}
We also call $1$-Carleson measures simply \emph{Carleson measures}.
\end{definition}

The following is an extension to $\mathbb{H}^1$ of a result
originally due to Jones \cite{MR554817}. Our argument is inspired
by a proof of Jones' result that was given later by Astala and
Koskela in \cite[Lemma 5.6]{MR2900163}, but we avoid the use of
M\"obius self-maps of $B$.

\begin{proposition}\label{l:5.6}
 Fix $0<p<4$ and let $f$ be a quasiconformal map on $B\subset \Hei$ with $f(q)\not=0$ for all $q\in B$.
 Then the following measure $\mu$ is a Carleson measure in $B$,
 with an upper bound for the Carleson measure constant that depends only on $K$ and $p$:
 \begin{equation}\label{eq:form_Carles_p}
  d\mu=\frac{|D_Hf(q)|^p}{\|f(q)\|^p}d(q,\partial B)^{p-1} dq.
 \end{equation}
\end{proposition}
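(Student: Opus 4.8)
The plan is to derive the local Carleson bound $\mu(B\cap B(\omega,r))\lesssim_{K,p} r^3$ from a single \emph{global} energy estimate that I will apply on two different domains. Precisely, I would first isolate the following statement, valid for every bounded $1$-quasiconformal image $\Omega$ of $B$ with $B(0,m)\subset\Omega\subset B(0,M)$ and every $K$-quasiconformal $g:\Omega\to g(\Omega)\subset\Hei\setminus\{0\}$:
\[
\int_{\Omega} \frac{|D_Hg(q)|^p}{\|g(q)\|^p}\,d(q,\partial\Omega)^{p-1}\,dq \leq C(K,p,m,M).
\]
Call this the \emph{global estimate}. Granting it, the two regimes of $r$ are handled separately. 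For $r$ bounded below by the threshold $r_\ast$ of Remark~\ref{r:Choice_x_a_r}, I apply the global estimate with $\Omega=B$ and $g=f$ (so $m=M=1$, and $\partial B$ is $3$-regular by Lemma~\ref{l:3Reg}); this bounds $\mu(B)$ by a constant depending only on $K,p$, whence $\mu(B\cap B(\omega,r))\leq\mu(B)\lesssim_{K,p} r_\ast^3\leq r^3$. For small $r<r_\ast$ I invoke the canonical map $T=T_{x,a,\rho}$ with parameters chosen as in Remark~\ref{r:Choice_x_a_r}, so that all hypotheses of Proposition~\ref{p:T_w_r} and Lemma~\ref{l:f=gT} hold and, by Corollaries~\ref{c:TB_in_ball} and \ref{c:ball_in_TB}, $B(0,m)\subset T(B)\subset B(0,M)$ with \emph{universal} $m,M$. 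Lemma~\ref{l:f=gT} gives, for a.e.\ $y\in B(\omega,r)\cap B$ and $g:=f\circ T^{-1}$, a pointwise comparison of the integrand of $\mu$ with $r^3$ times the corresponding integrand for $g$ on $T(B)$, weighted by $J_T$. Integrating over $B(\omega,r)\cap B$ and changing variables $q=T(y)$ (so $dq=J_T(y)\,dy$) turns $\mu(B\cap B(\omega,r))$ into $r^3$ times an integral over $T(B(\omega,r)\cap B)\subset T(B)$, which by the global estimate applied to $\Omega=T(B)$ is $\lesssim_{K,p} r^3$.

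It remains to prove the global estimate. Since post-composing $g$ with a Heisenberg dilation leaves the integrand unchanged, I normalize $\|g(0)\|=1$. The distortion inequality~\eqref{eq:Dist_Ineq} together with the elementary $J_g\leq|D_Hg|^4$ gives $|D_Hg|^p\sim_K J_g^{p/4}$. Because $0<p<4$, I apply H\"older's inequality with exponents $4/p$ and $4/(4-p)$: writing $d:=d(\cdot,\partial\Omega)$, choosing $a\in(0,3)$ and $b:=\tfrac{4(p-1)-ap}{4-p}$ (so that $\tfrac{ap}{4}+\tfrac{b(4-p)}{4}=p-1$, and $b>-1$ exactly because $a<3$ and $p>0$), I factor the integrand as $\bigl(J_g\|g\|^{-4}d^{\,a}\bigr)^{p/4}\bigl(d^{\,b}\bigr)^{(4-p)/4}$, which bounds the global estimate by $\bigl(\int_\Omega J_g\|g\|^{-4}d^{\,a}\bigr)^{p/4}\bigl(\int_\Omega d^{\,b}\bigr)^{(4-p)/4}$. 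For the first factor I use the area formula $J_g(q)\,dq=dw$, $w=g(q)$, followed by the growth estimate of Proposition~\ref{p:dist_alpha} (with $\|g(0)\|=1$), which yields $d(g^{-1}(w),\partial\Omega)^a\lesssim_K \min\{\|w\|^{a/\alpha},\|w\|^{-a/\alpha}\}$; hence
\[
\int_\Omega \frac{J_g}{\|g\|^4}\,d^{\,a}\,dq \lesssim_K \int_{\Hei}\frac{\min\{\|w\|^{a/\alpha},\|w\|^{-a/\alpha}\}}{\|w\|^4}\,dw.
\]
By the polar coordinates of Theorem~\ref{t:polar} this last integral reduces to $\int_0^\infty \bigl(\min\{s,s^{-1}\}\bigr)^{a/\alpha}\,s^{-1}\,ds<\infty$, finite because $a/\alpha>0$, its value depending only on $a/\alpha$ and hence on $K$. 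This is precisely where the hypothesis $0\notin g(\Omega)$ and the \emph{two-sided} growth bound are essential: they force integrability both near $0$ and near $\infty$.

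The remaining factor $\int_\Omega d(\cdot,\partial\Omega)^{b}\,dq$ is purely geometric. For $p\geq1$ one may take $b=0$ (via $a=4-4/p\in(0,3)$), and the factor is simply $|\Omega|\leq|B(0,M)|$. For $p<1$ one is forced into $b\in(-1,0)$, and the finiteness of $\int_\Omega d^{\,b}$ \emph{with a bound uniform over the family $\{\Omega=T(B)\}$} is the step I expect to be the main obstacle: it requires that the boundaries $\partial T(B)$ be Ahlfors $3$-regular with a constant independent of $x,a,\rho$ (equivalently of $r$ and $\omega$). For $\Omega=B$ this is Lemma~\ref{l:3Reg}; for general $\Omega=T(B)$ it should follow from the fact that these are conformal (hence, by the Liouville theorem, M\"obius) images of $B$ of uniformly controlled size, so that a dyadic decomposition of $\Omega$ into shells $\{2^{-k-1}<d\leq2^{-k}\}$ of volume $\lesssim 2^{-k}$ gives $\int_\Omega d^{\,b}\lesssim\sum_k 2^{-k(b+1)}<\infty$. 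Establishing this uniform regularity, and thereby the uniformity of all constants in $r$, $\omega$, and $f$, is the technical heart of the argument; the remaining steps are bookkeeping with the estimates assembled in Sections~\ref{s:QC_in_Hp} and \ref{sect-Mobius}.
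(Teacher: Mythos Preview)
Your approach is essentially the paper's: normalize via the canonical map $T=T_{x,a,\rho}$, apply H\"older with exponents $4/p$ and $4/(4-p)$, control the Jacobian factor by the area formula and the two-sided growth estimate of Proposition~\ref{p:dist_alpha}, and control the geometric factor by a dyadic shell decomposition using $3$-regularity of the boundary. The packaging differs in one respect that creates the obstacle you flag.

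You formulate a \emph{global} estimate on $\Omega=T(B)$ and then need $\int_{T(B)} d(\cdot,\partial T(B))^{b}<\infty$ uniformly in the parameters; this forces you toward uniform $3$-regularity of $\partial T(B)$. The paper avoids this entirely. After the change of variables you already have
\[
\mu(B\cap B(\omega,r))\sim_{K,p} r^3\int_{T(B(\omega,r)\cap B)}\frac{|D_Hg|^p}{\|g\|^p}\,d(\cdot,\partial T(B))^{p-1}\,dq,
\]
and the paper applies H\"older \emph{on this smaller set}. The Jacobian factor is then enlarged to an integral over all of $T(B)$ (harmless, since it is nonnegative) and handled exactly as you describe. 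The geometric factor, however, is kept over $T(B(\omega,r)\cap B)$ and \emph{pulled back} via $T^{-1}$ to $B(\omega,r)\cap B$: Proposition~\ref{p:T_w_r} gives $d(T(q),\partial T(B))\sim r^{-1}d(q,\partial B)$ and $J_T\sim r^{-4}$ on this set, so the factor becomes $\int_{B(\omega,r)\cap B} d(q,\partial B)^{-\eta}\,dq$ up to harmless powers of $r$, and now only the $3$-regularity of $\partial B$ (Lemma~\ref{l:3Reg}) is needed for the shell estimate $|A_j(\omega,r)|\lesssim r^4 2^{-j}$. No regularity of $\partial T(B)$ is ever invoked. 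So the ``main obstacle'' you anticipate is an artifact of integrating the geometric factor over all of $T(B)$ rather than over $T(B(\omega,r)\cap B)$; restrict the domain and pull back, and it disappears.

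One small correction: your choice $a=4-4/p$ gives $a=0$ at $p=1$, which makes $\int J_g\|g\|^{-4}$ diverge (polar coordinates produce $\int_0^\infty s^{-1}\,ds$). You need $a>0$ strictly, so the clean split ``$b=0$ for $p\geq 1$'' should read $p>1$; for $p\leq 1$ (including $p=1$) you are in the $b<0$ regime and must use the shell argument. The paper handles all $0<p<4$ uniformly by introducing a small $\varepsilon>0$ in the H\"older split (so $a=4\varepsilon/p>0$ and $-\eta=4(p-1-\varepsilon)/(4-p)>-1$ provided $\varepsilon<3p/4$), which is the same idea with cleaner bookkeeping.
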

\begin{proof}
Let  $f$ be an arbitrary map satisfying the assumptions of the
proposition, and fix $0<p<4$. We aim to prove that the associated
measure $\mu$ in \eqref{eq:form_Carles_p} is a Carleson measure.
In the course of the proof below, we will show that $\mu(B(0,1))$
can be bounded by a finite constant depending only on $p$ and $K$,
see \eqref{eq:CarlesonLargeScale}. Taking this for granted, it
suffices to verify the Carleson condition for small $r>0$, say
$r<r_{\ast}$, where $r_{\ast}$ is the absolute constant in Remark
\ref{r:Choice_x_a_r} that only depends on the geometry of
$\mathbb{H}^1$. Indeed, for $r>r_{\ast}$ and $\omega \in
\partial B$, we trivially have
\begin{displaymath}
\mu\left(B\cap B(\omega,r)\right)\leq \mu(B) \lesssim_{K,p}
(1/r_{\ast})^3 r^3.
\end{displaymath}

\medskip

Thus let us fix $\omega \in \partial B$ and $0<r<r_{\ast}$. Then
choose $x\in B$, $a\in \mathbb{H}^1 \setminus \overline{B}$ and
$\rho>0$, depending on $\omega$ and $r$, as in Remark
\ref{r:Choice_x_a_r}, and consider the associated
$1$-quasiconformal map $T=T_{x,a,\rho}$ defined in Proposition
\ref{p:GeneralMob}. Writing $g:= f \circ T^{-1}|_{TB}$, Lemma
\ref{l:f=gT} yields:
\begin{align}\label{l.5.6-ineq2}
 &\int_{B\cap B(\om, r)}\frac{|D_Hf(q)|^p}{\|f(q)\|^p}\,d(q,\partial B)^{p-1} dq \nonumber \\
 &\phantom{aa}\sim_K r^{3} \int_{B\cap B(\om, r)}\frac{|D_H g(T(q))|^p}{\|g(T(q))\|^p}\,d(T(q),\partial T (B))^{p-1} J_{T}(q) dq.
  \end{align}
By the change of variables $q\mapsto T(q)$ and H\"older's
inequality with exponents $4/p$ and $4/(4-p)$, the last line
in~\eqref{l.5.6-ineq2} becomes:
\begin{align}\label{l.5.6.est3}
& r^{3} \int_{T(B\cap B(\om, r))}\frac{|D_H
g(q))|^p}{\|g(q)\|^p}\,d(q,\partial T(B))^{p-1}
dq\\
&\phantom{aa}\leq r^3 \left(\int_{T(B\cap B(\om,
r))}\frac{|D_Hg(q)|^4}{\|g(q)\|^4}d(q,\partial
T(B))^{\ep\frac{4}{p}} dq \right)^{\frac{p}{4}}
  \left(\int_{T(B\cap B(\om, r))} d(q,\partial T(B))^{\frac{4(p-1-\ep)}{4-p}}
  dq\right)^{\frac{4-p}{4}}.\notag
 \end{align}
Here, $\varepsilon>0$ is a suitably chosen constant (depending
only on $p$) that makes the last integral converge. To see that
such a constant $\varepsilon$ exists, recall from Proposition
\ref{p:T_w_r}, formula \eqref{eq5} and the choice of parameters
according to Remark \ref{r:Choice_x_a_r}, that
\begin{equation}\label{eq:T on_B_w_r}
d(T(q),\partial TB) \sim \frac{1}{r}
d(q,\partial B),\quad J_T(q) \sim \frac{1}{r^4},\quad q\in
B(\omega,r) \cap B,
\end{equation}
with implicit constants independent of $\omega$ and $r$. Hence,
for any $\eta<1$, we find
\begin{align}\label{eq:sum}
\int_{T(B\cap B(\omega,r))} d(q,\partial T(B))^{-\eta}\,dq&=
\int_{B\cap B(\omega,r)}
d(T(q),\partial T(B))^{-\eta} J_T(q)\,dq\notag\\
&\overset{\eqref{eq:T on_B_w_r}}{\sim} \frac{r^{\eta}}{r^4}
\int_{B\cap B(\omega,r)} d(q,\partial B)^{-\eta} \,dq\notag\\
&\sim   \frac{r^{\eta}}{r^4}\sum_{j=0}^{\infty} r^{-\eta} 2^{\eta
j} |\{q\in B\cap B(\omega,r):\,r2^{-j-1}\leq
d(q,\partial B)< r 2^{-j}\}| \notag\\
&\lesssim \frac{1}{r^4} \sum_{j=0}^{\infty}  2^{\eta j} |\{q\in
B\cap B(\omega,r):\, d(q,\partial B)< r 2^{-j}\}|.
\end{align}
It remains to find a good upper bound for the Lebesgue measure of
the sets
\begin{displaymath}
A_j(\omega,r):= \{q\in B\cap B(\omega,r):\, d(q,\partial B)< r
2^{-j}\},\quad j\in \mathbb{N}_0.
\end{displaymath}
First, we observe that
\begin{equation}\label{eq:cover}
A_j(\omega,r) \subset \bigcup_{\widetilde{\omega}\in
B(\omega,2r)\cap
\partial B} B(\widetilde{\omega}, r 2^{-j}).
\end{equation}
Indeed, if $q\in A_j(\omega,r)$, then $d(q,\omega_q)< r 2^{-j}$
for some $\omega_q\in \partial B$ with $d(q,\omega_q)=
d(q,\partial B)$. Since
\begin{displaymath}
d(\omega_q,\omega)\leq d(\omega_q,q)+d(q,\omega) <r2^{-j} + r,
\end{displaymath}
we see that $\omega_q \in B(\omega,2r)\cap
\partial B$, so \eqref{eq:cover} holds. Then we apply the
$5r$-covering lemma to find a disjoint subfamily
\begin{displaymath}
\{B(\widetilde{\omega}_i, r 2^{-j}):\,
i=1,2,\ldots,I_j(\omega,r)\}
\end{displaymath}
such that the $5$-times enlarged balls still cover
$A_j(\omega,r)$. To conclude the argument, we control the
cardinality $I_j(\omega,r)$ from above by observing that
\begin{displaymath}
I_j(\omega,r)\, r^3 2^{-3j}\lesssim
\mathcal{S}^3\left(\bigcup_{i=1}^{I_j(\omega,r)}
B(\widetilde{\omega}_i, r 2^{-j})\cap \partial B \right) \leq
\mathcal{S}^3 (B(\omega, 3r)\cap \partial B) \lesssim r^3
\end{displaymath}
by the $3$-regularity of $\mathcal{S}^3|_{\partial B}$, recall
Lemma \ref{l:3Reg}. Thus $I_j(\omega,r)\lesssim 2^{3j}$, and hence
\begin{equation}\label{eq:A_j_est}
|A_j(\omega,r)|\lesssim I_j(\omega,r)\, r^4 2^{-4j} \lesssim r^4
2^{-j}.
\end{equation}
Inserting this estimate in \eqref{eq:sum}, we conclude that for
every $\eta<1$ (positive or negative), it holds
\begin{displaymath}
\int_{T(B\cap B(\omega,r))} d(q,\partial T(B))^{-\eta}\,dq
\lesssim \sum_{j=0}^{\infty}  2^{(\eta-1) j}\lesssim_{\eta} 1.
\end{displaymath}

Applying these considerations for $\eta =
4(\varepsilon+1-p)/(4-p)$, we see that if we choose $\varepsilon
=\varepsilon(p) < \frac{3}{4}p$, the second integral in
\eqref{l.5.6.est3} can be bounded from above by a finite constant
depending only on $p$.

Combining the above estimates, we have so far found that
\begin{align*}
\int_{B\cap B(\om, r)}\frac{|D_Hf(q)|^p}{\|f(q)\|^p}\,d(q,\partial
B)^{p-1} dq &\lesssim_{K,p} r^3 \left(\int_{T(B\cap B(\om,
r))}\frac{|D_Hg(q)|^4}{\|g(q)\|^4}d(q,\partial
T(B))^{\ep\frac{4}{p}} dq \right)^{\frac{p}{4}}\\
&\lesssim_{K,p}  r^3
\left(\int_{T(B)}\frac{|D_Hg(q)|^4}{\|g(q)\|^4}d(q,\partial
T(B))^{\ep\frac{4}{p}} dq \right)^{\frac{p}{4}}.
\end{align*}

 In order to estimate the last integral, we proceed as follows,
 using the fact that $g$ is quasiconformal,  with constant $K$,
 and the change-of-variables formula holds,
 \begin{align}
 \int_{T(B)}\frac{|D_Hg(q)|^4}{\|g(q)\|^4}d(q,\partial T(B))^{\ep\frac{4}{p}} dq
 & \lesssim_K \int_{T(B)}\frac{d(q,\partial T(B))^{\ep\frac{4}{p}}}{\|g(q)\|^4}J_g(q)
 dq\notag\\
 &\sim_K
 \int_{g(T(B))}\frac{d(g^{-1}(q),\partial T(B))^{\ep\frac{4}{p}}}{\|q\|^4} dq \notag \\
 & \sim_K I_1 +   I_2,
\label{eq:l5.6}
 \end{align}
 where
 \begin{displaymath}
 I_1:= \int_{g(T(B))\cap \{q: \|q\|<\|g(0)\|\}}\frac{d(g^{-1}(q),\partial T(B))^{\ep\frac{4}{p}}}{\|q\|^4} dq
 \end{displaymath}
 and
 \begin{displaymath}
 I_2:= \int_{g(T(B)\cap \{q: \|q\|\geq \|g(0)\|\}}\frac{d(g^{-1}(q),\partial T(B))^{\ep\frac{4}{p}}}{\|q\|^4}
 dq.
 \end{displaymath}
The integrals $I_1$ and $I_2$ can be bounded from above using the
first and second inequality in Proposition~\ref{p:dist_alpha},
respectively:
\begin{displaymath}
I_1 \lesssim_{p,K} \int_{\|q\|< \|g(0)\|} \|q\|^{-4}
\frac{\|q\|^{\frac{\varepsilon 4}{p \alpha}
}}{\|g(0)\|^{\frac{\varepsilon 4}{p \alpha} }}\, dq \lesssim_{p,K}
\frac{1}{\|g(0)\|^{\frac{\varepsilon 4}{p \alpha} }} \int_0^{
\|g(0)\|} s^{\frac{\varepsilon 4}{p \alpha} -1} \, ds \sim_{p,K} 1
\end{displaymath}
and
\begin{displaymath}
I_2 \lesssim_{p,K} \int_{\|q\|\geq \|g(0)\|} \|q\|^{-4}
\frac{\|g(0)\|^{\frac{\varepsilon 4}{p \alpha}
}}{\|q\|^{\frac{\varepsilon 4}{p \alpha} }}\, dq \lesssim_{p,K}
\|g(0)\|^{\frac{\varepsilon 4}{p \alpha} } \int_{
\|g(0)\|}^{\infty} s^{-\frac{\varepsilon 4}{p \alpha} -1} \, ds
\sim_{p,K} 1.
\end{displaymath}
In conclusion, we have shown that
\begin{equation}\label{eq:CarlesonSmallScale}
\int_{B\cap B(\om, r)}\frac{|D_Hf(q)|^p}{\|f(q)\|^p}\,d(q,\partial
B)^{p-1} dq \lesssim_{K,p} r^3,\quad \text{for all }\omega \in
\partial B, \, 0<r<r_{\ast}.
\end{equation}

Finally, the argument by H\"older's inequality and
change-of-variables $q\mapsto g(q)$ that we applied above to
``$g$'' and ``$T(B)$'' works also for ``$f$'' and ``$B$'' to show
that
\begin{equation}\label{eq:CarlesonLargeScale}
\int_{B}\frac{|D_Hf(q)|^p}{\|f(q)\|^p}\,d(q,\partial B)^{p-1} dq
\lesssim_{K,p} 1.
\end{equation}
More precisely, we have
\begin{align*}
\int_{B}\frac{|D_H f(q))|^p}{\|f(q)\|^p}\,d(q,\partial B))^{p-1}
dq\leq \left(\int_{B}\frac{|D_Hf(q)|^4}{\|f(q)\|^4}d(q,\partial
B)^{\ep\frac{4}{p}} dq \right)^{\frac{p}{4}}
  \left(\int_{B} d(q,\partial B)^{\frac{4(p-1-\ep)}{4-p}}
  dq\right)^{\frac{4-p}{4}},
 \end{align*}
where $\varepsilon$ is as before. The first integral can be
bounded by a finite constant depending only on $K$ and $p$ by the
same argument as we used for \eqref{eq:l5.6}, with ``$g$'' and
``$T(B)$'' replaced by ``$f$'' and ``$B$'' The second integral is
finite constant depending only on $p$, thanks to
\eqref{eq:A_j_est} (for $r\sim 1$). As remarked at the beginning
of the proof, \eqref{eq:CarlesonSmallScale} and
\eqref{eq:CarlesonLargeScale} together suffice to show the
Carleson measure condition for all scales $r>0$.
\end{proof}

The following corollary of Proposition \ref{l:5.6} extends
\cite[Lemma 4.2]{MR554817} to $\Hei$.

\begin{cor}\label{cor:form_Carles_p}
If $f$ is a quasiconformal map on $B\subset \Hei$ with
$f(q)\not=0$ for all $q\in B$, then
 \begin{equation*}
 \left|\nabla_H \log \|f(q)\| \right| \, dq
 \end{equation*}
 defines a Carleson measure on $B$, with an upper bound for the Carleson measure constant
 depending only on $K$.
\end{cor}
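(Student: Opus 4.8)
The plan is to deduce the corollary from Proposition~\ref{l:5.6} applied with the exponent $p=1$. For this choice $p-1=0$, so the measure in \eqref{eq:form_Carles_p} reduces exactly to
\[
d\mu_1 = \frac{|D_Hf(q)|}{\|f(q)\|}\,dq,
\]
which is therefore a Carleson measure with constant depending only on $K$. Since any positive Borel measure dominated by a Carleson measure is again Carleson, with Carleson constant at most that of the dominating measure, it suffices to establish the pointwise inequality
\[
|\nabla_H \log \|f(q)\|| \leq \sqrt{2}\,\frac{|D_Hf(q)|}{\|f(q)\|},\quad\text{for a.e. }q\in B.
\]
Note that $\nabla_H \log\|f\| = \nabla_H\|f\|/\|f\|$ is well defined a.e.\ because $f$ omits the origin, so $\|f\|$ is locally bounded away from $0$ and lies in $HW^{1,4}_{loc}(B)$.

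The main step is thus to compute $\nabla_H\|f\|$, which I would do exactly as in the proof of Lemma~\ref{l:deriv_r}, with the derivative $\partial_s$ along a radial curve replaced by the horizontal fields $X$ and $Y$. Writing $v:=\|f\|^4=(f_1^2+f_2^2)^2+f_3^2$ we have $X\|f\|=\tfrac14 v^{-3/4}Xv$, and the key observation is that the contact condition $Xf\in H_{f(q)}\mathbb{H}^1$ forces $Xf_3=2(f_2\,Xf_1-f_1\,Xf_2)$. This is the exact analogue of the horizontality relation \eqref{eq:lambda_horizontal}, and substituting it eliminates the term $Xf_3$ and yields
\[
X\|f\| = \frac{Xf_1\,[(f_1^2+f_2^2)f_1+f_2 f_3] + Xf_2\,[(f_1^2+f_2^2)f_2 - f_1 f_3]}{\|f\|^3}.
\]
A direct computation shows that the two-dimensional vector in brackets has Euclidean norm equal to $|f_I|\,\|f\|^2$, so the Cauchy--Schwarz inequality gives $|X\|f\||\leq |Xf_I|\,|f_I|/\|f\|$, and analogously $|Y\|f\||\leq |Yf_I|\,|f_I|/\|f\|$.

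Combining the two bounds, and using that the Hilbert--Schmidt norm of the $2\times2$ matrix $D_Hf$ is at most $\sqrt2$ times its operator norm, I obtain
\[
|\nabla_H\|f\||^2 \leq \frac{|f_I|^2}{\|f\|^2}\bigl(|Xf_I|^2+|Yf_I|^2\bigr) \leq \frac{2|f_I|^2}{\|f\|^2}\,|D_Hf|^2.
\]
Dividing by $\|f\|^2$ and recalling $|f_I|\leq\|f\|$ gives the pointwise inequality displayed above, which completes the proof. The only genuinely substantive point is the exact evaluation of $|\nabla_H\|f\||$ through the contact condition; once this is in place, the corollary follows from the elementary domination principle for Carleson measures together with Proposition~\ref{l:5.6}.
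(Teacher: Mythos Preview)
Your proof is correct and follows essentially the same approach as the paper: reduce to Proposition~\ref{l:5.6} with $p=1$ via the pointwise bound $|\nabla_H\log\|f\||\lesssim |D_Hf|/\|f\|$. The paper mentions two ways to obtain this bound---a direct computation using the contact equations, or the chain rule for Pansu derivatives combined with $|\nabla_H\|\cdot\||\le 1$---and you have carried out the first of these in full detail.
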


\begin{proof}
Since for Lebesgue almost every $q\in B$, we have
\begin{displaymath}
 \left|\nabla_H \log \|f\| (q)\right| = \frac{|\nabla_H
 (\|\cdot \|\circ f)(q)|}{\|f(q)\|}\lesssim \frac{|D_Hf(q)|}{\|f(q)\|}
\end{displaymath}
this is an immediate consequence of Proposition \ref{l:5.6} for
$p=1$. The above inequality can be verified by a direct
computation using the contact equations for $f$, or observing that
$|\nabla_H \|\cdot\|| \leq1$ and using the chain rule for Pansu
derivatives.
 \end{proof}

\section{Characterization of the $H^p$ property for quasiconformal mappings} \label{s:ZinsmeisterChar}
The purpose of this section is to prove Theorem \ref{t:main2},
which characterizes membership in $H^p$ for a quasiconformal
mapping in terms of its radial limit and nontangential maximal
function, respectively. This is motivated by a result for
quasiconformal mappings on the Euclidean unit ball in
$\mathbb{R}^n$, $n\geq 2$, originally due to Zinsmeister
\cite{MR860655}, and later obtained with different methods by
Astala and Koskela in \cite[Theorem 4.1]{MR2900163}. Our proof
combines elements from both approaches with arguments tailored to
the Heisenberg geometry.

\subsection{Conditions implying $p$-integrability of the radial
limit} By Lemma \ref{l:QC_RadialLimit} and \eqref{eq:ac}, the
radial limit $f^{\ast}$ of a quasiconformal map $f$ on the
Kor\'anyi unit ball $B$ exist for $\mathcal{S}^3$-almost every
boundary point, and it is a Borel function. We will use this fact
throughout the section to prove the straightforward implications
$(1)\Rightarrow (3)$, $(2)\Rightarrow (3)$, and $(2)\Rightarrow
(1)$ in Theorem \ref{t:main2}.

\begin{lemma}\label{l:FromH^pToMaximalFunction}
Fix $0<p<\infty$ and $f:B \subset \mathbb{H}^1 \to f(B) \subset
\mathbb{H}^1$ be a quasiconformal map. Then
\begin{displaymath}
\int_{\partial B} \|f^{\ast}\|^p \, d\mathcal{S}^3\leq
\|f\|_{H^p}^p.
\end{displaymath}
In particular, $f\in H^p$ implies $\|f^{\ast}\|\in
L^p(\mathcal{S}^3|_{\partial B})$.
\end{lemma}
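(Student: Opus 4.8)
The plan is to obtain the inequality directly from Fatou's lemma, exploiting that the radial limit realizes $\|f^{\ast}(\omega)\|$ as a pointwise limit of exactly the integrands that define $\|f\|_{H^p}$.

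First I would record the measurability and a.e.\ existence facts that legitimize the argument. By Lemma~\ref{l:QC_RadialLimit}, the radial limit $f^{\ast}$ exists for $\mathcal{S}^3$-almost every $\omega \in \partial B\setminus \{z=0\}$ and is Borel measurable; since the two characteristic points $\{z=0\}=\{q_{\pm}\}$ form an $\mathcal{S}^3$-null set, $f^{\ast}$ is in fact defined $\mathcal{S}^3$-a.e.\ on all of $\partial B$. For each fixed $s\in(0,1)$, the function $\omega \mapsto \|f(\gamma(s,\omega))\|$ is continuous (as $f$ is a homeomorphism and $\gamma(s,\cdot)$ is continuous on $\partial B\setminus\{z=0\}$), hence Borel, so the integrals appearing in $\|f\|_{H^p}$ are well defined. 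Moreover, since the Kor\'{a}nyi norm $\|\cdot\|$ is continuous, at every $\omega$ where the radial limit exists we have $\|f^{\ast}(\omega)\| = \lim_{s\to 1}\|f(\gamma(s,\omega))\|$.

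Next I would fix an arbitrary sequence $s_n \uparrow 1$. By the previous observation, for $\mathcal{S}^3$-a.e.\ $\omega$ it holds that $\|f(\gamma(s_n,\omega))\|^p \to \|f^{\ast}(\omega)\|^p$ as $n\to\infty$, for every $0<p<\infty$. Applying Fatou's lemma to the nonnegative measurable functions $\omega \mapsto \|f(\gamma(s_n,\omega))\|^p$ then yields
\[
\int_{\partial B} \|f^{\ast}\|^p\, d\mathcal{S}^3 \leq \liminf_{n\to\infty} \int_{\partial B} \|f(\gamma(s_n,\omega))\|^p\, d\mathcal{S}^3(\omega) \leq \sup_{0<s<1} \int_{\partial B} \|f(\gamma(s,\omega))\|^p\, d\mathcal{S}^3(\omega) = \|f\|_{H^p}^p.
\]
The ``in particular'' statement is then immediate: if $f\in H^p$, then $\|f\|_{H^p}<\infty$, so the left-hand side is finite, i.e.\ $\|f^{\ast}\|\in L^p(\mathcal{S}^3|_{\partial B})$.

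There is essentially no serious obstacle in this lemma; all the substance is carried by Lemma~\ref{l:QC_RadialLimit}, which guarantees the a.e.\ existence and Borel measurability of $f^{\ast}$ (this is where the modulus formula \eqref{eq:mod_formula} and the quasiconformality of $f$ are used). The only minor point to keep straight is that Fatou's lemma is applied along a sequence $s_n\to 1$ rather than to the continuous-parameter limit directly; since the resulting bound is independent of the chosen sequence and is dominated by the supremum over all $s$, this causes no difficulty.
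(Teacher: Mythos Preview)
Your proof is correct and follows essentially the same route as the paper's own argument: invoke Lemma~\ref{l:QC_RadialLimit} for the a.e.\ existence and measurability of $f^{\ast}$, then apply Fatou's lemma to the nonnegative integrands $\|f(\gamma(s,\omega))\|^p$ and bound the $\liminf$ by the supremum defining $\|f\|_{H^p}^p$. The only cosmetic difference is that you pass to a sequence $s_n\uparrow 1$ before invoking Fatou, whereas the paper writes the continuous-parameter $\liminf$ directly; both are equivalent here.
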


\begin{proof} Since $f$ is quasiconformal and $f^{\ast}$
exists $\mathcal{S}^3|_{\partial B}$ almost everywhere on
$\partial B$, the claim follows by Fatou's lemma:
\begin{align*}
\int_{\partial B} \|f^{\ast}(\omega)\|^p \, d\mathcal{S}^3(\omega)
&= \int_{\partial B} \|\lim_{r\nearrow1}f(\gamma(r,\omega))\|^p
\, d\mathcal{S}^3(\omega)\\
&\leq \liminf_{r\nearrow 1}  \int_{\partial B}
\|f(\gamma(r,\omega))\|^p \,
d\mathcal{S}^3(\omega)\overset{\text{Def.}\ref{d:Hp_Heis}}{\leq}
\|f\|_{H^p}^p.
\end{align*}
\end{proof}

In what follows we will frequently refer to
Proposition~\ref{p:curve_cone}. It asserts that there exists
$\kappa>0$ such that for every $\omega \in \partial B \setminus
\{z=0\}$, we have
\begin{displaymath}
\gamma(s,\omega)\in\Gamma(\omega):=
\Gamma_{\kappa}(\omega),\quad\text{for all }s\in (0,1),
\end{displaymath}
see the Appendix for the proof. The nontangential maximal function
$M(f)$ is defined with respect to that parameter $\kappa$. We also
recall from Remark~\ref{r:NTMaxMeas} that $M(f)$ is Borel
measurable. These observations immediately yield the next two
propositions.

\begin{proposition}\label{p:Zinsmeister(3)_to_(1)} Let $f:B
\subset \mathbb{H}^1 \to f(B) \subset \mathbb{H}^1$ be a
quasiconformal map. Then
\begin{displaymath}
\|f^{\ast}(\omega)\|\leq M(f)(\omega),\quad \mathcal{S}^3\text{
a.e. }\omega \in \partial B.
\end{displaymath}
In particular, for $0<p<\infty$, the condition $M(f)\in
L^p(\mathcal{S}^3|_{\partial B})$ implies that $\|f^{\ast}\|\in
L^p(\mathcal{S}^3|_{\partial B})$.
\end{proposition}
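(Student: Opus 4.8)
The plan is to exploit Proposition~\ref{p:curve_cone}, which guarantees that each radial curve $\gamma(\cdot,\omega)$ lies entirely inside the nontangential region $\Gamma(\omega)=\Gamma_{B,\kappa}(\omega)$, where $\kappa$ is the fixed parameter with respect to which $M(f)$ is defined. First I would isolate the set of admissible boundary points. By Lemma~\ref{l:QC_RadialLimit}, the radial limit $f^{\ast}(\omega)$ exists for $\mathcal{S}^3$-almost every $\omega\in\partial B\setminus\{z=0\}$; since $\partial B\cap\{z=0\}=\{q_{\pm}\}$ consists of only the two characteristic points and hence has $\mathcal{S}^3$-measure zero, the radial limit in fact exists $\mathcal{S}^3$-almost everywhere on $\partial B$. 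I then restrict attention to such a point $\omega\in\partial B\setminus\{z=0\}$ at which $f^{\ast}(\omega)$ is defined.

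For such $\omega$, Proposition~\ref{p:curve_cone} yields $\gamma(s,\omega)\in\Gamma(\omega)$ for every $s\in(0,1)$, so directly from the definition of the nontangential maximal function I obtain
\[
\|f(\gamma(s,\omega))\|\leq\sup_{q\in\Gamma(\omega)}\|f(q)\|=M(f)(\omega),\qquad s\in(0,1).
\]
Since the Kor\'{a}nyi norm $\|\cdot\|$ is continuous and $f^{\ast}(\omega)=\lim_{s\to1}f(\gamma(s,\omega))$, I can interchange limit and norm and pass to the limit $s\to1$ to deduce $\|f^{\ast}(\omega)\|=\lim_{s\to1}\|f(\gamma(s,\omega))\|\leq M(f)(\omega)$. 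As $\omega$ ranges over a set of full $\mathcal{S}^3$-measure, this establishes the claimed pointwise inequality.

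Finally, the ``in particular'' statement follows at once from monotonicity of the integral: integrating the inequality $\|f^{\ast}\|^p\leq M(f)^p$ over $\partial B$ against $\mathcal{S}^3$ gives $\int_{\partial B}\|f^{\ast}\|^p\,d\mathcal{S}^3\leq\int_{\partial B}M(f)^p\,d\mathcal{S}^3$, so that $M(f)\in L^p(\mathcal{S}^3|_{\partial B})$ forces $\|f^{\ast}\|\in L^p(\mathcal{S}^3|_{\partial B})$. There is essentially no substantive obstacle here: the only points requiring a word of care are the reduction to $\partial B\setminus\{z=0\}$, where the two characteristic points are discarded as an $\mathcal{S}^3$-null set, and the appeal to continuity of the norm that legitimizes passing the limit inside $\|\cdot\|$.
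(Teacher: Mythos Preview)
Your proof is correct and follows exactly the approach the paper intends: the paper does not spell out a proof but simply remarks that Proposition~\ref{p:curve_cone} together with the almost-everywhere existence of radial limits (Lemma~\ref{l:QC_RadialLimit}) ``immediately yield'' this proposition, which is precisely the argument you wrote out.
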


\begin{proposition}
Let $0<p<\infty$ and let $f: B \to f(B) \subseteq \mathbb{H}^1$ be
a quasiconformal map. Then
\begin{displaymath}
\|f\|_{H^p}^p \leq \int_{\partial B} M(f)^p\, d\mathcal{S}^3.
\end{displaymath}
In particular, $M(f) \in L^p\left(\mathcal{S}^3|_{\partial
B}\right)$ implies that $f\in H^p$.
\end{proposition}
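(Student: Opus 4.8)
The plan is to bound the integrand appearing in Definition~\ref{d:Hp_Heis} pointwise by $M(f)^p$, uniformly in the radial parameter $s$, and then to pass to the supremum. The crucial ingredient is Proposition~\ref{p:curve_cone}, which supplies a parameter $\kappa>0$ such that every radial curve segment $\gamma(s,\omega)$, $s\in(0,1)$, lies in the nontangential region $\Gamma(\omega)=\Gamma_{B,\kappa}(\omega)$ whenever $\omega\in\partial B\setminus\{z=0\}$. This is exactly the $\kappa$ used to define $M(f)$ in Definition~\ref{d:NT}, so the two objects are compatible.

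First I would fix $s\in(0,1)$ and $\omega\in\partial B\setminus\{z=0\}$. Since $\gamma(s,\omega)\in\Gamma_{B,\kappa}(\omega)$ by Proposition~\ref{p:curve_cone}, the definition of the nontangential maximal function gives immediately
\[
\|f(\gamma(s,\omega))\|\leq \sup_{q\in\Gamma_{B,\kappa}(\omega)}\|f(q)\| = M(f)(\omega).
\]
Raising to the $p$-th power and integrating over $\partial B$ — noting that the excluded set $\{z=0\}\cap\partial B=\{q_{\pm}\}$ consists of the two characteristic points and is therefore $\mathcal{S}^3$-null, so it does not affect the integral — yields, for each fixed $s$,
\[
\int_{\partial B}\|f(\gamma(s,\omega))\|^p\,d\mathcal{S}^3(\omega)\leq \int_{\partial B} M(f)(\omega)^p\,d\mathcal{S}^3(\omega).
\]
Taking the supremum over $s\in(0,1)$ on the left-hand side, and using that the right-hand side is independent of $s$, produces $\|f\|_{H^p}^p\leq\int_{\partial B}M(f)^p\,d\mathcal{S}^3$ directly from Definition~\ref{d:Hp_Heis}. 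The \emph{in particular} statement is then trivial: if $M(f)\in L^p(\mathcal{S}^3|_{\partial B})$ the right-hand side is finite, forcing $\|f\|_{H^p}<\infty$, i.e.\ $f\in H^p$.

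I do not anticipate any genuine obstacle here, since all the substantial work has already been invested in Proposition~\ref{p:curve_cone}, whose proof is deferred to the Appendix. The only points requiring a word of care are the Borel measurability of $M(f)$ — provided by Remark~\ref{r:NTMaxMeas}, which guarantees that the integral on the right is well defined — and the $\mathcal{S}^3$-null status of the characteristic points, which permits passing freely between integration over $\partial B$ and over $\partial B\setminus\{z=0\}$.
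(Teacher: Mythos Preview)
Your proof is correct and follows exactly the approach the paper intends: the paper states that the proposition is an immediate consequence of Proposition~\ref{p:curve_cone} and Remark~\ref{r:NTMaxMeas}, and your argument spells out precisely those two ingredients.
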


\subsection{Conditions implied by the $p$-integrability of the radial limit}
In this section, we prove the main implication in Theorem
\ref{t:main2}, namely (3)\,$\Rightarrow$\,(2):

\begin{proposition}\label{p:FromRadialLimitToNTMax}
Assume that $K\geq 1$, $0<p<\infty$, and that $f:B \to f(B)
\subset \mathbb{H}^1$ is $K$-quasiconformal. Then
\begin{displaymath}
\|M(f)\|_{L^p(\mathcal{S}^3|_{\partial B})} \leq C(K,p) \big\|
\|f^{\ast}\| \big\|_{L^p(\mathcal{S}^3|_{\partial B})}
\end{displaymath}
for a constant $C(K,p)$ that depends only on $K$ and $p$.
\end{proposition}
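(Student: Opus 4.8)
The plan is to reduce the $L^p$ bound to a pointwise comparison between the nontangential maximal function $M(f)$ and a Hardy--Littlewood maximal function on $\partial B$. Concretely, I would fix a small exponent $0<s<p$ and aim to prove
\[
M(f)(\omega)^s \lesssim_{K,s} \mathcal{M}\big(\|f^{\ast}\|^s\big)(\omega), \quad \mathcal{S}^3\text{-a.e. } \omega\in\partial B,
\]
where $\mathcal{M}$ denotes the uncentered Hardy--Littlewood maximal operator with respect to $\mathcal{S}^3|_{\partial B}$. Granting this, the proposition follows at once: since $\mathcal{S}^3|_{\partial B}$ is $3$-regular (Lemma~\ref{l:3Reg}) and hence doubling, $\mathcal{M}$ is bounded on $L^{p/s}(\mathcal{S}^3|_{\partial B})$ because $p/s>1$, so that
\[
\int_{\partial B} M(f)^p\,d\mathcal{S}^3 \lesssim_{K,s}\int_{\partial B}\big(\mathcal{M}(\|f^{\ast}\|^s)\big)^{p/s}\,d\mathcal{S}^3 \lesssim_{p,s}\int_{\partial B}\|f^{\ast}\|^p\,d\mathcal{S}^3,
\]
and taking $p$-th roots gives the claim with a constant $C(K,p)$. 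Thus the entire difficulty is concentrated in the pointwise estimate, which amounts to bounding $\|f(q)\|$ for a single $q\in\Gamma(\omega)$ by an average of $\|f^{\ast}\|^s$ as seen from $\omega$.

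To prove the pointwise bound I would fix $\omega$ and $q\in\Gamma(\omega)$ (the case $f(q)=0$ being trivial) and set $r:=d(q,\partial B)$, so that $d(q,\omega)<(1+\kappa)r$ by the definition of the cone. Then I would normalise using the canonical map of Section~\ref{sect-Mobius}: choosing $a\in\mathbb{H}^1\setminus\overline{B}$ and $\rho=r$ as in Remark~\ref{r:Choice_x_a_r} but with the interior corkscrew point replaced by $q$ itself (which is legitimate since $q$ is already a corkscrew point at scale $r$; the range $r$ close to $\operatorname{diam}B$ corresponds to $q$ in a fixed compact subset of $B$ and is handled directly), the map $T:=T_{q,a,r}$ of Proposition~\ref{p:GeneralMob} satisfies $T(q)=0$ and, by Corollaries~\ref{c:TB_in_ball} and \ref{c:ball_in_TB}, the domain $\Omega:=T(B)$ obeys $B(0,m)\subset\Omega\subset B(0,M)$ with $m,M$ universal. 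The map $g:=f\circ T^{-1}$ is $K$-quasiconformal on $\Omega$ with $\|g(0)\|=\|f(q)\|$. The heart of the matter is the \emph{center inequality}
\[
\log\|g(0)\| \leq \frac{1}{\mathcal{S}^3(\partial\Omega)}\int_{\partial\Omega}\log\|g^{\ast}\|\,d\mathcal{S}^3 + C(K).
\]
I would establish it by splitting each radial curve of $\Omega$ into an inner part inside the fixed ball $B(0,m/2)$, on which Proposition~\ref{p:dist_alpha} (applied to $g$ and $\Omega$) gives $|\log\|g\|-\log\|g(0)\||\lesssim_K 1$, and an outer part, along which the increment of $\log\|g\|$ is controlled by $\int|\nabla_H\log\|g\||\,d\ell$; integrating the latter over $\partial\Omega$ and using that $|\nabla_H\log\|g\||\,dq$ is a Carleson measure on $\Omega$ with constant depending only on $K$ (the $\Omega$-analogue of Corollary~\ref{cor:form_Carles_p}) bounds the total by $C(K)$. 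Once the center inequality is in hand, Jensen's inequality upgrades it, for \emph{every} $s>0$, to $\|g(0)\|^s\lesssim_{K,s}\tfrac{1}{\mathcal{S}^3(\partial\Omega)}\int_{\partial\Omega}\|g^{\ast}\|^s\,d\mathcal{S}^3$.

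It then remains to transfer this surface average on $\partial\Omega$ back to $\partial B$ and recognise it as a maximal function. Writing $g^{\ast}=f^{\ast}\circ T^{-1}$ and changing variables by $T$, the average over $\partial\Omega$ becomes $\int_{\partial B}\|f^{\ast}\|^s\,\psi\,d\mathcal{S}^3$, where the density is the surface Jacobian $\psi\sim J_T^{3/4}\sim r^3 d(a,\cdot)^{-6}$ by \eqref{eq5}. Since $a$ is an exterior corkscrew near $\omega$ at scale $r$, one has $d(a,\omega')\sim\max\{r,d(\omega,\omega')\}$, so $\psi(\omega')\sim r^3\max\{r,d(\omega,\omega')\}^{-6}$ is a Poisson-type kernel of total mass $\sim\mathcal{S}^3(\partial\Omega)\sim 1$ concentrated at $\omega$ on scale $r$. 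A routine dyadic decomposition together with the $3$-regularity of $\mathcal{S}^3|_{\partial B}$ (Lemma~\ref{l:3Reg}) then yields $\int_{\partial B}\|f^{\ast}\|^s\psi\,d\mathcal{S}^3\lesssim \mathcal{M}(\|f^{\ast}\|^s)(\omega)$. Combining this with the center inequality gives $\|f(q)\|^s\lesssim_{K,s}\mathcal{M}(\|f^{\ast}\|^s)(\omega)$, and taking the supremum over $q\in\Gamma(\omega)$ completes the pointwise estimate.

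The main obstacle I anticipate is that all the quantitative tools used above---Proposition~\ref{p:dist_alpha} and Corollary~\ref{cor:form_Carles_p}---require $f$ (hence $g$) to omit the origin, whereas a general $K$-quasiconformal $f:B\to f(B)$ may have $0\in f(B)$. I would handle this as a preliminary reduction: since $f(B)$ is a \emph{strict} subset of $\mathbb{H}^1$ (as shown in the proof of Proposition~\ref{c:dist_alpha}), one argues by a scaling/normalisation or truncation so as to reduce to maps omitting $0$, for which the center inequality and the Carleson bound are available with constants depending only on $K$. A secondary technical point is that Corollary~\ref{cor:form_Carles_p} and the growth estimate are invoked on the domains $\Omega=T(B)$ rather than on $B$; this is legitimate because these domains are $1$-quasiconformal images of $B$, are QED with a universal constant (Lemma~\ref{l:QED}), and have uniformly $3$-regular boundaries, so the relevant estimates hold with constants independent of the particular $\Omega$.
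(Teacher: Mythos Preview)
Your overall architecture is exactly the paper's: reduce to a pointwise bound of the form $\sup_{\Gamma(\omega)}\|f\|^{q}\lesssim_{K,q}\mathcal{M}_{\partial B}(\|f^{\ast}\|^{q})(\omega)$ for some $q<p$, and then conclude by the $L^{p/q}$ boundedness of the Hardy--Littlewood maximal operator on the doubling space $(\partial B,\mathcal{S}^{3})$; this is precisely \eqref{eq:chain} and Lemma~\ref{l:ZLemma6}. The divergence is in how you reach the pointwise bound.

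The paper proves Lemma~\ref{l:ZLemma6} via the mean value inequality of Lemma~\ref{l:AKCOr4.3}, which in turn rests on the weak-type bound of Lemma~\ref{l:ZLemma5}: for $f$ omitting $0$, the set $\{\omega\in S(x):\|f^{\ast}(\omega)\|\le \|f(x)\|/N\}$ has $\mathcal{S}^{3}$ measure at most $\Psi_K(N)\,\mathcal{S}^{3}(S(x))$. Crucially, for $x$ near $\partial B$ the paper does \emph{not} transfer the analysis to $\Omega=T_x(B)$; it keeps the integration on $B$, along the genuine radial curves $\gamma(\cdot,\omega)$, and uses $T_x$ only to normalise pointwise distances (Lemma~\ref{l:ClaimMobRad}) so that Proposition~\ref{p:dist_alpha} applies. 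The Carleson measure that is actually invoked is $|D_Hf|/\|f\|\,dq$ on $B$ (Proposition~\ref{l:5.6} with $p=1$), and the conversion of the line integrals into a volume integral uses the polar coordinates \emph{on $B$}, together with the explicit relation $d\mathcal{S}^{3}=|(\omega_{1},\omega_{2})|\,d\sigma_{0}$; see the proof of Lemma~\ref{l:ZLemma5} in the case $\|x\|>R_{\ast}$.

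Your route instead posits a log-center inequality on $\Omega=T(B)$ and then applies Jensen. The gap is that the ingredients you need on $\Omega$ are not available and are nontrivial to supply in $\mathbb{H}^{1}$. First, there are no polar coordinates for $\Omega$: the radial curves, the formula $|\dot\gamma|=1/|z|$, and the identity $d\mathcal{S}^{3}=|z|\,d\sigma_{0}$ are specific to the Kor\'anyi sphere, so the step ``integrate $\int|\nabla_H\log\|g\||\,d\ell$ over $\partial\Omega$ and convert to a volume integral'' is not justified as stated. Second, the ``$\Omega$-analogue of Corollary~\ref{cor:form_Carles_p}'' is not proven in the paper; Proposition~\ref{l:5.6} is established only for $B$, and its proof already uses the $T$-maps and polar coordinates on $B$, so one cannot simply restate it on $T(B)$. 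Third, the surface change-of-variables $d\mathcal{S}^{3}|_{\partial\Omega}\sim J_T^{3/4}\,d\mathcal{S}^{3}|_{\partial B}$ that underlies your Poisson-kernel computation is plausible but not established here; the paper never computes $\mathcal{S}^{3}$ on $\partial T(B)$. These are precisely the difficulties the paper's strategy is designed to sidestep by staying on $B$ and proving a weak-type level-set estimate rather than a log-mean inequality. If you want to salvage your approach, the natural fix is to pull everything back to $B$: write $g^{\ast}\circ T=f^{\ast}$, integrate $\partial_s\log\|f(\gamma(s,\omega))\|$ along the $B$-radial curves over $S(x)$ with respect to $\mathcal{S}^{3}$, and use Proposition~\ref{l:5.6} on $B$; this is essentially what the paper does in Lemmas~\ref{l:ZCorollary4}--\ref{l:ZLemma4}, but packaged as a distributional estimate rather than an averaged log inequality.
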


Proposition \ref{p:FromRadialLimitToNTMax} is a counterpart for
Zinsmeister's result \cite[Proposition 1]{MR860655} in Euclidean
spaces, which was re-proven with a different argument by Astala
and Koskela in \cite[Corollary 4.3 and Theorem 4.1]{MR2900163}. We
establish our result by combining the use of a specific Carleson
measure inspired by \cite{MR860655} with a Hardy-Littlewood
maximal function argument as in \cite{MR2900163}. The core of the
proof is then to relate the nontangential maximal function of $f$
and the Hardy-Littlewood maximal function of its radial limit.
This is achieved with Lemma \ref{l:ZLemma6}, where the geometry of
the Heisenberg group enters the picture due to the nonisotropic
nature of the Kor\'{a}nyi ball. In contrast to~\cite{MR860655}, we
do not involve $\log \|f\|$ in our discussion, although
Corollary~\ref{cor:form_Carles_p} shows that for a quasiconformal
$f$ on $B\subset \mathbb{H}^1$, omitting the origin,  $|\nabla_H
\log \|f\||$ always defines a Carleson measure on $B$. In
\cite{MR860655}, an analogous result is used to observe that if a
Sobolev mapping $f$ is such that $f\not=0$ in $B$ and $|\nabla
\log \|f\||$ defines a Carleson measure, then the radial limit
$f^{\ast}$ exists (see~\cite[pg. 128]{MR860655}), whereas for us
$f$ is quasiconformal and $f^{\ast}$ exists by
Lemma~\ref{l:QC_RadialLimit}. The Carleson measure induced by
$|\nabla \log \|f\||$ appears in \cite{MR860655} also in a more
subtle way, through condition \cite[(10)]{MR860655}. There is a
similar element in our argument, where we apply the Carleson
measure defined by $|D_H f|/\|f\|$ to establish Lemma
\ref{l:ZLemma5}, but our proof necessarily looks different due to
the presence of characteristic points in the Kor\'{a}nyi sphere
and the rigidity of M\"obius self-maps of the Kor\'{a}nyi ball. We
now turn to the details.

For a Borel function $h:\partial B \to [0,+\infty]$, we define the
\emph{non-centred Hardy-Littlewood maximal function}
\begin{displaymath}
\mathcal{M}_{\partial B}h(\omega):= \sup_{B(\omega',r)\ni \omega}
\frac{1}{\mathcal{S}^3(B(\omega',r)\cap \partial B)} \int_{B(\omega',r)\cap
\partial B} h \, d\mathcal{S}^3,\quad \text{for all }\omega\in
\partial B.
\end{displaymath}
Proposition \ref{p:FromRadialLimitToNTMax} will be established
through a series of intermediate results, but the core of the
argument is the  chain of inequalities
\begin{equation}\label{eq:chain}
\int_{\partial B} M(f)^p\;d\mathcal{S}^3 = \int_{\partial B}
\left(\sup_{\Gamma(\cdot)}
\|f\|^q\right)^{\frac{p}{q}}\;d\mathcal{S}^3 \overset{\text{Lem.
\ref{l:ZLemma6}}}{\lesssim} \int_{\partial B}
(\mathcal{M}_{\partial
B}(\|f^{\ast}\|^q))^{\frac{p}{q}}\,d\mathcal{S}^3 \lesssim
\int_{\partial B} \|f^{\ast}\|^p\,d\mathcal{S}^3,
\end{equation}
for $0<q<p$ and every quasiconformal map $f: B \to f(B) \subset
\mathbb{H}^1$, with implicit constants depending on $p,q$, and
$K$.  Exactly as in the proof of \cite[Theorem 4.1]{MR2900163},
the last inequality holds since the operator
$\mathcal{M}_{\partial B}$ is of strong type $(s,s)$ for all
$s>1$, see e.g., \cite{MR447954,heinonen2012lectures} and recall
that $(\partial B, d|_{\partial B},\mathcal{S}^3|_{\partial B})$
is a doubling metric measure space. By fixing a suitable constant
$q$, depending on $p$, Proposition  \ref{p:FromRadialLimitToNTMax}
follows from \eqref{eq:chain}, so we concentrate on Lemma
\ref{l:ZLemma6}.

\begin{lemma}\label{l:ZLemma6} For $K\geq 1$ and $0<q<\infty$,
there exists a constant $C$ such that for every quasiconformal map
$f: B \to f(B) \subset \mathbb{H}^1$, its radial limit $f^{\ast}$
satisfies
\begin{displaymath}
\sup_{\Gamma(\omega)} \|f\|^q  \leq C \mathcal{M}_{\partial
B}(\|f^{\ast}\|^q)(\omega),\quad \text{for all }\omega \in
\partial B.
\end{displaymath}
\end{lemma}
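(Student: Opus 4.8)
The plan is to fix $\omega \in \partial B$ together with a point $x \in \Gamma(\omega)=\Gamma_{\kappa}(\omega)$, write $r:=d(x,\partial B)$, and prove the single pointwise estimate
\begin{equation*}
\|f(x)\|^q \leq C\, \frac{1}{\mathcal{S}^3(S(x))}\int_{S(x)} \|f^{\ast}\|^q\, d\mathcal{S}^3, \tag{$\star$}
\end{equation*}
with $C=C(K,q)$ independent of $\omega$ and $x$; here $S(x)=S_{\kappa}(x)=\partial B\cap B(x,(1+\kappa)r)$ is the shadow. Since the right-hand side of the asserted inequality does not involve $x$, taking the supremum over $x\in\Gamma(\omega)$ reduces Lemma~\ref{l:ZLemma6} to $(\star)$ combined with the comparison of the shadow average to the Hardy--Littlewood maximal function.

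First I would record the elementary geometry of the shadow. Since $x\in\Gamma_{\kappa}(\omega)$ we have $r\leq d(x,\omega)<(1+\kappa)r$, so $\omega\in S(x)$ and $S(x)\subset B(\omega,2(1+\kappa)r)\cap\partial B$. If $\omega_x\in\partial B$ realizes $d(x,\partial B)=d(x,\omega_x)$, then the triangle inequality gives $B(\omega_x,\kappa r)\cap\partial B\subset S(x)$, so the $3$-regularity of $\mathcal{S}^3|_{\partial B}$ (Lemma~\ref{l:3Reg}) yields $\mathcal{S}^3(S(x))\sim r^3\sim \mathcal{S}^3(B(\omega,2(1+\kappa)r)\cap\partial B)$. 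Applying this to the Borel function $h:=\|f^{\ast}\|^q$ (Borel by Lemma~\ref{l:QC_RadialLimit}) gives
\begin{equation*}
\frac{1}{\mathcal{S}^3(S(x))}\int_{S(x)} h\, d\mathcal{S}^3 \lesssim \frac{1}{\mathcal{S}^3(B(\omega,2(1+\kappa)r)\cap\partial B)}\int_{B(\omega,2(1+\kappa)r)\cap\partial B} h\, d\mathcal{S}^3 \leq \mathcal{M}_{\partial B}h(\omega),
\end{equation*}
because $\omega$ lies in the ball $B(\omega,2(1+\kappa)r)$. Together with $(\star)$ this proves the lemma.

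It therefore remains to establish $(\star)$, which is the heart of the matter. The estimate is trivial when $\|f(x)\|$ is comparable to the displayed average, so I reduce $(\star)$ to producing a measurable $G\subseteq S(x)$ and a constant $c=c(K)\in(0,1]$ with $\mathcal{S}^3(G)\geq\tfrac12\mathcal{S}^3(S(x))$ and $\|f^{\ast}(\omega')\|\geq c\,\|f(x)\|$ for $\mathcal{S}^3$-a.e.\ $\omega'\in G$; indeed, then $\int_{S(x)}\|f^{\ast}\|^q\geq c^q\|f(x)\|^q\,\mathcal{S}^3(G)\gtrsim\|f(x)\|^q\,\mathcal{S}^3(S(x))$. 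To find $G$ I normalize at scale $r$: choosing parameters as in Remark~\ref{r:Choice_x_a_r} (with $\rho=r$, an exterior corkscrew point $a$ at scale $\sim r$, and $x$ itself as the interior point, which at scale $r$ is an interior corkscrew point up to the factor $1+\kappa$), the canonical conformal map $T=T_{x,a,r}$ of Proposition~\ref{p:GeneralMob} sends $x$ to $0$; by Corollaries~\ref{c:TB_in_ball} and~\ref{c:ball_in_TB} the domain $\Omega:=T(B)$ satisfies $B(0,m)\subset\Omega\subset B(0,M)$ with universal $m,M$, and Proposition~\ref{p:T_w_r} controls the distortion of $T$ on $B(\omega,Cr)\cap B$. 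Setting $g:=f\circ T^{-1}$, which is $K$-quasiconformal with $\|g(0)\|=\|f(x)\|$, the task becomes the scale-invariant one of finding a portion of $\partial\Omega$ (the $T$-image of $G$) of large relative measure on which $\|g^{\ast}\|\gtrsim\|g(0)\|$.

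For this last step I would use the Carleson measure $|\nabla_H\log\|g\||\,dq$ furnished by Corollary~\ref{cor:form_Carles_p}, whose constant depends only on $K$. Its Carleson bound controls, after integrating $|\nabla_H\log\|g\||$ along radial curves (which lie in a single nontangential cone by Proposition~\ref{p:curve_cone}), a Fubini argument over the resulting foliation of the Carleson box, and a Chebyshev estimate, the oscillation $|\log\|g^{\ast}(\zeta)\|-\log\|g(0)\||$ for $\zeta$ in a subset of $\partial\Omega$ of measure at least half the total; there $\|g^{\ast}\|\geq e^{-C}\|g(0)\|$, and transporting it back through $T$ --- using $J_T\sim r^{-4}$ and the controlled boundary distortion of $T$ near the image-shadow (Proposition~\ref{p:T_w_r}, Lemma~\ref{l:Haus}) --- yields the desired $G\subset S(x)$. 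This Carleson--oscillation estimate, which I would isolate as Lemma~\ref{l:ZLemma5}, is the main obstacle. Two $\mathbb{H}^1$-specific points require care. First, Corollary~\ref{cor:form_Carles_p} presupposes $g\neq 0$, but since $g$ is a homeomorphism it vanishes at most at one point, which --- in the only nontrivial case $\|f(x)\|$ large --- is far from $0$, so one restricts to the subregion of $\Omega$ where $\|g\|$ stays bounded below. Second, because the M\"obius self-maps of $B$ are too rigid and rotations fail to be transitive on $\partial B$, one is forced to run the whole argument on the non-ball domains $\Omega=T(B)$; the nonisotropy of $\partial B$ near its characteristic points (reflected in the factor $1/|z|$ relating $d\sigma_0$ and $d\mathcal{S}^3$ in Lemma~\ref{l:Haus}) is precisely what the normalization by $T$, the uniform nontangentiality from Proposition~\ref{p:curve_cone}, and the $3$-regularity of $\mathcal{S}^3|_{\partial B}$ are designed to absorb.
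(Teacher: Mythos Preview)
Your overall architecture coincides with the paper's: reduce Lemma~\ref{l:ZLemma6} to the mean-value inequality $(\star)$ (which is exactly Lemma~\ref{l:AKCOr4.3}), compare the shadow average with $\mathcal{M}_{\partial B}$ via $3$-regularity, and then reduce $(\star)$ to a level-set estimate (Lemma~\ref{l:ZLemma5}) proved through a M\"obius-type normalization, a Carleson measure, and Chebyshev. Your first two paragraphs are correct and essentially identical to the paper's proof of Lemma~\ref{l:ZLemma6} from Lemma~\ref{l:AKCOr4.3}.

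The sketch for $(\star)$, however, has genuine gaps. First, you invoke Corollary~\ref{cor:form_Carles_p} for $g$ on $\Omega=T(B)$, but that result is stated and proved only for $B$; the paper instead applies Proposition~\ref{l:5.6} (with $p=1$) to $f$ on $B$, and keeps the entire Fubini/polar-coordinates argument in $B$ where the radial curves and Theorem~\ref{t:polar} live. The symbol ``$g^{\ast}(\zeta)$'' for $\zeta\in\partial\Omega$ is not defined; in the paper one always writes $\lim_{s\to1}g(T_x(\gamma(s,\omega)))=f^{\ast}(\omega)$ with $\omega\in S(x)\subset\partial B$. Second, and more substantively, the Carleson--Fubini--Chebyshev step only controls the oscillation of $\log\|f\|$ along the short radial segment $\gamma(\cdot,\omega')|_{[s_{\omega',x},1]}$, i.e.\ between $f^{\ast}(\omega')$ and $f(\gamma(s_{\omega',x},\omega'))$, \emph{not} between $f^{\ast}(\omega')$ and $f(x)$. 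Bridging that last gap---showing $\|f(x)\|\lesssim_K\|f(\gamma(s_{\omega',x},\omega'))\|$---is precisely where the paper uses the normalization (Lemma~\ref{l:ClaimMobRad}, which guarantees $d(T_x(\gamma(s_{\omega',x},\omega')),\partial T_x B)\gtrsim1$) together with the growth estimate Proposition~\ref{p:dist_alpha} applied on $\Omega=T_x(B)$; this is encapsulated in Lemma~\ref{l:ZLemma4}. Your sentence ``the Carleson bound controls \dots\ the oscillation $|\log\|g^{\ast}(\zeta)\|-\log\|g(0)\||$'' telescopes these two distinct steps into one.

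Two smaller points. The construction of $T_{x,a,\rho}$ via Remark~\ref{r:Choice_x_a_r} requires $r<r_{\ast}$, so your normalization only treats $x$ near $\partial B$; the paper handles $\|x\|\le R_{\ast}$ separately, using Lemma~\ref{l:CentredLemma4} and Proposition~\ref{p:dist_alpha} on $B$ to reduce to the case $x=0$. Finally, for $0\in f(B)$ your restriction-to-a-subregion argument is vague (the ``only nontrivial case $\|f(x)\|$ large'' does not by itself trivialize $(\star)$ elsewhere); the paper instead, in the proof of Lemma~\ref{l:AKCOr4.3}, left-translates $f$ by a point $y$ with $\|y\|\le\|f^{\ast}\|$ a.e., applies the $0\notin f(B)$ case, and reassembles.
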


Lemma \ref{l:ZLemma6} follows from the subsequent result:

\begin{lemma}\label{l:AKCOr4.3}
Let $0<q<\infty$ and $K\geq 1$. Then there exists a constant $C$,
depending on $q$ and $K$, such that for every quasiconformal map
$f:B \to f(B) \subset \mathbb{H}^1$, we have
\begin{equation}\label{eq:MeanValue}
\|f(x)\|^q \leq C \frac{1}{\mathcal{S}^3(S(x))} \int_{S(x)}
\|f^{\ast}(\omega)\|^q\, d\mathcal{S}^3(\omega),\quad \text{for
all }x\in B.
\end{equation}
Here $S(x)= B\left(x,(1+\kappa)d(x,\partial B)\right)\cap \partial
B$ with $\kappa$ given by Proposition~\ref{p:curve_cone}.
\end{lemma}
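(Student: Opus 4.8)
The plan is to follow the strategy of Astala and Koskela: reduce the pointwise inequality to a ``global'' mean value estimate at the centre of a normalized domain, and then prove the latter by a modulus argument. Fix $x\in B$, let $\omega_0\in\partial B$ realize $d(x,\partial B)$, set $r\sim d(x,\partial B)$, and choose $a,\rho$ as in Remark~\ref{r:Choice_x_a_r} so that the canonical map $T:=T_{x,a,\rho}$ of Proposition~\ref{p:GeneralMob} satisfies $T(x)=0$ and $\Omega:=T(B)$ is normalized, i.e.\ $B(0,m)\subset\Omega\subset B(0,M)$ with universal $m,M$. Put $g:=f\circ T^{-1}$, a $K$-quasiconformal map on $\Omega$ with $g(0)=f(x)$, and let $g^{\ast}:=f^{\ast}\circ T^{-1}$; since $T$ extends to a homeomorphism $\overline B\to\overline\Omega$, the value $f^{\ast}(\omega)$ is the limit of $g$ along the curve $s\mapsto T(\gamma(s,\omega))$ landing at $T(\omega)$, so $g^{\ast}$ is the natural boundary function for $g$ on $\partial\Omega$. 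Because $T$ is conformal with $J_T\sim r^{-4}$ on $B(\omega_0,r)\cap B$ by \eqref{eq5} and \eqref{eq:T on_B_w_r}, the shadow $S(x)$ (a cap of radius $\sim r$) is mapped onto a definite portion of $\partial\Omega$ and the three-dimensional boundary measure transforms with density $\sim r^{-3}$; combined with the $3$-regularity of $\mathcal{S}^3|_{\partial B}$ (Lemma~\ref{l:3Reg}), which gives $\mathcal{S}^3(S(x))\sim r^3$, the average of $\|f^{\ast}\|^q$ over $S(x)$ is comparable to that of $\|g^{\ast}\|^q$ over $\partial\Omega$. Thus it suffices to prove the base case
\[
\|g(0)\|^q \;\lesssim_{q,K}\; \frac{1}{\mathcal{S}^3(\partial\Omega)}\int_{\partial\Omega}\|g^{\ast}\|^q\,d\mathcal{S}^3
\]
for every $K$-quasiconformal $g$ on a normalized domain $\Omega$.

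For the base case I would argue by a modulus estimate. After composing with a Heisenberg dilation of the target (which scales both sides by $\|g(0)\|^q$) I may assume $\|g(0)\|=1$, and then I must show $\frac{1}{\mathcal{S}^3(\partial\Omega)}\int_{\partial\Omega}\|g^{\ast}\|^q\,d\mathcal{S}^3\gtrsim 1$. For $\varepsilon>0$ consider the bad set $G_{\varepsilon}:=\{\zeta\in\partial\Omega:\ \|g^{\ast}(\zeta)\|<\varepsilon\}$. Each transported radial curve $s\mapsto T(\gamma(s,T^{-1}\zeta))$, $s\in(0,1)$, with $\zeta\in G_{\varepsilon}$ runs from $g(0)$ (of norm $1$) to a boundary value of norm $<\varepsilon$, so its $g$-image traverses the Kor\'anyi annulus $\{\varepsilon\le\|q\|\le 1\}$, whose $4$-modulus is $\pi^2(\log(1/\varepsilon))^{-3}$ by \eqref{eq:ring}. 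By the quasiconformal invariance \eqref{eq:mod} of the $4$-modulus, its conformal invariance under $T$, and the modulus formula of Proposition~\ref{p:Mod_Radial_subet}, this yields $\sigma(T^{-1}G_{\varepsilon})\lesssim_K(\log(1/\varepsilon))^{-3}$, and hence, via \eqref{eq:ac} and the density computations above, $\mathcal{S}^3(G_{\varepsilon})\to 0$ as $\varepsilon\to 0$, uniformly in $g$. Choosing $\varepsilon$ with $\mathcal{S}^3(G_{\varepsilon})\le\tfrac12\mathcal{S}^3(\partial\Omega)$ leaves a set of definite measure on which $\|g^{\ast}\|\ge\varepsilon$, giving the lower bound $\gtrsim\varepsilon^q$.

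The main obstacle is that a family of radial curves issuing from the single interior point $0$ has vanishing $4$-modulus, so the bound $\sigma(T^{-1}G_{\varepsilon})\lesssim(\log(1/\varepsilon))^{-3}$ cannot be read off from curves emanating from $0$; instead one must start them from a fixed interior sphere, the image under $T$ of $\partial B(0,\tfrac12)$. This reintroduces the difficulty that $\|g\|$ need not be comparable to $\|g(0)\|$ on that sphere when the target origin lies inside $g(\Omega)$, so that some image curves fail to cross a definite annulus. I expect to resolve this by splitting off the curves whose starting point has norm $<\tfrac12$: using the growth estimates of Propositions~\ref{p:dist_alpha} and \ref{c:dist_alpha} together with \eqref{eq:mod} and the Loewner/QED properties from Lemma~\ref{l:QED}, the exceptional family of such curves has small $4$-modulus, while the remaining curves cross the annulus $\{\varepsilon\le\|q\|\le\tfrac12\}$, which suffices. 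Controlling this exceptional family uniformly in $g$, and carefully transferring the radial-limit data and boundary measures under the non-ball-preserving map $T$, is where the Heisenberg-specific care is needed.
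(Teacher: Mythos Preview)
Your proposal follows the Euclidean Astala--Koskela route: normalize by a M\"obius map $T=T_{x,a,\rho}$ so that $x\mapsto 0$, and reduce to a centred estimate on $\Omega:=T(B)$. The paper explicitly explains (see the paragraph before Lemma~\ref{l:ZLemma5}) that this reduction breaks down in $\mathbb{H}^1$ because $T$ does not map $B$ to itself, and the proof of Lemma~\ref{l:AKCOr4.3} proceeds instead via Lemma~\ref{l:ZLemma5}, whose near-boundary case uses the Carleson measure $|D_Hf|/\|f\|\,dq$ from Proposition~\ref{l:5.6} together with the integral estimate of Lemma~\ref{l:ZCorollary4}, not a centred modulus argument on $\Omega$.

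There are two concrete gaps in your reduction. First, the claimed comparability of averages fails in the needed direction: the base case gives $\|g(0)\|^q\lesssim \mathcal{S}^3(\partial\Omega)^{-1}\int_{\partial\Omega}\|g^\ast\|^q\,d\mathcal{S}^3$, but the right-hand side integrates over all of $\partial\Omega=T(\partial B)$, including $T(\partial B\setminus S(x))$, where $\|f^\ast\|$ can be arbitrarily large; there is no reason this is controlled by the average of $\|f^\ast\|^q$ over $S(x)$ alone. Second, even for the base case itself your proposed modulus argument does not close: the curves you use start on $T(\partial B(0,\tfrac12))$, and a computation with \eqref{eq3}--\eqref{eq4} shows that for $y\in\partial B(0,\tfrac12)$ one has $d(T(y),\partial\Omega)\sim r$ (the nearest boundary points of $\Omega$ come from $\partial B$ far from $\omega_0$, where $d(a,\cdot)\sim 1$). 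Hence Proposition~\ref{p:dist_alpha} on $\Omega$ only yields $\|g(T(y))\|\gtrsim r^{\alpha}\|g(0)\|$, not $\gtrsim\|g(0)\|$, so the image curves need not cross a fixed annulus and the bound $\sigma(T^{-1}G_\varepsilon)\lesssim_K(\log(1/\varepsilon))^{-3}$ does not follow uniformly in $x$. (A smaller issue: invoking \eqref{eq:ac} gives only absolute continuity of $\sigma$ and $\mathcal{S}^3$; the quantitative passage uses H\"older, cf.\ the proof of Lemma~\ref{l:CentredLemma4}, and costs an exponent.)

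The paper circumvents all of this: it first observes that Lemma~\ref{l:AKCOr4.3} follows from Lemma~\ref{l:ZLemma5} (choose $N=N(K)$ so that the ``bad'' set has at most half the measure of $S(x)$, then handle $0\in f(B)$ by a left translation). For $\|x\|\le R_\ast$, Lemma~\ref{l:ZLemma5} is reduced to the centred case (Lemma~\ref{l:CentredLemma4}) via the growth estimate of Proposition~\ref{p:dist_alpha}. For $\|x\|>R_\ast$, the key new idea is different: Lemma~\ref{l:ZCorollary4} shows that $\|f^\ast(\omega)\|\le\|f(x)\|/N$ forces a large line integral of $|D_H(f\circ T_x^{-1})\circ T_x|/\|f\|$ along the terminal radial segment at $\omega$; integrating over $\omega\in S(x)$, switching to polar coordinates, and using $|D_HT_x^{-1}|\sim r$ converts this into $\int_{B\cap B(\omega_x,C r)}|D_Hf|/\|f\|\,dq$, which is $\lesssim_K r^3$ by Proposition~\ref{l:5.6}. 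This is the ingredient your outline is missing.
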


\begin{proof}[Proof of Lemma \ref{l:ZLemma6} using Lemma
\ref{l:AKCOr4.3}] Under the assumptions of Lemma \ref{l:AKCOr4.3},
we have for every $\omega_0 \in \partial B$ that
\begin{equation}\label{eq:FirstMax}
\sup_{x\in \Gamma(\omega_0)} \|f(x)\|^q \leq C \sup_{x\in
\Gamma(\omega_0)} \frac{1}{\mathcal{S}^3(S(x))} \int_{S(x)}
\|f^{\ast}(\omega)\|^q\, d\mathcal{S}^3(\omega).
\end{equation}
Now for every $x \in B$, there exists $\omega_x\in \partial B$
such that $d(x,\omega_x)= d(x,\partial B)$ and it is easy to see
that then
\begin{equation}\label{eq:ball_cap_incl}
B\left(\omega_x,\kappa\, d(x,\partial B)\right)\cap \partial B
\subset S(x) \subset B\left(\omega_x, (2+\kappa)\,d(x,\partial
B)\right) \cap
\partial B.
\end{equation}
Using the $3$-regularity of $\mathcal{S}^3|_{\partial B}$, we can
deduce from \eqref{eq:FirstMax} that
\begin{align*}
&\sup_{x\in \Gamma(\omega_0)} \|f(x)\|^q \\&\leq C \sup_{x\in
\Gamma(\omega_0)} \frac{1}{\mathcal{S}^3(B\left(\omega_x,
(2+\kappa)\,d(x,\partial B)\right) \cap
\partial B)} \int_{B\left(\omega_x, (2+\kappa)\,d(x,\partial
B)\right) \cap
\partial B}
\|f^{\ast}(\omega)\|^q\, d\mathcal{S}^3(\omega),
\end{align*}
where  $C$ now also depends on $\kappa$ and the $3$-regularity
constant of  $\mathcal{S}^3|_{\partial B}$, which we consider as
universal constants. Since $\omega_0 \in
B(\omega_x,(2+\kappa)d(x,\partial B)\cap\partial B)$ if $x\in
\Gamma(\omega_0)$, the right-hand side of the above inequality can
be bounded from above by $\mathcal{M}_{\partial
B}\|f^{\ast}\|^q(\omega_0)$ on $\partial B$ as claimed.
\end{proof}

Thus Proposition \ref{p:FromRadialLimitToNTMax} will follow if we
manage to prove Lemma \ref{l:AKCOr4.3}.

\subsubsection{Points and shadows: proof of Lemma~\ref{l:AKCOr4.3}} The main
ingredient for Lemma~\ref{l:AKCOr4.3} is a statement for a
quasiconformal map $f$ on $B$, saying that if $0\notin f(B)$, then
$\|f^{\ast}(\omega)\|$ cannot be too small compared to $\|f(x)\|$
for too many  $\omega \in S(x)$.

\begin{lemma}\label{l:ZLemma5} For every $K\geq 1$, there exists
$N(K)$ and a function $\Psi_K$ with $\lim_{N \to \infty}
\Psi_K(N)=0$ such that the following holds. If $f:B \to f(B)
\subset \mathbb{H}^1\setminus \{0\}$ is $K$-quasiconformal, then,
for all $x\in B$ and all $N\geq N(K)$, we have
\begin{displaymath}
\mathcal{S}^3 \left(\left\{\omega \in S(x):\, \|f^{\ast}(\omega)\|
\leq \frac{\|f(x)\|}{N} \right\} \right) \leq \Psi_K(N)
\mathcal{S}^3(S(x)).
\end{displaymath}
\end{lemma}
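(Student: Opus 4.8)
The plan is to pass from the map $f$ to the scalar function $u:=\log\|f\|$, which is well defined on $B$ because $0\notin f(B)$, and to exploit that $|\nabla_H u|\,dq$ is a Carleson measure with constant depending only on $K$, by Corollary~\ref{cor:form_Carles_p}. Writing the bad set in the statement as $E=\{\omega\in S(x):\,u^{\ast}(\omega)\leq u(x)-\log N\}$, I would record the geometry of the shadow: setting $R:=d(x,\partial B)$ and choosing $\omega_x\in\partial B$ with $d(x,\omega_x)=R$, the inclusions \eqref{eq:ball_cap_incl} show that $S(x)$ is comparable to the boundary ball $B(\omega_x,CR)\cap\partial B$, so that $\mathcal{S}^3(S(x))\sim R^3$ by the $3$-regularity of $\mathcal{S}^3|_{\partial B}$ (Lemma~\ref{l:3Reg}). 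The goal is then to show $\mathcal{S}^3(E)\lesssim_K \mathcal{S}^3(S(x))/\log N$, which gives the claim with $\Psi_K(N)\sim 1/\log N$ once $N\geq N(K)$.

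The core is an estimate along the radial curves. For $\omega\in E$, Proposition~\ref{p:curve_cone} guarantees that $\gamma(\cdot,\omega)$ stays in the nontangential region $\Gamma(\omega)$, and $x\in\Gamma(\omega)$ as well since $\omega\in S(x)$. Fixing $s_0=s_0(\omega)$ so that $\gamma(s_0,\omega)$ sits at distance $\sim R$ from $\partial B$, the points $x$ and $\gamma(s_0,\omega)$ lie in a common Whitney ball; standard quasiconformal distortion estimates, in the form underlying Proposition~\ref{p:dist_alpha}, give $|u(x)-u(\gamma(s_0,\omega))|\leq C(K)$. Hence for $\omega\in E$ the value of $u$ drops by at least $\log N-C(K)$ along $\gamma(\cdot,\omega)$ from $s_0$ to the boundary, so that
\begin{displaymath}
\int_{s_0}^{1}|\nabla_H u|(\gamma(s,\omega))\,\left|\tfrac{\partial\gamma}{\partial s}(s,\omega)\right|\,ds\ \geq\ \log N-C(K).
\end{displaymath}

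Integrating this over $E$ with respect to $\mathcal{S}^3$ and applying Fubini, I would then convert to the Heisenberg polar coordinates of Theorem~\ref{t:polar}. Here the Heisenberg geometry is favourable: by \eqref{eq:length_el} the arc-length factor is $|\partial_s\gamma|=(\cos\alpha)^{-1/2}$, while $d\mathcal{S}^3=\sqrt{\cos\alpha}\,d\sigma_0$ by Lemma~\ref{l:Haus}, so the two weights cancel \emph{exactly}, leaving $\int_E\int_{s_0}^{1}|\nabla_H u|(\gamma)\,ds\,d\sigma_0$. Since only the range $s$ close to $1$ contributes and there $s^3\sim 1$, this is comparable to $\int_E\int_{s_0}^{1}|\nabla_H u|(\gamma)\,s^3\,ds\,d\sigma_0$, which by the polar coordinates formula is at most $\int_{\mathcal{B}}|\nabla_H u|\,dq$, where $\mathcal{B}$ is the region foliated by the relevant radial segments. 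Using Proposition~\ref{p:curve_cone} once more, $\mathcal{B}\subset B\cap B(\omega_x,C'R)$, so the Carleson property of $|\nabla_H u|\,dq$ (Corollary~\ref{cor:form_Carles_p}) bounds this by $\lesssim_K R^3\sim\mathcal{S}^3(S(x))$. Combining, $(\log N-C(K))\,\mathcal{S}^3(E)\lesssim_K\mathcal{S}^3(S(x))$, which is the desired conclusion.

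The main obstacle I expect is precisely the Fubini/foliation step: one must integrate $|\nabla_H u|$ simultaneously along the whole family of spiralling, non-geodesic radial curves terminating in $E$, keep the measure bookkeeping (the weight $\cos\alpha$ and the factor $s^3$) under control uniformly — including for shadows $S(x)$ concentrated near the characteristic points $q_{\pm}$, where the three measures $\sigma_0,\sigma,\mathcal{S}^3$ differ most — and verify that the resulting integral is genuinely dominated by the Carleson mass over a Carleson box comparable to $S(x)$. The exact cancellation of $(\cos\alpha)^{-1/2}$ against $\sqrt{\cos\alpha}$, a special feature of the Korányi polar coordinates, is what makes these estimates uniform despite the nonisotropy of $\partial B$; the secondary point requiring care is the Whitney-ball comparison of $u(x)$ with the radial value $u(\gamma(s_0,\omega))$, which must hold with a constant depending only on $K$.
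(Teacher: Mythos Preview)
Your approach is correct and shares the same core mechanism as the paper: for $\omega$ in the bad set, the drop of $\log\|f\|$ along the radial segment ending at $\omega$ is at least $\sim\log N$, so integrating over the bad set and using the polar coordinates formula together with the \emph{exact} cancellation $|\partial_s\gamma|\cdot d\mathcal{S}^3/d\sigma_0=(\cos\alpha)^{-1/2}\sqrt{\cos\alpha}=1$ converts this into a volume integral of $|D_Hf|/\|f\|$ over a Carleson box, which is bounded by $\lesssim_K R^3\sim\mathcal{S}^3(S(x))$ by Proposition~\ref{l:5.6}.

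Where you diverge from the paper is in the ``Whitney-ball comparison'' $|u(x)-u(\gamma(s_0,\omega))|\leq C(K)$. The paper does \emph{not} invoke a Harnack chain directly; instead it splits into two cases, and for $x$ close to $\partial B$ it introduces the canonical M\"obius map $T_x$ (Section~\ref{sect-Mobius}) sending $x$ to $0$ and $B$ to a domain $T_x(B)$ with $B(0,m)\subset T_x(B)\subset B(0,M)$, then applies the growth estimate of Proposition~\ref{p:dist_alpha} to $g=f\circ T_x^{-1}$ on $T_x(B)$ (Lemma~\ref{l:ClaimMobRad} and \eqref{eq:k} in the proof of Lemma~\ref{l:ZLemma4}). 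That detour is what forces the QED/EDE machinery of Lemma~\ref{l:QED} and the careful parameter bookkeeping in Remark~\ref{r:Choice_x_a_r}. Your direct Harnack argument --- $x$ and $\gamma(s_0,\omega)$ both lie at distance $\sim R$ from $\partial B$ and within $\lesssim R$ of each other, so a bounded Harnack chain and local quasisymmetry (as in \cite[Proposition~3.12]{2017arXiv170702832A}, used elsewhere in the paper) give the comparison --- is genuinely more elementary and bypasses the M\"obius normalization altogether. What the paper's route buys is self-containment: the comparison is derived from the same Proposition~\ref{p:dist_alpha} that already appears in Section~\ref{s:QC_in_Hp}, without appealing to external local-quasisymmetry or Harnack-chain lemmas.

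Two technical points you should tighten. First, your claim ``only $s$ close to $1$ contributes, so $s^3\sim 1$'' is only valid when $R=d(x,\partial B)$ is bounded above away from $1$; when $x$ is deep inside $B$ (say $\|x\|\leq R_\ast$) you need a separate, easy treatment --- the paper handles this case by the modulus argument of Lemma~\ref{l:CentredLemma4} combined with Proposition~\ref{p:dist_alpha}. Second, to make the foliation step clean you should define $s_0(\omega)$ via Lemma~\ref{l:curve_cone} with $\rho=R$, which ensures both $d(\gamma(s_0,\omega),\partial B)\sim R$ and $\gamma([s_0,1),\omega)\subset B(\omega,R)$, hence $\mathcal{B}\subset B\cap B(\omega_x,(3+\kappa)R)$ exactly as in the paper's proof of Lemma~\ref{l:ZLemma5}.
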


This lemma is easy to prove by standard modulus techniques if $x$
is deep inside $B$ and the shadow $S(x)$ is large, see Lemma
\ref{l:CentredLemma4} for the case $x=0$. In Euclidean spaces, the
general case can be reduced to this one by a suitable M\"obius
self-map of the unit ball that sends a small spherical cap to a
large one in a canonical, metrically controlled way, see the proof
of \cite[Lemma 4.2]{MR2900163}. M\"obius automorphisms of the
Kor\'{a}nyi unit ball $B$ are not flexible enough for this
approach. In our setting, we therefore give a separate proof of
the statement in Lemma \ref{l:ZLemma5} for $x$ close to $\partial
B$ with the help of a Carleson measure provided by Proposition
\ref{l:5.6}. To do so, we again make use of the growth estimate in
Proposition \ref{p:dist_alpha} and the specific M\"obius
transformations introduced in Section \ref{sect-Mobius}. Yet this
time we start from a point $x\in B$ (close to $\partial B$), and
assign to it the data
\begin{displaymath}
\omega_x \in \partial B,\quad a_x \in \mathbb{H}^1 \setminus
\overline{B},\quad \rho_x>0
\end{displaymath}
and associated M\"obius transformation
\begin{equation}\label{eq:Tx}
T_x:= T_{x,a_x,\rho_x}
\end{equation}
as described in the following. Without loss of generality, we may
assume that the constant $r_{\ast}$ in Remark \ref{r:Choice_x_a_r}
satisfies $r_{\ast}<1$. Then there exists $R_{\ast} \in
[1-r_{\ast},1)$ such that
\begin{displaymath}
d(y,\partial B) <r_{\ast}\quad \text{for all }\|y\|>R_{\ast}.
\end{displaymath}
Now, for $x\in B$ with $\|x\|>R_{\ast}$, we define:
\begin{itemize}
\item[(d1)] $\rho_x:=d(x,\partial B)$,\item[(d2)] $\omega_x$ a
point in $\partial B$ such that $d(x,\omega_x)= d(x,\partial B)$,
\item[(d3)] $a_x:= A_{0,M_0 N \rho_x}(\omega_x)$ (outer corkscrew
point with ``$N$'' as in Remark \ref{r:Choice_x_a_r}).
\end{itemize}

To prove Lemma \ref{l:ZLemma5} for $\|x\|>R_{\ast}$, we will study
the behavior of $f\circ T_x^{-1}$ on
\begin{displaymath}
T_x\left(\left[\bigcup_{\omega \in S(x)} B(\omega,\rho_x)\right]
\cap B\right),
\end{displaymath}
recalling that $T_x(x)=0$. The precise statement, given in Lemma
\ref{l:ZCorollary4}, therefore requires us to control the time for
which the radial segment $\gamma(\cdot,\omega)$ stays inside
$B(\omega,\rho_x)$.

\begin{lemdef}\label{l:curve_cone} Let $\kappa$ be as in Proposition
\ref{p:curve_cone}. For all $\omega \in \partial B \setminus
\{z=0\}$ and $\rho \in (0,1)$, there exists $s_{\omega,\rho}\in
(0,1)$ such that
\begin{enumerate}
\item $\gamma(s,\omega)\in B(\omega,\rho)\cap
\Gamma_{\kappa}(\omega)$ for all $s\in [s_{\omega,\rho},1)$, \item
$\gamma(s_{\omega,\rho},\omega) \in \partial B(\omega,\rho)$,
\item $s_{\omega,\rho}\geq 1-\rho$.
\end{enumerate}
Moreover, the choice $\omega \mapsto s_{\omega,\rho}$ can be made
Borel measurable on $\partial B \setminus \{z=0\}$. If $x\in B
\setminus \{0\}$ and $\rho=\rho_x:= d(x,\partial B)$, we denote
$s_{\omega,x}:= s_{\omega,\rho_x}$.
\end{lemdef}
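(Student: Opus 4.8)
The plan is to track, for each fixed $\omega=(z,t)\in\partial B\setminus\{z=0\}$ and each $\rho\in(0,1)$, the scalar quantity $\phi_\omega(s):=d(\gamma(s,\omega),\omega)$ measuring the Kor\'anyi distance from the radial point to its endpoint. From the explicit formula in Theorem~\ref{t:polar}, the map $(s,\omega)\mapsto\gamma(s,\omega)$ is jointly continuous on $(0,1]\times(\partial B\setminus\{z=0\})$ (it is smooth away from $z=0$, and $\gamma(1,\omega)=\omega$), so $\phi_\omega$ is continuous and extends continuously to $s=1$ with $\phi_\omega(1)=0$. Two elementary facts drive the argument. First, a direct computation gives $\|\gamma(s,\omega)\|=s\|\omega\|=s$, so $\gamma(s,\omega)\in\partial B(0,s)$ and $\gamma(s,\omega)\to 0$ as $s\to 0^+$; hence $\phi_\omega(s)\to\|\omega\|=1$ as $s\to 0^+$, while $\phi_\omega(s)\to 0$ as $s\to 1^-$. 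Second, the triangle inequality for the Kor\'anyi norm yields $\bigl|\,\|\gamma(s,\omega)\|-\|\omega\|\,\bigr|\leq d(\gamma(s,\omega),\omega)$, i.e.\ $1-s\leq\phi_\omega(s)$ for all $s$.

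Since the radial curve spirals toward the $t$-axis, $\phi_\omega$ need not be monotone and $\gamma(\cdot,\omega)$ may cross $\partial B(\omega,\rho)$ several times; I therefore define $s_{\omega,\rho}$ to be the \emph{last} crossing,
\[
s_{\omega,\rho}:=\sup\{s\in(0,1):\phi_\omega(s)\geq\rho\}.
\]
Because $\phi_\omega(s)\to 1>\rho$ as $s\to 0^+$ the defining set contains all small $s$, so it is nonempty with positive supremum, and because $\phi_\omega(s)\to 0<\rho$ as $s\to 1^-$ the set stays away from $1$; hence $s_{\omega,\rho}\in(0,1)$. The level set $\{s:\phi_\omega(s)\geq\rho\}$ is closed, so its supremum lies in it, giving $\phi_\omega(s_{\omega,\rho})\geq\rho$; combined with $\phi_\omega(s)<\rho$ for all $s>s_{\omega,\rho}$ and continuity, this forces $\phi_\omega(s_{\omega,\rho})=\rho$, which is assertion (2). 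For $s\in(s_{\omega,\rho},1)$ we have $\phi_\omega(s)<\rho$, i.e.\ $\gamma(s,\omega)\in B(\omega,\rho)$, while at the single endpoint $s=s_{\omega,\rho}$ the point sits on $\partial B(\omega,\rho)$; together with Proposition~\ref{p:curve_cone}, which already places $\gamma(s,\omega)$ in $\Gamma_\kappa(\omega)$ for \emph{all} $s\in(0,1)$, this yields assertion (1) on $[s_{\omega,\rho},1)$. Finally, evaluating $1-s\leq\phi_\omega(s)$ at $s=s_{\omega,\rho}$ gives $1-s_{\omega,\rho}\leq\phi_\omega(s_{\omega,\rho})=\rho$, i.e.\ assertion (3).

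It remains to produce a Borel choice $\omega\mapsto s_{\omega,\rho}$. For fixed $a\in(0,1)$ set $\Psi_a(\omega):=\sup_{s\in[a,1)}\phi_\omega(s)$. Extending $\phi_\omega$ continuously to $[a,1]$ by $\phi_\omega(1)=0$, the supremum is attained on the compact interval, and by continuity in $s$ it equals $\sup_{s\in[a,1)\cap\mathbb{Q}}\phi_\omega(s)$; as a countable supremum of functions that are continuous in $\omega$, $\Psi_a$ is Borel. I claim $\{\omega:s_{\omega,\rho}\geq a\}=\{\omega:\Psi_a(\omega)\geq\rho\}$. Indeed, if $s_{\omega,\rho}\geq a$ then $s_{\omega,\rho}\in[a,1)$ and $\Psi_a(\omega)\geq\phi_\omega(s_{\omega,\rho})=\rho$; conversely, if $\Psi_a(\omega)\geq\rho$ then, since $\phi_\omega(1)=0<\rho$, the maximum over $[a,1]$ is attained at some $s^\ast\in[a,1)$ with $\phi_\omega(s^\ast)\geq\rho$, whence $s_{\omega,\rho}\geq s^\ast\geq a$. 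Thus every superlevel set of $\omega\mapsto s_{\omega,\rho}$ is Borel, so the selection is Borel measurable on $\partial B\setminus\{z=0\}$, and we set $s_{\omega,x}:=s_{\omega,\rho_x}$ when $\rho=\rho_x=d(x,\partial B)$.

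The main obstacle is the non-monotonicity caused by the spiralling of the radial curves: a naive first-hitting time would not guarantee that $\gamma(\cdot,\omega)$ remains inside $B(\omega,\rho)$ afterwards, which is precisely why the last-crossing definition, together with the closedness of the level set, is essential. The measurable-selection step then hinges on re-expressing the superlevel sets of $s_{\omega,\rho}$ through the Borel functions $\Psi_a$, while assertion (3) reduces to the one-line reverse triangle inequality made available by the identity $\|\gamma(s,\omega)\|=s$.
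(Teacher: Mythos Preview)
Your proof is correct and follows essentially the same approach as the paper: both define $s_{\omega,\rho}$ as the last time the radial curve meets $\partial B(\omega,\rho)$ (your $\sup\{s:\phi_\omega(s)\geq\rho\}$ coincides with the paper's $\max\{s:\phi_\omega(s)=\rho\}$), obtain (3) from the reverse triangle inequality via $\|\gamma(s,\omega)\|=s$, and establish Borel measurability from the joint continuity of $(s,\omega)\mapsto d(\gamma(s,\omega),\omega)$. The only cosmetic difference is that the paper verifies upper semicontinuity of $\omega\mapsto s_{\omega,\rho}$ directly, whereas you check that the superlevel sets $\{s_{\omega,\rho}\geq a\}=\{\Psi_a\geq\rho\}$ are Borel; both arguments are standard and equivalent.
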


\begin{proof} Let $\omega \in \partial B \setminus \{0\}$. By Proposition
\ref{p:curve_cone} we know that $ \gamma(s,\omega) \in
\Gamma_{\kappa}(\omega)$ for all $s\in (0,1)$. Since the radial
curves are continuous with
\begin{displaymath}
d(\gamma(0,\omega),\omega)=1\quad \text{and}\quad
d(\gamma(1,\omega),\omega)=0,
\end{displaymath}
we can take $s_{\omega,\rho}$ to be the largest number in $(0,1)$
such that
\begin{displaymath}
\gamma(s_{\omega,\rho},\omega)\in \partial
B(\omega,\rho).\end{displaymath} By maximality, this satisfies
also
\begin{displaymath}
\gamma(s,\omega)\in B(\omega,\rho),\quad\text{for all }s\in
[s_{\omega,\rho},1).\end{displaymath} Moreover, it is clear that
\begin{displaymath}
s_{\omega,\rho}=\|\gamma(s_{\omega,\rho},\omega)\|\geq \|\omega\|-
d(\gamma(s_{\omega,\rho},\omega),\omega) \geq 1- \rho.
\end{displaymath}
To prove the Borel measurability of $\omega \mapsto
s_{\omega,\rho}$, we use that the function $(s,\omega)\mapsto
d(\gamma(s,\omega),\omega)$ is continuous on $(0,1)\times
(\partial B\setminus \{z=0\})$ and it extends to a continuous
function $h:[0,1]\times \partial B  \to [0,1]$, cf.\,
\eqref{eq:dis_gamma_omega}. Then, for a given $0<\rho<1$, the
function
\begin{displaymath}
\omega \mapsto s(\omega):=\max\{s\in[0,1]:\, h(s,\omega)=\rho\}
\end{displaymath}
is upper semicontinuous. Indeed, for any $\lambda\leq 1$, if
$s(\omega)<\lambda$, then $h(s,\omega)<\rho$ for all $s \in
[\lambda,1]$ while $h(s(\omega),\omega)=\rho$, and thus there
exists a small relatively open neighborhood $U$ of $\omega$ in
$\partial B$ such that for all $\omega'\in U$ and $s\geq \lambda$,
we also have $h(s,\omega')<\rho$, and hence $s(\omega')<\lambda$.
This shows that $\omega \mapsto s(\omega)$ is upper semicontinuous
as claimed, and in particular,
\[
\omega \mapsto
s_{\omega,\rho}:=\max\{s\in(0,1):\,d(\gamma(s,\omega),\omega)=\rho\}
\]
is a Borel function.
\end{proof}

If a point $x\in B$ is close to $\partial B$, then so is
$\gamma(s_{\omega,x},\omega)$. The map $T_x$ allows us to
normalize the situation in such a way that we obtain a point at a
uniformly bounded distance away from the boundary of the new
domain, independently of the choice of $x$ and $\omega\in S(x)$.

\begin{lemma}[Normalization]\label{l:ClaimMobRad} There exists a constant $c>0$ such that, for all $x\in B$ with
$\|x\|>R_{\ast}$ and all $\omega \in S(x)\setminus \{z=0\}$, we
have
\begin{displaymath}
d\left(T_x(\gamma(s_{\omega,x},\omega)),\partial T_x B\right) \geq
c.
\end{displaymath}
\end{lemma}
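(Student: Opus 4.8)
The plan is to reduce everything to the comparability estimate of Proposition~\ref{p:T_w_r}, applied to the map $T_x = T_{x,a_x,\rho_x}$ with the data from (d1)--(d3), evaluated at the point $y := \gamma(s_{\omega,x},\omega)$. Writing $\rho := \rho_x = d(x,\partial B)$, the first step is to pin down the size of $d(y,\partial B)$. By Lemma and Definition~\ref{l:curve_cone} we have $y \in B$ and $d(y,\omega) = \rho$, which immediately gives $d(y,\partial B) \leq \rho$. For the matching lower bound I would invoke Proposition~\ref{p:curve_cone}: since $y = \gamma(s_{\omega,x},\omega) \in \Gamma_{\kappa}(\omega)$, the defining inequality of the nontangential region in Definition~\ref{def:NTregion} yields $d(y,\partial B) > d(y,\omega)/(1+\kappa) = \rho/(1+\kappa)$. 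Thus $d(y,\partial B) \sim \rho$, with constants depending only on $\kappa$.

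Next I would locate $y$ relative to the foot point $\omega_x$. Since $\omega \in S(x) = B(x,(1+\kappa)\rho)\cap \partial B$ and $d(x,\omega_x)=\rho$, two applications of the triangle inequality give $d(y,\omega_x) < (3+\kappa)\rho$, so that $y \in B(\omega_x, r)\cap B$ with $r := (3+\kappa)\rho$. This is precisely the point where the argument departs from the setup of Remark~\ref{r:Choice_x_a_r}: there the relevant radius is $\rho$ itself, but here $\omega$ ranges over the whole shadow $S(x)$, and the radial curve through $\omega$ only exits $B(\omega,\rho)$, not $B(\omega_x,\rho)$. Accordingly I would apply Proposition~\ref{p:T_w_r} at the enlarged scale $r = (3+\kappa)\rho$ rather than at scale $\rho$, and re-verify its hypotheses \eqref{eq:Cond_x_a_rho_C} at this scale: the bound $d(a_x,\omega_x) \leq M_0 N \rho \lesssim r$ is automatic from (d3), while $d(a_x,B) \geq \eta N \rho \geq C r$ for some $C>1$ holds provided $N$ is chosen large enough in terms of $\kappa$ and $\eta$. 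Since $N$ is a fixed universal constant, I am free to enlarge it once and for all.

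With the hypotheses verified, Proposition~\ref{p:T_w_r} gives the comparability $d(T_x(y),\partial T_x B)/d(y,\partial B) \sim \rho/d(y,a_x)^2$. The last step is to show $d(y,a_x) \sim \rho$: the upper bound follows from $d(y,a_x) \leq d(y,\omega_x) + d(\omega_x,a_x) < (3+\kappa)\rho + M_0 N \rho \lesssim \rho$, and the lower bound from $y \in B$ together with $d(a_x,B) \geq \eta N \rho \gtrsim \rho$. Combining the three facts $d(y,\partial B) \sim \rho$, $d(y,a_x) \sim \rho$, and the comparability yields $d(T_x(y),\partial T_x B) \sim \rho \cdot \rho/\rho^2 = 1$, with all implicit constants independent of $x$ and $\omega$. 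This gives the claimed lower bound by a universal $c > 0$.

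The only genuinely delicate point I anticipate is the scale mismatch in the second step: one must be careful that $\omega \in S(x)$ forces the radius to be enlarged from $\rho$ to a fixed multiple of $\rho$, and then confirm that enlarging $N$ keeps the exterior corkscrew point $a_x$ far enough from $B$ for Proposition~\ref{p:T_w_r} to remain applicable at this larger scale. Everything else is a routine chain of triangle inequalities combined with the nontangentiality built into Proposition~\ref{p:curve_cone}.
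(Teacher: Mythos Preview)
Your argument is correct, and the underlying estimates are the same ones the paper uses, but you take a slight detour that introduces an avoidable complication. The paper bypasses Proposition~\ref{p:T_w_r} entirely and works directly with the distance formula~\eqref{eq3}: for an arbitrary $\widetilde{\omega}\in\partial B$ one writes
\[
d\bigl(T_x(y),T_x(\widetilde{\omega})\bigr)
= \rho_x\,\frac{d(y,\widetilde{\omega})}{d(y,a_x)\,d(\widetilde{\omega},a_x)},
\]
and then bounds the right-hand side below using only $d(y,a_x)\lesssim\rho_x$ together with the nontangentiality estimate $d(y,\partial B)\sim\rho_x$ (obtained exactly as you do, from Proposition~\ref{p:curve_cone} and Lemma~\ref{l:curve_cone}(2)). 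This is in fact just the ``$\lesssim$'' half of the proof of Proposition~\ref{p:T_w_r}, which holds for all $y\in B$ with no scale hypothesis whatsoever.

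The practical difference is that by invoking the full two-sided statement of Proposition~\ref{p:T_w_r} you are forced to verify \eqref{eq:Cond_x_a_rho_C} at the inflated radius $r=(3+\kappa)\rho$, and this in turn pushes you to retroactively enlarge the constant $N$ from Remark~\ref{r:Choice_x_a_r}. That move is legitimate, but it is bookkeeping you do not actually need: the lower bound you are after is insensitive to where $y$ sits relative to $\omega_x$. If you want to keep your approach, it would be cleaner to note that only the one-sided inequality $\rho/d(y,a_x)^2 \lesssim d(T_x(y),\partial T_xB)/d(y,\partial B)$ is required, and that this direction in the proof of Proposition~\ref{p:T_w_r} holds for every $y\in B$.
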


\begin{proof} Let $x\in B$ and $\omega\in S(x)$ be as assumed in the lemma. By definition of $T_x$, it holds for
every $\widetilde{\omega}\in
\partial B$ that
\begin{displaymath}
d\left(T_x(\gamma(s_{\omega,x},\omega)),T_x(\widetilde{\omega})\right)
 = \rho_x \frac{d(\gamma(s_{\omega,x},\omega),\widetilde{\omega})}{d(\gamma(s_{\omega,x},\omega),a_x)\,d(\widetilde{\omega},a_x)}
\end{displaymath}
Since $\omega\in S(x)$, we can apply Lemma \ref{l:curve_cone} (2)
 and Remark \ref{r:Choice_x_a_r} to deduce that
 \begin{align*}
 d\left(T_x(\gamma(s_{\omega,x},\omega)),T_x(\widetilde{\omega})\right)
 \gtrsim \rho_x \frac{d(\gamma(s_{\omega,x},\omega),\widetilde{\omega})}{\rho_x \,
 [d(\gamma(s_{\omega,x},\omega),\widetilde{\omega})+d(\gamma(s_{\omega,x},\omega),a_x)]}
 \gtrsim \frac{\rho_x}{\rho_x} =1.
 \end{align*}
In the last step we also used that
\[
 d(\gamma(s_{\omega,x},\omega),a_x) \lesssim \rho_x \simeq  d(\gamma(s_{\omega,x},\omega), \partial B)\lesssim  d(\gamma(s_{\omega,x},\omega), \widetilde{\omega}).
\]
\end{proof}

Lemma \ref{l:curve_cone} and Lemma \ref{l:ClaimMobRad} are useful
to deduce consequences of a small radial limit:

\begin{lemma}\label{l:ZCorollary4}
There exists a constant $C>0$ and, for every $K\geq 1$, a number
$N(K)>1$ such that the following holds. Whenever $x\in B$ is such
that $\|x\|>R_{\ast}$ and $f:B\to f(B) \subset
\mathbb{H}^1\setminus \{0\}$ is $K$-quasiconformal, then for all
$N\geq N(K)$ and $\mathcal{S}^3$ almost every $\omega \in S(x)$,
we have
\begin{displaymath}
\|f^{\ast}(\omega)\|\leq \frac{\|f(x)\|}{N}\quad \Rightarrow \quad
\int_{s_{\omega,x}}^1 \frac{|D_H (f\circ
T_x^{-1})(T_x(\gamma(s,\omega)))|}{\|f(\gamma(s,\omega))\|} s^3 \,
ds \geq C \rho_x |(\omega_1,\omega_2)| \log N
\end{displaymath}
where $T_x$ is the M\"obius transformation defined in
\eqref{l:curve_cone}, and $s_{\omega,x}$ is given by Lemma
\ref{l:curve_cone}.
\end{lemma}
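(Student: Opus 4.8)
The plan is to deduce the asserted integral inequality from the fundamental theorem of calculus applied to $s\mapsto\log\|f(\gamma(s,\omega))\|$ along the radial curve, converting the smallness of the radial limit into a lower bound for a logarithm that produces the factor $\log N$. Throughout I fix $\omega=(z,t)\in S(x)\setminus\{z=0\}$ at which $f^{\ast}(\omega)$ exists, $f$ is absolutely continuous along $\gamma(\cdot,\omega)$, and Lemma~\ref{l:deriv_r} holds; by Lemma~\ref{l:QC_RadialLimit} and the reasoning in the proof of Theorem~\ref{t:QC_hardy} this excludes only an $\mathcal{S}^3$-null subset of $S(x)$. Since $f$ omits the origin, $s\mapsto\log\|f(\gamma(s,\omega))\|$ is absolutely continuous on $[s_{\omega,x},1)$ and extends continuously to $s=1$ with value $\log\|f^{\ast}(\omega)\|\in[-\infty,\infty)$, so dividing the bound of Lemma~\ref{l:deriv_r} by $\|f\|$ and using $|f_I|\leq\|f\|$ gives
\begin{equation*}
\log\frac{\|f(\gamma(s_{\omega,x},\omega))\|}{\|f^{\ast}(\omega)\|}\leq\int_{s_{\omega,x}}^1\left|\frac{\partial}{\partial s}\log\|f(\gamma(s,\omega))\|\right|ds\leq\frac{1}{|z|}\int_{s_{\omega,x}}^1\frac{|D_Hf(\gamma(s,\omega))|}{\|f(\gamma(s,\omega))\|}\,ds,
\end{equation*}
where the inequality is trivially valid if $\|f^{\ast}(\omega)\|=0$, both sides then being $+\infty$.

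The main step, which I expect to be the crux, is a normalization showing that
\begin{equation*}
\|f(\gamma(s_{\omega,x},\omega))\|\geq c_1\,\|f(x)\|
\end{equation*}
for a constant $c_1=c_1(K)>0$ independent of $x$ and $\omega$. Here I set $g:=f\circ T_x^{-1}$ on $\Omega:=T_x(B)$, a $K$-quasiconformal map omitting $0$ defined on a $1$-quasiconformal image of $B$. By Corollary~\ref{c:TB_in_ball} one has $\Omega\subset B(0,M)$, and a direct lower estimate of $\|T_x(\widetilde{\omega})\|$ for $\widetilde{\omega}\in\partial B$ through \eqref{eq4} (using $d(x,\widetilde{\omega})\geq\rho_x$ and $d(a_x,x)\sim N\rho_x$) gives $B(0,m)\subset\Omega$; both $m\sim N^{-2}$ and $M\sim N^{-1}$ depend only on $K$ through $N$. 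Lemma~\ref{l:ClaimMobRad} yields $d(q,\partial\Omega)\geq c$ for $q:=T_x(\gamma(s_{\omega,x},\omega))$, while $q\in\Omega\subset B(0,M)$ forces $d(q,\partial\Omega)\leq M$ by \eqref{eq:dist_bound_sphere}. Thus $d(q,\partial\Omega)\in[c,M]$ with $c,M$ uniform, and Proposition~\ref{p:dist_alpha} applied to $g$ gives $\|g(q)\|\geq C^{-1}c^{\alpha}\|g(0)\|$, which is the displayed bound since $g(0)=f(x)$ and $g(q)=f(\gamma(s_{\omega,x},\omega))$. Combining this with the previous display and the hypothesis $\|f^{\ast}(\omega)\|\leq\|f(x)\|/N$ gives $\log(c_1N)\leq|z|^{-1}\int_{s_{\omega,x}}^1|D_Hf(\gamma)|/\|f(\gamma)\|\,ds$, so choosing $N(K)$ so large that $\log(c_1N)\geq\tfrac12\log N$ for $N\geq N(K)$ produces
\begin{equation*}
\int_{s_{\omega,x}}^1\frac{|D_Hf(\gamma(s,\omega))|}{\|f(\gamma(s,\omega))\|}\,ds\geq\frac{|(\omega_1,\omega_2)|}{2}\log N .
\end{equation*}

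It remains to pass from $|D_Hf|$ to $|D_H(f\circ T_x^{-1})(T_x(\cdot))|$ and to insert the factors $s^3$ and $\rho_x$. The conformal chain rule \eqref{eq:f=gT} gives $|D_Hg(T_x(\gamma(s,\omega)))|\gtrsim_K|D_Hf(\gamma(s,\omega))|\,J_{T_x}(\gamma(s,\omega))^{-1/4}$, and by \eqref{eq5} together with $d(a_x,\gamma(s,\omega))\sim N\rho_x$ one obtains $J_{T_x}(\gamma(s,\omega))^{-1/4}=d(a_x,\gamma(s,\omega))^2/\rho_x\geq\rho_x$ for $N\geq N(K)$; the comparison $d(a_x,\gamma(s,\omega))\sim N\rho_x$ follows because $a_x$ is the outer corkscrew point at scale $M_0N\rho_x$ (Remark~\ref{r:Choice_x_a_r}) while $\gamma(s,\omega)\in B(\omega,\rho_x)$ with $\omega\in S(x)$, so $d(a_x,\gamma(s,\omega))\in[N\rho_x-(3+\kappa)\rho_x,\,(M_0+1)N\rho_x]$. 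Moreover $\|x\|>R_{\ast}$ forces $\rho_x<r_{\ast}<1$, whence $s^3\geq s_{\omega,x}^3\geq(1-\rho_x)^3\geq(1-r_{\ast})^3=:c_2>0$ on $[s_{\omega,x},1]$ by Lemma~\ref{l:curve_cone}(3). Multiplying the last display by these two lower bounds and using $|z|=|(\omega_1,\omega_2)|$ yields
\begin{equation*}
\int_{s_{\omega,x}}^1\frac{|D_H(f\circ T_x^{-1})(T_x(\gamma(s,\omega)))|}{\|f(\gamma(s,\omega))\|}\,s^3\,ds\geq C\,\rho_x\,|(\omega_1,\omega_2)|\,\log N
\end{equation*}
with $C=C(K)>0$, which is the assertion. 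The main obstacle is precisely the uniform normalization of the second paragraph: one has to verify that the moving domains $\Omega=T_x(B)$ fulfil the hypotheses of Proposition~\ref{p:dist_alpha} with constants $m,M,c$ independent of $x$ and $\omega$, so that the thresholds $N(K)$ and $c_1(K)$ can be fixed using only the distortion $K$.
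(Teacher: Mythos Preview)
Your proof is correct and follows essentially the same route as the paper, which packages the argument as Lemma~\ref{l:ZLemma4} (stated for $g=f\circ T_x^{-1}$) and then deduces the present lemma as an immediate corollary; the key ingredients---the fundamental theorem of calculus for $s\mapsto\log\|f(\gamma(s,\omega))\|$, the normalization $\|f(\gamma(s_{\omega,x},\omega))\|\gtrsim_K\|f(x)\|$ via Proposition~\ref{p:dist_alpha} and Lemma~\ref{l:ClaimMobRad}, and the Jacobian estimate $J_{T_x}(\gamma(s,\omega))^{-1/4}=d(a_x,\gamma(s,\omega))^2/\rho_x\sim\rho_x$ from \eqref{eq5}---are identical, only reordered. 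One cosmetic issue: you use the symbol $N$ simultaneously for the fixed geometric corkscrew parameter of Remark~\ref{r:Choice_x_a_r} (which enters the definition of $a_x$ and the bounds $m,M$) and for the variable $N\geq N(K)$ of the lemma, so phrases like ``$J_{T_x}^{-1/4}\geq\rho_x$ for $N\geq N(K)$'' or ``depend only on $K$ through $N$'' read oddly; once the two are disentangled the argument is clean, and the final constant $C$ is in fact universal (independent of $K$), as the lemma asserts.
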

We postpone the proof to Section~\ref{ss:ProofOfLemmaZCor4}. The
next result serves as a substitute for Lemma \ref{l:ZCorollary4}
in case $x\in B$ is far away from the boundary. This is a
Heisenberg version of \cite[Lemma 4.2]{MR2900163} in the special
case when the distinguished point is the origin. This case is
particularly simple since
\begin{displaymath}
S_{\kappa}(0) = \partial B \cap B(0,(1+\kappa)d(0,\partial B)) =
\partial B.
\end{displaymath}
Lemma  \ref{l:ZCorollary4} and Lemma \ref{l:CentredLemma4} look
different at first, but their similarity will become clear latest
in Remark \ref{r:Reform_Lemma4}. The integral in Lemma
\ref{l:ZCorollary4} is a new element in our proof, but the
inspiration for using it came from the proof of Zinsmeister's
\cite[Lemma 4]{MR860655}, where M\"obius self-maps of $B$ appear
through the definition of $\|u\|_{\ast}$ at the bottom of
\cite[p.127]{MR860655}.

\begin{lemma}\label{l:CentredLemma4}
If $f:B \to f(B) \subset \mathbb{H}^1\setminus \{0\}$ is
$K$-quasiconformal, then
\begin{displaymath}
\mathcal{S}^3\left(\left\{\omega \in \partial B:\;
\|f^{\ast}(\omega)\|\leq \tfrac{\|f(0)\|}{N} \right\}\right) \leq
C(K) \left(\ln N\right)^{-\frac{3}{4}},
\end{displaymath}
where the constant $C(K)$ depends only on $K$.
\end{lemma}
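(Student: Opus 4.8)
The plan is to adapt the Euclidean modulus argument of \cite[Lemma 4.2]{MR2900163} for the base point $0$, but with an additional interpolation step between the three canonical measures $\sigma$, $\sigma_0$ and $\mathcal{S}^3$ that is forced by the characteristic points of $\partial B$. Write $E:=\{\omega \in \partial B\setminus\{z=0\}:\|f^{\ast}(\omega)\|\le \|f(0)\|/N\}$, which is Borel by Lemma~\ref{l:QC_RadialLimit}; the characteristic points carry no mass and may be discarded. Since $(\ln N)^{-3/4}$ is bounded below on any bounded range of $N$, it suffices to treat $N$ larger than a threshold $N_0(K)$ fixed below, the remaining cases being absorbed into $C(K)$ via $\mathcal{S}^3(E)\le \mathcal{S}^3(\partial B)$. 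First I would invoke the growth estimate of Proposition~\ref{p:dist_alpha} (in the form of Remark~\ref{rem:growth-non-zero}, with $\Omega=B$): its lower bound applied at $q=\gamma(\tfrac12,\omega)$, where $\|q\|=\tfrac12$ forces $d(q,\partial B)\ge \tfrac12$ by the triangle inequality, yields a constant $c_0=c_0(K)\in(0,1)$ with $\|f(\gamma(\tfrac12,\omega))\|\ge c_0\|f(0)\|$ for \emph{every} $\omega$. This scale-invariant lower bound is precisely what makes the final constant depend only on $K$.

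Next I would run the modulus comparison. For $\omega\in E$ the image curve $f\circ\gamma(\cdot,\omega)|_{[1/2,1]}$ joins a point of Kor\'anyi norm $\ge c_0\|f(0)\|$ to $f^{\ast}(\omega)$ of norm $\le \|f(0)\|/N$; choosing $N_0(K)>1/c_0$, for $N\ge N_0(K)$ every such curve contains a subcurve crossing the spherical ring $\{\|f(0)\|/N<\|q\|<c_0\|f(0)\|\}$. By the overflow (minorization) property of the $4$-modulus and the ring formula \eqref{eq:ring}, the image family $f(\Gamma)$ of the family $\Gamma$ of radial curves from $\partial B(0,\tfrac12)$ to $E$ satisfies $\mathrm{mod}_4(f(\Gamma))\le \pi^2(\ln(c_0 N))^{-3}$. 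Quasiconformality \eqref{eq:mod} gives $\mathrm{mod}_4(\Gamma)\le K'\,\mathrm{mod}_4(f(\Gamma))$, while Proposition~\ref{p:Mod_Radial_subet} (with $r=\tfrac12$) evaluates $\mathrm{mod}_4(\Gamma)=\sigma(E)(\ln 2)^{-3}$. Combining these three facts yields $\sigma(E)\lesssim_K (\ln(c_0 N))^{-3}\lesssim_K(\ln N)^{-3}$ for $N\ge N_0(K)$.

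The genuinely Heisenberg step is then to convert this bound on $\sigma(E)$ into one on $\mathcal{S}^3(E)$, since the modulus only controls the measure $\sigma$ while the statement concerns $\mathcal{S}^3$, and the two differ drastically near the characteristic points. By Lemma~\ref{l:Haus}, Theorem~\ref{t:polar} and Proposition~\ref{p:Mod_Radial_subet}, in the coordinates \eqref{eq:spherical_coords} we have $d\mathcal{S}^3=\cos^{1/2}\alpha\,d\alpha\,d\varphi$, $d\sigma=\cos^2\alpha\,d\alpha\,d\varphi$ and $d\sigma_0=d\alpha\,d\varphi$. Applying H\"older's inequality on $E$ with exponents $4$ and $4/3$ to the factorization $\cos^{1/2}\alpha=\cos^{1/2}\alpha\cdot 1$ against $d\sigma_0$, I obtain
\[
\mathcal{S}^3(E)=\int_E \cos^{1/2}\alpha\,d\sigma_0\le\Big(\int_E\cos^2\alpha\,d\sigma_0\Big)^{1/4}\sigma_0(E)^{3/4}=\sigma(E)^{1/4}\,\sigma_0(E)^{3/4}.
\]
Since $\sigma_0(E)\le\sigma_0(\partial B)=2\pi^2$, this gives $\mathcal{S}^3(E)\lesssim\sigma(E)^{1/4}$, and feeding in the modulus bound from the previous paragraph yields $\mathcal{S}^3(E)\lesssim_K (\ln N)^{-3/4}$, as claimed. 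The exponents $4$ and $4/3$ are dictated by the requirement that the $\cos$-weight integrate exactly to the finite total mass $\sigma_0(E)$; this interpolation, rather than the (routine) modulus estimate, is the main point and the source of the exponent $3/4$. The only delicate verifications I anticipate are that the null sets (non-rectifiable radial curves and points where $f^{\ast}$ is undefined) do not affect the modulus computation, which follows as in the proof of Lemma~\ref{l:QC_RadialLimit}, and the elementary uniform lower bound $d(\gamma(\tfrac12,\omega),\partial B)\ge \tfrac12$.
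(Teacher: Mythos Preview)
Your proposal is correct and follows essentially the same approach as the paper: a modulus argument on the family of radial curves from an inner sphere to $E$ yields $\sigma(E)\lesssim_K(\ln N)^{-3}$, and then the H\"older interpolation $\mathcal{S}^3(E)\le \sigma(E)^{1/4}\sigma_0(E)^{3/4}$ converts this to the $\mathcal{S}^3$ bound with exponent $3/4$. The only difference is in how the lower bound on $\|f\|$ near the origin is obtained: the paper invokes an external Harnack-type estimate (Corollary~3.5 in \cite{2017arXiv170702832A}) on a small ball $B(0,r_0(K))$, while you use the already-established Proposition~\ref{p:dist_alpha} at radius $\tfrac12$, which is a minor and self-contained variant.
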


\begin{proof} We first prove a similar inequality for the
measure $\sigma$ on $\partial B \setminus \{0\}$. Namely, for
every $K \geq 1$, we show that there exists a constant $C(K)$ such
that if $f:B\subset \mathbb{H}^1 \to f(B) \subset \mathbb{H}^1
\setminus \{0\}$ is a $K$-quasiconformal map, then for all $N>1$,
one has
\begin{equation}\label{eq:statement:q_0}
\sigma\left(\left\{ \omega \in \partial B\setminus \{z=0\}:\;
\|f^{\ast}(\omega)\|< \|f(0)\|/N\right\}\right)\leq C(K)
\sigma(\partial B\setminus \{z=0\})(\ln N)^{-3}.
\end{equation}
As in the proof of \cite[Lemma 4.2]{MR2900163}, we set
$$
d(f(0), \partial f(B)):=c.
$$
  We apply Corollary 3.5 in~\cite{2017arXiv170702832A} with $g=f$, $U=B$, $U'=f(B)$,
  $\beta:=5$ and a ball $B=B(0,r_0)$, where $r_0=r_0(K)$ is such that $3kB\subset B$ ($k$ depends on $K$ only)
  to obtain that for all $q\in B(0,r_0)$ we have
\[
 d(f(0), f(q)) \leq \diam f(B(0,r_0))\leq d(f(0), \partial f(B(0,r_0)))\leq \frac{c}{2}.
\]
Since $c:=d(f(0), \partial f(B))\leq \|f(0)\|$, it holds for any
$q'\in f(B(0 ,r_0))$ that
\[
d(q',0)=\|q'\|\geq \|f(0)\|-d(q', f(0))\geq
\|f(0)\|-\frac{c}{2}\geq \frac12 \|f(0)\|.
\]
Thus $ f(B(0 ,r_0))\cap B(0, \frac12 \|f(0)\|)=\emptyset$. Then,
for any given $N>1$ we denote \begin{displaymath}E:=E_N:=\left\{
\omega \in
\partial B\setminus \{z=0\}:\; \|f^{\ast}(\omega)\|<
\|f(0)\|/N\right\},\end{displaymath} and we define a family of
radial curves $\Gamma_E=\Gamma(\partial B(0,r_0), E, B)$, see the
discussion following Theorem~\ref{t:polar}.  By
\eqref{eq:mod_formula} we obtain that $\mathrm{mod}_4(\Gamma_E) =
\pi^2 \left(\ln \frac{1}{r_0}\right)^{-3} \sigma(E)$. If $N>3$,
then (radial) curves in $f(\Gamma_E)$ have one endpoint in
$\partial f(B(0, r_0))$ and another in $B(0, \|f(0)\|/N)$ and so,
in particular, they connect the complement of $B(0, \|f(0)\|/2)$
to $B(0, \|f(0)\|/N)$. Hence, by \eqref{eq:ring},
\[
 \mathrm{mod}_4(f(\Gamma_E))\lesssim_K \left(\ln \frac{N}{2}\right)^{-3}\leq C (\ln N)^{-3}.
\]
Therefore, $\sigma(E)\lesssim_K \frac{(\ln
\frac{1}{r_0})^3}{\pi^2} (\ln N)^{-3}$. If $1<N\leq 3$, then the
estimate is trivial:
\begin{displaymath}
\sigma(E) \leq \left(\frac{\ln 3}{\ln N}\right)^3 \sigma (\partial
B\setminus \{z=0\}).
\end{displaymath}
Thus $ \sigma(E)\lesssim_K (\ln N)^{-3},
$
and the proof of \eqref{eq:statement:q_0} is complete.

\medskip

The statement of Lemma \ref{l:CentredLemma4} can be reduced to
this estimate. Since
\begin{displaymath}
d\sigma= \cos^2\alpha\, d\sigma_0\quad\text{and}\quad
d\mathcal{S}^3= \sqrt{\cos \alpha} \,d\sigma_0,
\end{displaymath}
H\"older's inequality with exponents $p=4$ and $q=4/3$ yields
\begin{displaymath}
\mathcal{S}^3(E) = \int_{E} \sqrt{\cos \alpha}\,d\sigma_0 \leq
\left[\int_{E} \cos^2 \alpha
\,d\sigma_0\right]^{\frac{1}{4}}\,\left[\int_E
\,d\sigma_0\right]^{\frac{3}{4}}= \sigma(E)^{\frac{1}{4}}\,
\sigma_0(E)^{\frac{3}{4}}.
\end{displaymath}
Now we bound $\sigma(E)$ from above with the help of
\eqref{eq:statement:q_0}, and thus conclude that
\begin{displaymath}
\mathcal{S}^3(E) \leq C(K)^{\frac{1}{4}}\, \sigma(\partial
B\setminus \{z=0\})^{\frac{1}{4}} \, \left(\ln
N\right)^{-\frac{3}{4}}\,\sigma_0(\partial B\setminus
\{z=0\})^{\frac{3}{4}}.
\end{displaymath}
\end{proof}

Lemmas \ref{l:ZCorollary4} and \ref{l:CentredLemma4} allow to
control the set of points $\omega$ in $S(x)$ where
$\|f^{\ast}(\omega)\|$ is small compared to $\|f(x)\|$, as made
precise by Lemma \ref{l:ZLemma5}.

\begin{proof}[Proof of Lemma \ref{l:ZLemma5}]
The proof is split in two cases: $\|x\|\leq R_{\ast}$ and
$\|x\|>R_{\ast}$, where the first case is handled with Lemma
\ref{l:CentredLemma4} and Proposition \ref{p:dist_alpha}, while
Lemma \ref{l:ZCorollary4} and the Carleson measure $|D_H
f(q)|/\|f(q)\|\, dq$ are used to treat the second case.

\medskip
Let us assume first that $\|x\|\leq R_{\ast}$. Since $0 \notin
f(B)$, Proposition \ref{p:dist_alpha} for $\Omega=B$ and $g=f$
implies that there exist constants $C_K,\alpha_K>1$ such that
\begin{displaymath}
\frac{\|f(x)\|}{\|f(0)\|}\leq C_K\, d(x,\partial B)^{-\alpha_K}
\leq C_K\, (1-R_{\ast})^{-\alpha_K}.
\end{displaymath}
Then Lemma \ref{l:CentredLemma4} with $\Psi(N)= \left(\ln
N\right)^{-\frac{3}{4}}$ implies that
\begin{align*}
\mathcal{S}^3 \left(\left\{\omega \in S(x):\,
\|f^{\ast}(\omega)\|\leq \frac{\|f(x)\|}{N}\right\}\right) & \leq
\mathcal{S}^3 \left(\left\{\omega \in \partial B:\;
\|f^{\ast}(\omega)\|\leq C_K \,(1-R_{\ast})^{-\alpha_K}\,
\frac{\|f(0)\|}{N}\right\}\right)\\
&\leq C(K) \Psi\left(\frac{N}{C_K \,(1-R_{\ast})^{-\alpha_K}}\right)\\
&\leq C(R^{\ast}) \,C(K)\, \Psi\left(\frac{N}{C_K
\,(1-R_{\ast})^{-\alpha_K}}\right)\,\mathcal{S}^3(S(x)),
\end{align*}
where we have used in the last inequality that
$\mathcal{S}^3(S(x))\gtrsim_{R_{\ast}} 1$ for $\|x\|\leq
R_{\ast}$. This is the case by the inclusion
\eqref{eq:ball_cap_incl}, the inequality $d(x,\partial B)\geq
1-R_{\ast}$, and the $3$-regularity of $\mathcal{S}^3|_{\partial
B}$ stated in Lemma~\ref{l:3Reg}.
 Thus the estimate in Lemma \ref{l:ZLemma5} holds for all $x\in \overline{B(0,R_{\ast})}$
 with any function $\Psi_K$ satisfying $\lim_{N\to
 \infty}\Psi_K(N)=0$ and
\begin{displaymath}
\Psi_K(N)\geq C(R^{\ast}) \,C(K)\,\Psi\left(\frac{ N}{C_K
\,(1-R_{\ast})^{-\alpha_K}}\right).
\end{displaymath}

\medskip In the second part of the proof, we assume that $\|x\|>
R_{\ast}$. Choosing $N(K)$ as in Lemma \ref{l:ZCorollary4}, we
find for $N\geq N(K)$ that
\begin{align*}
\mathcal{S}^3&\left(\left\{\omega \in S(x):\, \|f^{\ast}(\omega)\|
< \frac{\|f(x)\|}{N} \right\}\right) \\&\leq \mathcal{S}^3
\left(\left\{\omega \in S(x):\ \frac{1}{\rho_x\, C\, \log N\,
|(\omega_1,\omega_2)|}\int_{s_{\omega,x}}^1 \frac{|D_H(f\circ
T_x^{-1})(T_x(\gamma(s,\omega)))|}{\|f(\gamma(s,\omega))\|} \,s^3
\,ds \geq 1 \right\}\right)\\
&\leq \frac{1}{\rho_x \,C \,\log N}\int_{S(x)}
\frac{1}{|(\omega_1,\omega_2)|} \int_{s_{\omega,x}}^1
\frac{|D_H(f\circ
T_x^{-1})(T_x(\gamma(s,\omega)))|}{\|f(\gamma(s,\omega))\|}\,s^3
\,ds \,d\mathcal{S}^3(\omega)\\
&= \frac{1}{\rho_x \,C \,\log N} \int_0^1 \int_{S(x)}
\chi_{[s_{\omega,x},1]}(s) \frac{|D_H(f\circ
T_x^{-1})(T_x(\gamma(s,\omega)))|}{\|f(\gamma(s,\omega))\|} \,
d\sigma_0(\omega) \,s^3 ds,
\end{align*}
where we have used $d\mathcal{S}^3|_{\partial B}(\omega) =
|(\omega_1,\omega_2)| d\sigma_0(\omega)$ and the Borel
measurability of $\om \mapsto s_{\om,x}$, recall
Lemma~\ref{l:curve_cone}.

Let us take a closer look at the domain of the double integration.
By the choice of $s_{\omega,x}$ in Lemma \ref{l:curve_cone}, we
know that for all $\omega \in \partial B$, we have
\begin{equation}\label{eq:condI}
d(\gamma(s,\omega),\omega) \leq \rho_x,\quad \text{for all }s\in
[s_{\omega,x},1].
\end{equation}
If we assume that $\omega$ is contained in the spherical cap
$S(x)$, then by definition we also have
\begin{equation}\label{eq:condII}
d(\omega,x) \leq (1+\kappa)\rho_x.
\end{equation}
Finally, by definitions (d1)-(d2) below \eqref{eq:Tx}, we have
\begin{equation}\label{eq:condIII}
d(x,\omega_x)=\rho_x.
\end{equation}
Recalling that $S(x):=S_{\kappa}(x)$, a combination of
\eqref{eq:condI}, \eqref{eq:condII}, and \eqref{eq:condIII} shows
that
\begin{displaymath}
\{\gamma(s,\omega):\, s\in [s_{\omega,x},1],\,\omega\in S(x)\}
\subset B \cap B(\omega_x, (3+\kappa)\rho_x).
\end{displaymath}
Thus we can continue with the previous estimate as follows
\begin{align*}
\mathcal{S}^3\left(\left\{ \omega \in S(x):\,
\|f^{\ast}(\omega)\|< \frac{\|f(x)\|}{N}\right\}\right) & \leq
\frac{1}{\rho_x \, C\, \log N}\int_{B \cap  B(\omega_x,
(3+\kappa)\rho_x)} \frac{|D_H (f\circ
T_x^{-1})(T_x(q))|}{\|f(q)\|}\, dq.
\end{align*}
We now derive an upper bound for the operator norm of the
horizontal derivative appearing in that integral. The chain rule
for Pansu derivatives, the contact and the Lusin property of
quasiconformal maps yield for Lebesgue almost every $q\in B $ that
\begin{displaymath}
D_H(f\circ T_x^{-1})(T_x(q)) = D_H f(q) \, D_H T_x^{-1}(T_x(q)).
\end{displaymath}
Then, for almost every $q\in B \cap  B(\omega_x, (3+\kappa)\rho_x)$, we find by a similar computations as in the proof of Lemma~\ref{l:f=gT} that
\begin{align*}
|D_H(f\circ T_x^{-1})(T_x(q))|  \leq |D_H f(q)| \, |D_H
T_x^{-1}(T_x(q))|
&= |D_H f(q)|\, J_{T_x}(q)^{-\frac{1}{4}}\\
&= |D_H f(q)|\, \frac{d(a_x,q)^2}{\rho_x}\\
&\lesssim_{\kappa} |D_H f(q)|\,\rho_x.
\end{align*}
Here the first equation holds since $T_x^{-1}$ is
$1$-quasiconformal and the next equation is due to \eqref{eq5},
the formula for the Jacobian of $T_x$. Finally, the last
inequality holds because $q\in B \cap  B(\omega_x,
(3+\kappa)\rho_x)$ and $d(a_x,\omega_x)\lesssim \rho_x$ by the
choices we made below \eqref{eq:Tx} in (d1)-(d3). Inserting the
obtained estimate for $|D_H(f\circ T_x^{-1})(T_x(q))|$ in our
chain of inequalities, we find that
\begin{displaymath}
\mathcal{S}^3 \left(\left\{\omega \in S(x):\,
\|f^{\ast}(\omega)\|< \frac{\|f(x)\|}{N} \right\}\right) \leq
\frac{1}{C\, \log N} \int_{B\cap
B(\omega_x,(3+\kappa)\rho_x)}\frac{|D_H f(q)|}{\|f(q)\|} \,dq.
\end{displaymath}
Now we apply Proposition~\ref{l:5.6} for $p=1$ to deduce that
 \begin{displaymath}
 d\mu(q)= \frac{|D_H f(q)|}{\|f(q)\|}\,dq
\end{displaymath}
is a Carleson measure with Carleson measure constant depending
only on $K$. Hence,
\begin{align*}
\mathcal{S}^3 \left(\left\{\omega \in S(x):\,
\|f^{\ast}(\omega)\|< \frac{\|f(x)\|}{N} \right\}\right)
\lesssim_K \frac{1}{\log N} \rho_x^3\lesssim \frac{1}{\log
N}\mathcal{S}^3(S(x)),
\end{align*}
where in the last step we used the $3$-regularity of
$\mathcal{S}^3|_{\partial B}$, recall Lemma \ref{l:3Reg}, and
\eqref{eq:ball_cap_incl}. This concludes the proof of Lemma
\ref{l:ZLemma5} in the second case, that is, if $\|x\|>R_{\ast}$.
\end{proof}

With these preparations in hand, we are now ready to prove Lemma
\ref{l:AKCOr4.3}, following the proof of \cite[Corollary
4.3]{MR2900163} by Astala and Koskela.

\begin{proof}[Proof of Lemma \ref{l:AKCOr4.3}]
If $0\notin f(B)$, then we can directly apply Lemma
\ref{l:ZLemma5} by choosing $N$ large enough, depending only on
$K$, such that
\begin{displaymath}
\mathcal{S}^3 \left(\left\{\omega \in S(x):\, \|f^{\ast}(\omega)\|
\leq \frac{\|f(x)\|}{N} \right\} \right) < \frac{1}{2}
\mathcal{S}^3(S(x)),\quad \text{for all }x\in B.
\end{displaymath}
This yields \eqref{eq:MeanValue} in that case, cf., the proof of
Corollary 4.3 in~\cite{MR2900163}. If, on the other hand, $0\in
f(B)$, then we can choose a point $y\in \mathbb{H}^1$ such that
\begin{displaymath}
\|y\|\leq \|f^{\ast}(\omega)\|,\quad\text{for almost all } \omega
\in \partial B.
\end{displaymath}
Then the map $g:= L_{y^{-1}}\circ f$ is $K$-quasiconformal on $B$
with $0\notin g(B)$. Applying the inequality from the previous
case yields
\begin{align*}
\|f(x)\|^q &\leq 2^q \|g(x)\|^q + 2^q \|y\|^q\\
&\leq 2^q \|g(x)\|^q + \frac{2^q}{\mathcal{S}^3(S(x))}
\int_{S(x)} \|f^{\ast}(\omega)\|^q\, d\mathcal{S}^3(\omega)\\
&\leq \frac{2^q C} {\mathcal{S}^3(S(x))} \int_{S(x)}
\|g^{\ast}(\omega)\|^q\, d\mathcal{S}^3(\omega) +
\frac{2^q}{\mathcal{S}^3(S(x))} \int_{S(x)}
\|f^{\ast}(\omega)\|^q\, d\mathcal{S}^3(\omega)\\
&\leq \frac{2^q C} {\mathcal{S}^3(S(x))} \int_{S(x)}
(2\|f^{\ast}(\omega)\|)^q\, d\mathcal{S}^3(\omega) +
\frac{2^q}{\mathcal{S}^3(S(x))} \int_{S(x)}
\|f^{\ast}(\omega)\|^q\, d\mathcal{S}^3(\omega),
\end{align*}
which concludes the proof also in that case.
\end{proof}

\subsubsection{Proof of Lemma \ref{l:ZCorollary4}}\label{ss:ProofOfLemmaZCor4}

We will deduce Lemma \ref{l:ZCorollary4} for quasiconformal maps
$f$ on $B$ from a related statement for quasiconformal maps  on
certain conformal images of $B$, similarly to the reasoning in the
proof of Proposition~\ref{l:5.6}.

\begin{lemma}\label{l:ZLemma4}
There exists a constant $C>0$, and for every $K\geq 1$ a number
$N(K)>1$, such that for all $x\in B$ with $\|x\|>R_{\ast}$ with
associated M\"obius transformation $T_x$ as in \eqref{eq:Tx}, and
for all $K$-quasiconformal $g:T_x(B) \to g(T_x(B)) \subset
\mathbb{H}^1 \setminus \{0\}$, we have for all $N\geq N(K)$ and
$\mathcal{S}^3$ almost all $\omega=(\omega_1,\omega_2,\omega_3)
\in S(x)$ that
\begin{displaymath}
\|\lim_{s \to 1} g(T_x(\gamma(s,\omega)))\|\leq
\frac{\|g(T_x(x))\|}{N} \quad \Rightarrow \quad G_x(\omega) \geq C
\rho_x |(\omega_1,\omega_2)| \log N,
\end{displaymath}
where
\begin{displaymath}
G_x(\omega)= \int_{s_{\omega,x}}^1 \frac{|D_H
g(T_x(\gamma(s,\omega)))|}{\|g\|(T_x(\gamma(s,\omega)))}\,s^3
\,ds.
\end{displaymath}
\end{lemma}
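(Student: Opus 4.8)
The plan is to integrate a logarithmic derivative estimate for $\|g\|$ along the image curve $T_x(\gamma(\cdot,\omega))$, and then to exploit that, after applying the conformal map $T_x$, the endpoint $\gamma(s_{\omega,x},\omega)$ is pushed to a uniformly bounded distance from $\partial T_x(B)$. I would first fix a ``good'' $\omega \in S(x)\setminus\{z=0\}$ --- one for which the radial limit exists, the radial curve $\gamma(\cdot,\omega)$ is horizontal, and $g$ is Pansu differentiable along its $T_x$-image --- which is $\mathcal{S}^3$-almost every $\omega$ by the modulus argument used in Lemmas~\ref{l:QC_RadialLimit} and \ref{l:deriv_r}, noting that $T_x$ is a smooth contact diffeomorphism near $\overline{B}$ since $a_x \notin \overline{B}$. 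Writing $\eta(s):=T_x(\gamma(s,\omega))$ and $\mu(s):=g(\eta(s))$, the curve $\mu$ is horizontal, so the computation leading to \eqref{eq:tangent_bound} gives $|\partial_s\|\mu\|| \le |\dot\mu_I|\,|\mu_I|/\|\mu\| \le |\dot\mu_I|$. Using $|\dot\mu_I(s)|=|\dot\mu(s)|\le |D_Hg(\eta(s))|\,|\dot\eta(s)|$, conformality of $T_x$ (so $|D_H T_x(y)| = J_{T_x}(y)^{1/4} = \rho_x/d(a_x,y)^2$ by \eqref{eq5}), and \eqref{eq:length_el}, I obtain
\[
\left|\frac{\partial}{\partial s}\log\|\mu(s)\|\right| \le \frac{|D_H g(\eta(s))|}{\|g(\eta(s))\|}\cdot\frac{\rho_x}{d(a_x,\gamma(s,\omega))^2}\cdot\frac{1}{|(\omega_1,\omega_2)|}.
\]
Integrating over $[s_{\omega,x},1]$ and recalling $\mu(1)=\lim_{s\to1}g(T_x(\gamma(s,\omega)))$ yields
\[
\log\frac{\|\mu(s_{\omega,x})\|}{\|\mu(1)\|} \le \int_{s_{\omega,x}}^1 \frac{|D_H g(\eta(s))|}{\|g(\eta(s))\|}\cdot \frac{\rho_x}{d(a_x,\gamma(s,\omega))^2}\cdot\frac{1}{|(\omega_1,\omega_2)|}\,ds.
\]

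Next I would bound the left-hand side from below. Since $T_x(x)=0$ by \eqref{eq1}, the reference value is $\|g(0)\|=\|g(T_x(x))\|$. The domain $T_x(B)$ is a $1$-quasiconformal image of $B$ satisfying $B(0,m)\subset T_x(B)\subset B(0,M)$ for universal $m,M$, by Corollaries~\ref{c:TB_in_ball} and \ref{c:ball_in_TB} applied to the parameter choices of Remark~\ref{r:Choice_x_a_r}; hence Proposition~\ref{p:dist_alpha} applies to $g$ on $\Omega=T_x(B)$. By the normalization Lemma~\ref{l:ClaimMobRad} we have $d(\eta(s_{\omega,x}),\partial T_x(B))\ge c$, and $\le M$ by \eqref{eq:dist_bound_sphere}, so Proposition~\ref{p:dist_alpha} gives $\|\mu(s_{\omega,x})\|\sim_K \|g(0)\|$. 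Consequently, if $\|\mu(1)\|\le \|g(T_x(x))\|/N$ then $\|\mu(s_{\omega,x})\|\ge c_K N\,\|\mu(1)\|$, so the left-hand side is $\ge \log(c_K N)\ge \tfrac12\log N$ once $N\ge N(K)$.

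Finally I would absorb the weight into $s^3$. Since $a_x$ is an outer corkscrew point, $d(a_x,\gamma(s,\omega))\ge d(a_x,B)\gtrsim \rho_x$ for $s\in(0,1)$ by Remark~\ref{r:Choice_x_a_r}, and $s\ge s_{\omega,x}\ge 1-\rho_x$ is bounded away from $0$ (we may assume $r_\ast\le \tfrac12$), so $s^3\gtrsim 1$, by Lemma~\ref{l:curve_cone}. Hence $\tfrac{\rho_x}{d(a_x,\gamma(s,\omega))^2}\lesssim \tfrac{1}{\rho_x}\lesssim \tfrac{s^3}{\rho_x}$ with a universal constant, and plugging this into the integral bound gives
\[
\tfrac12\log N \;\lesssim\; \frac{1}{\rho_x|(\omega_1,\omega_2)|}\int_{s_{\omega,x}}^1 \frac{|D_H g(\eta(s))|}{\|g(\eta(s))\|}\,s^3\,ds \;=\; \frac{1}{\rho_x|(\omega_1,\omega_2)|}\,G_x(\omega),
\]
i.e.\ $G_x(\omega)\ge C\,\rho_x|(\omega_1,\omega_2)|\log N$, as required.

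The main obstacle, and the reason for the preparatory results, is that every constant must be tracked to be \emph{independent} of $x$ with $\|x\|>R_\ast$ and of $\omega\in S(x)$, even as these points approach $\partial B$: this concerns the growth estimate, the corkscrew lower bound $d(a_x,\cdot)\gtrsim\rho_x$ on $B$, and above all the comparison $\|\mu(s_{\omega,x})\|\sim_K\|g(0)\|$. Lemma~\ref{l:ClaimMobRad} and Proposition~\ref{p:dist_alpha} are precisely tailored to deliver this uniformity; the remaining care is the chain-rule bookkeeping that converts $|D_H g|$ along the image curve into the integrand of $G_x$ via the conformal Jacobian factor $J_{T_x}^{1/4}=\rho_x/d(a_x,\cdot)^2$.
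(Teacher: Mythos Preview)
Your proposal is correct and follows essentially the same approach as the paper: integrate the logarithmic derivative of $\|g\|$ along $T_x(\gamma(\cdot,\omega))$ using the chain rule and the explicit conformal Jacobian $J_{T_x}^{1/4}=\rho_x/d(a_x,\cdot)^2$, invoke the normalization Lemma~\ref{l:ClaimMobRad} together with Proposition~\ref{p:dist_alpha} to compare $\|g(T_x(\gamma(s_{\omega,x},\omega)))\|$ with $\|g(0)\|$, and use the corkscrew bound $d(a_x,\cdot)\gtrsim\rho_x$ on $B$ plus $s\ge 1-\rho_x$ to replace the Jacobian weight by $s^3/\rho_x$. The only cosmetic difference is that the paper packages the derivative estimate as a separate Lemma~\ref{l:green} and strips off $s^3$ first, whereas you carry the Jacobian factor through and reinsert $s^3$ at the end; the substance is identical.
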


\begin{remark}\label{r:Reform_Lemma4}
Lemma \ref{l:ZLemma4} implies
\begin{displaymath}
\mathcal{S}^3 \left(\omega \in S(x):\ \| \lim_{s\to 1}
g(T_x(\gamma(s,\omega))) \| < \frac{\|g(0)\|}{N}\right) \leq
\frac{1}{C\,\rho_x \,|(\omega_1,\omega_2)|\,\log N}
\|G_x\|_{L^1(\mathcal{S}^3|_{\partial B})}
\end{displaymath}
\end{remark}

Lemma \ref{l:ZCorollary4}  follows immediately by applying Lemma
\ref{l:ZLemma4} to $g:= f \circ T_x^{-1}$. So Lemma
\ref{l:ZLemma4} is the last missing piece. We prove it by applying
Proposition \ref{p:dist_alpha}, which is possible thanks to the
normalization provided by $T_x$. This is inspired by ideas from
\cite{MR860655}, but at the same time geometric properties of
radial curves and M\"obius transformations in $\mathbb{H}^1$ play
an important role in our argument via the following auxiliary
result.

\begin{lemma}\label{l:green} For a point $x\in B$ with
$\|x\|>R_{\ast}$, let $T_x$ be the associated M\"obius
transformation defined in \eqref{eq:Tx}, and let $g:T_x(B) \to
g(T_x(B))\subset \mathbb{H}^1 \setminus \{0\}$ be quasiconformal.
Then, for $\mathcal{S}^3$ almost every  $\omega \in S(x)$ and
almost every $s\in (0,1)$, we have
\begin{displaymath}
|D_H g(T_x(\gamma(s,\omega)))|\geq \left| \frac{\partial}{\partial
s} \|g(T_x(\gamma(s,\omega)))\|\right|
\frac{|(\omega_1,\omega_2)|}{J_{T_x}(\gamma(s,\omega))^{1/4}}.
\end{displaymath}
\end{lemma}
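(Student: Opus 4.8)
The plan is to read off the estimate from Lemma~\ref{l:deriv_r} applied to the composite map $f := g \circ T_x$. Since $a_x \in \mathbb{H}^1 \setminus \overline{B}$, the transformation $T_x$ is defined and $1$-quasiconformal on all of $B$, so $f = g \circ T_x : B \to g(T_x(B)) \subset \mathbb{H}^1 \setminus \{0\}$ is $K$-quasiconformal and Lemma~\ref{l:deriv_r} is applicable to it. Writing $\omega = (\omega_1,\omega_2,\omega_3)$ and $z = \omega_1 + \mathrm{i}\omega_2$, so that $|z| = |(\omega_1,\omega_2)|$, that lemma gives, for $\mathcal{S}^3$ almost every $\omega \in \partial B \setminus \{z=0\}$ and almost every $s \in (0,1)$,
\begin{displaymath}
\left|\frac{\partial}{\partial s}\|g(T_x(\gamma(s,\omega)))\|\right| \leq \frac{|f_I(\gamma(s,\omega))|}{\|g(T_x(\gamma(s,\omega)))\|}\,|D_H f(\gamma(s,\omega))|\,\frac{1}{|(\omega_1,\omega_2)|},
\end{displaymath}
where $f_I = g_I \circ T_x$ since $f = g\circ T_x$ acts componentwise through $T_x$.

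Two elementary facts then convert this into the claim. First, because $\|g\|^4 = |g_I|^4 + g_3^2 \geq |g_I|^4$, the hypothesis $0 \notin g(T_x(B))$ ensures $\|g\| > 0$ and $|f_I|/\|g(T_x(\cdot))\| = |g_I(T_x(\cdot))|/\|g(T_x(\cdot))\| \leq 1$ along $\gamma(\cdot,\omega)$. Second, I would factor the operator norm of the horizontal derivative \emph{exactly}, using the conformality of $T_x$: by the chain rule for Pansu derivatives \cite{MR2115223}, $D_H f(q) = (D_H g)(T_x(q))\cdot D_H T_x(q)$ for almost every $q$, and since $T_x$ is $1$-quasiconformal its formal horizontal differential is a conformal matrix, i.e.\ a scalar multiple of a rotation with $|D_H T_x(q)| = J_{T_x}(q)^{1/4}$. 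As precomposition with a rotation leaves the operator norm unchanged, this gives the constant-free identity $|D_H f(q)| = J_{T_x}(q)^{1/4}\,|(D_H g)(T_x(q))|$ for almost every $q \in B$; this is the sharp version of the relation \eqref{eq:f=gT} invoked in Lemma~\ref{l:f=gT}.

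Substituting both observations into the displayed inequality yields
\begin{displaymath}
\left|\frac{\partial}{\partial s}\|g(T_x(\gamma(s,\omega)))\|\right| \leq J_{T_x}(\gamma(s,\omega))^{1/4}\,|D_H g(T_x(\gamma(s,\omega)))|\,\frac{1}{|(\omega_1,\omega_2)|},
\end{displaymath}
and rearranging gives exactly the asserted lower bound for $|D_H g(T_x(\gamma(s,\omega)))|$. Finally, since $S(x) \subset \partial B$ and the two characteristic points $\{z=0\}$ form an $\mathcal{S}^3$-null set, the qualifier ``$\mathcal{S}^3$ a.e.\ $\omega \in \partial B \setminus \{z=0\}$'' specializes to ``$\mathcal{S}^3$ a.e.\ $\omega \in S(x)$''.

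The argument is essentially a direct corollary of Lemma~\ref{l:deriv_r}, so I do not expect a serious obstacle. The only points demanding care are the \emph{exact} (constant-free) factorization of the operator norm under the conformal change of variables $T_x$, and the verification that the relevant null set — where $g$ fails to be Pansu differentiable, pulled back under $T_x$ — is negligible along $\mathcal{S}^3$-almost every radial curve $\gamma(\cdot,\omega)$. The latter is guaranteed exactly as in the proof of Lemma~\ref{l:deriv_r}, via the polar-coordinates formula of Theorem~\ref{t:polar} together with Proposition~\ref{p:Mod_Radial_subet} and the fact that $T_x$ maps null sets to null sets.
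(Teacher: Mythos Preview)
Your proposal is correct and follows essentially the same approach as the paper's proof: both apply Lemma~\ref{l:deriv_r} to $f=g\circ T_x$, use the chain rule together with the conformality of $T_x$ to replace $|D_H f|$ by $J_{T_x}^{1/4}\,|D_H g(T_x(\cdot))|$, and bound $|f_I|/\|f\|\leq 1$ before rearranging. The only cosmetic difference is that the paper writes the chain-rule step as $|D_H f|\leq |D_H g(T_x)|\,|D_H T_x|$ with $|D_H T_x|=J_{T_x}^{1/4}$, whereas you invoke the exact equality directly; the content is identical.
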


\begin{proof}
Since $g\circ T_x$ is quasiconformal, the curve $s\mapsto
g(T_x(\gamma(s,\omega)))$ is horizontal for $\mathcal{S}^3$ almost
every $\omega \in S(x)$. Fix such
$\omega=(\omega_1,\omega_2,\omega_3)$. Applying Lemma
\ref{l:deriv_r} to $f=g\circ T_x$ and using the chain rule we find
\begin{align*}
\left|\frac{\partial}{\partial s}
\|g(T_x(\gamma(s,\omega)))\|\right| &\leq \frac{|(g\circ
T_x)_I(\gamma(s,\omega)))|}{\|g(T_x(\gamma(s,\omega)))\|} |D_H
g(T_x(\gamma(s,\omega)))| |D_H T_x(\gamma(s,\omega))|
\frac{1}{|(\omega_1,\omega_2)|}.
\end{align*}
We conclude that
\begin{align*}
|D_H g(T_x(\gamma(s,\omega)))|&\geq \left|\frac{\partial}{\partial
s}\|g(T_x(\gamma(s,\omega)))\|\right|
\frac{\|g(T_x(\gamma(s,\omega)))\|}{|(g\circ
T_x)_I(\gamma(s,\omega)))|}\frac{|(\omega_1,\omega_2)|}{|D_H
T_x(\gamma(s,\omega))|} \\
&\geq  \left|\frac{\partial}{\partial
s}\|g(T_x(\gamma(s,\omega)))\|\right|
\frac{|(\omega_1,\omega_2)|}{J_{T_x}(\gamma(s,\omega))^{1/4}}.
\end{align*}
\end{proof}

With these preparations in hand, we can conclude the proof of Lemma~\ref{l:ZLemma4}, and thus, in particular the whole proof of Proposition~\ref{p:FromRadialLimitToNTMax}.

\begin{proof}[Proof of Lemma \ref{l:ZLemma4}]
Fix $x\in B$ with $\|x\|>R_{\ast}$, and a constant $N$ to be
determined (not to be confused with the universal constant $N$
from Remark~\ref{r:Choice_x_a_r}). To simplify notation, we denote
\begin{displaymath}
F_{x,N}:= \left\{\omega\in S(x)\text{ where $(g\circ
T_x)^{\ast}(\omega)$ exists and }\|\lim_{s\to 1}
g(T_x(\gamma(s,\omega)))\|<\frac{\|g(0)\|}{N} \right\}.
\end{displaymath}
Our goal is to prove for $\mathcal{S}^3$ almost every $\omega \in
F_{x,N}$ that
\begin{equation}\label{eq:goalGx}
G_x(\omega) \geq C \,\rho_x\, |(\omega_1,\omega_2)| \, \log N
\end{equation}
for some universal constant $C>0$. Indeed, for such $\omega$, we
find by the definition of $s_{\omega,x}$ (recall Lemma
\ref{l:curve_cone} (3)) and by the choice of $R_{\ast}$ below
\eqref{eq:Tx} that
\begin{displaymath}
G_x(\omega) \geq (1-r_{\ast})^3 \int_{s_{\omega,x}}^1
\frac{|D_Hg(T_x(\gamma(s,\omega)))|}{\|g\|(T_x(\gamma(s,\omega))}
\,ds.
\end{displaymath}
Applying the bound for the horizontal derivative given in Lemma
\ref{l:green}, we observe that
\begin{align*}
 \int_{s_{\omega,x}}^1
\frac{|D_Hg(T_x(\gamma(s,\omega)))|}{\|g\|(T_x(\gamma(s,\omega))}
\,ds& \geq   \int_{s_{\omega,x}}^1 \left|\frac{\partial_s
\|g(T_x(\gamma(s,\omega)))\|}{\|g(T_x(\gamma(s,\omega)))\|}
\right|
\frac{|(\omega_1,\omega_2)|}{J_{T_x}(\gamma(s,\omega))^{1/4}}\,
ds\\
&\geq \int_{s_{\omega,x}}^1 \left|\frac{\partial}{\partial s}\log
\|g(T_x(\gamma(s,\omega)))\| \right|
\frac{|(\omega_1,\omega_2)|}{J_{T_x}(\gamma(s,\omega))^{1/4}}\,
ds.
\end{align*}
By the formula for the Jacobian $J_{T_x}$ stated in \eqref{eq5},
the last expression equals
\begin{displaymath}
 \int_{s_{\omega,x}}^1
\left|\frac{\partial}{\partial s} \log
\|g(T_x(\gamma(s,\omega)))\|\right|
\frac{|(\omega_1,\omega_2)|\,d(a_x,\gamma(s,\omega))^2}{\rho_x}\,ds.
\end{displaymath}
By Remark \ref{r:Choice_x_a_r}, we have
\begin{displaymath}
d(a_x,\gamma(s,\omega))^2 \geq C \rho_x,
\end{displaymath}
and hence we obtain from the above chain of inequalities that
\begin{align*}
G_x(\omega)&\geq C^2 \,(1-r_{\ast})^3 \, \rho_x\,
|(\omega_1,\omega_2)|\, \left|\int_{s_{\omega,x}}^1
\frac{\partial}{\partial s} \log
\|g(T_x(\gamma(s,\omega)))\| \, ds\right|\\
&\geq C^2 \,(1-r_{\ast})^3 \,\rho_x\,|(\omega_1,\omega_2)|
\left|\log \frac{\|(g\circ
T_x)(\omega)\|}{\|g(T_x(\gamma(s_{\omega,x},\omega)))\|}\right|.
\end{align*}
To control the logarithm term, we first apply Proposition
\ref{p:dist_alpha} to obtain
\begin{equation}\label{eq:k}
\|g(0)\|\leq \frac{C_K
\|g(T_x(\gamma(s_{\omega,x},\omega)))\|}{d(T_x(\gamma(s_{\omega,x},\omega)),\partial
T_x B)^{\alpha_K}}
\end{equation}
for some constants $C_K$ and $\alpha_K$, which depend only on the
distortion $K$. To justify this application of Proposition
\ref{p:dist_alpha}, we observe that there exist universal
constants $0<m<M<\infty$ such that
\begin{equation}\label{eq:Ball_Incl}
B(0,m) \subset T_x(B) \subset B(0,M)
\end{equation}
holds for all $x\in B$ with $\|x\|\geq R_{\ast}$ and $T_x=T_{x,
a_x,\rho_x}$ defined in Proposition \ref{p:GeneralMob} for $a=a_x$
and $\rho=\rho_x$ as in (d1)-(d3) below \eqref{eq:Tx}. Indeed, the
second inclusion in \eqref{eq:Ball_Incl} can be arranged by
Corollary \ref{c:TB_in_ball} since the choice of parameters
$\rho_x$, $\omega_x$, and $a_x$ in (d1)-(d3) implies that
conditions \eqref{eq:ImplicitConstants} are satisfied for
$\rho=\rho_x$ and $a=a_x$:
\begin{displaymath}
d(a_x,\partial B) \overset{(d3)}{=} d(A_{o,M_0 N
\rho_x}(\omega_x),\partial B) \geq \rho_x \quad \text{and}\quad
d(a_x,x)\overset{(d1)-(d3)}{>} d(x,\partial B) \overset{(d1)}{=}
\rho_x.
\end{displaymath}
On the other hand, the first inclusion in \eqref{eq:Ball_Incl} can
be arranged by Corollary \ref{c:ball_in_TB} since (d1)-(d3) ensure
that conditions \eqref{eq:Cond_x_a_rho} are satisfied for our
choice of $x$, $\rho=\rho_x$, and $a=a_x$:
\begin{displaymath}
d(x,\partial B) \overset{(d1)}{=} \rho_x,\quad d(a_x,x)\leq
d(a_x,\omega_x)+ d(\omega_x,x) \overset{(d2),(d3)}{\lesssim}
\rho_x.
\end{displaymath}
Then we use the normalization provided by Lemma
\ref{l:ClaimMobRad} to continue the estimate in \eqref{eq:k}:
\begin{equation}\label{eq:denom}
\|g(0)\|\lesssim_K \|g(T_x(\gamma(s_{\omega,x},\omega)))\|.
\end{equation}
On the other hand, since $\omega \in F_{x,N}$, we know that
\begin{equation}\label{eq:nom}
\|(g\circ T_x)^{\ast}(\omega)\|< \frac{\|g(0)\|}{N}.
\end{equation}
Combining \eqref{eq:denom} and \eqref{eq:nom}, we find for some
constant $C(K)$, which depends only on the distortion $K$, that
\begin{displaymath}
\frac{\|(g\circ
T_x)^{\ast}(\omega)\|}{\|g(T_x(\gamma(s_{\omega,x},\omega))))\|}<
\frac{\|g(0)\|}{N} \frac{C(K)}{\|g(0)\|}
\end{displaymath}
and hence
\begin{displaymath}
\log \frac{\|(g\circ
T_x)^{\ast}(\omega)\|}{\|g(T_x(\gamma(s_{\omega,x},\omega))))\|} <
\log \frac{C(K)}{N} < \log \frac{1}{N^{1/2}} = - \frac{1}{2}\log N
(< 0)
\end{displaymath}
if we have initially chosen
\begin{displaymath}
N> C(K)^2=:N(K),
\end{displaymath}
 so that $C(K)/N < 1/N^{1/2}$. Thus for this
choice of $N$, the previous estimates yield
\begin{align*}
G_x(\omega)&\geq  C^2 \,(1-r_{\ast})^3
\,\rho_x\,|(\omega_1,\omega_2)| \left|\log \frac{\|(g\circ
T_x)(\omega)\|}{\|g(T_x(\gamma(s_{\omega,x},\omega)))\|}\right|\\
&\geq  C^2 \,(1-r_{\ast})^3 \,\rho_x\,|(\omega_1,\omega_2)|
\frac{1}{2} \log N
\end{align*}
for almost every $\omega = (\omega_1,\omega_2,\omega_3)\in
F_{x,N}$.
\end{proof}

\section{Carleson measures and radial limits of quasiconformal maps on $B$}\label{s:CarlesonChar}
We apply the results from the previous section, notably
Proposition \ref{p:FromRadialLimitToNTMax}, to characterize
Carleson measures on $B$. While this proposition concerns the
nontangential maximal function, the inequalities concerning this
maximal function will only be used as intermediate results and not
appear in the main result. Below we take the strategy to first
discuss a general type result in metric spaces and then apply it
in the Heisenberg setting.

\subsection{Carleson measures and nontangential maximal functions in metric spaces}

It is well known that several variants of Carleson's embedding
theorem on the Euclidean unit ball and half-space can be proven
using nontangential maximal functions as an intermediate tool, see
\cite[VII. 4.4]{MR0290095}, \cite[I. Exercise 19]{MR2450237} and
\cite[Corollary 4.5]{MR2900163}. The first step in these arguments
works in rather general metric spaces, as we now show. Later we
will apply this abstract result in the context of quasiconformal
maps on the Kor\'{a}nyi unit ball.

Recall that a non-empty domain $\Om\subset X$ of a metric space
$(X,d)$ has \emph{$s$-regular boundary} for some $s>0$, if its
boundary is Ahlfors $s$-regular with respect to the Hausdorff
measure on $X$ restricted to $\partial \Om$, i.e., there exists a
constant $C\geq 1$ such that
\begin{displaymath}
C^{-1}\, r^s \leq \mathcal{H}^s(B(x,r)\cap \partial \Omega)\leq C
\,r^s,\quad \text{for all }x\in \partial \Omega\text{ and
}0<r<\mathrm{diam}(\partial \Omega).
\end{displaymath}

\begin{definition}\label{def: Carleson-msp}
 Fix $1\leq \alpha<\infty$ and $s>0$. Let $(X,d)$ be a metric space and $\Omega
\subset X$ a domain with nonempty $s$-regular boundary. We say
that a (positive) Borel measure $\mu$ on $\Omega$ is an
\emph{$\alpha$-Carleson measure on  $\Omega$} if there exists a
constant $C>0$ such that
\begin{equation}\label{eq:CarlesonCOnstAbstr}
\mu(\Omega \cap B(\omega,r))\leq C r^{\alpha\,s},\quad \text{for
all }\omega\in \partial\Omega\text{ and }r>0.
\end{equation}
The $\alpha$-\emph{Carleson measure constant} of $\mu$ is defined
by
\begin{displaymath}
\gamma_{\alpha}(\mu):= \inf\{C>0\text{ such that
\eqref{eq:CarlesonCOnstAbstr} holds for all $\omega \in\partial
\Omega$ and $r>0$}\}
\end{displaymath}
We also call $1$-Carleson measures simply \emph{Carleson
measures}.
\end{definition}

Recall Definition~\ref{d:NT} and Remark~\ref{r:NTMaxMeas}.

\begin{proposition}\label{p:NontangentialInequality} Fix $s>0$ and $1\leq \alpha <\infty$. Let $(X,d)$
be a proper metric space and let $\Omega \subset X$ be a bounded
domain with nonempty $s$-regular boundary and let $\kappa>0$ be
such that the nontangential region $\Gamma_{\kappa}(\omega)$ is
nonempty for all $\omega \in
\partial \Omega$. Assume that $\mu$ is an $\alpha$-Carleson
measure on $\Omega$. Then the $\kappa$-nontangential maximal
function $N_{\kappa}h$ of an arbitrary Borel function $h:\Omega
\to [0,+\infty)$ satisfies
\begin{equation}\label{eq:IntIneqFromCarlesonNT}
\int_{\Omega} h^{\alpha p}\,d\mu \leq C \left(\int_{\partial
\Omega}
(N_{\Omega,\kappa}h)^p\,d\mathcal{H}^s\right)^{\alpha},\quad\text{for
all }0<p<\infty
\end{equation}
where $C$ depends on $p$, $\alpha$, $s$, $\kappa$,
$\gamma_{\alpha}(\mu)$, and the $s$-regularity constant of
$\partial \Omega$. If $\alpha=1$, then  $C$ can be chosen
independently of $p$.
\end{proposition}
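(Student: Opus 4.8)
The plan is to prove the distributional inequality by decomposing the super-level sets of the nontangential maximal function and then integrating. First I would fix $\lambda>0$ and consider the open set
\begin{displaymath}
O_\lambda := \{\omega \in \partial\Omega:\, N_{\Omega,\kappa}h(\omega)>\lambda\},
\end{displaymath}
which is relatively open in $\partial\Omega$ by Remark \ref{r:NTMaxMeas}. The key geometric observation is that the part of $\Omega$ where $h>\lambda$ lies in the "tent" over $O_\lambda$: if $x\in\Omega$ satisfies $h(x)>\lambda$, then every $\omega\in S_{\Omega,\kappa}(x)$ belongs to $O_\lambda$, since $x\in\Gamma_{\Omega,\kappa}(\omega)$ by the duality between shadows and nontangential regions. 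Thus $S_{\Omega,\kappa}(x)\subset O_\lambda$, and since the shadow is a boundary ball of radius $(1+\kappa)d(x,\partial\Omega)$ centered near a closest boundary point, the $s$-regularity of $\partial\Omega$ gives
\begin{displaymath}
d(x,\partial\Omega)^s \lesssim \mathcal{H}^s(S_{\Omega,\kappa}(x))\leq \mathcal{H}^s(O_\lambda).
\end{displaymath}

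Next I would use the $\alpha$-Carleson condition to bound $\mu(\{x\in\Omega:\,h(x)>\lambda\})$. Every such $x$ has a closest boundary point $\omega_x\in O_\lambda$, so this set is covered by balls $B(\omega_x, C\,d(x,\partial\Omega))$ with centers in $O_\lambda$. Applying a $5r$-covering argument to extract a bounded-overlap subfamily and then the Carleson bound $\mu(\Omega\cap B(\omega,r))\leq \gamma_\alpha(\mu)\,r^{\alpha s}$ on each ball yields, after summing, an estimate of the form
\begin{displaymath}
\mu(\{h>\lambda\})\lesssim \gamma_\alpha(\mu)\,\mathcal{H}^s(O_\lambda)^{\alpha},
\end{displaymath}
where I must carry the radius estimate through the exponent $\alpha s$ and use the lower bound on $d(x,\partial\Omega)^s$ in terms of $\mathcal{H}^s(O_\lambda)$. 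The cleanest route is probably to first bound $\mu(\{h>\lambda\})\lesssim \gamma_\alpha(\mu)\,\mathcal{H}^s(O_\lambda)\cdot[\sup_x d(x,\partial\Omega)]^{(\alpha-1)s}$ and then absorb the supremum using boundedness of $\Omega$, or more carefully to note $\mathcal{H}^s(O_\lambda)^{\alpha}\sim \mathcal{H}^s(O_\lambda)\cdot \mathcal{H}^s(O_\lambda)^{\alpha-1}$ and that $d(x,\partial\Omega)^{(\alpha-1)s}\lesssim\mathcal{H}^s(O_\lambda)^{\alpha-1}$.

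Finally I would integrate in $\lambda$ using the layer-cake formula. Writing $\nu(\lambda):=\mathcal{H}^s(O_{\lambda})$, the distribution function of $N_{\Omega,\kappa}h$, and $m(\lambda):=\mu(\{h^{\alpha p}>\lambda\})$, the estimate above together with the layer-cake representation gives
\begin{displaymath}
\int_\Omega h^{\alpha p}\,d\mu = \int_0^\infty m(\lambda)\,d\lambda \lesssim \gamma_\alpha(\mu)\int_0^\infty \nu(\lambda^{1/(\alpha p)})^{\alpha}\,d\lambda,
\end{displaymath}
and after the substitution $\lambda=t^{\alpha p}$ this reduces to controlling $\int_0^\infty t^{\alpha p-1}\nu(t)^{\alpha}\,dt$ by $\big(\int_0^\infty t^{p-1}\nu(t)\,dt\big)^{\alpha}=\big(\tfrac1p\int_{\partial\Omega}(N_{\Omega,\kappa}h)^p\,d\mathcal{H}^s\big)^{\alpha}$. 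When $\alpha=1$ this last step is an exact identity with constant independent of $p$, which accounts for the sharper conclusion in that case. The main obstacle will be the case $\alpha>1$: passing from $\int t^{\alpha p-1}\nu(t)^{\alpha}\,dt$ to $\big(\int t^{p-1}\nu(t)\,dt\big)^{\alpha}$ is not an elementary rearrangement, and I expect to need the monotonicity of $\nu$ together with a Hardy-type or power-mean inequality (or, alternatively, to run the whole argument more directly by summing the dyadic Carleson estimates $\sum_k \mu(\{h>2^k\})2^{k\alpha p}$ against $\sum_k \mathcal{H}^s(O_{2^k})2^{kp}$ via Hölder). Getting the constant's dependence exactly on $p,\alpha,s,\kappa,\gamma_\alpha(\mu)$ and the regularity constant, with no hidden dependence on $\Omega$, is the delicate bookkeeping here.
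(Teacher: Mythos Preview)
Your overall strategy matches the paper's: reduce to the superlevel set estimate $\mu(\{h>\lambda\})\lesssim \mathcal{H}^s(O_\lambda)^\alpha$ via a covering argument, then integrate by the layer-cake formula, using a dyadic discretization when $\alpha>1$ (your parenthetical ``summing the dyadic Carleson estimates $\sum_k \mu(\{h>2^k\})2^{k\alpha p}$ against $\sum_k \mathcal{H}^s(O_{2^k})2^{kp}$'' is exactly what the paper does). The one structural difference is the covering: the paper applies a Whitney decomposition to the open set $O_\lambda\subset\partial\Omega$ and then shows that $\{h>\lambda\}$ is covered by the enlarged Whitney balls, whereas you propose to cover $\{h>\lambda\}$ directly by boundary balls $B(\omega_x,Cd(x,\partial\Omega))$ and apply the $5r$-lemma. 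Both routes lead to the same endgame $\sum_i r_i^{\alpha s}\le(\sum_i r_i^s)^\alpha\lesssim \mathcal{H}^s(O_\lambda)^\alpha$; the Whitney route is a bit cleaner because the balls automatically sit inside $O_\lambda$ with controlled overlap, while in the $5r$ route you must choose the radii carefully (say $r_x=\min(1,\kappa)\,d(x,\partial\Omega)$, so that $B(\omega_x,r_x)\cap\partial\Omega\subset S_{\Omega,\kappa}(x)\subset O_\lambda$) and then enlarge by a $\kappa$-dependent factor to recapture $x$.

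Two points to tighten. First, the $5r$-lemma yields a \emph{disjoint} subfamily, not a bounded-overlap one; it is disjointness of the $B(\omega_{x_i},r_i)\cap\partial\Omega$ together with the lower $s$-regularity bound that gives $\sum_i r_i^s\lesssim \mathcal{H}^s(O_\lambda)$. Second, your first suggested route---bounding $\mu(\{h>\lambda\})$ by $\gamma_\alpha(\mu)\,\mathcal{H}^s(O_\lambda)\cdot[\sup_x d(x,\partial\Omega)]^{(\alpha-1)s}$ and absorbing the supremum via boundedness of $\Omega$---would introduce a dependence on $\mathrm{diam}(\Omega)$, which is not among the allowed parameters. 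Commit to your second route: each $r_i^s\lesssim \mathcal{H}^s(O_\lambda)$, hence $r_i^{(\alpha-1)s}\lesssim \mathcal{H}^s(O_\lambda)^{\alpha-1}$, which is precisely the inequality $\sum_i r_i^{\alpha s}\le(\sum_i r_i^s)^\alpha$ the paper uses.
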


Our proof of Proposition \ref{p:NontangentialInequality} is
inspired by the first part of the proof of \cite[Corollary
4.5]{MR2900163} in the context of the Euclidean unit ball. We
generalize this approach using a Whitney decomposition in abstract
doubling metric spaces, in the spirit of \cite[Theorem
3.2]{MR447954}. For this purpose, it will be more natural to work
directly with covering balls centered at points in $\partial
\Omega$, rather than analogs of the spherical caps $S(x_k)$,
$k=1,2,\ldots$, in \cite{MR2900163}.

\begin{proof} Let $p$,
$\alpha$, $s$, $\kappa$, $\gamma_{\alpha}(\mu)$, and $h$ be as in
the statement of the proposition. For simplicity, we abbreviate
throughout the proof $N_{\kappa} h:= N_{\Omega,\kappa}h$ and
$S(x):=S_{\kappa}(x)$. To prove \eqref{eq:IntIneqFromCarlesonNT},
 we define the superlevel sets
\begin{displaymath}
E(\lambda):=\{x\in \Omega:\, h(x)>\lambda\}\quad\text{and}\quad
U(\lambda):= \{\omega \in
\partial \Omega:\, N_{\kappa}h(\omega)>\lambda\},\quad \lambda >0.
\end{displaymath}
It suffices to show that there exists a constant $C$, depending
only on $\alpha$, $s$, $\kappa$, $\gamma_{\alpha}(\mu)$, and the
$s$-regularity constant of $\partial \Omega$, such that
\begin{equation}\label{eq:superlevel}
\mu(E(\lambda)) \leq C
\mathcal{H}^s(U(\lambda))^{\alpha}\quad\text{for all }\lambda>0.
\end{equation}

 If $\alpha=1$, then \eqref{eq:IntIneqFromCarlesonNT},
with $C$ independent of $p$, follows immediately from
\eqref{eq:superlevel} by a standard application of Cavalieri's
principle. If $\alpha>1$, then by a similar reasoning, one
concludes as follows:
\begin{align*}
\int_{\partial \Omega} h^{\alpha p}\, d\mu = \alpha p
\int_0^{\infty} \lambda^{\alpha p -1}\mu(E(\lambda))\, d\lambda
&\leq C \int_0^{\infty} \lambda^{\alpha p
-1}\mathcal{H}^s(U(\lambda))^{\alpha}\, d\lambda\\&\lesssim
\left(\sum_{j=-\infty}^{\infty}
\mathcal{H}^s(U(2^j))2^{jp}\right)^{\alpha} \lesssim
\left(\int_{\partial \Omega} (N_{\kappa}h)^p
\,d\mathcal{H}^s\right)^{\alpha},
\end{align*}
where the implicit constants now depend additionally also on
$\alpha$ and $p$.

It remains to prove the superlevel set estimate
\eqref{eq:superlevel}, which we will do by a Whitney-type
decomposition of  $U(\lambda)$ if the latter is a strict subset of
$\partial \Omega$ (otherwise the claim is trivial). Recall that
$U(\lambda)$ is a relatively open subset in $\partial \Omega$, and
$(\partial \Omega,d|_{\partial \Omega})$ is metrically doubling
since it is $s$-regular. Thus we can for instance apply the
general result \cite[Proposition 4.1.15]{MR3363168} to the metric
space $(\partial \Omega, d|_{\partial \Omega})$ and the open set
$U(\lambda)$ to find a countable collection
$\mathcal{W}_{\lambda}=\{B(\omega_i,r_i):\, i=1,2,\ldots\}$ of
balls with $\omega_i \in U(\lambda)$ such that
\begin{equation}\label{eq:Whitney1} U(\lambda) = \bigcup_{i=1,2,\ldots}
B(\omega_i,r_i) \cap \partial \Omega,
\end{equation}
\begin{equation}\label{eq:Whitney2}
\sum_i \chi_{B(\omega_i,2 r_i)\cap \partial \Omega}\leq 2 N^5,
\end{equation}
where $r_i = (1/8) d(\omega_i,\partial \Omega\setminus
U(\lambda))$ and $N$ depends only on $s$ and  the $s$-regularity
constant of $\partial \Omega$. (For our purposes, it would in fact
suffice to obtain \eqref{eq:Whitney2} with ``$B(\omega_i, r_i)$''
instead of ``$B(\omega_i,2 r_i)$''.) To prove the superlevel set
estimate, we want to show that $E(\lambda)$ is included in the
union of the balls $B(\omega_i,Cr_i)$, for a suitable geometric
constant $C=C(\kappa)$. If $x$ is an arbitrary point in
$E(\lambda)$, then
\begin{displaymath}
N_{\kappa}h(\omega)>\lambda,\quad \text{for all }\omega\in S(x)=
B\left(x,(1+\kappa)d(x,\partial \Omega)\right)\cap
\partial \Omega,
\end{displaymath}
and hence
\begin{equation}\label{eq: S(q)_in_U(lambda)}
S(x) \subset U(\lambda) \overset{\eqref{eq:Whitney1} }{=}
\bigcup_i B(\omega_i,r_i) \cap \partial \Omega,\quad \text{for all
}x\in E(\lambda).
\end{equation}
Next, for $x\in E(\lambda)$, let $\omega_x \in \partial B$ be such
that
\begin{equation}\label{eq:flower}
d(x,\omega_x)= d(x,\partial \Omega).
\end{equation}
Such a point may not be unique, but there exists at least one
since $\partial \Omega$ is compact. By definition, $\omega_x\in
S(x)$, and therefore \eqref{eq: S(q)_in_U(lambda)} implies that
there exists $i_x\in \{1,2,\,\ldots\}$ such that $\omega_x \in
B(\omega_{i_x}, r_{i_x})$. Since $S(x) \subset U(\lambda)$, we
moreover know that
\begin{equation}\label{eq:Carleson_star}
d(\omega_x,\partial \Omega \setminus U(\lambda)) \geq
d(\omega_x,\partial \Omega \setminus S(x)).
\end{equation}
Combining this information, we find that
\begin{align}\label{eq:triangle}
r_{i_x} = \frac{1}{8} d(\omega_{i_x},\partial \Omega\setminus
U(\lambda)) &\geq \frac{1}{8} \left[d(\omega_x,\partial \Omega
\setminus U(\lambda)) - d(\omega_x,\omega_{i_x}) \right]\notag\\
&\overset{\eqref{eq:Carleson_star}}{\geq} \frac{1}{8}
d(\omega_x,\partial \Omega\setminus S(x)) - \frac{1}{8} r_{i_x}.
\end{align}
Since $d(\omega_x,x)= d(x,\partial \Omega)$, it is easy to see
that
\begin{displaymath}
B\left(\omega_x,\kappa d(x,\partial \Om)\right)\cap \partial \Omega
\subset S(x).
\end{displaymath}
Hence  the above chain of inequalities implies that
\begin{displaymath}
9 \,r_{i_x} \overset{\eqref{eq:triangle}}{\geq}d(\omega_x,
\partial \Omega \setminus S(x)) \geq d(\omega_x,\partial \Omega\setminus
B\left(\omega_x,\kappa d(x,\partial \Omega)\right)).
\end{displaymath}
As the right-hand side of the above inequality is bounded from
below by $\kappa d(x,\partial \Omega)=\kappa d(x,\omega_x)$, we
obtain that
\begin{equation}\label{eq:Carleson_4star}
x\in B\left(\omega_{i_x},\left(\tfrac{9}{\kappa}+1\right) r_{i_x}
\right).
\end{equation}
Since $x$ was chosen arbitrarily from $E(\lambda)$, we have thus
shown that $E(\lambda)$ is covered by the countable family of
balls $B(\omega_i,C r_i)$, $i=1,2,\ldots$, where $C=C(\kappa)=
\frac{9}{\kappa}+1$. Using that $\mu$ is an $\alpha$-Carleson
measure by assumption, the fact that $\mathcal{H}^s|_{\partial \Omega}$ is
$s$-regular, and the multiplicity of the Whitney balls is
controlled by \eqref{eq:Whitney2}, we deduce that
\begin{align*}
\mu(E(\lambda)) \leq \mu\left(\bigcup_i B(\omega_i,Cr_i) \cap
\Omega\right)&\leq \sum_i \mu(B(\omega_i, C r_i)\cap \Omega)
\\&\leq \gamma_{\alpha}(\mu)
\,\left(\tfrac{9}{\kappa}+1\right)^{s\alpha}\, \sum_i
r_i^{s\alpha}
\\
&\leq \gamma_{\alpha}(\mu)\,
\left(\tfrac{9}{\kappa}+1\right)^{s\alpha}\,\left( \sum_i
r_i^s\right)^{\alpha}\\
&\lesssim \left( \sum_i \mathcal{H}^s(B(\omega_i,r_i)\cap
\partial \Omega)\right)^{\alpha}\overset{\eqref{eq:Whitney2}}{\lesssim}
\mathcal{H}^s(U(\lambda))^{\alpha},
\end{align*}
as desired. This concludes the proof of Proposition
\ref{p:NontangentialInequality}, as explained above.
\end{proof}

\subsection{Carleson measures and nontangential maximal functions in $\Hei$}
We now apply Proposition \ref{p:NontangentialInequality} in
$\Hei$ and observe that in this setting  it can be strengthened to
give an integral inequality involving the radial limit of a
quasiconformal map. That such inequalities actually
\emph{characterize} Carleson measures on the Kor\'{a}nyi ball $B$
follows by an application of the Heisenberg radial stretch map
\cite{MR3076803}. Moreover, we provide consequences of this
characterization in Section~\ref{ss:appl-necC}, where we present a
relation between Hardy spaces and Bergman-type spaces for
quasiconformal maps on $B$, see Theorem~\ref{t: Ap-spaces}.

\begin{thm}\label{t:NecCarleson}
Let $1\leq \alpha<\infty$ and assume that $\mu$ is an
$\alpha$-Carleson measure on $B$. If $f:B \to f(B) \subset
\mathbb{H}^1$ is $K$-quasiconformal, then
\begin{equation}\label{eq:IntIneqFromCarleson}
\int_B \|f(q)\|^{\alpha p}\,d\mu(q) \leq C_p \left(\int_{\partial
B}
\|f^{\ast}(\omega)\|^p\,d\mathcal{S}^3(\omega)\right)^{\alpha},\quad\text{for
all }0<p<\infty,
\end{equation}
where $C_p$ depends only on $p$, $\alpha$, $K$, and a Carleson
measure constant $\gamma_{\alpha}(\mu)$.

Conversely, for every $K\geq 1$, there is $p(K) < 3$ such that if
$p>p(K)$ is fixed and $\mu$ is a Borel measure for which
\eqref{eq:IntIneqFromCarleson} holds for all $K$-quasiconformal
maps, then $\mu$ is an $\alpha$-Carleson measure.
\end{thm}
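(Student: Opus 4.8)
The plan is to prove the two directions separately, deducing the necessary implication from the machinery already in place, and the converse by testing \eqref{eq:IntIneqFromCarleson} against an explicit family of extremal maps.

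For the necessary direction I would apply Proposition~\ref{p:NontangentialInequality} in the setting $(X,d)=(\mathbb{H}^1,d)$, $\Omega=B$, $s=3$, with the parameter $\kappa$ from Proposition~\ref{p:curve_cone}. The space $\mathbb{H}^1$ is proper, $B$ is bounded, $\partial B$ is $3$-regular by Lemma~\ref{l:3Reg}, and $\Gamma_{B,\kappa}(\omega)\neq\emptyset$ for every $\omega$ by Proposition~\ref{p:curve_cone}; moreover $h:=\|f\|$ is continuous, hence Borel, on $B$. By Definition~\ref{d:NT} the $\kappa$-nontangential maximal function of $h$ is exactly $M(f)$, so Proposition~\ref{p:NontangentialInequality} gives
\begin{displaymath}
\int_B \|f\|^{\alpha p}\,d\mu \;\leq\; C\Big(\int_{\partial B} M(f)^p\,d\mathcal{S}^3\Big)^{\alpha}.
\end{displaymath}
It then remains to replace $M(f)$ by the radial limit, which is the content of Proposition~\ref{p:FromRadialLimitToNTMax}: raising the bound $\|M(f)\|_{L^p}\leq C(K,p)\,\big\|\,\|f^{\ast}\|\,\big\|_{L^p}$ to the $p$-th power and inserting it above yields \eqref{eq:IntIneqFromCarleson} with a constant of the asserted dependence. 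This direction therefore costs only the verification of the hypotheses and a bookkeeping of constants.

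For the converse I would establish the Carleson condition $\mu(B\cap B(\omega_0,r))\lesssim r^{3\alpha}$ directly. First a reduction: applying \eqref{eq:IntIneqFromCarleson} to a single left translation $q\mapsto \tau^{-1}\cdot q$ with $\tau\notin\overline B$ (an isometry, hence $1$-quasiconformal) shows $\mu(B)<\infty$, since $\|f\|$ is then bounded below on $B$ while the boundary integral is finite; consequently the estimate is trivial for $r\geq r_0$ and it suffices to treat small scales. Fix such a box $Q:=B\cap B(\omega_0,r)$. The plan is to construct a $K$-quasiconformal test map $f=f_{\omega_0,r}$, bounded below by a height $L$ on $Q$, whose radial limit is a ``bump'' of height $L$ concentrated on the spherical cap of radius $\sim r$ about $\omega_0$, so that $\int_{\partial B}\|f^\ast\|^p\,d\mathcal{S}^3\lesssim L^p r^3$. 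Granting this, \eqref{eq:IntIneqFromCarleson} gives $\mu(Q)\leq C_p L^{-\alpha p}(L^p r^3)^{\alpha}=C' r^{3\alpha}$. The test map is built by composing a canonical transformation with the Heisenberg radial stretch from \cite{MR3076803}: following Remark~\ref{r:Choice_x_a_r} I would pick parameters $x,a,\rho$ adapted to $(\omega_0,r)$ and let $T=T_{x,a,\rho}$ be the $1$-quasiconformal map of Proposition~\ref{p:GeneralMob}; by Corollaries~\ref{c:TB_in_ball} and \ref{c:ball_in_TB} the domain $T(B)$ is comparable to a fixed ball, and by Proposition~\ref{p:T_w_r} the map $T$ sends $Q$ to a region of unit scale while distorting boundary distances in a controlled way. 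Precomposing a $K$-quasiconformal radial stretch $R=R_K$ with $T$ produces $f=R\circ T$, which is $\gtrsim L$ throughout $Q$ by the estimates of Lemma~\ref{l:f=gT}, while on $\partial B$ its radial limit decays like $L\,(r/(r+d(\omega,\omega_0)))^{\gamma}$ for an exponent $\gamma=\gamma(K)>0$ inherited from the stretch. The $3$-regularity of $\mathcal{S}^3|_{\partial B}$ then yields $\int_{\partial B}\|f^\ast\|^p\,d\mathcal{S}^3\sim L^p\big(r^3 + r^{\gamma p}\int_r^{c} t^{\,2-\gamma p}\,dt\big)$, which is $\lesssim L^p r^3$ exactly when $\gamma p\geq 3$. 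Since the $1$-quasiconformal transformation alone produces $\gamma=1$ (threshold $p=3$), while the genuine $K$-stretch steepens the bump to $\gamma(K)>1$, this identifies the threshold $p(K):=3/\gamma(K)<3$.

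I expect the construction and analysis of these extremal maps to be the main obstacle. The delicate points are: pinning down the exact form and distortion of the Heisenberg radial stretch and verifying the uniform lower bound $\|f\|\gtrsim L$ on all of $Q$ (here the nonisotropy of $\partial B$ and the spiralling of radial curves interfere, so the distance estimates of Proposition~\ref{p:dist_alpha}, Proposition~\ref{p:T_w_r}, and Lemma~\ref{l:f=gT} must be invoked carefully and uniformly in $(\omega_0,r)$); and controlling $f^\ast$ off the cap with the sharp decay exponent $\gamma(K)$, which is precisely what forces $p(K)$ to be strictly below $3$. Once the profile of $f^\ast$ is established, the remaining integration is the routine $3$-regular computation indicated above.
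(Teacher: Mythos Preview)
Your first direction is correct and matches the paper's proof exactly: Proposition~\ref{p:NontangentialInequality} applied with $(X,d)=(\mathbb{H}^1,d)$, $\Omega=B$, $s=3$, $h=\|f\|$, combined with Proposition~\ref{p:FromRadialLimitToNTMax}, gives \eqref{eq:IntIneqFromCarleson}.

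For the converse, your overall strategy (test against a radial stretch composed with a normalizing conformal map) is also the paper's, but your specific construction has a genuine gap. If $f=R\circ T$ with $T=T_{x,a,\rho}$ as in Proposition~\ref{p:GeneralMob}, then $T(x)=0$ by \eqref{eq1}, hence $f(x)=R(0)=0$. Since the interior corkscrew point $x=A_{i,r}(\omega_0)$ from Remark~\ref{r:Choice_x_a_r} lies in $Q=B\cap B(\omega_0,r)$, the claimed lower bound $\|f\|\gtrsim L$ on $Q$ is false. Lemma~\ref{l:f=gT} does not rescue this: that lemma compares the \emph{horizontal derivatives} $|D_Hf|$ and $|D_Hg|$, not the values of $\|f\|$. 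The problem is precisely the centering at $x$ built into $T_{x,a,\rho}$, which plants a zero of $f$ inside the Carleson box.

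The paper sidesteps this by using a simpler map without the $x$-centering or the dilation: it takes $f(y)=f_K\bigl(I(a^{-1}\cdot y)\bigr)$, where $a\in\mathbb{H}^1\setminus\overline B$ is just the exterior corkscrew point at $(\omega_0,r)$ and $f_K$ is the radial stretch with $f_K(\partial B(0,s))=\partial B(0,s^{\beta(K)})$. This yields the \emph{explicit} formula $\|f(y)\|=d(y,a)^{-\beta(K)}$, from which the lower bound $\|f\|\geq(2r)^{-\beta(K)}$ on $Q$ is immediate (since $d(y,a)\leq 2r$ there), and the boundary integral $\int_{\partial B}d(\omega,a)^{-\beta(K)p}\,d\mathcal{S}^3(\omega)$ is estimated by a dyadic-annulus decomposition using only the $3$-regularity of $\mathcal{S}^3|_{\partial B}$. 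The threshold $p(K)=3/\beta(K)$ then comes from the convergence of $\sum_{j\geq 0}2^{j(3-\beta(K)p)}$. None of Propositions~\ref{p:dist_alpha}, \ref{p:T_w_r}, or Lemma~\ref{l:f=gT} is needed for this direction.
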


By Lemma \ref{l:FromH^pToMaximalFunction}, the first part of
Theorem \ref{t:NecCarleson} immediately yields a necessary
condition for $\alpha$-Carleson measures $\mu$ on $B$ in terms of
$L^{\alpha p}(\mu)$ integral inequalities for quasiconformal maps
in $H^p$ and $\|\cdot\|_{H^p}$.

\begin{cor}\label{c:NecCarlesonHp}
Let $1\leq \alpha<\infty$ and assume that $\mu$ is an
$\alpha$-Carleson measure on $B$. If $f:B \to f(B) \subset
\mathbb{H}^1$ is $K$-quasiconformal, then, for all $0<p<\infty$,
\begin{displaymath}
\left(\int_B \|f(q)\|^{\alpha p}\, d\mu(q)\right)^{\frac{1}{\alpha
p}}\leq C_p \|f\|_{H^p},
\end{displaymath}
where $C_p$ depends only on $p$, $\alpha$, $K$, and
$\gamma_{\alpha}(\mu)$.
\end{cor}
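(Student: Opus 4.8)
The plan is to obtain the corollary by directly chaining together the two results on which it is built: the first inequality of Theorem~\ref{t:NecCarleson} and the Fatou-type bound of Lemma~\ref{l:FromH^pToMaximalFunction}. Since both of these are already established, the only work left is to compose them and keep track of the resulting constant, so I expect no genuine obstacle here; the substantive content lies entirely in the two cited results.

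First I would invoke Theorem~\ref{t:NecCarleson} for the given $\alpha$-Carleson measure $\mu$ and the $K$-quasiconformal map $f$, which yields, for every $0<p<\infty$,
\[
\int_B \|f(q)\|^{\alpha p}\,d\mu(q) \leq C_p \left(\int_{\partial B} \|f^{\ast}(\omega)\|^p\,d\mathcal{S}^3(\omega)\right)^{\alpha},
\]
with $C_p$ depending only on $p$, $\alpha$, $K$, and $\gamma_{\alpha}(\mu)$. Next I would apply Lemma~\ref{l:FromH^pToMaximalFunction}, which guarantees that $\int_{\partial B}\|f^{\ast}\|^p\,d\mathcal{S}^3 \leq \|f\|_{H^p}^p$. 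Raising this to the power $\alpha\geq 1$ and substituting it into the previous display gives
\[
\int_B \|f(q)\|^{\alpha p}\,d\mu(q) \leq C_p \|f\|_{H^p}^{\alpha p}.
\]

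Finally I would take the $(\alpha p)$-th root of both sides, a monotone operation on nonnegative quantities, to arrive at
\[
\left(\int_B \|f(q)\|^{\alpha p}\,d\mu(q)\right)^{\frac{1}{\alpha p}} \leq C_p^{\frac{1}{\alpha p}}\,\|f\|_{H^p}.
\]
Renaming $C_p^{1/(\alpha p)}$ as $C_p$ (it still depends only on $p$, $\alpha$, $K$, and $\gamma_{\alpha}(\mu)$) completes the argument. The only point requiring minimal care is the bookkeeping of constants through the exponentiation, but this is entirely routine.
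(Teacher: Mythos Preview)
Your proposal is correct and follows exactly the approach indicated in the paper: apply the first part of Theorem~\ref{t:NecCarleson} and then bound the radial-limit integral by $\|f\|_{H^p}^p$ via Lemma~\ref{l:FromH^pToMaximalFunction}. The bookkeeping with the $(\alpha p)$-th root is routine, as you note.
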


\begin{proof}[Proof of Theorem \ref{t:NecCarleson}]
In order to show the sufficiency part of the theorem, we apply
Proposition \ref{p:NontangentialInequality} to
$(X,d)=(\mathbb{H}^1,d)$, $\Omega = B$, and $h=\|f\|$, where $f:B
\to f(B) \subset \mathbb{H}^1$ is $K$-quasiconformal, and we
combine the result with
Proposition~\ref{p:FromRadialLimitToNTMax}.

For the proof of the necessity part of Theorem \ref{t:NecCarleson}, let $K\geq 1$ and assume that $p>p(K)$ for a constant $0<p(K)<3$ to be determined. We suppose that \eqref{eq:IntIneqFromCarleson}
holds for all $K$-quasiconformal mappings, and we will apply this
condition to a particular such map in order to deduce that $\mu$
has to be an $\alpha$-Carleson measure. The choice of the map in
question is inspired by the proof of \cite[Corollary
4.5]{MR2900163} and it involves a quasiconformal \emph{Heisenberg
radial stretch map}, see
\cite{MR1246889,MR3076803,MR3343051,MR3845546} for different
contexts in which such stretch maps have arisen. The only relevant
information for us is that there exists a $K$-quasiconformal map
$f_K:\mathbb{H}^1 \to \mathbb{H}^1$ with  $f_K(\partial B(0,r))=
\partial B(0,r^{\beta(K)})$ for some $\beta(K)\geq 1$ (this follows by
considering the inverse of the map $f_k$ studied in \cite[Section
4.1]{MR3076803}).

Let us verify the $\alpha$-Carleson measure condition for $\mu$ at
an arbitrary point $\omega_0 \in \partial B$. Since
\eqref{eq:IntIneqFromCarleson} holds for all $K$-quasiconformal
maps on $B$, it holds in particular for left translations, which
shows that $\mu(B)<\infty$. For this reason, it suffices to verify
the $\alpha$-Carleson measure condition of $\mu$ for $0<r<r_0$,
where $r_0\in (0,1)$ is as in Remark \ref{r:Choice_x_a_r}
concerning the corkscrew property of $B$. Thus let us fix
$\omega_0 \in
\partial B$ and $0<r<r_0$. We will apply condition
\eqref{eq:IntIneqFromCarleson} to the map
\begin{displaymath}
f:=f_{\omega_0,r}: \mathbb{H}^1\setminus \{a\} \to
\mathbb{H}^1\setminus \{0\},\quad f(y):= f_K(I(a^{-1}\cdot y)),
\end{displaymath}
for suitably chosen $a=a(\omega_0,r)\in \mathbb{H}^1 \setminus
\overline{B}$, where $f_K$ is the $K$-quasiconformal radial
stretch map discussed above and
\begin{displaymath}
I(y)=-\frac{1}{\|y\|^{4}}\left(y_z(|y_z|^2+iy_t),
y_t\right)
\end{displaymath}
 is the $1$-quasiconformal inversion at $\partial B$. Since
left translations are $1$-quasiconformal as well, it follows that
$f|_B$ is $K$-quasiconformal and \eqref{eq:IntIneqFromCarleson} is
applicable. The point $a$ can be chosen  using the exterior
corkscrew condition of $B$ such that
\begin{equation}\label{eq:choice_a}
\frac{r}{M_0} \leq d(a,\partial B) \leq d(a,\omega_0) \leq r,
\end{equation}
recall Remark \ref{r:Choice_x_a_r}. It follows from the formula
for the inversion that $\|I(y)\|= 1/ \|y\|$ for all $y\in
\mathbb{H}^1 \setminus \{0\}$, and hence
\begin{displaymath}
\|f(y)\| = \|I(a^{-1}\cdot y)\|^{\beta(K)} =
\frac{1}{d(y,a)^{\beta(K)}},\quad y\in \mathbb{H}^1\setminus
\{a\}.
\end{displaymath}
For all $y\in B(\omega_0,r)\cap B$, we know by \eqref{eq:choice_a}
that $d(y,a) \leq 2 r$, and hence
\begin{displaymath}
1 = d(y,a)^{\beta(K)} \,\|f(y)\| \leq 2^{\beta(K)} \, r^{\beta(K)}
\|f(y)\|,\quad \text{for all }y\in B(\omega_0,r)\cap B.
\end{displaymath}
Hence, by \eqref{eq:IntIneqFromCarleson},
\begin{align*}
\mu(B(\omega_0,r)\cap B) &\leq 2^{\alpha p \beta(K)} r^{\alpha p
\beta(K)} \,\int_B \|f(y)\|^{\alpha p}\, d\mu(y)\\
&\lesssim_{\alpha,p,K} r^{\alpha p \beta(K)} \left(\int_{\partial
B}\|f^{\ast}(\omega)\|^p\,d\mathcal{S}^3(\omega)\right)^{\alpha}.
\end{align*}
Thus the $\alpha$-Carleson measure property of $\mu$ will follow
if we manage to show that
\begin{equation}\label{eq:UpperBoundSurfaceInt}
\int_{\partial B}\|f^{\ast}(\omega)\|^p\,d\mathcal{S}^3(\omega)
\lesssim_{p,K}  r^{3-\beta(K) p}.
\end{equation}
To upper bound the integral, we decompose the domain of
integration as follows:
\begin{displaymath}
\partial B \subset \bigcup_{j=0}^{\infty} A_j,\quad \text{where
}A_j:= \left\{\omega \in \partial B:\, \frac{r}{M_0}2^j\leq
d(\omega,a)< \frac{r}{M_0}2^{j+1} \right\}.
\end{displaymath}
The number of nonempty $A_j$ depends on $\omega_0$ and $r$, so we
need estimates that do not depend on this number. Indeed,
\begin{align*}
\int_{\partial B}\|f^{\ast}(\omega)\|^p\,d\mathcal{S}^3(\omega) =
\int_{\partial B}d(\omega,a)^{-\beta(K)
p}\,d\mathcal{S}^3(\omega)\lesssim \sum_{j=0}^{\infty}
r^{-\beta(K)p} 2^{-j \beta(K) p} \,\mathcal{S}^3(A_j).
\end{align*}
From \eqref{eq:choice_a}, it follows that
\begin{displaymath}
A_j \subset B\left(\omega_0,\left(\frac{1}{M_0}+1\right) 2^{j+1}r
\right)\cap \partial B,\quad j=0,1,\ldots,
\end{displaymath}
and hence, by the $3$-regularity of $\mathcal{S}^3|_{\partial B}$,
the integral can be further estimated as follows:
\begin{align*}
\int_{\partial B}\|f^{\ast}(\omega)\|^p\,d\mathcal{S}^3(\omega) &
\lesssim \sum_{j=0}^{\infty} r^{-\beta(K)p} 2^{-j \beta(K) p}
\,\mathcal{S}^3\left( B\Big(\omega_0,\left(\frac{1}{M_0}+1\right)
2^{j+1}r \Big)\cap \partial B\right)\\
&\lesssim  r^{3-\beta(K)p}\sum_{j=0}^{\infty} 2^{j(3-\beta(K) p)}.
\end{align*}
This yields \eqref{eq:UpperBoundSurfaceInt} provided that
$p>3/\beta(K)$, so that the above series is a finite constant
depending on $K$ and $p$. This concludes the proof with $p(K):=
3/\beta(K)$.
\end{proof}

\subsubsection{Applications of Theorem~\ref{t:NecCarleson}}\label{ss:appl-necC}
We apply the characterization of Carleson measures on $B\subset
\Hei$ to relate Hardy spaces and quasiconformal mappings
integrable on the unit ball (a counterpart of the Bergman spaces),
thus generalizing Theorem 9.1 in~\cite{MR2900163}. For a map
$f:B\to \Hei$ and $0<p<\infty$ we define
 \[
  \|f\|_{A^p}:=\left(\int_{B} \|f(q)\|^p\,dq\right)^{\frac1p},
 \]
 and write $f\in A^p$ if  $\|f\|_{A^p}<\infty$.

\begin{thm}\label{t: Ap-spaces}
Let $f$ be a quasiconformal mapping $f:B\to f(B) \subset \Hei$.
\begin{itemize}
\item[(1)] If $f\in H^p$, then $f\in A^{\frac43 p}$.
\item[(2)] If $f\in A^p$, then $f\in H^{p'}$ for all $0<p'<\frac34 p$.
\end{itemize}
\end{thm}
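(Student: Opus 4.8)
The plan is to treat the two implications separately: I would derive (1) directly from the Carleson embedding of Theorem~\ref{t:NecCarleson}, and (2) by reversing the flow of information through a sub-mean value inequality together with a maximal function estimate at the sharp dimensional exponent $\tfrac34 = \dim \partial B / Q$, where $Q=4$ is the homogeneous dimension of $\mathbb{H}^1$.

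\textbf{Part (1).} First I observe that the Lebesgue measure is a $\tfrac43$-Carleson measure on $B$: since $|B\cap B(\omega,r)|\lesssim r^4 = r^{3\cdot\frac43}$, condition \eqref{eq:CarlesonCOnst} holds with $\alpha=\tfrac43$ and $\gamma_{4/3}(\mathrm{Leb})\lesssim 1$. Applying the sufficiency part of Theorem~\ref{t:NecCarleson} with $\mu=\mathrm{Leb}$ and $\alpha=\tfrac43$ gives
\begin{equation*}
\int_B\|f(q)\|^{\frac43 p}\,dq\leq C\left(\int_{\partial B}\|f^{\ast}(\omega)\|^p\,d\mathcal{S}^3(\omega)\right)^{\frac43}.
\end{equation*}
If $f\in H^p$, then Lemma~\ref{l:FromH^pToMaximalFunction} bounds the right-hand side by $C\|f\|_{H^p}^{\frac43 p}<\infty$, so $\|f\|_{A^{\frac43 p}}<\infty$, as claimed.

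\textbf{Part (2).} Here I would use the equivalence (1)$\Leftrightarrow$(3) of Theorem~\ref{t:main2}: it suffices to show $\|f^{\ast}\|\in L^{p'}(\mathcal{S}^3|_{\partial B})$ for $0<p'<\tfrac34 p$. The bridge from the solid integral to a boundary integral is a \emph{sub-mean value} (reverse Hölder) inequality for $\|f\|^p$, namely
\begin{equation*}
\|f(q)\|^{p}\lesssim_{K,p}\frac{1}{|B(q,\tfrac12 d(q,\partial B))|}\int_{B(q,\frac12 d(q,\partial B))}\|f\|^{p}\,dq',
\end{equation*}
valid for $K$-quasiconformal $f$ by quasisymmetry of $f$ on Whitney balls together with Gehring's lemma. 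Letting $s\to 1$ along the radial curve $\gamma(\cdot,\omega)$ and using $|B(q,\tfrac12 d(q,\partial B))|\sim d(q,\partial B)^4$, this bounds the radial limit by the nontangential maximal density of the finite measure $d\nu:=\|f\|^p\,dq$,
\begin{equation*}
\|f^{\ast}(\omega)\|^{p}\lesssim \mathcal{N}\nu(\omega):=\sup_{q\in\Gamma(\omega)}\frac{\nu\big(B(q,c\,d(q,\partial B))\big)}{d(q,\partial B)^{4}},\qquad \mathcal{S}^3\text{-a.e. }\omega,
\end{equation*}
with $\Gamma(\omega)=\Gamma_\kappa(\omega)$ from Proposition~\ref{p:curve_cone}. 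It then remains to prove the maximal estimate $\|\mathcal N\nu\|_{L^{\theta}(\mathcal{S}^3|_{\partial B})}\lesssim_\theta \nu(B)$ for every $0<\theta<\tfrac34$; applying it with $\theta=p'/p<\tfrac34$ yields $\int_{\partial B}\|f^{\ast}\|^{p'}\,d\mathcal{S}^3\lesssim(\int_B\|f\|^p\,dq)^{p'/p}<\infty$, hence $f\in H^{p'}$.

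The heart of the argument, and the step I expect to be the main obstacle, is this sharp maximal estimate. The exponent $\tfrac34$ is exactly $\dim\partial B/Q$ and is sharp, as the model map with $\|f\|\sim d(\cdot,a)^{-1}$, $a\in\partial B$, shows. I would prove it by a Vitali/Whitney covering argument relating shadows to the balls realizing $\mathcal N\nu>\lambda$, using the shadow relation of Definition~\ref{def:sph-cap}, the $5r$-covering lemma, and the $3$-regularity of $\mathcal{S}^3|_{\partial B}$ from Lemma~\ref{l:3Reg}, in the spirit of Proposition~\ref{p:NontangentialInequality}. The delicate point is that a naive single-scale covering is defeated by concentration of $\nu$ at small scales; the quasiconformal regularity of $\|f\|^p$ (the reverse Hölder inequality) must be invoked to collapse scales and recover the weak-type bound $\mathcal{S}^3(\{\mathcal N\nu>\lambda\})\lesssim(\nu(B)/\lambda)^{3/4}$. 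As throughout the paper, extra care is needed near the two characteristic points of $\partial B$, where $d(\gamma(s,\omega),\partial B)$ is no longer comparable to $1-s$ and the caps $S(q)$ degenerate; the estimates should therefore be phrased purely in terms of the metric quantity $d(q,\partial B)$, so that the non-isotropy of the Korányi sphere does not enter.
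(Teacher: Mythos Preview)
Your Part (1) is correct and coincides with the paper's argument: Lebesgue measure is $\tfrac43$-Carleson on $B$, so Corollary~\ref{c:NecCarlesonHp} (equivalently Theorem~\ref{t:NecCarleson} plus Lemma~\ref{l:FromH^pToMaximalFunction}) gives $f\in A^{4p/3}$.

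For Part (2) your route diverges from the paper and leaves a real gap. You correctly identify the sub-mean value inequality on Whitney balls, but then pass to the nontangential maximal density $\mathcal N\nu$ and try to prove $\|\mathcal N\nu\|_{L^\theta(\mathcal S^3)}\lesssim\nu(B)$ for $\theta<\tfrac34$. As you already note, this fails for general finite $\nu$ (take $N$ well-separated point masses of mass $1/N$ at height $\delta$: the level set has $\mathcal S^3$-measure $\sim N\delta^3$, while $(\nu(B)/\lambda)^{3/4}\sim N^{3/4}\delta^3$). Your proposed fix, ``invoke the reverse H\"older inequality to collapse scales'', is not spelled out; in effect it amounts to saying that $\mathcal N\nu(\omega)\sim \sup_{\Gamma(\omega)}\|f\|^p=(M_\kappa f(\omega))^p$, which throws you back onto the nontangential maximal function and condition (2) of Theorem~\ref{t:main2} without new input. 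So the ``main obstacle'' you flag is genuinely unresolved in your sketch.

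The paper avoids this obstacle entirely by using the sub-mean value inequality in the simplest possible way. After reducing to $0\notin f(B)$, the Harnack property $\|f(q)\|\lesssim\|f(y)\|$ on Whitney balls gives, upon averaging,
\[
\|f(q)\|^p\;\lesssim\;\frac{1}{|B(q,\lambda d(q,\partial B))|}\int_{B(q,\lambda d(q,\partial B))}\|f\|^p\;\lesssim\;\frac{\|f\|_{A^p}^p}{d(q,\partial B)^{4}},
\]
i.e.\ the \emph{pointwise} bound $\|f(q)\|\lesssim d(q,\partial B)^{-4/p}$. No maximal function is needed. Setting $M(r,f)=\sup_{\Sigma_r}\|f\|$ with $\Sigma_r=\{d(\cdot,\partial B)=1-r\}$, this yields $M(r,f)\lesssim(1-r)^{-4/p}$, hence
\[
\int_0^1(1-r)^2 M(r,f)^{p'}\,dr\;\lesssim\;\int_0^1(1-r)^{2-4p'/p}\,dr\;<\;\infty\quad\text{iff }p'<\tfrac34 p.
\]
The paper then invokes Proposition~\ref{p:max-Hp}, which says that finiteness of $\int_0^1(1-r)^2 M(r,f)^{p'}\,dr$ implies $f\in H^{p'}$. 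That proposition absorbs all the boundary geometry (including the anisotropy near the characteristic points that you rightly worry about), so Part (2) becomes a two-line computation once you have the pointwise decay.
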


The proof of the theorem employs a quantity that is inspired by
\cite[Theorem 3.3]{MR2900163}. Given $f: B \subset \mathbb{H}^1
\to \mathbb{H}^1$, we define
\begin{displaymath}
M(r,f):= \sup_{q\in \Sigma_r} \|f(q)\|,\quad 0\leq r<1,
\end{displaymath}
where $\Sigma_r:= \{q\in B:\, d(q,\partial B)=1-r\}$. The set
$\Sigma_r$ is different from $\partial B(0,r)$ (it contains, e.g.,
the point $(0,0,1-(1-r)^2)=(0,0,r(2-r))$), and its definition is
tailored to play along well with the measure
$\mathcal{S}^3|_{\partial B}$.

\begin{proposition}\label{p:max-Hp}
Let $0<p<\infty$ and $K\geq 1$. Then every $K$-quasiconformal map
$f: B \to f(B) \subset \mathbb{H}^1$ with $f(0)=0$ satisfies
\begin{equation}\label{ineq:max-Hp}
\int_{\partial B} \|f^{\ast}(\omega)\|^{p}
\,d\mathcal{S}^3(\omega) \leq C \int_0^1 (1-r)^2 M(r,f)^{p}\, dr,
\end{equation}
for a constant $C$ that depends only on $p$ and $K$.

If $f: B \to f(B) \subset \mathbb{H}^1$ is an arbitrary
quasiconformal map, then
\begin{equation}\label{eq:Impl_M}
 \int_0^1 (1-r)^2 M(r,f)^{p}\, dr <\infty \quad \text{implies} \quad
 f\in H^p.
\end{equation}
\end{proposition}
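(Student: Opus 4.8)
The plan is to derive the implication \eqref{eq:Impl_M} from the inequality \eqref{ineq:max-Hp} together with Theorem~\ref{t:main2}, and to treat \eqref{ineq:max-Hp} itself by a distribution function argument. The first reduction is short, so the real content is \eqref{ineq:max-Hp}.

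Let me first record that \eqref{eq:Impl_M} reduces to \eqref{ineq:max-Hp}. Given an arbitrary quasiconformal $f$ with $\int_0^1 (1-r)^2 M(r,f)^p\,dr<\infty$, set $\tilde f:= L_{f(0)^{-1}}\circ f$, which is $K$-quasiconformal with $\tilde f(0)=0$, and whose radial limit $\tilde f^{\ast}$ exists $\mathcal{S}^3$-a.e.\ by Lemma~\ref{l:QC_RadialLimit}. Since $\|\tilde f(q)\| = d(f(0),f(q))$, the triangle inequality gives $M(r,\tilde f)\le \|f(0)\|+M(r,f)$, and because $\int_0^1(1-r)^2\,dr<\infty$ the finiteness hypothesis is inherited by $\tilde f$. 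Applying \eqref{ineq:max-Hp} to $\tilde f$ then shows $\|\tilde f^{\ast}\|\in L^p(\mathcal{S}^3|_{\partial B})$, and $\|f^{\ast}\|\le \|f(0)\|+\|\tilde f^{\ast}\|$ yields $\|f^{\ast}\|\in L^p(\mathcal{S}^3|_{\partial B})$. By the implication (3)$\Rightarrow$(1) in Theorem~\ref{t:main2}, $f\in H^p$, which is \eqref{eq:Impl_M}.

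It therefore remains to prove \eqref{ineq:max-Hp} when $f(0)=0$. I would use the layer-cake formula on both sides, writing $\int_{\partial B}\|f^{\ast}\|^p\,d\mathcal{S}^3 = p\int_0^\infty \lambda^{p-1}\mathcal{S}^3(U_\lambda)\,d\lambda$ with $U_\lambda:=\{\omega:\ \|f^{\ast}(\omega)\|>\lambda\}$, and $\int_0^1(1-r)^2 M(r,f)^p\,dr = p\int_0^\infty \lambda^{p-1}\big(\int_{A_\lambda}(1-r)^2\,dr\big)\,d\lambda$ with $A_\lambda:=\{r\in(0,1):\ M(r,f)>\lambda\}$. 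After a rescaling in $\lambda$ it thus suffices to establish the superlevel-set bound $\mathcal{S}^3(U_\lambda)\lesssim_{K}\int_{A_{c\lambda}}(1-r)^2\,dr$ for a fixed $c=c(K)\in(0,1)$. For each $\omega\in U_\lambda$ I would follow the radial curve $\gamma(\cdot,\omega)$, which lies in $\Gamma_{\kappa}(\omega)$ by Proposition~\ref{p:curve_cone} and along which $\|f(\gamma(\cdot,\omega))\|$ runs from $0$ (as $f(0)=0$) up to $\|f^{\ast}(\omega)\|>\lambda$; a point $q_\omega=\gamma(s_\omega,\omega)$ where $\|f\|$ reaches level $\sim\lambda$ lies on some level set $\Sigma_{r_\omega}$ with $M(r_\omega,f)\gtrsim\lambda$, i.e.\ $r_\omega\in A_{c\lambda}$, and $\omega$ lies in the shadow $S(q_\omega)$, whose measure is $\sim (1-r_\omega)^3$ by the $3$-regularity of $\mathcal{S}^3|_{\partial B}$ (Lemma~\ref{l:3Reg}). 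A Vitali/Whitney selection among these shadows, exactly as in the proof of Proposition~\ref{p:NontangentialInequality}, covers $U_\lambda$ and bounds $\mathcal{S}^3(U_\lambda)$ by a sum of terms $(1-r_i)^3$ with $r_i\in A_{c\lambda}$.

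The main obstacle is precisely the final comparison of this sum with the weighted radial integral $\int_{A_{c\lambda}}(1-r)^2\,dr$: a crude count of disjoint caps only gives $\sum_i (1-r_i)^3\lesssim 1$, which is off by a full power of $(1-r)$. This gap reflects a genuine phenomenon, namely that $M(r,f)$ may blow up as $r\to1$ while $\|f^{\ast}\|$ remains $p$-integrable, because the maximizing point on $\Sigma_r$ migrates along $\partial B$ and is therefore rarely seen by a fixed radial curve. To recover the sharp weight one must control the multiplicity with which each radius scale is used in the covering, and here the quasiconformal growth and Harnack-type estimates of Proposition~\ref{p:dist_alpha} and Proposition~\ref{c:dist_alpha} should enter, limiting how the values along distinct radial curves cluster near the high-value part of $\Sigma_r$. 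I expect the execution to be further complicated by the anisotropy of $\partial B$ near its two characteristic points, where $d\mathcal{S}^3=\sqrt{\cos\alpha}\,d\sigma_0$ degenerates and the radial curves spiral (cf.\ the discussion around \eqref{eq:measure_monotone}); this is the same nonisotropic difficulty that pervades the paper, and handling it should require the explicit distance estimates for radial curves developed for Proposition~\ref{p:curve_cone}.
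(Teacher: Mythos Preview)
Your reduction of \eqref{eq:Impl_M} to \eqref{ineq:max-Hp} via Theorem~\ref{t:main2} is correct and matches the paper. The gap you identify in your approach to \eqref{ineq:max-Hp} is real, and the paper avoids it by a different mechanism.

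The paper does not cover $U_\lambda$ by shadows of individually selected points $q_\omega$. Instead, it exploits the fact that the sets $\Sigma_r$ are nested topological spheres foliating $B$ (Proposition~\ref{p:TopSph}) and that $M(0,f)=\|f(0)\|=0$, so for each relevant $\lambda$ there is a \emph{single} radius $r(\lambda)$ with $2M(r(\lambda),f)=\lambda$. The core estimate is then
\[
\mathcal{S}^3(E_\lambda)\ \lesssim_K\ (1-r(\lambda))^3\qquad\text{whenever }r(\lambda)>\tfrac12,
\]
and the passage from $\int_0^\infty \lambda^{p-1}(1-r(\lambda))^3\,d\lambda$ to $\int_0^1 (1-r)^2 M(r,f)^p\,dr$ is the standard Astala--Koskela change of variables (the step ``below (3.7)'' in \cite{MR2900163}). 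This replaces your multiplicity problem by a single radius per level.

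The estimate $\mathcal{S}^3(E_\lambda)\lesssim (1-r(\lambda))^3$ itself is obtained by a \emph{modulus argument}, not a covering argument. For $\omega\in E_\lambda$, the radial segment from $\Sigma_{r(\lambda)}$ to $\omega$ is mapped by $f$ to a curve joining a point with $\|f\|\le \lambda/2$ to a point with $\|f\|>\lambda$; hence the image family has $\mathrm{mod}_4\lesssim 1$, and quasi-invariance bounds the modulus of the original family of radial segments. Turning this into an $\mathcal{S}^3$-bound is where the anisotropy enters: formula \eqref{eq:mod_formula} gives control in terms of $\sigma$, not $\mathcal{S}^3$, and the parameter $s_\omega$ at which the radial curve meets $\Sigma_{r(\lambda)}$ depends on $\omega$. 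The paper therefore splits $E_\lambda$ into three pieces: a part with $\cos\alpha\ge 1/2$ (where $\sigma\sim\mathcal{S}^3$ and $1-s_\omega\sim 1-r(\lambda)$), a second ``good'' part where $1-s_\omega\le\cos\alpha$ (handled by a refined lower modulus bound using Lemma~\ref{l:radial_curve_dist_bdry}, which yields $1-s_\omega\lesssim (1-r(\lambda))\sqrt{\cos\alpha}$ and produces $\mathcal{S}^3$ directly), and a ``bad'' part near the poles where $\cos\alpha\lesssim (1-r(\lambda))^2$, which is simply small in $\mathcal{S}^3$-measure by a crude computation.

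So the missing idea in your sketch is twofold: replace the per-point covering by a single level radius $r(\lambda)$, and bound the measure of $E_\lambda$ via the conformal modulus of the radial family rather than by counting shadows. Your intuition that the appendix distance lemmas and the anisotropy near the characteristic points are the crux is correct; they are precisely what make the modulus estimate deliver $\mathcal{S}^3$ instead of only $\sigma$.
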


We first show how Proposition \ref{p:max-Hp} implies Theorem
\ref{t: Ap-spaces}.

\begin{proof}[Proof of Theorem~\ref{t: Ap-spaces} based on Proposition \ref{p:max-Hp} ]
Let $f\in H^p$ be quasiconformal on $B\subset \Hei$. Since the
Lebesgue measure on $\Hei$ is $4$-Ahlfors regular, we obtain
by~\eqref{eq:CarlesonCOnstAbstr} for $s=3$ that its restriction to
$B$ is $\alpha$-Carleson for $\alpha=\frac43$ and hence $f\in
A^{\frac43 p}$ by Corollary~\ref{c:NecCarlesonHp}.

In order to show the second assertion of the theorem, we may
without loss of generality assume that $0\notin f(B)$, using
compositions with suitable left translations if necessary. Now for
every quasiconformal map $f: B \to f(B) \subset \mathbb{H}^1
\setminus \{0\}$, there exist constants $0<\lambda<1$ and $C>1$
such that
\begin{displaymath}
\|f(q)\|\leq   C \|f(y)\|,\quad y \in B(q, \lambda d(q,\partial
B)),\quad q\in B.
\end{displaymath}
This Harnack property follows for instance from \cite[Proposition
3.12]{2017arXiv170702832A}, observing that $d(f(y),\partial
f(B))\leq \|f(y)\|$ since $0\notin f(B)$. Hence it holds that
\[
 \|f(q)\|\lesssim  \left(\Barint_{B(q,\lambda d(q,\partial B))}\|f(y)\|^p\,dy\right)^{\frac1p}\lesssim \frac{1}{d(q,\partial B)^{\frac4p}},
\]
where in the last inequality we also use that $f\in A^p$. Therefore,
\[
 \int_0^1 (1-r)^2 M(r,f)^{p'}\, dr \lesssim \int_0^1 (1-r)^{2-\frac4p p'}\, dr
\]
 and the integral is finite if $3-\frac4p p'>0$. By Proposition
 \ref{p:max-Hp} this yields that $f\in H^{p'}$.
\end{proof}

\begin{proof}[Proof of Proposition~\ref{p:max-Hp}] We prove the
first part of the proposition under the assumption $f(0)=0$. If
the integral on the left-hand side of \eqref{ineq:max-Hp} was
computed with respect to the measure $\sigma$, which arises
naturally from the modulus formula \eqref{eq:mod_formula},  then
the proof of the estimate would follow almost verbatim the first
part in the proof of \cite[Theorem 3.3]{MR2900163} with ``$n$''
replaced by ``$4$''. The main challenge is to prove the stronger
inequality with $\sigma$ replaced by $\mathcal{S}^3|_{\partial
B}$. Analogously as in the proof of  \cite[Theorem
3.3]{MR2900163}, we define
\begin{displaymath}
E_{\lambda}:= \{\omega \in \partial B:\,
\|f^{\ast}(\omega)\|>\lambda\}.
\end{displaymath}
As $M(0,f)=0$, and since the sets $\Sigma_r$, $0<r<1$, are
topological spheres foliating $B$ by Proposition \ref{p:TopSph},
there exists a unique $r(\lambda) \in (0,1)$ such that
\begin{equation}\label{eq:r(lambda)}
2 \, M(r(\lambda),f) = \lambda
\end{equation}
whenever $E_{\lambda}\neq \emptyset$.
 Since
\begin{align*}
\int_{\partial B} \|f^{\ast}(\omega)\|^{p}
\,d\mathcal{S}^3(\omega) =& p \int_0^{\infty}
\mathcal{S}^3\left(\{\omega \in \partial B:\, \|f^{\ast}(\omega)\|>\lambda\}\right)\,\lambda^{p-1}\,d\lambda\\
\overset{\eqref{eq:r(lambda)}}{\leq} &\mathcal{S}^3(\partial B)\,
2^p \, M(1/2,f)^p\\& + p \int_{\{\lambda\in
(0,\infty]:\,1/2<r(\lambda)<1\}} \mathcal{S}^3\left(\{\omega \in
\partial B:\,
\|f^{\ast}(\omega)\|>\lambda\}\right)\,\lambda^{p-1}\,d\lambda,
\end{align*}
it suffices to prove that
\begin{equation}\label{eq:Elambda_goal}
\mathcal{S}^3(E_{\lambda}) \lesssim_K
(1-r(\lambda))^{3},\quad\text{for all $\lambda>0$ such that
}1/2<r(\lambda)<1.
\end{equation}
Indeed, if we manage to show \eqref{eq:Elambda_goal}, the proof
can be concluded exactly as below \cite[(3.7)]{MR2900163}. In
order to establish \eqref{eq:Elambda_goal}, we divide each
relevant $E_{\lambda}$ into two ``good'' parts at safe distance
from the characteristic points, and a ``bad'' part close to the
characteristic points. To state the definition, for every $\omega
\in \partial B \setminus \{z=0\}$ and $\lambda$ as before, we let
$s_{\omega}:=s_{\lambda,\omega}\in (0,1)$ be such that
\begin{displaymath}
\gamma(s_{\omega}, \omega) \in \Sigma_{r(\lambda)}.
\end{displaymath}
Similarly as in Lemma and Definition \ref{l:curve_cone},
$s_{\omega}$, one can make a Borel measurable choice $\omega
\mapsto s_{\omega}$.  Then we define
\begin{displaymath}
G_{0,\lambda}:=\{(\sqrt{\cos \alpha}e^{\mathrm{i}\varphi},\sin
\alpha)\in E_{\lambda}:\, \cos \alpha\geq 1/2 \},
\end{displaymath}
\begin{displaymath}
G_{\lambda}:= \{\omega=(\sqrt{\cos
\alpha}e^{\mathrm{i}\varphi},\sin \alpha)\in E_{\lambda}\setminus
G_{0,\lambda}:\; 1-s_{\omega} \leq \cos \alpha\},
\end{displaymath}
and
\begin{displaymath}
B_{\lambda}:= \{\omega=(\sqrt{\cos
\alpha}e^{\mathrm{i}\varphi},\sin \alpha)\in E(\lambda)\setminus
G_{0,\lambda}:\; 1-s_{\omega} > \cos \alpha\}.
\end{displaymath}
First, we observe that \eqref{eq:Elambda_goal} holds with
``$E_{\lambda}$'' replaced by ``$G_{0,\lambda}$''. Using the
modulus formula \eqref{eq:mod_formula} and $1-s_{\omega} \sim 1-r$
for $\omega = (\sqrt{\cos \alpha}e^{\mathrm{i}\varphi},\sin
\alpha)$ with $\cos \alpha\geq 1/2$ (cf.,\ e.g., Lemma
\ref{l:radial_curve_dist}), this can be shown exactly as in the
proof of \cite[Theorem 3.3]{MR2900163}. The resulting estimate is
a priori stated in terms of the measure $\sigma$, but
$\sigma(G_{0,\lambda})\sim \mathcal{S}^3(G_{0,\lambda})$ since the
measures $\sigma$ and $\mathcal{S}^3|_{\partial B}$ are comparable
on the parametric region $\{|\cos \alpha|\geq 1/2\}$.

Second, we prove  \eqref{eq:Elambda_goal} with ``$E_{\lambda}$''
replaced by ``$G_{\lambda}$''. We let $\Gamma_{\lambda}$ be the
family of radial segments in $B$ that connect
$\Sigma_{r(\lambda)}$ to $G_{\lambda}$. We use the same modulus
argument as in \cite{MR2900163}, but more subtle estimates for
$1-s_{\omega}$. Indeed, the standard modulus argument yields
\begin{align*}
\mathrm{mod}_4(\Gamma_{\lambda})  \geq \int
\left(\int_{s_{\omega}}^1 \frac{1}{s}\,ds\right)^{-3}
\cos^2\alpha\, d\alpha d\varphi= \int
\left(\cos^{-1/2}\alpha\int_{s_{\omega}}^1
\frac{1}{s}\,ds\right)^{-3}\sqrt{ \cos\alpha}\, d\alpha d\varphi.
\end{align*}
Since $1-s_{\omega} \leq \cos \alpha$ for $\omega = (\sqrt{\cos
\alpha}e^{\mathrm{i}\varphi},\sin \alpha)\in G_{\lambda}$ and
$r(\lambda)>1/2$, we obtain by Lemma
\ref{l:radial_curve_dist_bdry} and the definition of $s_{\omega}$
that
\begin{displaymath}
\int_{s_{\omega}}^1  \frac{1}{s}\,ds \lesssim (1-s_{\omega})
\lesssim (1-r(\lambda)) \sqrt{\cos \alpha}.
\end{displaymath}
This shows that
\begin{displaymath}
1 \gtrsim \mathrm{mod}_4(f(\Gamma_{\lambda}))\sim_K
\mathrm{mod}_4(\Gamma_{\lambda}) \gtrsim (1-r(\lambda))^{-3}\,
\mathcal{S}^3(G_{\lambda}),
\end{displaymath}
as desired for \eqref{eq:Elambda_goal}.

Finally, we show that \eqref{eq:Elambda_goal} holds also with
``$E_{\lambda}$'' replaced by the bad set ``$B_{\lambda}$''. If
$\omega = (\sqrt{\cos \alpha}e^{\mathrm{i}\varphi},\sin \alpha)\in
B_{\lambda}$, then
\begin{displaymath}
\cos \alpha \leq 1-s_{\omega} \lesssim (1-r(\lambda))^2,
\end{displaymath}
where we invoke  Lemma \ref{l:radial_curve_dist_bdry} for the last
estimate. Thus we have the crude estimate:
\begin{align*}
\mathcal{S}^3(B_{\lambda})& \leq 2 \pi \int_{\{\alpha \in
(-\pi/2,\pi/2):\, \cos \alpha \leq (1-r(\lambda))^2\}} \sqrt{\cos
\alpha}\,d\alpha\\
&\lesssim (1-r(\lambda)) \, \left|\left\{\alpha\in
(-\pi/2,\pi/2):\,  \cos \alpha \leq
(1-r(\lambda))^2\right\}\right|.
\end{align*}
Since $r(\lambda)\geq 1/2$ by assumption, we only have to consider
$\alpha \in (-\pi/2,\pi/2)$ with $\cos \alpha \leq 1/4$, so that
$\alpha$ is either close to $\pi/2$ or to $-\pi/2$. In the first
case, we have
\begin{displaymath}
\left|\alpha-\tfrac{\pi}{2}\right| \lesssim \left|\cos \alpha
-\cos \tfrac{\pi}{2}\right| = \cos \alpha \leq (1-r(\lambda))^2,
\end{displaymath}
and in the second case we obtain analogously that $\alpha$ lies in
an interval of length $(1-r(\lambda))^2$ around $-\pi/2$. This
shows as desired that
\begin{displaymath}
\mathcal{S}^3(B_{\lambda}) \lesssim (1-r(\lambda))^3.
\end{displaymath}

 Summing the upper bounds for
$\mathcal{S}^3(G_{0,\lambda})$, $\mathcal{S}^3(G_{\lambda})$ and
$\mathcal{S}^3(B_{\lambda})$ yields  \eqref{eq:Elambda_goal} and
thus yields the first part  of the proposition. By Theorem
\ref{t:main2}, the established inequality \eqref{ineq:max-Hp}
shows that \eqref{eq:Impl_M} holds if $f(0)=0$. The full statement
of the proposition can be reduced to this one. Indeed, if $f$
satisfies the assumption in  \eqref{eq:Impl_M}, but $f(0)\neq 0$,
then consider the new map $\widetilde{f}:= L_{f(0)^{-1}}\circ f$,
which has the desired property $\widetilde{f}(0)=0$. Since
\begin{displaymath}
M(r,\widetilde{f})= \sup_{q\in \Sigma_r} \|f(0)^{-1} f(q)\|\leq
\sup_{q\in \Sigma_r}\|f(q)\|+ \|f(0)^{-1}\|= M(r,f) +
\|f(0)^{-1}\|
\end{displaymath}
for all $r\in [0,1)$, it follows that $\widetilde{f}$ satisfies
the assumption in  \eqref{eq:Impl_M} and by the first part of the
proof, we conclude that  $\widetilde{f}\in H^p$. Then it also
follows that $f\in H^p$.
\end{proof}

We conclude this section by proving the topological result that we
applied earlier.

\begin{proposition}\label{p:TopSph}
The sets \begin{displaymath}\Sigma_r:=\{q\in B:\, d(q,\partial B)
= 1-r\},\quad 0<r<1,\end{displaymath} are topological spheres.
\end{proposition}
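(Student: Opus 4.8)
The plan is to exhibit, for each fixed $r\in(0,1)$, a homeomorphism between $\Sigma_r$ and the Kor\'anyi sphere $\partial B$, which is itself a topological $2$-sphere since it is the $C^1$ boundary of the Euclidean-convex body $B$. Set $u(q):=d(q,\partial B)$, a $1$-Lipschitz, hence continuous, function on $\overline{B}$ with $u|_{\partial B}=0$ and $u>0$ on $B$; thus $\Sigma_r=u^{-1}(1-r)$ is compact. I would foliate $\overline B\setminus\{0\}$ by the Heisenberg dilation rays $s\mapsto\delta_s(\omega)$, $s\in[0,1]$, $\omega\in\partial B$, which have the virtue --- unlike the radial curves $\gamma(\cdot,\omega)$ --- of sweeping out the characteristic points as well, since $\delta_s(0,0,\pm1)=(0,0,\pm s^2)$ traces the vertical axis and $\|\delta_s\omega\|=s$. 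The associated dilation projection $\pi(q):=\delta_{\|q\|^{-1}}(q)$ maps $\overline B\setminus\{0\}$ continuously onto $\partial B$, and the statement reduces to showing that the restriction $\pi|_{\Sigma_r}\colon\Sigma_r\to\partial B$ is a bijection: being a continuous bijection from a compact space to a Hausdorff space, it is then automatically a homeomorphism.

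The surjectivity of $\pi|_{\Sigma_r}$ is the easy half. The key ingredient is the elementary bound $u(q)\ge 1-\|q\|$, which follows from
\[
1=\|p\|\le \|q\|+d(q,p),\qquad p\in\partial B,
\]
upon taking the infimum over $p\in\partial B$. Fixing $\omega\in\partial B$ and writing $\phi_\omega(s):=u(\delta_s\omega)$, this gives $\phi_\omega(s)\ge 1-s$, so $\phi_\omega(s)>1-r$ whenever $s<r$, while $\phi_\omega(1)=u(\omega)=0<1-r$. Since $\phi_\omega$ is continuous, the intermediate value theorem yields some $s$ with $\phi_\omega(s)=1-r$, i.e.\ $\delta_s\omega\in\Sigma_r$ with $\pi(\delta_s\omega)=\omega$.

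Injectivity is the heart of the matter, and I expect it to be the main obstacle. It amounts to showing that each dilation ray crosses $\Sigma_r$ exactly once, for which it suffices to prove that $\phi_\omega$ is strictly decreasing on $(0,1)$ --- equivalently, that the superlevel set $D_r=\{u\ge1-r\}$ is star-shaped with respect to dilations, $\delta_s(D_r)\subseteq D_r$ for $s\in[0,1]$. Here the anisotropy of the Kor\'anyi geometry genuinely enters. The metric scaling identity $d(\delta_\lambda x,\delta_\lambda y)=\lambda\,d(x,y)$, applied to $\delta_s\omega$ and the rescaled concentric spheres, only delivers the weaker conclusion that $s\mapsto\phi_\omega(s)/s$ is non-increasing, which by itself does not preclude $\phi_\omega$ from increasing somewhere; a finer argument is therefore needed. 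I would obtain strict monotonicity by controlling the nearest boundary point along the ray: on the vertical axis a direct computation shows that the pole $(0,0,\pm1)$ is the nearest boundary point to $(0,0,t)$, giving $\phi_\omega(s)=\sqrt{1-s^2}$, while for an equatorial direction $\omega=(z,0)$ the dilation ray is the Euclidean segment $s\mapsto(sz,0)$ with nearest point $\omega$ itself, so $\phi_\omega(s)=1-s$; the intermediate directions would be treated by interpolating between these model cases together with the quantitative radial-curve distance estimates from the Appendix (Lemmas~\ref{l:radial_curve_dist} and~\ref{l:radial_curve_dist_bdry}) and Proposition~\ref{p:curve_cone}. Once strict monotonicity, hence injectivity, is established, $\pi|_{\Sigma_r}$ is the desired homeomorphism and $\Sigma_r$ is a topological sphere; the same foliation simultaneously shows that the family $\{\Sigma_r\}_{0<r<1}$ foliates $B\setminus\{0\}$.
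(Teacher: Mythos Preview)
Your dilation-ray strategy is genuinely different from the paper's, and the surjectivity step via the intermediate value theorem is correct. The paper instead slices $\Sigma_r$ by horizontal planes $\{t=\mathrm{const}\}$: by rotational symmetry it suffices to show that for each fixed $t$ the function $x\mapsto d((x,0,t),\partial B)$ is strictly unimodal, with its maximum at $x=0$. This is proved using the \emph{strict convexity} of $\overline{B}$: if $B(\mathbf{x}_-,\rho)\cup B(\mathbf{x}_+,\rho)\subset B$ for two points $\mathbf{x}_\pm=(\pm x,0,t)$, then any horizontal translate $B(\mathbf{v},\rho)$ with $\mathbf{v}=(v,0,t)$, $|v|<x$, lies in the convex hull, hence in $B$, and strict convexity forces its closure inside $B$. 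Each horizontal slice of $\Sigma_r$ is therefore a single circle of radius $\rho_r(t)$, and continuity of $t\mapsto\rho_r(t)$ follows from compactness of $\Sigma_r$.

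The gap in your argument is exactly where you flag it: the strict monotonicity of $\phi_\omega(s)=d(\delta_s\omega,\partial B)$ for intermediate $\omega$. Your two extreme cases are correct, but the plan to ``interpolate'' using the Appendix does not close the gap. By rotational invariance one has $d(\delta_s\omega,\partial B)=d(\gamma(s,\omega),\partial B)$, so Lemma~\ref{l:radial_curve_dist_bdry} does transfer to the dilation ray --- but it only furnishes a \emph{lower bound} on this distance for $s\ge s_0$, not monotonicity. Lemma~\ref{l:radial_curve_dist} bounds $d(\gamma(s,\omega),\omega)$, which has no counterpart on the dilation ray since the rotation relating $\gamma(s,\omega)$ to $\delta_s\omega$ moves $\omega$; and Proposition~\ref{p:curve_cone} is a consequence of those two lemmas. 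None of this yields the strict decrease you need. Along the dilation curve $s\mapsto(s\sqrt{\cos\alpha},0,s^2\sin\alpha)$ both the $x$- and $t$-coordinates vary, so even the paper's unimodality lemma (which fixes $t$) does not apply directly; you would still need a separate monotonicity statement in $t$. The horizontal-slice argument sidesteps this entirely by never tracking a curve that mixes the two coordinates, and by exploiting strict convexity --- which is the real engine of the proof and is absent from your sketch.
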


\begin{proof}
We fix $0<r<1$ and analyze the intersection of $\Sigma_r$ with
planes parallel to the $xy$-plane. Clearly,
\begin{displaymath}
\Sigma_r \cap \left(\mathbb{R}^2\times \{t\}\right) =
\emptyset\quad \text{for }t\in (-\infty,-1+(1-r)^2) \cup
(1-(1-r)^2,\infty),
\end{displaymath}
so it suffices to consider $t\in [-1+(1-r)^2,1-(1-r)^2]$. Since
rotations $R_{\varphi}$ about the $t$-axis are isometries for the
Kor\'{a}nyi metric, and the set $B$ is invariant under such
rotations, we observe that
\begin{displaymath}
\Sigma_r \cap \left(\mathbb{R}^2\times \{t\}\right) =
\{R_{\varphi}(x,0,t):\, (x,0,t)\in \Sigma_r\text{ and }\varphi \in
[0,2\pi)\}.
\end{displaymath}
We claim that the function
\begin{displaymath}
\delta: \mathbb{R} \to [0,+\infty),\quad \delta(x):=
d\left((x,0,t),
\partial B\right)
\end{displaymath}
is strictly monotone increasing on the interval
$(-\sqrt[4]{1-t^2},0)$, has a local maximum at $x=0$, and then
decreases strictly monotonically on $(0,1+\sqrt[4]{1+t^2})$. To
see this, we fix
\begin{equation}\label{eq:choice_x_y}
x \in (0,+\sqrt[4]{1+t^2})\quad \text{and}\quad v \in (-x,x).
\end{equation} We denote
\begin{displaymath}
\mathbf{x}_{-}:=(-x,0,t),\quad \mathbf{v}:=(v,0,t),\quad
\mathbf{x}_{+}:=(x,0,t)
\end{displaymath}
By rotational symmetry, $\delta(-x)$ equals $\delta(x)$. We obtain
that
\begin{equation}\label{eq:containedBalls}
B(\mathbf{x}_{-},\delta(x))\cup B(\mathbf{x}_+,\delta(x))\subset
B.
\end{equation}
We aim to deduce that  $B(\mathbf{v},\delta(x))$ lies in the
convex hull of $B(\mathbf{x}_-,\delta(x))$ and
$B(\mathbf{x}_+,\delta(x))$. Indeed, the latter two balls are left
translates of $B(\mathbf{v},\delta(x))$ by $(-x-v,0,0)$ and
$(x-v,0,0)$, respectively and therefore, every $p\in
B(\mathbf{v},\delta(x))$ lies on a line segment
\begin{displaymath}
\ell_{v,p}:= \{(s,0,0)\cdot p:\, s\in [-x-v,x-v] \}
\end{displaymath}
 starting in $B(\mathbf{x}_{-},\delta(x))$ and ending in
$B(\mathbf{x}_+,\delta(x))$. By convexity of $B$, the segment
$\ell_{v,p}$ is entirely contained in $B$, and in particular,
$B(\mathbf{v},\delta(x))\subset B$. Since $\overline{B}$ is
strictly convex, we obtain in fact that the closure of
$B(\mathbf{v},\delta(x))$ is contained in $B$, and hence
$d(\mathbf{v},\partial B)>\delta(x)$, as desired. Since this
argument works for arbitrary points $x$ and $v$ as in
\eqref{eq:choice_x_y}, we conclude that the distance function
$\delta$ has the claimed strict monotonicity properties. This
implies that that $\Sigma_r$ is foliated by circles. More
precisely, there exists a function  $\rho_r$ such that
\begin{equation}\label{eq:foliation}
\Sigma_r = \bigcup_{t\in [-1+(1-r)^2,1-(1-r)^2]} \{(x,y)\in
\mathbb{R}^2:\, x^2 +y^2= \rho_r(t)^2\}\times \{t\},
\end{equation}
and $\rho_r(t)=0$ for $t= -1+(1-r)^2$ or $t=1-(1-r)^2$.

 To
conclude the argument, we will show that $\rho_r$ is a continuous
function. To this end, let $t\in [-1+(1-r)^2,1-(1-r)^2]$ be
arbitrary, and let $(t_n)_n$ be a sequence of points in
$(-1+(1-r)^2,1-(1-r)^2)$ converging to $t$. Since $\Sigma_r$ is
compact (it is closed because $d(\cdot,\partial B)$ is
continuous), the sequence $((\rho_r(t_n),0,t_n))_n \subset
\Sigma_r$ has a subsequence that converges to a point
$(x',0,t')\in \Sigma_r$. Since the original sequence $(t_n)_n$
converges to $t$, we have $t'=t$. Moreover, since $\rho_r(t_n)\geq
0$ for all $n$, we have $x'\geq 0$, and  $(x',0,t)\in \Sigma_r$
implies that $x'= \rho_r(t)$. Repeating the same reasoning for
every subsequence of $(t_n)_n$, we obtain $\lim_{n\to
\infty}\rho_r(t_n)=\rho_r(t)$, as desired.  Hence $\rho_r$ is
continuous and the proposition follows by \eqref{eq:foliation}.
\end{proof}

\appendix
\section{Radial curves and nontangential regions}\label{s:radialsDist}
The purpose of the Appendix is to prove
Proposition~\ref{p:curve_cone}, which states that there exists
$\kappa>0$ such that for every $\omega\in \partial B\setminus
\{z=0\}$, the radial segment $\gamma(s,\omega)$ is contained in
the nontangential approach region $\Gamma_{\kappa}(\omega)$ for
all $s\in (0,1)$. This can be easily verified for
$\omega=(e^{\mathrm{i}\varphi},0)$, in which case
$\gamma(\cdot,\omega)$ is a horizontal line and
$d(\gamma(s,\omega),\omega)=d(\gamma(s,\omega),\partial B)$ for
all $s\in [0,1]$. It is also straightforward to check in the
limiting case that unit segments on the vertical axis are
contained in $\Gamma_{\kappa}(\omega)$ for $\omega=(0,0,\pm 1)$
and any choice of $\kappa$. For arbitrary $\omega$, the curve
$\gamma(s,\omega)$ stays close to the horizontal normal of
$\partial B$ through $\omega$ for some time $s\in [s(\omega),1]$
since it is obtained by a flow along a vector field tangential to
$\nabla_H \|\cdot\|$, see \cite[(3.1)]{MR1942237}. The distance
estimates for the remaining curve segment
$\gamma(\cdot,\omega)|_{[0,s(\alpha)]}$ are similar to those for
the vertical line segment. This is reminiscent of the construction
of John curves in \cite[(1.3)]{MR2135732}, but our focus lies on
verifying the John property for the given radial curves. This is
achieved
 by estimating
$d(\gamma(s,\omega),\omega)$ (Lemma \ref{l:radial_curve_dist}) and
 $d(\gamma(s,\omega),\partial B)$ (Lemma
\ref{l:radial_curve_dist_bdry}); the complete proof of
Proposition~\ref{p:curve_cone}
 is given at the end.

First, we compute the distance between points in $\partial B$ and
in the paraboloid
\[
P_{\alpha}:=\{(x,y,t)\in\mathbb{H}^1:\,\tfrac{t}{x^2+y^2}=\tan\alpha\},\quad
\alpha \in (-\pi/2,\pi/2).
\]
Let $s>0$, $p=(s \sqrt{\cos
\alpha}e^{\mathrm{i}\varphi},\sin\alpha)\in
\partial B(0,s)\cap P_\alpha$, and $\widetilde{\omega}\in
\partial B$, $\widetilde{\omega}=(\sqrt{\cos
\widetilde{\alpha}}e^{\mathrm{i}\widetilde{\varphi}},\sin
\widetilde{\alpha})$ be arbitrary. By applying a rotation about
the $t$-axis, we obtain
\begin{displaymath}
d(p,\widetilde{\omega})
 =\|(-s\sqrt{\cos \alpha},0, -s^2\sin \alpha)\cdot(\sqrt{\cos \widetilde{\alpha}}\cos( \widetilde{\varphi}-\varphi),
 \sqrt{\cos \widetilde{\alpha}}\sin( \widetilde{\varphi}-\varphi),\sin
 \widetilde{\alpha})\|.
\end{displaymath} Denoting $\phi:= \widetilde{\varphi}-\varphi$,
a straightforward computation yields that
\begin{align}\label{eq:distTwoPoints-rot}
&d(p,\widetilde{\omega})^4 \\
&=\left((\sqrt{\cos \widetilde{\alpha}}\cos \phi-s\sqrt{\cos
\alpha})^2+\cos \widetilde{\alpha}\sin ^2\phi\right)^2
+\left(\sin \widetilde{\alpha}-s^2\sin \alpha+2s\sqrt{\cos \alpha}\sqrt{\cos \widetilde{\alpha}}\sin \phi\right)^2 \nonumber \\
&=1+s^4+s^2(6\cos \alpha \cos \widetilde{\alpha}-2\sin \alpha \sin
\widetilde{\alpha}) -4s\sqrt{\cos \alpha}\sqrt{\cos
\widetilde{\alpha}}\left(\cos(\widetilde{\alpha}+\phi)+s^2\cos(\alpha-\phi)\right).\notag
\end{align}
We will estimate $d(\gamma(s,\omega),\omega)$ in two different
ways, which will yield better estimates depending on which range
of parameters we consider.

\begin{lemma}\label{l:radial_curve_dist}
If $\omega=\left(\sqrt{\cos \alpha}\cos
\varphi,\sqrt{\cos\alpha}\sin\varphi,\sin\alpha\right)$ with
 $\varphi \in [0,2\pi)$,
$\alpha \in (-\pi/2,\pi/2)$, then
\begin{align*}
d\left(\gamma(s,\omega),\omega\right) \lesssim
\min\left\{\frac{1-s}{\sqrt{\cos
\alpha}},\sqrt{1-s}+\sqrt[4]{1-s}\sqrt[4]{\cos
\alpha}\right\},\quad s\in (0,1].
\end{align*}
\end{lemma}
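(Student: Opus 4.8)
The plan is to fix $\omega=(\sqrt{\cos\alpha}\,e^{\mathrm{i}\varphi},\sin\alpha)$ and, after a rotation about the $t$-axis that normalizes $\varphi=0$, to work from an explicit formula for $d(\gamma(s,\omega),\omega)$. First I would record that $\gamma(s,\omega)$ lies on $\partial B(0,s)\cap P_{\alpha}$; indeed $\gamma(s,\omega)=(s\sqrt{\cos\alpha}\,e^{\mathrm{i}\psi},s^2\sin\alpha)$ with winding angle $\psi:=\tan\alpha\,\log(1/s)$, so that $\|\gamma(s,\omega)\|=s$. Specializing \eqref{eq:distTwoPoints-rot} to $\widetilde\alpha=\alpha$ (or computing $\omega^{-1}\cdot\gamma(s,\omega)$ directly from the group law) yields
\begin{displaymath}
d(\gamma(s,\omega),\omega)^4=\cos^2\alpha\,(1-2s\cos\psi+s^2)^2+\bigl(\sin\alpha\,(s^2-1)+2\cos\alpha\,s\sin\psi\bigr)^2.
\end{displaymath}
Since the asserted bound is a minimum, I would establish the two estimates separately and combine them at the end.

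For the first bound I would avoid the explicit formula altogether: the radial curve $\gamma(\cdot,\omega)$ is horizontal, and by \eqref{eq:length_el} its speed in the sub-Riemannian metric equals $1/|z|=1/\sqrt{\cos\alpha}$. Hence the sub-Riemannian length of $\gamma(\cdot,\omega)|_{[s,1]}$ is $(1-s)/\sqrt{\cos\alpha}$, and since the Kor\'anyi distance is bounded above (up to a constant) by the sub-Riemannian distance, we get $d(\gamma(s,\omega),\omega)\lesssim(1-s)/\sqrt{\cos\alpha}$, the first term in the minimum.

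For the second bound I would argue from the explicit formula, and in fact prove the stronger estimate $d(\gamma(s,\omega),\omega)^4\lesssim(1-s)^2$, which a fortiori gives $\sqrt{1-s}+\sqrt[4]{1-s}\sqrt[4]{\cos\alpha}$. The crucial elementary input is $s\,\log(1/s)\le 1-s$ (the standard $\log x\le x-1$ with $x=1/s$), hence $s^2(\log(1/s))^2\le(1-s)^2$. Combined with the identity $\cos^2\alpha\,\psi^2=\sin^2\alpha\,(\log(1/s))^2$ and $1-\cos\psi\le\min\{2,\psi^2/2\}$, which gives $(1-\cos\psi)^2\le\psi^2$, the $|z|^4$-term is controlled: $\cos^2\alpha\,(1-2s\cos\psi+s^2)^2\lesssim(1-s)^4+\sin^2\alpha\,s^2(\log(1/s))^2\lesssim(1-s)^2$. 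The delicate part is the time component. Writing $\sin\psi=\psi-(\psi-\sin\psi)$ exhibits the cancellation
\begin{displaymath}
\sin\alpha\,(s^2-1)+2\cos\alpha\,s\sin\psi=\sin\alpha\,\bigl[2s\log\tfrac{1}{s}-(1-s)(1+s)\bigr]-2\cos\alpha\,s(\psi-\sin\psi);
\end{displaymath}
the bracket equals $s^2-2s\log s-1$, which I would show satisfies $0\le 1-s^2+2s\log s\le(1-s)^2$ by a short monotonicity argument, while $\cos^2\alpha\,s^2(\psi-\sin\psi)^2\le 4\cos^2\alpha\,s^2\psi^2=4\sin^2\alpha\,s^2(\log(1/s))^2\lesssim(1-s)^2$ using $|\psi-\sin\psi|\le 2|\psi|$.

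The main obstacle is precisely this cancellation in the time coordinate. A naive triangle-inequality bound on $\sin\alpha\,(s^2-1)+2\cos\alpha\,s\sin\psi$ leaves a term of order $\cos\alpha$, which is not controlled by $(1-s)^2$ away from the characteristic points, while a naive bound on the winding $\psi$ produces spurious factors $1/\cos\alpha$ near them; recognizing that $2s\log(1/s)$ matches $(1-s)(1+s)$ to second order and invoking the sharp inequality $s\log(1/s)\le 1-s$ are what make both terms collapse to $(1-s)^2$. Taking the smaller of the two established bounds then gives the claim.
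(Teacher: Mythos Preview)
Your proof is correct, and your first bound via the arc-length of $\gamma(\cdot,\omega)|_{[s,1]}$ is exactly what the paper does. For the second bound, however, your route is genuinely different from the paper's and in fact yields the sharper estimate $d(\gamma(s,\omega),\omega)^4\lesssim (1-s)^2$, i.e.\ $d\lesssim\sqrt{1-s}$, which already dominates the extra $\sqrt[4]{1-s}\,\sqrt[4]{\cos\alpha}$ term. The paper instead rewrites
\[
d(\gamma(s,\omega),\omega)^4=\sin^2\alpha\,(1-s^2)^2+\cos^2\alpha\,(1-s)^4-\Lambda,
\]
with $\Lambda$ carrying an overall factor $4s\cos\alpha$, and then bounds $|\Lambda|$ by a case split: a crude $|\Lambda|\lesssim s\le (1-s)^2$ for $s\le 1/4$, and a mean-value-theorem estimate $|\Lambda|\lesssim (1-s)^2+(1-s)\cos\alpha$ for $s\ge 1/4$. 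The second case is what produces the additional $\sqrt[4]{(1-s)\cos\alpha}$ in the final statement. Your decomposition $\sin\psi=\psi-(\psi-\sin\psi)$, combined with the exact inequality $0\le 1-s^2+2s\log s\le (1-s)^2$ and the universal bound $s\log(1/s)\le 1-s$, captures precisely the second-order cancellation in the $t$-component without any case split on $s$; this is what eliminates the cross term $(1-s)\cos\alpha$ and gives the cleaner bound. Either approach is adequate for the sequel (only Lemma~\ref{l:radial_curve_dist} as stated is used), but yours is both more elementary and slightly sharper.
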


\begin{proof}
The bound by the first expression in the minimum follows if we
estimate the distance between $\gamma(s,\omega)$ and $\omega$ from
above by the length  of the radial curve segment that connects the
two points:
\begin{displaymath}
d(\gamma(s,\omega),\omega)  \leq
\mathrm{length}(\gamma(\cdot,\omega)|_{[s,1]})
\overset{\eqref{eq:length_el}}{=} \int_s^1 \frac{1}{\sqrt{\cos
\alpha}}\,d\sigma = \frac{1-s}{\sqrt{\cos \alpha}}.
\end{displaymath}
To prove the second bound for $d(\gamma(s,\omega),\omega)$, we
recall  the formula for radial curves provided by
Theorem~\ref{t:polar} and apply \eqref{eq:distTwoPoints-rot} to
$p=\gamma(s,\omega)$ and $\widetilde{\omega}=\omega$. This yields:
\begin{align}\label{eq:dis_gamma_omega}
d\left(\gamma(s,\omega),\omega\right)^4=  1&+ s^4 + s^2\,\left[6
\cos^2 \alpha - 2 \sin^2 \alpha\right] \\&- 4 s \cos \alpha\,
\left[\cos\left(\alpha + \tan \alpha \ln s\right)+ s^2
\cos\left(\alpha - \tan \alpha \ln s\right)\right].\notag
\end{align}
It is convenient to write this formula as
\begin{align*}
d(\gamma(s,\omega),\omega)^4 = \sin^2\alpha (1-s^2)^2 + \cos^2
\alpha (1-s)^4- \Lambda
\end{align*}
where
\begin{displaymath}
\Lambda:= 4 s\cos \alpha \left[-\sin \alpha \sin (\tan \alpha \ln
s)(1-s^2)-\cos \alpha (1+s^2) (1-\cos(\tan \alpha \ln s))\right].
\end{displaymath}
To conclude the proof, it suffices to find a suitable upper bound
for $|\Lambda|$. If $s\in (0,1/4]$, then we simply use
$|\Lambda|\lesssim s \leq (1-s)^2$, which yields in that case
$d(\gamma(s,\omega),\omega)\lesssim \sqrt{1-s}$. On the other
hand, if $s\in (1/4,1]$, then by the mean value theorem, we find
that
\begin{align*}
|\Lambda|\lesssim &  \cos \alpha (1-s^2)\left|\sin(\tan \alpha \ln
s )-\sin(\tan \alpha \ln 1)\right| \\&+  \cos^2 \alpha (1+s^2)
\left|\cos(\tan \alpha \ln s)-\cos(\tan \alpha \ln
 1)\right|\\
\lesssim & (1-s) |\ln s - \ln 1| + \cos \alpha |\ln s - \ln
1|\\\lesssim &(1-s)^2 + (1-s) \cos \alpha,
\end{align*}
which yields the desired estimate in that case.
\end{proof}

\begin{lemma}\label{l:radial_curve_dist_bdry} There exists $s_0
\in (0,1)$ such that for all $s\in [s_0,1]$ the following holds.
If  $\omega=\left(\sqrt{\cos \alpha}\cos \varphi,\sqrt{\cos
\alpha}\sin\varphi,\sin\alpha\right)$ with $\varphi \in [0,2\pi]$,
$\alpha \in (-\pi/2,\pi/2)$, $|\alpha|\geq c$, then
\begin{displaymath}
d(\gamma(s,\omega),\partial B)^4 \gtrsim_c
\left\{\begin{array}{ll}(1-s)^2,&\text{if }1-s\geq \cos \alpha,\\
\frac{(1-s)^4}{\cos^2 \alpha},&\text{if }1-s\leq \cos \alpha.
\end{array} \right.
\end{displaymath}
\end{lemma}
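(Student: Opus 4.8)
The plan is to compute $d(\gamma(s,\omega),\partial B)$ essentially exactly, reducing it to a one-variable minimization in which the two asserted regimes appear transparently. Since $\gamma(s,\omega)=(s\sqrt{\cos\alpha}\,e^{\mathrm{i}(\varphi-\tan\alpha\log s)},s^2\sin\alpha)$ lies on $\partial B(0,s)\cap P_\alpha$, it has exactly the form of the point $p$ for which the distance formula \eqref{eq:distTwoPoints-rot} was derived. Writing an arbitrary boundary point as $\widetilde\omega=(\sqrt{\cos u}\,e^{\mathrm{i}\widetilde\varphi},\sin u)$ with $u\in(-\tfrac\pi2,\tfrac\pi2)$, and using the compactness of $\partial B$ (so that the distance is attained, the characteristic points $z=0$ entering only as the boundary values $u=\pm\tfrac\pi2$ of the formula), we have $d(\gamma(s,\omega),\partial B)^4=\min_{u,\phi}d(\gamma(s,\omega),\widetilde\omega)^4$, where $\phi=\widetilde\varphi-(\varphi-\tan\alpha\log s)$, so that minimizing over $\widetilde\varphi$ amounts to minimizing over $\phi$.

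First I would carry out the minimization over the angular variable $\phi$ in closed form. The only $\phi$-dependent term in \eqref{eq:distTwoPoints-rot} is $-4s\sqrt{\cos\alpha\cos u}\,[\cos(u+\phi)+s^2\cos(\alpha-\phi)]$, and maximizing the bracket over $\phi$ gives $\sqrt{1+s^4+2s^2\cos(\alpha+u)}$. Setting $P(u):=1+s^4+2s^2\cos(\alpha+u)$ and $W(u):=\cos\alpha\cos u$, and rewriting the remaining term via $6\cos\alpha\cos u-2\sin\alpha\sin u=2\cos(\alpha+u)+4\cos\alpha\cos u$, the whole expression collapses to a perfect square, yielding the clean identity
\begin{equation*}
d(\gamma(s,\omega),\partial B)^4=\min_{u\in(-\pi/2,\pi/2)}\big(\sqrt{P(u)}-2s\sqrt{W(u)}\big)^2.
\end{equation*}
Since $1+s^4\ge 2s^2\ge 2s^2\cos(\alpha-u)$, the quantity $h(u):=\sqrt{P(u)}-2s\sqrt{W(u)}$ is nonnegative, and rationalizing gives the key representation $h(u)=N(u)/D(u)$ with $N(u)=P(u)-4s^2W(u)=(1-s^2)^2+2s^2(1-\cos(\alpha-u))$ and $D(u)=\sqrt{P(u)}+2s\sqrt{W(u)}$.

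The second step is to bound $h(u)$ below uniformly in $u$. For the numerator, $1-\cos\theta\gtrsim\theta^2$ on $[-\pi,\pi]$ gives $N(u)\gtrsim(1-s)^2+(\alpha-u)^2$ once $s\ge s_0$. The crucial — and most delicate — estimate is the matching upper bound on the denominator. Writing $\tau:=|\alpha-u|$ and using $P(u)=(1-s^2)^2+4s^2\cos^2(\tfrac{\alpha+u}2)$ together with the $1$-Lipschitz bound $|\cos\tfrac{\alpha+u}2|\le\cos\alpha+\tfrac\tau2$ controls $\sqrt{P(u)}$, while for the genuinely anisotropic term I would use $\cos u\le\cos\alpha+\tau$ followed by the arithmetic–geometric mean inequality $\sqrt{\cos\alpha\cos u}\le\tfrac12\big(\cos\alpha+(\cos\alpha+\tau)\big)=\cos\alpha+\tfrac\tau2$. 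Together these give $D(u)\lesssim(1-s)+\cos\alpha+\tau$. The main obstacle is precisely this anisotropic denominator bound: a naive estimate $D(u)\lesssim 1$ would only yield $h\gtrsim(1-s)^2$ and would lose the essential factor $1/\cos\alpha$ in the second regime, so it is the AM–GM step, which keeps the scale $\cos\alpha$ rather than producing a lossy $\sqrt{\cos\alpha\,\tau}$, that makes the argument work.

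Finally I would minimize the resulting quotient. Splitting according to whether $\tau\le A:=(1-s)+\cos\alpha$ or $\tau>A$, and using $N(u)\gtrsim(1-s)^2+\tau^2$ and $D(u)\lesssim A+\tau$, one gets $h(u)\gtrsim(1-s)^2/A$ in both cases (for $\tau>A$ via $N\gtrsim\tau^2$, $D\lesssim\tau$, and $A\ge 1-s$). Hence
\begin{equation*}
d(\gamma(s,\omega),\partial B)^4\gtrsim\frac{(1-s)^4}{\big((1-s)+\cos\alpha\big)^2}.
\end{equation*}
When $1-s\ge\cos\alpha$ the right-hand side is $\gtrsim(1-s)^2$, and when $1-s\le\cos\alpha$ it is $\gtrsim(1-s)^4/\cos^2\alpha$, which are exactly the two asserted bounds (in fact with an absolute constant, so that the hypothesis $|\alpha|\ge c$ is not actually needed for this particular estimate, though it is natural in the application). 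I expect the only remaining points requiring care to be the bookkeeping in the denominator estimate and the verification that the minimizing boundary point is interior rather than a characteristic point.
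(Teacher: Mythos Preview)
Your argument is correct and in fact slightly stronger than the paper's: it yields the bound with an absolute constant, independent of $c$. Both approaches begin the same way --- minimizing over $\phi$ to reach the perfect-square form $(\sqrt{P(u)}-2s\sqrt{W(u)})^2$ and then rationalizing to a quotient $N(u)/D(u)$ --- but diverge in how they control the denominator. The paper upper-bounds $\sqrt{P}$ by $\cos\alpha+s^2\cos\widetilde\alpha+|\sin\alpha-s^2\sin\widetilde\alpha|$ and then runs a \emph{bootstrap}: since $\widetilde\omega$ is the actual minimizer, the distance $d(\gamma(s,\omega),\widetilde\omega)$ controls $|\sqrt{\cos\alpha}-\sqrt{\cos\widetilde\alpha}|$, hence $|\cos\alpha-\cos\widetilde\alpha|$, and finally $|\alpha-\widetilde\alpha|$ via the mean value theorem. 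That last inversion needs $|\sin\alpha|\gtrsim_c 1$, which is exactly where the hypothesis $|\alpha|\ge c$ enters. You instead bound $D(u)\lesssim(1-s)+\cos\alpha+\tau$ directly for \emph{every} $u$ using the Lipschitz and AM--GM estimates, and then minimize the explicit lower bound $\frac{(1-s)^2+\tau^2}{(1-s)+\cos\alpha+\tau}$ over $\tau$. This avoids the self-referential step entirely and explains why you never need $|\alpha|\ge c$. Your remark that the characteristic points enter only as the limiting case $u\to\pm\tfrac{\pi}{2}$ is also fine: the lower bound on $h(u)$ extends by continuity (and at $u=\pm\tfrac\pi2$ one has $W=0$, $h=\sqrt{P}\ge 1-s^2$, which dominates $(1-s)^2/A$ trivially).
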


\begin{proof} Using formula \eqref{eq:distTwoPoints-rot} for an arbitrary point
$\widetilde{\omega}=(\sqrt{\cos
\widetilde{\alpha}}e^{\mathrm{i}\widetilde{\varphi}},\sin
\widetilde{\alpha})\in
\partial B$, we write
\begin{align}\label{d-est:l:-radial}
d(\gamma(s,\omega),\widetilde{\omega})^4=&
\left(\sqrt{1+s^4+2s^2\cos(\alpha+\widetilde{\alpha})}\right)^2
+\left(2s\sqrt{\cos \alpha}\sqrt{\cos
\widetilde{\alpha}}\right)^2\notag\\
&-4s\sqrt{\cos \alpha}\sqrt{\cos
\widetilde{\alpha}}\left(\cos(\widetilde{\alpha}+\phi)+s^2\cos(\alpha-\phi)\right),
\end{align}
where $\phi=\widetilde{\varphi}-\varphi+\tan \alpha \ln s$. We
claim that for all $\phi\in \mathbb{R}$ it holds that
\begin{equation}\label{ineq:l:radial}
 [\cos(\widetilde{\alpha}+\phi)+s^2\cos(\alpha-\phi)]^2 \leq 1+s^4+2s^2\cos(\alpha+\widetilde{\alpha}).
\end{equation} Factoring out the left-hand side, the claim is
easily seen to be equivalent to
\begin{displaymath}
2s^2\cos(\widetilde{\alpha}+\phi)\cos(\alpha-\phi)
  \leq
  \sin^2(\widetilde{\alpha}+\phi)+2s^2\cos(\alpha+\widetilde{\alpha})+s^4\sin^2(\alpha-\phi).
\end{displaymath}
Using trigonometric formulas, it follows that
\eqref{ineq:l:radial} is further equivalent to
\begin{align*}
 2s^2\cos\left((\widetilde{\alpha}+\phi)+(\alpha-\phi)\right)+ 2s^2\sin(\widetilde{\alpha}+\phi)\sin(\alpha-\phi)\leq
 \sin^2(\widetilde{\alpha}+\phi)&+2s^2\cos(\alpha+\widetilde{\alpha})\\&+s^4\sin^2(\alpha-\phi),
 \end{align*}
which is clearly true since $0\leq
\left(\sin(\widetilde{\alpha}+\phi)-s^2\sin(\alpha-\phi)\right)^2$
holds for all $\phi$. Thus, claim~\eqref{ineq:l:radial} is proven.
Assume now that $\widetilde{\omega}\in \partial B$ realizes the
distance $d(\gamma(s,\omega),\partial B)$. Inserting
\eqref{ineq:l:radial} in~\eqref{d-est:l:-radial}, we find that
\begin{align}\label{eq:denom}
d(\gamma(s,\omega),\partial B)^4 \overset{\eqref{ineq:l:radial}
}{\geq} & \left(\sqrt{1+s^4+2 s^2 \cos(\alpha +
\widetilde{\alpha})}-2s
\sqrt{\cos \alpha \cos \widetilde{\alpha}}\right)^2\notag\\
\geq &  \left(\frac{1-2s^2 \cos(\alpha-\widetilde{\alpha})
+s^4}{\sqrt{1+2s^2 \cos(\alpha +\widetilde{\alpha})+s^4}+2s \sqrt{\cos \alpha \cos \widetilde{\alpha}}}\right)^2\notag\\
\geq & \frac{(1-s^2)^4}{\left(\cos \alpha + s^2 \cos
\widetilde{\alpha} + 2 s \sqrt{\cos \alpha \cos
\widetilde\alpha}+|\sin \alpha - s^2 \sin
\widetilde{\alpha}|\right)^2}.
\end{align}
The denominator of the last expression can be bounded from above,
recalling that $|\alpha|\geq c$ and that
\begin{displaymath}
d(\gamma(s,\omega),\partial B) =
d(\gamma(s,\omega),\widetilde{\omega}) \geq |s\sqrt{\cos
\alpha}-\sqrt{\cos \widetilde{\alpha}}|\geq |\sqrt{\cos
\alpha}-\sqrt{\cos \widetilde{\alpha}}|- (1-s)\sqrt{\cos \alpha}.
\end{displaymath}
Since $1-s\leq d(\gamma(s,\omega),\partial B)$, this yields the
following estimates
\begin{align*}
|\sqrt{\cos \alpha}-\sqrt{\cos \widetilde{\alpha}}|&\lesssim
d(\gamma(s,\omega),\partial B)\\
|\cos \alpha -\cos \widetilde{\alpha}|&\lesssim \left(\sqrt{\cos
\alpha}+ d(\gamma(s,\omega),\partial B)\right)
d(\gamma(s,\omega),\partial B)\\
|\alpha -\widetilde{\alpha}|&\lesssim_c \left(\sqrt{\cos \alpha}+
d(\gamma(s,\omega),\partial B)\right) d(\gamma(s,\omega),\partial
B).
\end{align*}
This, in turn, yields the following estimate for the denominator
in \eqref{eq:denom} above,
\begin{align*}
\Big|\cos \alpha + s^2 \cos \widetilde{\alpha}& + 2 s \sqrt{\cos
\alpha \cos \widetilde\alpha}+|\sin \alpha - s^2
\sin \widetilde{\alpha}|\Big| \\
&\leq s^2 (\cos \alpha -|\sin \alpha|) + 2s \cos \alpha + \cos
\alpha +|\sin\alpha|\\
&+ s^2 |\cos \alpha -\cos \widetilde{\alpha}|+ s^2|\sin
\alpha-\sin \widetilde{\alpha}|+ 2 s \sqrt{\cos \alpha}
|\sqrt{\cos \widetilde{\alpha}}-\sqrt{\cos \alpha}|\\
&\lesssim_c \cos \alpha + (1-s^2)|\sin \alpha| +\left(\sqrt{\cos
\alpha}+ d(\gamma(s,\omega),\partial B)\right)
d(\gamma(s,\omega),\partial B)\\
&=: I_1 + I_2 +I_3.
\end{align*}
To conclude the proof, it suffices to observe that if $1-s\geq \cos \alpha$, then $I_2 \gtrsim \max\{I_1,I_3\}$,
while if $1-s\leq \cos \alpha$, then $I_1 \gtrsim \max\{I_2,I_3\}$.
The first inequality is a consequence of the direct estimate relying on $|\alpha|\geq c$, $s\geq s_0$ and on the following observation:
\[
 1-s\geq \sqrt{\cos \alpha} \sqrt{1-s}\gtrsim \sqrt{\cos \alpha}\, d(\gamma(s,\omega),\partial B).
\]
Thus $I_2\gtrsim \max\{I_1, I_3\}$. If $1-s\leq \cos \alpha$, the
inequality $I_1 \gtrsim \max\{I_2,I_3\}$ follows similarly.
\end{proof}

With  Lemmas \ref{l:radial_curve_dist} and
\ref{l:radial_curve_dist_bdry} at hand, we are ready to prove
Proposition \ref{p:curve_cone}.

\begin{proof}[Proof of Proposition \ref{p:curve_cone}] The goal is to find $\kappa>0$ such that for all
$\omega=(\sqrt{\cos \alpha}e^{\mathrm{i}\varphi},\sin \alpha)$,
\begin{equation}\label{eq:GoalRadialInCone}
d(\gamma(s,\omega),\omega) \leq (1+\kappa)
d(\gamma(s,\omega),\partial B),\quad \text{for all }s\in (0,1).
\end{equation}
We fix a constant $0<c<\pi/2$. If  $|\alpha|\leq c$ (and hence
$\cos \alpha \sim_c 1$),  Lemma \ref{l:radial_curve_dist} yields
\begin{equation}\label{eq:A1}d(\gamma(s,\omega),\omega)\lesssim_c 1-s = 1-
\|\gamma(s,\omega)\|\leq d(\gamma(s,\omega),\partial B),\quad s\in
(0,1).
\end{equation}

\medskip If $|\alpha|\geq c$, we first prove the estimate under
the assumption that $s\in [s_0,1)$ for $s_0<1$ as in  Lemma
\ref{l:radial_curve_dist_bdry}. In this situation, we discuss
separately the cases
 $1-s\leq \cos \alpha$ and $1-s\geq \cos
\alpha$, using   the two different bounds provided by  Lemma
\ref{l:radial_curve_dist}. If $1-s\leq \cos \alpha$, we deduce
\begin{equation}\label{eq:A2}
d(\gamma(s,\omega),\omega)\lesssim \frac{1-s}{\sqrt{\cos \alpha}}
\lesssim_c d(\gamma(s,\omega),\partial B) \quad\text{for all }s\in
[s_0,1).
\end{equation}
If $1-s\geq \cos \alpha$, we obtain
\begin{equation}\label{eq:A3}
d(\gamma(s,\omega),\omega)\lesssim \sqrt{1-s} \lesssim_c
d(\gamma(s,\omega),\partial B) \quad\text{for all }s\in [s_0,1).
\end{equation}
Finally, if $|\alpha|\geq c$ and $s\in (0,s_0)$, then we simply
use the crude estimate
\begin{equation}\label{eq:A4}
d(\gamma(s,\omega),\omega) \lesssim 1 \leq \frac{1}{1-s_0}
d(\gamma(s,\omega),\partial B).
\end{equation}
Combining the estimates \eqref{eq:A1} - \eqref{eq:A4}, we find
$\kappa$ such that \eqref{eq:GoalRadialInCone} holds as desired.
\end{proof}

\bibliographystyle{plain}
\bibliography{references}

\def\cprime{$'$} \def\cprime{$'$} \def\cprime{$'$} \def\cprime{$'$}
\begin{thebibliography}{10}

\bibitem{2017arXiv170702832A}
Tomasz Adamowicz, Katrin F\"{a}ssler, and Ben Warhurst.
\newblock A {K}oebe distortion theorem for quasiconformal mappings in the
  {H}eisenberg group.
\newblock {\em Ann. Mat. Pura Appl. (4)}, 199(1):147--186, 2020.

\bibitem{MR4310177}
Tomasz Adamowicz and Mar\'{\i}a~J. Gonz\'{a}lez.
\newblock Hardy spaces for quasiregular mappings and composition operators.
\newblock {\em J. Geom. Anal.}, 31(11):11417--11427, 2021.

\bibitem{2015arXiv151209165A}
Tomasz Adamowicz and Ben Warhurst.
\newblock Prime ends in the {H}eisenberg group {${\bf H}_1$} and the boundary
  behavior of quasiconformal mappings.
\newblock {\em Ann. Acad. Sci. Fenn. Math.}, 43(2):631--668, 2018.

\bibitem{MR725161}
Lars~V. Ahlfors.
\newblock {\em M\"{o}bius transformations in several dimensions}.
\newblock Ordway Professorship Lectures in Mathematics. University of
  Minnesota, School of Mathematics, Minneapolis, Minn., 1981.

\bibitem{MR2900163}
Kari Astala and Pekka Koskela.
\newblock {$H^p$}-theory for quasiconformal mappings.
\newblock {\em Pure Appl. Math. Q.}, 7(1):19--50, 2011.

\bibitem{MR3076803}
Zolt\'{a}n~M. Balogh, Katrin F\"{a}ssler, and Ioannis~D. Platis.
\newblock Modulus method and radial stretch map in the {H}eisenberg group.
\newblock {\em Ann. Acad. Sci. Fenn. Math.}, 38(1):149--180, 2013.

\bibitem{MR3343051}
Zolt\'{a}n~M. Balogh, Katrin F\"{a}ssler, and Ioannis~D. Platis.
\newblock Uniqueness of minimisers for a {G}r\"{o}tzsch-{B}elinski\u{\i} type
  inequality in the {H}eisenberg group.
\newblock {\em Conform. Geom. Dyn.}, 19:122--145, 2015.

\bibitem{MR1942237}
Zolt\'an~M. Balogh and Jeremy~T. Tyson.
\newblock Polar coordinates in {C}arnot groups.
\newblock {\em Math. Z.}, 241(4):697--730, 2002.

\bibitem{MR1459590}
Luca Capogna.
\newblock Regularity of quasi-linear equations in the {H}eisenberg group.
\newblock {\em Comm. Pure Appl. Math.}, 50(9):867--889, 1997.

\bibitem{capogna2007introduction}
Luca Capogna, Donatella Danielli, Scott~D. Pauls, and Jeremy Tyson.
\newblock {\em {A}n Introduction to the {H}eisenberg Group and the
  Sub-{R}iemannian Isoperimetric Problem}.
\newblock Progress in Mathematics. Birkh{\"a}user Basel, 2007.

\bibitem{MR1658616}
Luca Capogna and Nicola Garofalo.
\newblock Boundary behavior of nonnegative solutions of subelliptic equations
  in {NTA} domains for {C}arnot-{C}arath\'eodory metrics.
\newblock {\em J. Fourier Anal. Appl.}, 4(4-5):403--432, 1998.

\bibitem{MR1890994}
Luca Capogna, Nicola Garofalo, and Duy-Minh Nhieu.
\newblock Properties of harmonic measures in the {D}irichlet problem for
  nilpotent {L}ie groups of {H}eisenberg type.
\newblock {\em Amer. J. Math.}, 124(2):273--306, 2002.

\bibitem{MR141789}
Lennart Carleson.
\newblock Interpolations by bounded analytic functions and the corona problem.
\newblock {\em Ann. of Math. (2)}, 76:547--559, 1962.

\bibitem{CTU}
Vasilis Chousionis, Jeremy Tyson, and Mariusz Urba\'{n}ski.
\newblock Conformal graph directed {M}arkov systems on {C}arnot groups.
\newblock {\em Mem. Amer. Math. Soc.}, 266(1291):viii+155, 2020.

\bibitem{MR447954}
Ronald~R. Coifman and Guido Weiss.
\newblock Extensions of {H}ardy spaces and their use in analysis.
\newblock {\em Bull. Amer. Math. Soc.}, 83(4):569--645, 1977.

\bibitem{MR1102963}
Michael Cowling, Anthony~H. Dooley, Adam Kor\'{a}nyi, and Fulvio Ricci.
\newblock {$H$}-type groups and {I}wasawa decompositions.
\newblock {\em Adv. Math.}, 87(1):1--41, 1991.

\bibitem{MR592391}
Bj\"{o}rn E.~J. Dahlberg.
\newblock Weighted norm inequalities for the {L}usin area integral and the
  nontangential maximal functions for functions harmonic in a {L}ipschitz
  domain.
\newblock {\em Studia Math.}, 67(3):297--314, 1980.

\bibitem{MR1778673}
Nurlan~S. Dairbekov.
\newblock Mappings with bounded distortion on {H}eisenberg groups.
\newblock {\em Sibirsk. Mat. Zh.}, 41(3):567--590, ii, 2000.

\bibitem{MR1664688}
Donatella Danielli, Nicola Garofalo, and Duy-Minh Nhieu.
\newblock Trace inequalities for {C}arnot-{C}arath\'{e}odory spaces and
  applications.
\newblock {\em Ann. Scuola Norm. Sup. Pisa Cl. Sci. (4)}, 27(2):195--252
  (1999), 1998.

\bibitem{MR2229731}
Donatella Danielli, Nicola Garofalo, and Duy-Minh Nhieu.
\newblock Non-doubling {A}hlfors measures, perimeter measures, and the
  characterization of the trace spaces of {S}obolev functions in
  {C}arnot-{C}arath\'{e}odory spaces.
\newblock {\em Mem. Amer. Math. Soc.}, 182(857):x+119, 2006.

\bibitem{MR241650}
Peter~L. Duren.
\newblock Extension of a theorem of {C}arleson.
\newblock {\em Bull. Amer. Math. Soc.}, 75:143--146, 1969.

\bibitem{MR0268655}
Peter~L. Duren.
\newblock {\em Theory of {$H^{p}$} spaces}.
\newblock Pure and Applied Mathematics, Vol. 38. Academic Press, New
  York-London, 1970.

\bibitem{Eichmann}
R~Eichmann.
\newblock Variationsprobleme auf der {H}eisenberggruppe.
\newblock Lizentiatsarbeit, Universit{\"a}t Bern, 1990.

\bibitem{MR4127898}
Katrin F\"{a}ssler, Tuomas Orponen, and S\'{e}verine Rigot.
\newblock Semmes surfaces and intrinsic {L}ipschitz graphs in the {H}eisenberg
  group.
\newblock {\em Trans. Amer. Math. Soc.}, 373(8):5957--5996, 2020.

\bibitem{MR447953}
Charles Fefferman and Elias~M. Stein.
\newblock {$H^{p}$} spaces of several variables.
\newblock {\em Acta Math.}, 129(3-4):137--193, 1972.

\bibitem{MR657581}
Gerald~B. Folland and Elias~M. Stein.
\newblock {\em Hardy spaces on homogeneous groups}, volume~28 of {\em
  Mathematical Notes}.
\newblock Princeton University Press, Princeton, N.J.; University of Tokyo
  Press, Tokyo, 1982.

\bibitem{MR1871966}
Bruno Franchi, Raul Serapioni, and Francesco Serra~Cassano.
\newblock Rectifiability and perimeter in the {H}eisenberg group.
\newblock {\em Math. Ann.}, 321(3):479--531, 2001.

\bibitem{MR3276133}
David~M. Freeman.
\newblock Invertible {C}arnot groups.
\newblock {\em Anal. Geom. Metr. Spaces}, 2(1):248--257, 2014.

\bibitem{MR628971}
John~B. Garnett.
\newblock {\em Bounded analytic functions}, volume~96 of {\em Pure and Applied
  Mathematics}.
\newblock Academic Press, Inc. [Harcourt Brace Jovanovich, Publishers], New
  York-London, 1981.

\bibitem{MR2450237}
John~B. Garnett and Donald~E. Marshall.
\newblock {\em Harmonic measure}, volume~2 of {\em New Mathematical
  Monographs}.
\newblock Cambridge University Press, Cambridge, 2008.
\newblock Reprint of the 2005 original.

\bibitem{MR1404326}
Nicola Garofalo and Duy-Minh Nhieu.
\newblock Isoperimetric and {S}obolev inequalities for
  {C}arnot-{C}arath\'{e}odory spaces and the existence of minimal surfaces.
\newblock {\em Comm. Pure Appl. Math.}, 49(10):1081--1144, 1996.

\bibitem{MR833411}
Fred~W. Gehring and Olli Martio.
\newblock Quasiextremal distance domains and extension of quasiconformal
  mappings.
\newblock {\em J. Analyse Math.}, 45:181--206, 1985.

\bibitem{MR1603290}
Alexander~V. Greshnov.
\newblock Extension of differentiable functions beyond the boundary of the
  domain on {C}arnot groups [translation of {\it {p}roceedings of the
  {i}nstitute of {m}athematics, 31} ({R}ussian), 1--26, {I}zdat. {R}oss.
  {A}kad. {N}auk, {S}ibirsk. {O}tdel. {I}nst. {M}at., {N}ovosibirsk, 1996].
\newblock volume~7, pages 20--62, 1997.
\newblock Siberian Advances in Mathematics.

\bibitem{MR1683160}
Piotr Haj{\l}asz and Pekka Koskela.
\newblock Sobolev met {P}oincar\'{e}.
\newblock {\em Mem. Amer. Math. Soc.}, 145(688):x+101, 2000.

\bibitem{MR3417082}
Piotr Haj{\l}asz and Scott Zimmerman.
\newblock Geodesics in the {H}eisenberg group.
\newblock {\em Anal. Geom. Metr. Spaces}, 3(1):325--337, 2015.

\bibitem{MR1555303}
Godfrey~H. Hardy and John~E. Littlewood.
\newblock A maximal theorem with function-theoretic applications.
\newblock {\em Acta Math.}, 54(1):81--116, 1930.

\bibitem{heinonen2012lectures}
Juha Heinonen.
\newblock {\em Lectures on Analysis on Metric Spaces}.
\newblock Universitext. Springer New York, 2001.

\bibitem{zbMATH00746069}
Juha {Heinonen} and Pekka {Koskela}.
\newblock {Definitions of quasiconformality.}
\newblock {\em {Invent. Math.}}, 120(1):61--79, 1995.

\bibitem{MR1654771}
Juha Heinonen and Pekka Koskela.
\newblock Quasiconformal maps in metric spaces with controlled geometry.
\newblock {\em Acta Math.}, 181(1):1--61, 1998.

\bibitem{MR1869604}
Juha Heinonen, Pekka Koskela, Nageswari Shanmugalingam, and Jeremy Tyson.
\newblock Sobolev classes of {B}anach space-valued functions and quasiconformal
  mappings.
\newblock {\em J. Anal. Math.}, 85:87--139, 2001.

\bibitem{MR3363168}
Juha Heinonen, Pekka Koskela, Nageswari Shanmugalingam, and Jeremy~T. Tyson.
\newblock {\em Sobolev spaces on metric measure spaces}, volume~27 of {\em New
  Mathematical Monographs}.
\newblock Cambridge University Press, Cambridge, 2015.
\newblock An approach based on upper gradients.

\bibitem{MR379871}
Joseph Hesse.
\newblock A {$p$}-extremal length and {$p$}-capacity equality.
\newblock {\em Ark. Mat.}, 13:131--144, 1975.

\bibitem{MR234002}
Lars H\"{o}rmander.
\newblock {$L^{p}$} estimates for (pluri-) subharmonic functions.
\newblock {\em Math. Scand.}, 20:65--78, 1967.

\bibitem{MR624919}
David Jerison and Allen Weitsman.
\newblock On the means of quasiregular and quasiconformal mappings.
\newblock {\em Proc. Amer. Math. Soc.}, 83(2):304--306, 1981.

\bibitem{MR554817}
Peter~W. Jones.
\newblock Extension theorems for {BMO}.
\newblock {\em Indiana Univ. Math. J.}, 29(1):41--66, 1980.

\bibitem{MR631089}
Peter~W. Jones.
\newblock Quasiconformal mappings and extendability of functions in {S}obolev
  spaces.
\newblock {\em Acta Math.}, 147(1-2):71--88, 1981.

\bibitem{MR1833251}
Sari Kallunki and Nageswari Shanmugalingam.
\newblock Modulus and continuous capacity.
\newblock {\em Ann. Acad. Sci. Fenn. Math.}, 26(2):455--464, 2001.

\bibitem{kleiner2021pansu}
Bruce Kleiner, Stefan Muller, and Xiangdong Xie.
\newblock Pansu pullback and exterior differentiation for {S}obolev maps on
  {C}arnot groups, 2021.
\newblock arXiv:2007.06694.

\bibitem{MR680658}
Adam Kor\'{a}nyi.
\newblock Kelvin transforms and harmonic polynomials on the {H}eisenberg group.
\newblock {\em J. Functional Analysis}, 49(2):177--185, 1982.

\bibitem{MR925991}
Adam Kor\'{a}nyi and Hans~M. Reimann.
\newblock The complex cross ratio on the {H}eisenberg group.
\newblock {\em Enseign. Math. (2)}, 33(3-4):291--300, 1987.

\bibitem{zbMATH04023589}
Adam {Kor\'anyi} and Hans~M. {Reimann}.
\newblock {Horizontal normal vectors and conformal capacity of spherical rings
  in the Heisenberg group.}
\newblock {\em {Bull. Sci. Math., II. S\'er.}}, 111:3--21, 1987.

\bibitem{MR1317384}
Adam Kor{\'a}nyi and Hans~M. Reimann.
\newblock Foundations for the theory of quasiconformal mappings on the
  {H}eisenberg group.
\newblock {\em Adv. Math.}, 111(1):1--87, 1995.

\bibitem{MR1271759}
Pekka Koskela.
\newblock An inverse {S}obolev lemma.
\newblock {\em Rev. Mat. Iberoamericana}, 10(1):123--141, 1994.

\bibitem{https://doi.org/10.48550/arxiv.2203.01020}
Pekka Koskela and Khanh Nguyen.
\newblock Existence and uniqueness of limits at infinity for homogeneous
  sobolev functions, 2022.
\newblock arXiv:2203.01020.

\bibitem{MR1787096}
Guozhen Lu.
\newblock Polynomials, higher order {S}obolev extension theorems and
  interpolation inequalities on weighted {F}olland-{S}tein spaces on stratified
  groups.
\newblock {\em Acta Math. Sin. (Engl. Ser.)}, 16(3):405--444, 2000.

\bibitem{MR2115223}
Valentino Magnani.
\newblock {\em Elements of geometric measure theory on sub-{R}iemannian
  groups}.
\newblock Scuola Normale Superiore, Pisa, 2002.

\bibitem{MR2466579}
Olli Martio, Vladimir Ryazanov, Uri Srebro, and Eduard Yakubov.
\newblock {\em Moduli in modern mapping theory}.
\newblock Springer Monographs in Mathematics. Springer, New York, 2009.

\bibitem{MR1246889}
Robert~R. Miner.
\newblock Quasiconformal equivalence of spherical {CR} manifolds.
\newblock {\em Ann. Acad. Sci. Fenn. Ser. A I Math.}, 19(1):83--93, 1994.

\bibitem{MR538089}
Ruth Miniowitz.
\newblock Distortion theorems for quasiregular mappings.
\newblock {\em Ann. Acad. Sci. Fenn. Ser. A I Math.}, 4(1):63--74, 1979.

\bibitem{MR2135732}
Roberto Monti and Daniele Morbidelli.
\newblock Regular domains in homogeneous groups.
\newblock {\em Trans. Amer. Math. Soc.}, 357(8):2975--3011, 2005.

\bibitem{MR1848049}
Duy-Minh Nhieu.
\newblock The {N}eumann problem for sub-{L}aplacians on {C}arnot groups and the
  extension theorem for {S}obolev spaces.
\newblock {\em Ann. Mat. Pura Appl. (4)}, 180(1):1--25, 2001.

\bibitem{MR1013976}
Craig~A. Nolder.
\newblock A characterization of certain measures using quasiconformal mappings.
\newblock {\em Proc. Amer. Math. Soc.}, 109(2):349--356, 1990.

\bibitem{MR1307591}
Craig~A. Nolder.
\newblock Carleson's inequality and quasiconformal mappings.
\newblock {\em Rocky Mountain J. Math.}, 24(3):1055--1063, 1994.

\bibitem{MR1943765}
Craig~A. Nolder.
\newblock The {$H^p$}-norm of a quasiconformal mapping.
\newblock {\em J. Math. Anal. Appl.}, 275(2):557--561, 2002.

\bibitem{orponen2020subelliptic}
Tuomas Orponen and Michele Villa.
\newblock Sub-elliptic boundary value problems in flag domains, 2020.
\newblock To appear in Adv. Calc. Var.

\bibitem{Pansu1}
Pierre Pansu.
\newblock M\'etriques de carnot-carath\'eodory et quasiisom\'etries des espaces
  sym\'etriques de rang un.
\newblock {\em Ann. of Math.}, 129(1):1--60, 1989.

\bibitem{MR3029176}
Elefterios Soultanis and Marshall Williams.
\newblock Distortion of quasiconformal maps in terms of the quasihyperbolic
  metric.
\newblock {\em J. Math. Anal. Appl.}, 402(2):626--634, 2013.

\bibitem{MR0290095}
Elias~M. Stein.
\newblock {\em Singular integrals and differentiability properties of
  functions}.
\newblock Princeton Mathematical Series, No. 30. Princeton University Press,
  Princeton, N.J., 1970.

\bibitem{MR1232192}
Elias~M. Stein.
\newblock {\em Harmonic analysis: real-variable methods, orthogonality, and
  oscillatory integrals}, volume~43 of {\em Princeton Mathematical Series}.
\newblock Princeton University Press, Princeton, NJ, 1993.
\newblock With the assistance of Timothy S. Murphy, Monographs in Harmonic
  Analysis, III.

\bibitem{MR3845546}
Robin Timsit.
\newblock Geometric construction of quasiconformal mappings in the {H}eisenberg
  group.
\newblock {\em Conform. Geom. Dyn.}, 22:99--140, 2018.

\bibitem{MR860655}
Michel Zinsmeister.
\newblock A distortion theorem for quasiconformal mappings.
\newblock {\em Bull. Soc. Math. France}, 114(1):123--133, 1986.

\end{thebibliography}

\end{document}